\documentclass[a4paper,11pt]{article}

\addtolength{\hoffset}{-1cm}
\addtolength{\voffset}{-2cm}
\addtolength{\textwidth}{2cm}
\addtolength{\textheight}{4cm}

\usepackage[T1]{fontenc}
\usepackage{lmodern}

\usepackage{dsfont}

\usepackage[utf8]{inputenc}
\usepackage{amsmath}
\usepackage{amsthm}
\usepackage{amssymb}
\usepackage{mathrsfs}
\usepackage{graphicx}
\usepackage[all]{xy}
\usepackage{hyperref}
\usepackage[usenames, dvipsnames]{xcolor}
\usepackage{soul}
\usepackage{thm-restate}
\usepackage{tikz-cd}
\usepackage{pdfpages}

\usepackage{stmaryrd}
\usepackage{caption}
\usepackage{subcaption}
\usepackage{abstract} 
\usepackage[shortlabels]{enumitem}

\usepackage{tikz} 
\usetikzlibrary{decorations.pathreplacing,decorations.markings}

 \tikzset{mid arrow/.style={postaction={decorate,decoration={
			markings,
			mark=at position .5 with {\arrow[#1]{stealth}}
}}}}

\newtheorem{thm}{Theorem}[section]
\newtheorem{cor}[thm]{Corollary}
\newtheorem{claim}[thm]{Claim}
\newtheorem{fact}[thm]{Fact}

\newtheorem{lemma}[thm]{Lemma}
\newtheorem{prop}[thm]{Proposition}

\theoremstyle{definition}

\newtheorem{ex}[thm]{Example}
\newtheorem{remark}[thm]{Remark}
\newtheorem{question}[thm]{Question}

\newtheorem{conj}[thm]{Conjecture}

\newtheorem*{definition*}{Definition}

\DeclareMathOperator{\N}{\mathbb{N}}
\DeclareMathOperator{\stab}{\mathrm{stab}}
\DeclareMathOperator{\Mod}{\mathrm{Mod}}

\DeclareMathOperator{\Z}{\mathbb{Z}}
\DeclareMathOperator{\id}{{id}}
\DeclareMathOperator{\Fr}{\mathrm{Fr}}

\renewcommand{\l}{\ell}

\newlist{cas}{enumerate}{1}
\setlist[cas,1]{label=Case \arabic*}

\def\rquotient#1#2{%
	\makeatletter
	\raise.3ex\hbox{$#1$}/\lower.3ex\hbox{$#2$}%
	\makeatother
}	

\makeatletter
\newcommand{\subjclass}[2][2010]{%
	\let\@oldtitle\@title%
	\gdef\@title{\@oldtitle\footnotetext{#1 \emph{Mathematics subject classification.} #2}}%
}
\newcommand{\keywords}[1]{%
	\let\@@oldtitle\@title%
	\gdef\@title{\@@oldtitle\footnotetext{\emph{Key words and phrases.} #1.}}%
}
\makeatother

\newcommand{\Address}{{
		\bigskip
		\small
		
		\textsc{Institut Montpellierain Alexander Grothendieck, 499-554 Rue du Truel, 34090 Montpellier, France.}\par\nopagebreak
		\textit{E-mail address}: \texttt{anthony.genevois@umontpellier.fr}
		\medskip
		
		\textsc{Laboratoire de mathématiques d’Orsay, Université Paris-Saclay, 91405, Orsay, France}\par\nopagebreak
		\textit{E-mail address}: \texttt{anne.lonjou@universite-paris-saclay.fr}
		\medskip
		
		\textsc{Departement Mathematik, ETH Zurich, Rämistrasse 101, CH-8092 Zürich, Swit\-zer\-land.}\par\nopagebreak
		\textit{E-mail address}: \texttt{christian.urech@math.ethz.ch}
		\medskip
		
}}

\title{Asymptotically rigid mapping class groups III: Presentations and isomorphisms}
\date{\today}

\author{Anthony Genevois, Anne Lonjou, and Christian Urech}

\subjclass{}
\keywords{Big mapping class groups, Thompson groups, braid groups, asymptotically rigid mapping class groups, presentation, abelianisation}

\begin{document}

	\maketitle

	\begin{abstract}
		\noindent This article is dedicated to the computation of an explicit presentation of some asymptotically rigid mapping class groups, namely the braided Higman-Thompson groups. To do so, we use the action of these groups on the spine complex, a simply connected cube complex constructed by the authors in a previous work. In particular, this allows to compute the abelianisations of these groups. With these new algebraic invariants we can handle many new cases of the isomorphism problem for asymptotically rigid mapping class groups of trees.
	\end{abstract}

	\tableofcontents

	\section{Introduction}

Given a surface of infinite type $\Sigma$ endowed with a fixed cellulation, referred as a \emph{rigid structure}, the \emph{asymptotically rigid mapping class group} $\mathfrak{mod}(\Sigma)$ is the subgroup of the big mapping class group $\mathrm{mod}(\Sigma)$ given by the homeomorphisms $\Sigma \to \Sigma$ that map cells to cells with only finitely many exceptions. Often, despite the fact that $\mathrm{mod}(\Sigma)$ is not even countable, its asymptotically rigid subgroup $\mathfrak{mod}(\Sigma)$ turns out to satisfy good finiteness properties, providing an interesting source of finitely generated groups. See for instance \cite{Degenhardt, Funar_Kapoudjian_UniversalMCG, Funar_Houghton, Funar-Kapoudjian_brT_finitely_presented, Aramoyana_Funar_asymptotic_MCG, GLU_finiteness, GLU_Chambord, SurfaceHoughton}.

\medskip\noindent In this article, we pursue our study of asymptotically rigid mapping class groups of planar surfaces initiated in \cite{GLU_finiteness}. Given a locally finite tree $A$ properly embedded in the plane, consider the planar surface $\mathscr{S}(A)$ given by a small closed tubular neighbourhood of $A$; and let $\mathscr{S}^\# (A)$ be the surface obtained from $\mathscr{S}(A)$ by adding a puncture at each vertex of $A$. A rigid structure can be naturally defined on $\mathscr{S}^\#(A)$ by adding arcs transverse to the edges of $A$. We denote by $\mathfrak{mod}(A)$ the asymptotically rigid mapping class group of $\mathscr{S}^\#(A)$ endowed with its rigid structure (see Figure~\ref{fig:homeo} for an example of an element of $\mathfrak{mod}(A)$ when $A$ is the infinite $3$-regular tree). The main question we are interested in is:

\begin{question}\label{question:Intro}
Given two trees $A_1$ and $A_2$, when are $\mathfrak{mod}(A_1)$ and $\mathfrak{mod}(A_2)$ isomorphic?
\end{question}

\noindent Of course, if there exists a quasi-isomorphism $A_1 \to A_2$, i.e.\ a bijection on the vertex-sets $V(A_1) \to V(A_2)$ that preserves adjacency and non-adjacency for all but finitely many pairs of vertices, then there exists an asymptotically rigid homeomorphism $\mathscr{S}^\# (A_1) \to \mathscr{S}^\#(A_2)$ that induces a group isomorphism $\mathfrak{mod}(A_1) \to \mathfrak{mod}(A_2)$. But the converse does not hold. In fact, there exist many trees $A$ with so few symmetries that every asymptotically rigid homeomorphism $\mathscr{S}^\# (A_1) \to \mathscr{S}^\#(A_2)$ must be compactly supported, which implies that $\mathfrak{mod}(A)$ reduces to $B_\infty$ (i.e.\ the group of finitely supported braids on countably many strands).

\begin{figure}
	\begin{center}
			\includegraphics[scale=0.2]{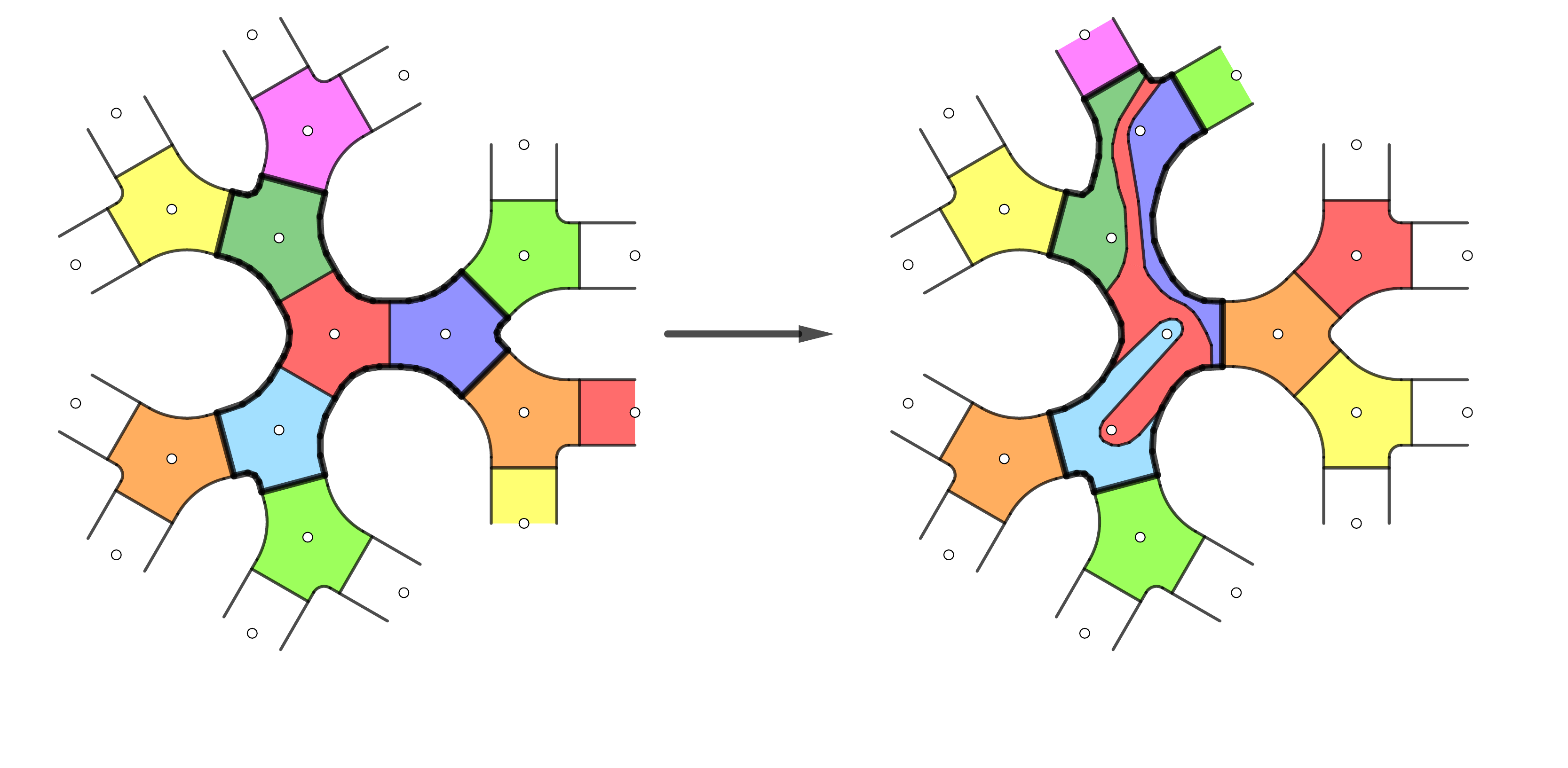}
			\caption{An element of $\mathrm{mod}(\mathscr{S}^\#(A_{2,3}))$ \label{fig:homeo}}
		\end{center}
\end{figure}

\medskip\noindent Question~\ref{question:Intro} seems out of reach in full generality, so in this article we restrict ourselves to a specific family of asymptotically rigid mapping class groups, namely the \emph{braided Higman-Thompson groups} $\mathrm{br}T_{n,m}:= \mathfrak{mod}(A_{n,m})$, where $A_{n,m}$ is the rooted tree whose root has degree $m$ and all of whose other vertices have degree $n+1$ (see Figure \ref{fig:homeo} for an example of an element of $\mathrm{br}T_{2,3}$). The terminology comes from the observation that the forgetful map $\mathrm{mod}(\mathscr{S}^\#(A_{n,m})) \to \mathrm{mod}(\mathscr{S}(A_{n,m}))$ induces a short exact sequence
$$1 \to B_\infty \to \mathrm{br}T_{n,m} \to T_{n,m} \to 1$$
where $T_{n,m}$ is the Higman-Thompson group corresponding to $A_{n,m}$. Interestingly, despite the fact that there exist non-trivial isomorphisms between certain Higman-Thomp\-son groups, their braided versions seem to be more rigid. For instance, $T_{n,m}$ and $T_{n,m+k(n-1)}$ are isomorphic for every $k\in\Z$, but $\mathrm{br}T_{n,m}$ and $\mathrm{br}T_{n,m+k(n-1)}$ turn out not to be isomorphic for $k\neq 0 , -1$ since they do not have the same torsion (according to \cite{GLU_finiteness}). 

\begin{conj}\label{conj}
For all $n,m,r,s \geq 2$, the groups $\mathrm{br}(T_{n,m})$ and $\mathrm{br}(T_{r,s})$ are isomorphic if and only if $(n,m)=(r,s)$. 
\end{conj}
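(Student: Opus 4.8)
\medskip\noindent\emph{Towards Conjecture~\ref{conj}.}
The plan is to convert the geometry of the spine complex into algebra. The spine complex constructed in \cite{GLU_finiteness} is a simply connected cube complex on which $\mathrm{br}T_{n,m}$ acts cocompactly with explicitly understood cell-stabilizers, so one may apply the standard machinery that reconstructs a group acting on a simply connected $2$-complex from the stabilizers of its vertices, edges and $2$-cells: generators come from the vertex-stabilizers --- here finite braid groups together with a few ``rigid symmetry'' elements --- while the relators come from the edge- and square-stabilizers together with a set of loops generating the fundamental groupoid of the $1$-skeleton. The first step, which is essentially bookkeeping, is to run this through carefully and then Tietze-reduce the output to a compact presentation of $\mathrm{br}T_{n,m}$ that is uniform in $n,m\geq 2$.

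The second step is to abelianize this presentation, obtaining an explicit finitely generated abelian group $H_1(\mathrm{br}T_{n,m})$ whose isomorphism type depends on $n$ and $m$. This is the new invariant the argument rests on, and it must be combined with the data already available: the short exact sequence $1\to B_\infty\to \mathrm{br}T_{n,m}\to T_{n,m}\to 1$, the complete description of the torsion of $\mathrm{br}T_{n,m}$ from \cite{GLU_finiteness}, and the classical classification of the Higman--Thompson groups $T_{n,m}$ up to isomorphism.

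Given an abstract isomorphism $\mathrm{br}T_{n,m}\cong\mathrm{br}T_{r,s}$, one would then proceed in two stages. To force $n=r$, the idea is to recover the Higman--Thompson quotient: one shows that $B_\infty$, the subgroup of compactly supported classes, is characteristic in $\mathrm{br}T_{n,m}$ --- for instance by describing it intrinsically through the action on the spine complex, or as a distinguished term of the normal subgroup lattice --- so that any isomorphism descends to $T_{n,m}\cong T_{r,s}$, whence $n=r$ by the classical results; alternatively $n$ can perhaps be read off directly from the torsion spectrum of $\mathrm{br}T_{n,m}$ or from $H_1$. To then force $m=s$ once $n=r$ is known, one combines the classical fact that $m\equiv s\pmod{n-1}$ with the torsion computation of \cite{GLU_finiteness}, which rules out all identifications $m\leftrightarrow m+k(n-1)$ except the exceptional one with $k=-1$; the residual ambiguity between $\mathrm{br}T_{n,m}$ and $\mathrm{br}T_{n,\,m-(n-1)}$ is then meant to be broken by showing that their abelianizations differ.

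The crux --- and the reason this is recorded as a conjecture rather than a theorem --- is completeness: one must verify that the finite package of invariants assembled above (abelianization, torsion, finiteness length, recovered quotient) actually separates $\mathrm{br}T_{n,m}$ from $\mathrm{br}T_{r,s}$ for \emph{every} pair with $n,m,r,s\geq 2$ and $(n,m)\neq(r,s)$. Whenever $H_1(\mathrm{br}T_{n,m})$ genuinely records both parameters, or merely suffices to break the single residual $k=-1$ ambiguity left by the torsion, the argument goes through --- and this is precisely what yields the ``many new cases'' of the isomorphism problem treated in the paper. What cannot be excluded a priori is that some pairs share the same abelianization, the same torsion and the same finiteness behaviour; disposing of those would require a genuinely new invariant, or a rigidity statement reconstructing the surface $\mathscr{S}^\#(A_{n,m})$ --- hence $(n,m)$ --- from the isomorphism type of $\mathrm{br}T_{n,m}$ alone.
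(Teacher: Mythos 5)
Your text is, by its own admission, a strategy outline rather than a proof, and that is the correct posture: the statement is recorded in the paper as Conjecture~\ref{conj} precisely because the paper does not prove it either. As an outline of the partial result it reproduces the paper's route almost step for step: a presentation of $\mathrm{br}T_{n,m}$ via Brown's method applied to the spine complex (note, though, that the action on the full spine complex has infinitely many orbits of cells, so the paper first truncates at bounded height to make the method finite), the abelianisation $\mathbb{Z}_m\times\mathbb{Z}_{|m-n+1|}$ of Theorem~\ref{thm_abelianise}, an algebraic characterisation of $B_\infty$ (Theorem~\ref{thm_thompson_iso}) so that any isomorphism descends to the Higman--Thompson quotients, and Rubin's theorem to force $n=r$ (Proposition~\ref{prop:IsoThompson}). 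Up to that point you are describing Theorem~\ref{thm:BigIntro}.

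The gap is in your plan for forcing $m=s$, and it sits exactly where the conjecture is open. First, you invoke as ``classical'' that $T_{n,m}\cong T_{n,s}$ implies $m\equiv s\pmod{n-1}$; only the sufficiency of that congruence is known, and the paper explicitly states that it does not know whether $T_{n,m}$ and $T_{n,n-1-m}$ are isomorphic (a negative answer, combined with Theorem~\ref{thm_thompson_iso}, would prove the conjecture). Second, you place the residual ambiguity at $s=m-(n-1)$ and expect the abelianisation to break it. The abelianisation does eliminate that pair --- the orders $m|m-n+1|$ and $(m-n+1)|m-2n+2|$ coincide only when $m=n-1$, which is excluded --- but it creates a different one: for $2\leq m\leq\frac{n-1}{2}$ the groups $\mathrm{br}T_{n,m}$ and $\mathrm{br}T_{n,n-1-m}$ have the same abelianisation $\mathbb{Z}_m\times\mathbb{Z}_{n-1-m}$ and, by the torsion computation of \cite{GLU_finiteness}, the same torsion; this family (not the $k=-1$ shift) is the exceptional case left open in Theorem~\ref{thm:BigIntro}. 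None of the invariants in your package --- abelianisation, torsion, finiteness properties, or the recovered Thompson quotient, given that $T_{n,m}\cong T_{n,n-1-m}$ is undecided --- separates these groups, so your argument yields at most the paper's partial result; closing the conjecture requires either the non-isomorphism $T_{n,m}\not\cong T_{n,n-1-m}$ or a genuinely new invariant.
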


\noindent In this article, we exploit various algebraic invariants in order to verify the conjecture in many cases:

\begin{thm}\label{thm:BigIntro}
Let $n,m,r,s \geq 2$ be integers. If $\mathrm{br}T_{n,m}$ and $\mathrm{br}T_{r,s}$ are isomorphic, then $(r,s)=(n,m)$ or $2\leq m\leq \frac{n-1}{2}$ and $(r,s )=(n,n-1-m)$.
\end{thm}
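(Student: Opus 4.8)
The plan is to extract from the group isomorphism a collection of numerical invariants of $\mathrm{br}T_{n,m}$ that determine the pair $(n,m)$ up to the single ambiguity allowed in the statement. The main tool will be the explicit presentation of $\mathrm{br}T_{n,m}$ obtained earlier via the action on the spine complex, and in particular the resulting computation of the abelianisation $\mathrm{br}T_{n,m}^{\mathrm{ab}}$. First I would record two easily-computed invariants. The first is the torsion: by the short exact sequence $1 \to B_\infty \to \mathrm{br}T_{n,m} \to T_{n,m} \to 1$, together with the known torsion in $T_{n,m}$ (finite cyclic subgroups coming from rotations of subtrees), one reads off which orders of finite-order elements occur; as already remarked in the introduction following \cite{GLU_finiteness}, this distinguishes $\mathrm{br}T_{n,m}$ from $\mathrm{br}T_{n,m+k(n-1)}$ unless $k \in \{0,-1\}$, i.e.\ it pins down $m \bmod (n-1)$ up to the flip $m \leftrightarrow n-1-m$. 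The second is $H_1$: from the presentation one computes $\mathrm{br}T_{n,m}^{\mathrm{ab}}$ as an explicit finitely generated abelian group whose invariant factors depend on $n$ and $m$ (the free rank and the torsion part both carry arithmetic information, typically through $\gcd$s of $n-1$, $m$ and $2$).

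Next I would combine these. Any isomorphism $\mathrm{br}T_{n,m} \cong \mathrm{br}T_{r,s}$ forces equality of abelianisations and of torsion spectra. I would then argue that the abelianisation computation is sharp enough to recover $n$ outright: for instance, the set of element orders realised, being governed by divisors of $n$-ish quantities, determines $n$, after which $\mathrm{br}T_{n,m}^{\mathrm{ab}} \cong \mathrm{br}T_{n,s}^{\mathrm{ab}}$ reduces to a congruence/divisibility condition on $m$ and $s$. The residual ambiguity is exactly $s \equiv m$ or $s \equiv n-1-m \pmod{n-1}$, and then a separate argument — bounding the growth of the puncture-count, or using that $A_{n,m}$ and $A_{n,m'}$ with $m \neq m'$, $m' \neq n-1-m$ are \emph{not} quasi-isometric/quasi-isomorphic in the sense of the introduction, hence the surfaces carry no asymptotically rigid homeomorphism between them — rules out all representatives of the congruence class except $m$ and $n-1-m$. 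Finally the inequality $2 \leq m \leq \frac{n-1}{2}$ in the statement is just the normalisation that selects the smaller of $\{m, n-1-m\}$, so one only has to observe that if $m > \frac{n-1}{2}$ one simply swaps the roles, and if $m = \frac{n-1}{2}$ the two candidates coincide and there is nothing to prove.

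The step I expect to be the real obstacle is showing that the abelianisation (possibly together with the torsion data, and possibly together with one further homological or geometric invariant) is genuinely sharp — i.e.\ that $\mathrm{br}T_{n,m}^{\mathrm{ab}} \cong \mathrm{br}T_{r,s}^{\mathrm{ab}}$ plus matching torsion really does force $n=r$ and $s \in \{m, n-1-m\}$, rather than merely constraining these quantities. This is delicate because $H_1$ is a rather coarse invariant: its free rank is typically small and bounded, and its torsion part, being built from $\gcd$s, can easily coincide for genuinely different pairs $(n,m)$. I anticipate that closing this gap requires either a careful case analysis splitting on the parity of $n-1$ and on $\gcd(m,n-1)$, or supplementing $H_1$ with the torsion spectrum in an essential way (the orders of torsion elements depend on $n$ much more rigidly than $H_1$ does), or invoking the quasi-isomorphism obstruction from the introduction to handle the cases the algebra leaves open. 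A secondary technical point is making the abelianisation computation from the spine-complex presentation fully explicit and uniform in $(n,m)$, including the boundary cases $m=2$ and $n=2$, and checking that no relator degenerates there.
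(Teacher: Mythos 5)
Your outline reproduces only the last third of the paper's argument (abelianisation plus torsion spectrum), and the step you yourself flag as ``the real obstacle'' --- recovering $n$ from these invariants --- is in fact impossible, not merely delicate. Both invariants you propose depend only on the pair $\bigl(m,\lvert n-1-m\rvert\bigr)$: by Theorem~\ref{thm_abelianise} the abelianisation is the finite group $\mathbb{Z}_m\times\mathbb{Z}_{\lvert m-n+1\rvert}$ (in particular it has no free rank and is not built from $\gcd$'s of $n-1$ and $m$; that description applies to $T_{n,m}$, not to $\mathrm{br}T_{n,m}$), and by \cite{GLU_finiteness} the orders of torsion elements are exactly the divisors of $m$ and of $\lvert n-1-m\rvert$. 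Hence, for example, $\mathrm{br}T_{4,5}$ and $\mathrm{br}T_{8,5}$ have identical abelianisations ($\mathbb{Z}_5\times\mathbb{Z}_2$) and identical torsion spectra, yet the theorem asserts they are not isomorphic; no amount of case analysis on parities or $\gcd$'s will separate them with the data you use. Your fallback --- that $A_{n,m}$ and $A_{r,s}$ are not quasi-isomorphic, so there is no asymptotically rigid homeomorphism between the surfaces --- only excludes \emph{geometrically induced} isomorphisms; the statement concerns abstract group isomorphisms, and the introduction explicitly notes that the implication from quasi-isomorphism of trees to isomorphism of groups is not known to reverse.

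The missing idea is the algebraic rigidity of the braid kernel: the paper first proves (Theorem~\ref{thm:BraidSubCharacteristic}) that $B_\infty\leq\mathrm{br}T_{n,m}$ is the unique maximal normal subgroup $N$ such that $\mathrm{br}T_{n,m}/N$ does not surject onto a virtually abelian group with a kernel having non-trivial centre; the proof uses the simplicity result for $T_{n,m}^s$ (Proposition~\ref{prop:TAlmostSimple}), a trivial-centraliser argument, and a ``push the support to infinity'' construction producing central elements in $B_\infty/(B_\infty\cap N)$ when $N\not\subseteq B_\infty$. Consequently any isomorphism $\mathrm{br}T_{n,m}\to\mathrm{br}T_{r,s}$ carries $B_\infty$ to $B_\infty$ and induces $T_{n,m}\cong T_{r,s}$, and then Rubin's theorem together with the computation of groups of germs (Proposition~\ref{prop:IsoThompson}) forces $n=r$. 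Only after $n=r$ is fixed does the paper run the step you propose: the equality $m\lvert m-n+1\rvert=s\lvert s-n+1\rvert$ from the abelianisations, the Diophantine analysis of Claim~\ref{claim:arithm}, and the torsion spectrum to eliminate the Pythagorean-type solutions, leaving only $s=m$ or $s=n-1-m$. Without the $B_\infty$ characterisation and the Rubin step, your argument cannot reach the conclusion.
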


\noindent Our strategy is twofold. First, for $n,m\geq 2$, we deduce from the action of $\mathrm{br}(T_{n,m})$ on the contractible cube complex constructed in \cite{GLU_finiteness} a presentation of the group using Brown's method. Note that for the special case $(n,m)=(2,3)$ an explicit presentation has been computed in \cite{Funar-Kapoudjian_brT_finitely_presented}. In Theorem ~\ref{thm_presentation_BHT} we give a presentation for all $n,m\geq2$. 
For instance, for all $n \geq 2$ and $m \geq 4$, the group $\mathrm{br}(T_{n,m})$ admits a presentation with generators $r_0, \dots, r_4$ and $\tau_1, \ldots, \tau_4$, and with the relations
\begin{itemize}
	\item \emph{the braids relations: } 
		\begin{enumerate}
			\item $\tau_i\tau_j\tau_i=\tau_j\tau_i\tau_j$, for any $1\leq i<j\leq 4$,
			\item $\tau_i\tau_j\tau_s\tau_i=\tau_j\tau_s\tau_i\tau_j=\tau_s\tau_i\tau_j\tau_s$, for any $1\leq i<j<s\leq 4$,
		\end{enumerate}
	\item \emph{the commutation relations:} $r_{k}\tau_i=\tau_ir_{k}$ for $1\leq i\leq k\leq 4$,
	\item \emph{the rotation relations:} $r_{k}^{m+k(n-1)}=(\tau_k\tau_{k-1}\dots\tau_1)^{-(k+1)}$ for $0\leq k\leq 4$,
	\item \emph{the square relations}: for $1\leq i\leq 3$ and $1\leq j_i \leq \left\lceil\frac{m+(n-1)(i-1)-1}{2}\right\rceil$
	
		\[	r_{i-1}^{j_i}\tau_{i}^{-1}r_{i}^{-n-j_i}\tau_{i+i}\tau_{i}r_{i+1}^{j_i+n-1}r_{i}^{1-j_i}=\id .\]
\end{itemize}
We refer to Section~\ref{section:set-up} for a topological description of the generators. As an easy consequence of our calculation, the abelianisations of the braided Higman-Thompson groups can be computed. 

\begin{thm}\label{thm_abelianise}
For all $n,m \geq 2$, the abelianisation of $\mathrm{br}T_{n,m}$ is $\mathbb{Z}_m \times \mathbb{Z}_{|m-n+1|}$.
\end{thm}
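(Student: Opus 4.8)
The plan is to abelianise the presentation of $\mathrm{br}T_{n,m}$ obtained in Theorem~\ref{thm_presentation_BHT}: passing to $\mathrm{br}T_{n,m}^{\mathrm{ab}}$ kills all commutators, so it suffices to read off the abelian group presented by the same generators subject to the abelianised relations. In the abelianisation the braid relation $\tau_i\tau_j\tau_i=\tau_j\tau_i\tau_j$ becomes $\tau_i=\tau_j$, so all the $\tau_i$ collapse to a single element $\tau$; the longer braid relations and the commutation relations $r_k\tau_i=\tau_ir_k$ then become trivially satisfied. Hence $\mathrm{br}T_{n,m}^{\mathrm{ab}}$ is the abelian group generated by $r_0,\dots,r_N$ and $\tau$ (where $N$ is the number of rotation generators in the presentation) subject only to the abelianised rotation relations and square relations.

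Next I would exploit the square relations. Writing the $i$-th square relation additively and subtracting the versions with parameters $j_i$ and $j_i+1$ (which is possible when $m\geq 4$, since then $\left\lceil\frac{m+(n-1)(i-1)-1}{2}\right\rceil\geq 2$ for each relevant $i$) yields $r_{i-1}-2r_i+r_{i+1}=0$; running over all admissible $i$ shows that $r_0,\dots,r_N$ form an arithmetic progression $r_k=r_0+k\delta$ with $\delta:=r_1-r_0$. Substituting this back into the square relation with $j_i=1$ collapses all the $r$-terms and produces $\tau=(1-n)\delta$, so the abelianisation is already generated by the two elements $r_0$ and $\delta$. Finally, feeding $r_k=r_0+k\delta$ and $\tau=(1-n)\delta$ into the abelianised rotation relation $(m+k(n-1))\,r_k=-k(k+1)\tau$ gives, after simplification, $(m+k(n-1))\,r_0+k(m-n+1)\,\delta=0$ for every $k$. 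The case $k=0$ reads $m\,r_0=0$ (matching the conceptual picture: $r_0$ is the order-$m$ rotation of the central polygon, up to a braid correction that dies in the abelianisation), and the case $k=1$, combined with $k=0$, reads $(n-1)\,r_0+(m-n+1)\,\delta=0$; the relations for $k\geq 2$ are then redundant, as are all remaining square relations. Thus
$$\mathrm{br}T_{n,m}^{\mathrm{ab}}\;\cong\;\big\langle\, r_0,\delta \;\big|\; m\,r_0=0,\;\; (n-1)\,r_0+(m-n+1)\,\delta=0 \,\big\rangle.$$

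It remains to identify this group. Its relation matrix $\left(\begin{smallmatrix} m & 0\\ n-1 & m-n+1\end{smallmatrix}\right)$ has determinant $m(m-n+1)$ and gcd of entries $\gcd(m,n-1)=\gcd(m,m-n+1)$, so its Smith normal form has diagonal entries $\gcd(m,|m-n+1|)$ and $\mathrm{lcm}(m,|m-n+1|)$; these are exactly the invariant factors of $\mathbb{Z}_m\times\mathbb{Z}_{|m-n+1|}$, which proves the theorem (the degenerate case $m=n-1$, where one invariant factor is $0$ and $\mathbb{Z}_{|m-n+1|}=\mathbb{Z}$, is covered by the same computation).

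The main obstacle is not this last piece of linear algebra but the bookkeeping around the small values $m=2$ and $m=3$: there the presentation of Theorem~\ref{thm_presentation_BHT} has a different shape and fewer square relations, so the ``arithmetic progression'' step is not available verbatim and one must rerun the (shorter) computation directly on those presentations. One also has to be careful that no extra relation between $r_0$ and $\delta$ is hidden in the square relations, i.e.\ to check that the $j_i=1$ instances genuinely all reduce to the single relation $\tau=(1-n)\delta$.
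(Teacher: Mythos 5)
Your strategy is exactly the paper's: abelianise the presentation of Theorem~\ref{thm_presentation_BHT}, note that the braid relations identify all the $\tau_i$ to one class $\bar\tau$, use the square relations to express everything in terms of two generators, and then feed the rotation relations into a two-generator abelian presentation. Your computations in the range where they are carried out are correct: the difference of the $j_i$ and $j_i+1$ square relations gives $\bar r_{i-1}-2\bar r_i+\bar r_{i+1}=0$, any single square relation then reduces to $\bar\tau=(1-n)\delta$ (independently of $i$ and $j_i$, which settles the ``hidden relation'' worry you raise), the rotation relation for $k$ becomes $(m+k(n-1))\bar r_0+k(m-n+1)\delta=0$ so that only $k=0,1$ matter, and the Smith normal form of $\bigl(\begin{smallmatrix} m & 0\\ n-1 & m-n+1\end{smallmatrix}\bigr)$ indeed gives $\mathbb{Z}_{\gcd(m,|m-n+1|)}\times\mathbb{Z}_{\operatorname{lcm}(m,|m-n+1|)}\cong\mathbb{Z}_m\times\mathbb{Z}_{|m-n+1|}$, with the degenerate case $m=n-1$ included.

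The one genuine shortfall is that the cases $m\in\{2,3\}$ (and $(n,m)=(2,2)$, with its extra generators $r_{\Sigma_5},\tau_5$ and the $i=4$ square relations) are only flagged, not proved: as you note, for these values the $i=1$ square relation admits only $j_1=1$, so the ``arithmetic progression'' step is not available verbatim, and the theorem as stated covers these values. The paper closes this by ordering the substitutions from the top down rather than from the differences up: the second-difference relation for $i=3$ (and $i=4$ when $(n,m)=(2,2)$) is available for all $n,m\geq 2$, and combined with the $j=1$ instances for $i=3,2,1$ it yields $\bar\tau=(n-1)(\bar r_{\Sigma_3}-\bar r_{\Sigma_4})$, $\bar r_{\Sigma_2}=2\bar r_{\Sigma_3}-\bar r_{\Sigma_4}$, $\bar r_{\Sigma_1}=3\bar r_{\Sigma_3}-2\bar r_{\Sigma_4}$, $\bar r_{\Sigma_0}=4\bar r_{\Sigma_3}-3\bar r_{\Sigma_4}$ (and $\bar r_{\Sigma_5}=2\bar r_{\Sigma_4}-\bar r_{\Sigma_3}$ in the $(2,2)$ case), i.e.\ exactly your arithmetic progression and $\bar\tau=(1-n)\delta$, without ever using a second difference at $i=1$. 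Once that is done your rotation-relation and Smith-normal-form endgame applies unchanged in all cases (the paper treats $(2,2)$ separately and gets $\mathbb{Z}_2$, which agrees with $\mathbb{Z}_2\times\mathbb{Z}_1$). So the missing piece is routine and consists precisely of the reorganised substitution above; with it, your proof coincides with the paper's up to the final identification, where you use Smith normal form while the paper performs explicit changes of generators.
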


\noindent We recover from this also the known abelianization of $T_{n,m}$, see Remark \ref{rmk_abel_tnr}.

\noindent Theorem \ref{thm_abelianise} provides the first algebraic invariant used in the proof of Theorem~\ref{thm:BigIntro}. Next, we show that the subgroup $B_\infty$ in $\mathrm{br}T_{n,m}$ can be characterised algebraically.

\begin{thm}\label{thm_thompson_iso}
Let $n,m \geq 2$ be integers. The subgroup $B_\infty$ of $\mathrm{br}T_{n,m}$ is the unique subgroup that is maximal (with respect to the inclusion) among the subgroups $N$ satisfying the property
\begin{itemize}
	\item[($\ast$)] $N$ is normal and $\mathrm{br}T_{n,m}/N$ does not surject onto a virtually abelian group with a kernel that has a non-trivial centre. 
\end{itemize}
\end{thm}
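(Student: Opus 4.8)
\noindent\textit{Plan.} Write $G=\mathrm{br}T_{n,m}$ and let $\pi\colon G\to T_{n,m}$ be the quotient map of the short exact sequence $1\to B_\infty\to G\to T_{n,m}\to 1$ from the introduction, so $B_\infty=\ker\pi$; put $S:=[T_{n,m},T_{n,m}]$. The plan is to establish two complementary facts: (a) $B_\infty$ satisfies ($\ast$); and (b) every normal subgroup $N\trianglelefteq G$ satisfying ($\ast$) is contained in $B_\infty$. Together these show that $B_\infty$ is the largest, hence the unique maximal, subgroup with property ($\ast$).

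\smallskip For (a) I would invoke the structure theory of Higman--Thompson groups: $T_{n,m}$ has trivial centre, its commutator subgroup $S$ is an infinite simple group of finite index (its abelianisation being the finite group recorded in Remark~\ref{rmk_abel_tnr}), and $C_{T_{n,m}}(S)=1$ --- the last point because $T_{n,m}$ acts minimally and with micro-support on the Cantor set of ends of $A_{n,m}$, so any non-trivial normal subgroup already has full support and hence trivial centraliser. Consequently every normal subgroup $K\trianglelefteq T_{n,m}$ is $1$, or $T_{n,m}$, or contains $S$ and has finite index, and in every case $Z(K)=1$ (for $K\supseteq S$ because $Z(K)\subseteq C_{T_{n,m}}(S)=1$). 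Since $G/B_\infty\cong T_{n,m}$, any surjection of $G/B_\infty$ has centre-free kernel, which gives ($\ast$) for $B_\infty$.

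\smallskip For (b) I would argue by contraposition: given $N\trianglelefteq G$ with $N\not\subseteq B_\infty$, exhibit a surjection of $G/N$ onto a virtually abelian group with a kernel having non-trivial centre. Since $\pi(N)\neq 1$, the classification in (a) gives $\pi(N)\supseteq S$, so $NB_\infty=\pi^{-1}(\pi(N))$ has finite index in $G$ and $G/NB_\infty\cong T_{n,m}/\pi(N)$ is finite abelian. Put $L:=[B_\infty,B_\infty]$, which is characteristic in $B_\infty$ and hence normal in $G$; since conjugation by $G$ sends standard half-twists to half-twists it fixes the class in $B_\infty^{\mathrm{ab}}\cong\Z$, so $B_\infty/L$ is central in $G/L$ and $1\to\Z\to G/L\to T_{n,m}\to1$ is a central extension. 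Setting $M:=NL\trianglelefteq G$, the quotient $G/M$ is obtained from $G/L$ by killing a normal subgroup whose image in $T_{n,m}$ contains $S$, hence is an extension of the finite abelian group $T_{n,m}/\pi(N)$ by a central cyclic group --- in particular virtually abelian --- while the kernel of $G/N\twoheadrightarrow G/M$ is $M/N\cong L/(L\cap N)$. If $L\subseteq N$ then $B_\infty\cap N\supseteq L$, so $B_\infty/(B_\infty\cap N)$ is a quotient of $B_\infty/L$ and is therefore central in $G/N$: either $B_\infty\not\subseteq N$ and this yields a non-trivial central element of $G/N$, or $B_\infty\subseteq N$ and $G/N\cong T_{n,m}/\pi(N)$ is a non-trivial finite abelian group, and in both cases $G/N$ maps onto a virtually abelian group with a kernel of non-trivial centre. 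The remaining case $L\not\subseteq N$ requires showing that the non-trivial quotient $L/(L\cap N)$ of $L$ --- by a subgroup which, crucially, is invariant under conjugation by all of $G$, not merely by $B_\infty$ --- has non-trivial centre.

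\smallskip The main obstacle is exactly this last point: controlling the $G$-invariant normal subgroups of the infinite braid group $B_\infty=\varinjlim B_k$ (equivalently the possible quotients $B_\infty/(B_\infty\cap N)$ and $L/(L\cap N)$). A priori $B_\infty$ has a rich normal-subgroup lattice, but passing to the direct limit and imposing invariance under all of $\mathrm{br}T_{n,m}$ is very restrictive --- many familiar normal subgroups of the finite braid groups (Brunnian braids and the like) collapse or cease to be invariant in this limit. The tools I would bring to bear are: the identity $C_G(B_\infty)=1$ --- a non-trivial $g$ centralising $B_\infty$ commutes with all half-twists, hence fixes every puncture of $\mathscr{S}^\#(A_{n,m})$ and acts trivially on $A_{n,m}$, forcing $\pi(g)=1$ and then $g\in Z(B_\infty)=1$ --- which rules out $B_\infty\cap N=1$; the non-splitting of the central extension $G/L$ over the relevant finite-index subgroups of $T_{n,m}$, which constrains the $G$-invariant subgroups $\Lambda\leq B_\infty$ for which $B_\infty/\Lambda$ is abelian; and the analysis of the $G$-action on the punctures via $B_\infty\twoheadrightarrow S_\infty$ (the finitary symmetric group, with kernel the infinite pure braid group $P_\infty$) together with the faithfulness of the $T_{n,m}$-action on $A_{n,m}$, which excludes centre-free quotients such as $S_\infty$ from occurring as the braid part of $G/N$. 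Assembling these shows $L/(L\cap N)$ always has non-trivial centre when it is non-trivial, which completes (b) and hence the theorem (the trivial quotient $G/G$ being understood to be excluded from the comparison).
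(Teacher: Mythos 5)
Your part (a) and the reduction in (b) to the centre of the ``braid part'' of the quotient both line up with the paper's argument (Lemma~\ref{lem:BhasAst}, via Proposition~\ref{prop:TAlmostSimple} and triviality of the centraliser of the simple core, and the first half of Lemma~\ref{lem:NotAst}). But the decisive step is missing. In the case you label $L\not\subseteq N$ you only list tools ($C_G(B_\infty)=1$, non-splitting of the central extension $G/L$, the quotient $B_\infty\twoheadrightarrow S_\infty$) and then assert that ``assembling these'' shows $L/(L\cap N)$ has non-trivial centre; no argument is given, and none of the listed facts yields one. In particular, trying to control the $\mathrm{br}T_{n,m}$-invariant normal subgroups of $B_\infty$ is exactly the hard problem the paper avoids: knowing $C_G(B_\infty)=1$ says nothing about which quotients $B_\infty/(B_\infty\cap N)$ can occur, and ruling out a specific centreless quotient such as $S_\infty$ does not rule out all of them. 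So as written the proposal does not prove the theorem.

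The missing idea in the paper (Lemma~\ref{lem:NotAst}) is dynamical, and it makes your $L$-based case division unnecessary. Since $\pi(N)\supseteq T_{n,m}^s$, one can choose $g\in N$ whose action on the space of ends of $\mathscr{S}(A_{n,m})$ has an attracting point, hence an infinite connected union of polygons $P$ with $g^kP$ pairwise disjoint for $k\geq 1$. Given any $\beta\in B_\infty\setminus N$, conjugate it inside $B_\infty$ so that its support lies in $P$ (normality of $N$ keeps it outside $N$). Then for every braid $\alpha$ there is $k\geq 1$ such that $\alpha$ and $g^k\beta g^{-k}$ have disjoint supports and hence commute, while $g^k\beta g^{-k}\equiv\beta$ modulo $B_\infty\cap N$ because $g\in N$. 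Thus the image of $\beta$ is a non-trivial central element of $B_\infty/(B_\infty\cap N)$, which is the kernel of the surjection $G/N\twoheadrightarrow G/NB_\infty$ onto a virtually abelian group; this settles all cases with $B_\infty\not\subseteq N$ at once, with no appeal to $[B_\infty,B_\infty]$. (Your separate treatment of the case $B_\infty\subseteq N$, and your remark that $N=G$ must be excluded from the comparison, are legitimate points of care that the paper passes over silently, but they do not compensate for the absent main argument.)
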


\noindent As a consequence, every isomorphism $\mathrm{br}T_{n,m} \to \mathrm{br}T_{r,s}$ sends $B_\infty \leq \mathrm{br}T_{n,m}$ to $B_\infty \leq \mathrm{br}T_{r,s}$, and therefore induces an isomorphism $T_{n,m} \to T_{r,s}$. From a standard application of Rubin's theorem, we know that such an isomorphism imposes that $n=r$ (see Proposition~\ref{prop:IsoThompson}). This is the second ingredient in our proof of Theorem~\ref{thm:BigIntro}.

\medskip\noindent It is worth noticing that, even though we conjecture they are not isomorphic, the groups $\mathrm{br}T_{n,m}$ and $\mathrm{br}T_{n,n-1-m}$ (with $2\leq m\leq \frac{n-1}{2}$) turn out to share many algebraic invariants. For instance, they have the same torsion, the same abelianisation. They also seem to have the same number of conjugacy classes of torsion elements. We do not know if their underlying Higman-Thompson groups are isomorphic or not. If they are not isomorphic, Theorem~\ref{thm_thompson_iso} would prove Conjecture~\ref{conj}.

\subsection*{Outline of the article}
In Section \ref{section_preliminaries}, we recall Brown's method and illustrate it with an example, we then recall the definition of the braided Higman-Thompson groups, the construction of the spine complex, and a suitable presentation of the braid group that we will need. We also show that a suitable subcomplex of the spine complex is simply in order to make the computation of the presentation easier. Section \ref{section_presentation} is the heart of the article and is dedicated to an explicit computation of the presentation of the braided Higman-Thompson groups. The most technical part conists of the computation of the relations corresponding to fundamental squares (see Section \ref{Subsection:relation_squares}). Finally, in Section \ref{section_abelianisation}, we compute the abelianisation of $\mathrm{br}T_{n,m}$ and in Section \ref{section_isom_problem} we prove the main results on the isomorphism problem.

\subsection*{Acknowledgements} The authors would like to thank Jim Belk for explaining us the known results about the isomorphism problem in the family of Higman-Thompson groups $T_{n,m}$. The second author is partially supported by MCIN /AEI /10.13039/501100011033 / FEDER through the spanish grant Proyecto PID2022-138719NA-I00. and by the french National Agency of Research (ANR) through the project GOFR ANR-22-CE40-0004. The third author was partially supported by the SNSF grant 10004735.

	\section{Preliminaries}\label{section_preliminaries}
In Subsection \ref{subsection:Brown}, we describe a specific case of Brown's method (\cite{Brown_presentation}) that will be used to compute an explicit presentation of the the braided Higman-Thompson groups $\mathrm{br}T_{n,m}$. Then we recall the construction of the braided Higman-Thompson groups in Subsection \ref{subsection:HT}, the construction of the spine cube complex made in \cite{GLU_finiteness} in Subsection \ref{subsection:spine} and a suitable presentation (for our context) of braid groups in Subsection \ref{Subsection_braids}. Finally, in Subsection \ref{subsection:spine_height}, we prove that the subcomplex of the spine complex given by vertices of height at most $5$ if $m=n=2$, or $4$ otherwise, is  simply connected. 

\subsection{Brown's method}\label{subsection:Brown}
We recall in this subsection Brown's method (\cite{Brown_presentation}) not in the full generality, but only in the framework that we need for later. We illustrate this method with an easy example at the end of this subsection. 

\medskip\noindent   Consider the action of a group $G$ on an oriented CW simply connected complex $X$ that preserves the orientation. Choose such an orientation on $X$. For any edge $e$ in $X$, according to the orientation on $X$, we denote by $o(e)$ its vertex of origin and by $t(e)$ its terminal vertex. Now, several choices have to be done.
\begin{enumerate}[wide]
\item First, we find a \emph{tree of representatives}, meaning a tree $T$ such that its set of vertices $V$ is a set of representatives of the vertices of $X$ under the action of $G$.
\item Second, we choose a set $E^+$ of representatives of edges for the action of $G$ on $X$ such that for each edge $e\in E^+$, $o(e)\in V$ and for any edge $\tilde{e}$ of $T$, $\tilde{e}\in E^+$. If $E^+$ corresponds to the set of edges of $T$ then $G$ is generated by the isotropy subgroups of the vertices of $T$: $\{G_v\}_{v\in V}$.

 	\item Last, we choose a set $F$ of representatives of $2$-cells under the action of $G$ such that any representative is based on a vertex belonging to $V$. To each element of $F$ corresponds a relation. In order to do it, we first associate to each edge $\alpha$ of $X$ (with an orientation possibly different from the one fixed) starting in a vertex of $V$ the following element $h\in G_{o(\alpha)}$. Depending on the orientation of $\alpha$ (see Figure \ref{fig:element_edge}), $h$ is chosen as follows.
 		\begin{figure}
 		\begin{subfigure}{.45\textwidth}
 			\begin{center}
 				\begin{tikzcd}				
 					{o(e)=o(\alpha) } \arrow[d,red, "e" black]\arrow[r,red,"\alpha" black]	 & {t(\alpha) }\\
 					{t(e) } \arrow[ur, dashrightarrow, bend right, red, "h" red]  \end{tikzcd}
 				\caption{The edge $\alpha$ has the same orientation in the complex and in the square.}
 				\label{fig:same_orientation}
 			\end{center}
 		\end{subfigure}
 		\hfill
 		\begin{subfigure}{.45\textwidth}
 			\begin{center}
 				\begin{tikzcd}
 					{t(e)=t(\alpha) } \arrow[r,red,"\alpha" black]	\arrow[d,red, "e" black]  & {o(\alpha) }\\
 					{o(e) } \arrow[ur, dashrightarrow, bend right, red, "h" red]  
 				\end{tikzcd}
 				\caption{The edge $\alpha$ does not have the same orientation in the complex and in the square.}
 				\label{fig:opposite_orientation}
 			\end{center}
 		\end{subfigure}	
 		\caption{Choice of an element $h\in G_{o(\alpha)}$ to the edge $\alpha$ of a square.}
 		\label{fig:element_edge}
 	\end{figure}
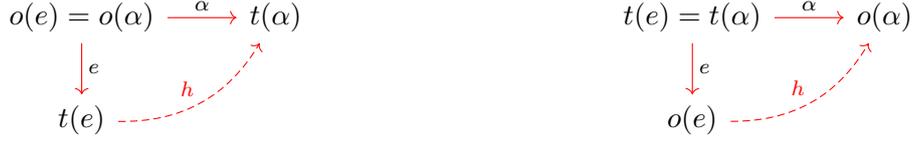
 	\begin{itemize}
 		\item If the direction of the edge $\alpha$ is the same as the one of the orientation on $X$ then we choose an element $h\in G_{o(\alpha)}$ such that there exists $e\in E^+$ with $o(\alpha)=o(e)$ and $t(\alpha)=ht(e)$ (see Figure \ref{fig:same_orientation}). Hence this edge ends in $hT$.
  	\item If the direction of the edge $\alpha$ is opposite to the one given by the orientation on $X$ then we choose an element $h\in G_{o(\alpha)}$ such that there exists $e\in E^+$ with $t(\alpha)=t(e)$ and $o(\alpha)=hg_e^{-1}o(e)$ (see Figure \ref{fig:opposite_orientation}). Hence $o(\alpha)=hT$.
 	\end{itemize}
\noindent 
 Consider a $2$-cell $s$ in $F$ and denote by $v_1,\dots,v_n$ its vertices and by $\alpha_1,\dots,\alpha_n$ its edges such that the edge $\alpha_i$ starts at the vertex $v_i$. As supposed before $\alpha_1$ belongs to $V$. Hence to $\alpha_1$ we can associate an element $h_1\in G$ has explained before. Then $h_1^{-1}v_2=\tilde{v}_2\in V$ and the edge $\tilde{e}_2=h_1^{-1}\alpha_2$ starts in $V$. As a consequence, we can associate to it the element $h_2$ chosen above. The vertex $v_3\in h_1h_2T$. Keeping doing this process, all the edges $h_i^{-1}h_{i-1}^{-1}\dots h_1^{-1}e_{i+1}$ for $2\leq i\leq n-1$ belong to $V$ and we can associate to them the element $h_{i+1}$ chosen above. Note that by construction the $h_i's$ are elements of some $G_v$ with $v\in V$. Finally $h_1\dots h_nT=T$ and so $h_1\dots h_n\in G_{v_1}$. Choosing such an element $g_s\in G_{v_1}$ gives us a relation $r_s$: $h_1\dots h_ng_s^{-1}=1$ among elements of $\{G_v\}_{v\in V}$.
 \end{enumerate}  

 \medskip\noindent  In top of the relations within the $G_v$ and the relations given by the $2$-cells, we have the relations that identify elements of two isotropy subgroups corresponding to adjacent vertices through the stabiliser of the edge. More precisely, consider an edge $e\in E^+$. Denote by $\iota_{o(e)}$ and $\iota_{t(e)}$ the inclusion of stabilisers: $\iota_{o(e)}: G_e\hookrightarrow G_{o(e)}$ and $\iota_{t(e)}: G_e\hookrightarrow G_{t(e)}$, where $G_e$ denotes the stabiliser of the edge $e$. We have:
  \[\iota_{o(e)}(g)=\iota_{t(e)}(g) \text{ for any }e\in E^+ \text{ and for any }g\in G_e. \]

\medskip \noindent To resume let state Brown's theorem in our context.
\begin{thm}[{\cite[Theorem 1]{Brown_presentation}}]\label{thm_Brown}
Let $G$ be a group acting on an oriented simply connected CW complex $X$ such that the action is orientation-preserving. We assume moreover that $E^+$ can be chosen as the set of edges of a tree of representatives $T$. Then $G$ is generated by the isotropy subgroups $\{G_v\}_{v\in V}$ and the relations are generated by:
\begin{enumerate}
\item\label{item1:Brownrel} the relations inside the $G_v$,
\item\label{item2:Brownrel} $\iota_{o(e)}(g)=\iota_{t(e)}(g)$ for any $e\in E^+ $ and for any $g\in G_e$,
\item \label{item3:Brownrel}the relations $r_s=1$ for any $s\in F$.
\end{enumerate}
\end{thm}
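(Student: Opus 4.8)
The plan is to recast the theorem as the assertion that a canonical homomorphism is an isomorphism, and to prove this by realising the presentation geometrically as a covering of $X$. Let $\hat G$ be the group with generating set $\bigsqcup_{v\in V}G_v$ and relators (1), (2) and (3). Every relator holds in $G$ between the corresponding elements, so there is a canonical homomorphism $\phi\colon\hat G\to G$ restricting to the inclusion on each vertex group $G_v$; in particular each map $G_v\to\hat G$ is injective, since its composite with $\phi$ is, and likewise the edge stabilisers $G_e$ and the stabilisers of the $2$-cells $s\in F$ embed into $\hat G$. That $\phi$ is \emph{surjective} is the classical fact that $G$ is generated by its vertex stabilisers $\{G_v\}_{v\in V}$, and this is where the hypothesis $E^+=E(T)$ is used: writing $H=\langle\bigcup_{v\in V}G_v\rangle$ and $\Gamma=\bigcup_{h\in H}hT\subseteq X^{(1)}$, any edge of $X^{(1)}$ with an endpoint in some $hT$ is an $H$-translate of an edge of $E(T)$ — the element carrying it back into $T$ must fix a vertex of $V$, hence lies in $H$ — so $\Gamma=X^{(1)}$ by connectedness, and a short vertex-chase gives $H=G$. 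The same computation justifies that the elements $h$ chosen in the set-up are well defined, and, carried out inside $\hat G$, that the complex $\hat X$ constructed below is connected. Finally, since a presentation only involves $0$-, $1$- and $2$-cells and $\pi_1(X)=\pi_1(X^{(2)})$, we may assume $\dim X\le 2$.

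For injectivity, I would build a $\hat G$-CW-complex $\hat X$ modelled on $X$. Its cells are $\bigsqcup_{v\in V}\hat G/G_v$ in dimension $0$, and $\bigsqcup_{e\in E^+}\hat G/G_e$ in dimension $1$, where the cell $\hat g G_e$ joins $\hat g G_{o(e)}$ to $\hat g G_{t(e)}$ — this is unambiguous exactly because relations (2) make the two embeddings $\iota_{o(e)},\iota_{t(e)}$ of $G_e$ into $\hat G$ coincide. For each $s\in F$ one attaches a $\hat G$-orbit of $2$-cells along the lift to $\hat X^{(1)}$ of the boundary path of $s$ prescribed in the set-up; this lift is a closed loop precisely because the corresponding relation $r_s=1$ from (3) holds in $\hat G$. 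The injectivity of the various stabiliser maps into $\hat G$ makes this construction legitimate, and there is a $\phi$-equivariant cellular surjection $p\colon\hat X\to X$, $\hat g G_\sigma\mapsto\phi(\hat g)\cdot\sigma$. By construction $p$ restricts to a homeomorphism on each cell, and $\hat X$ is built so that the incidences around every cell of $\hat X$ match those around its image in $X$; since moreover $G$, and hence $\hat G$, acts preserving orientations, $p$ is a covering map of CW complexes.

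To conclude: $\hat X$ is connected (first paragraph) and $X$ is simply connected, so the connected covering $p$ is trivial, i.e.\ a homeomorphism. In particular $p$ is injective on $0$-cells, which forces $\phi^{-1}(G_{v_0})=G_{v_0}$ inside $\hat G$ for any fixed $v_0\in V$; hence $\ker\phi\subseteq G_{v_0}$, and since $G_{v_0}$ maps injectively into $G$ we get $\ker\phi=\{1\}$, so $\phi$ is an isomorphism — which is exactly the statement. I expect the part requiring the most care to be the construction of $\hat X$ together with the verification that $p$ is a covering: one must check that relations (1)--(3) are precisely what is needed to attach every cell $\hat G$-equivariantly, and that the local combinatorial picture of $\hat X$ around each cell coincides with that of $X$ around its image (in particular that cell-stabilisers behave correctly). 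The generation of $G$ by vertex stabilisers and the covering-space endgame are then routine.
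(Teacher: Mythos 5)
You should first be aware that the paper contains no proof of this statement: it is imported from Brown's article (Theorem 1 there), stated in a restricted setting, so the only meaningful comparison is with Brown's framework, in which $X$ is a $G$-CW-complex, i.e.\ the stabiliser of every cell fixes that cell pointwise. Your overall strategy --- present $\hat G$, develop it into an equivariant complex $\hat X$ with a map $p\colon\hat X\to X$, and use simple connectivity of $X$ --- is the classical route, and most of your steps are sound: the injectivity of each $G_v\to\hat G$, the surjectivity argument exploiting $E^+=E(T)$ (both endpoints of every representative edge lie in $V$, so every edge meeting $\bigcup_{h\in H}hT$ is an $H$-translate of an edge of $T$), and the endgame deducing $\ker\phi=\{1\}$ from injectivity of $p$ on $0$-cells are all correct. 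On $1$-skeleta the covering property also checks out: the star of $\hat gG_v$ consists of cosets $\hat gG_v/G_e$ over the $e\in E^+$ with $o(e)=v$ or $t(e)=v$, and this matches the star of $\phi(\hat g)v$ in $X$ precisely because $E^+=E(T)$ forces $o(e),t(e)\in V$.

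The genuine gap is the step you flag and then leave unverified: that $p$ is a covering in dimension $2$. Attaching ``a $\hat G$-orbit of $2$-cells along the lift of $\partial s$'' only produces a covering when the setwise stabiliser of each $2$-cell fixes it pointwise. The hypotheses as stated --- an oriented CW complex and an orientation-preserving (i.e.\ edge-orientation-preserving) action --- do not exclude a $2$-cell being rotated or reflected by its stabiliser, and this actually occurs in the very situation the theorem is applied to: in the spine complex with $m$ even, the rotation $r_{\Sigma_0}^{m/2}$ stabilises the square spanned by $[\Sigma_0,\id]$ and two antipodal adjacent polygons, swapping its two middle vertices while preserving the height-orientation of every edge; such squares even belong to the chosen set $F$. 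Over such a square your $\hat X$ acquires either two $2$-cells sharing a boundary loop or a side-incidence count along the boundary edges that disagrees with the one in $X$, and in either case $p$ fails to be a local homeomorphism, so the covering-space conclusion cannot be invoked as written. To repair this you must either impose Brown's standing assumption that cell stabilisers fix cells pointwise (or subdivide to reduce to it), or attach the $2$-cells of $\hat X$ indexed by $\hat G/\hat G_s$ for a suitable lift $\hat G_s$ of the full setwise stabiliser and verify that this is well defined, or replace the covering argument by a direct $\pi_1$-level argument (van Kampen over the quotient). With any of these repairs the remainder of your argument goes through.
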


\paragraph{Illustration}
Consider the action of the diedral group $D_4$ on the oriented CW simply connected planar complex $X_s$ of Figure \ref{fig:CW_complex}.
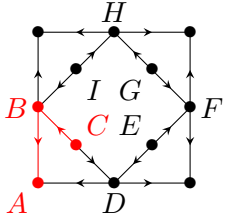
\begin{figure}
\begin{center}
	\begin{tikzpicture}
		\draw (0,2) node {$\bullet$};
		\draw (1,0) node{$\bullet$} node[below]
		{$D$};
		\draw (1,2) node{$\bullet$} node[above]{$H$};
		\draw (2,2) node{$\bullet$};
		\draw (2,1) node{$\bullet$} node[right]{$F$};
		\draw (2,0) node{$\bullet$};
		\draw (0.5,1.5) node{$\bullet$} node[below right]{$I$};
		\draw (1.5,0.5) node{$\bullet$} node[above left]{$E$};
		\draw (1.5,1.5) node{$\bullet$} node[below left]{$G$};
		\draw[postaction={mid arrow}, red] (0,1)--(0,0);
		\draw[postaction={mid arrow}] (0,1)--(0,2) ;
		\draw[postaction={mid arrow}] (2,1)--(2,2) ;
		\draw[postaction={mid arrow}] (1,2)--(0,2) ;
		\draw[postaction={mid arrow}] (1,0)--(0,0) ;
		\draw[postaction={mid arrow}] (1,2)--(2,2) ;
		\draw[postaction={mid arrow}] (1,0)--(2,0) ;
		\draw[postaction={mid arrow}] (2,1)--(2,0) ;
		\draw[postaction={mid arrow}] (0.5,1.5)--(0,1);
		\draw[postaction={mid arrow}, red] (0.5,0.5)--(0,1);
		\draw[postaction={mid arrow}] (1.5,1.5)--(2,1);
		\draw[postaction={mid arrow}] (1.5,0.5)--(2,1);
		\draw[postaction={mid arrow}] (0.5,1.5)--(1,2);
		\draw[postaction={mid arrow}] (1.5,1.5)--(1,2);
		\draw[postaction={mid arrow}] (0.5,0.5)--(1,0);
		\draw[postaction={mid arrow}] (1.5,0.5)--(1,0);
		\draw[red] (0,1) node {$\bullet$} node[left]{$B$};
		\draw[red] (0.5,0.5) node{$\bullet$} node[above right]{$C$};
		\draw[red] (0,0) node {$\bullet$} node[below left]{$A$};
	\end{tikzpicture}
\caption{The CW complex $X_s$.\label{fig:CW_complex}}
\end{center}
\end{figure}
The tree $T$ made of the vertices $A$, $B$ and $C$ with the two edges connecting them is a tree of representatives.
The isotropy subgroups $G_A$, $G_B$ and $G_C$ are isomorphic to $\Z_2$. Let call respectively $s_A$, $s_B$ and $s_C$ their generators.
The set $E^+$ is equal to the set of edges of $T$. There exists $2$ classes of $2$-cells whose representatives are given by the polygons $ABCD$ and $BCDEFGHI$. The relation given by $ABCD$ is $s_C=s_A$ and the one given by $BCDEFGHI$ is $s_Cs_Bs_Cs_Bs_Cs_Bs_C=s_B$. Finally, Brown's method gives us the following presentation of $D_4$:
\[\langle s_A,\  s_B,\ s_C\mid s_A^2,\  s_B^2,\  s_C^2,\  s_As_C^{-1}\ , (s_Cs_B)^4 \rangle\simeq \langle s_A,\  s_B\mid s_A^2,\  s_B^2,\ (s_As_B)^4 \rangle. \]

\subsection{Braided Higman-Thompson groups}\label{subsection:HT}
 \noindent In this section, we recall the construction of braided Higman-Thompson groups introduced in \cite{GLU_finiteness}, which generalised the constructions in \cite{Funar-Kapoudjian_brT_finitely_presented}. For integers $n,m\geq 2$, let $A_{n,m}$ be the infinite tree with one vertex of valence $m$ while all the other vertices have valence $n+1$,  embedded into the plane. We define the \emph{arboreal surface} $\mathscr{S}(A_{n,m})$ as the oriented planar surface with boundary obtained by thickening $A_{n,m}$ in the plane. Denote by $\mathscr{S}^\sharp(A_{n,m})$ the punctured arboreal surface obtained from $\mathscr{S}(A_{n,m})$ by adding a puncture for each vertex of the tree $A_{n,m}$. We fix a \emph{rigid structure} on $\mathscr{S}^\sharp(A_{n,m})$, that is, a decomposition of $\mathscr{S}(A_{n,m})$  into \emph{polygons} by a family of pairwise non-intersecting arcs whose endpoints lie on the boundary, in such a way  that each polygon contains exactly one vertex of the underlying tree in its interior and such that each arc crosses once and transversely a unique edge of the tree. The \emph{central polygon} is the unique polygon that has exactly $m$ arcs in its frontier if $m\neq n+1$. In the case  $m=n+1$, the central polygon is a polygon that we fix once for all.

	\medskip \noindent
	A subsurface $\Sigma$ of $\mathscr{S}^\sharp(A_{n,m})$ is called \emph{admissible} if it is a non-empty connected finite union of polygons that belong to the rigid structure. The \emph{frontier of $\Sigma$},  denoted by $\mathrm{Fr}(\Sigma)$, is defined as the union of the arcs defining the rigid structure that are contained in the boundary of $\Sigma$. A \emph{polygon adjacent} to $\Sigma$ is a polygon not contained in $\Sigma$ that shares an arc with the frontier of $\Sigma$.
	
	\noindent We call ahomeomorphism $\varphi : \mathscr{S}^\sharp(A_{n,m}) \to \mathscr{S}^\sharp(A_{n,m})$  \emph{asymptotically rigid} if the following conditions are satisfied:
	\begin{itemize}
		\item there is an admissible subsurface $\Sigma \subset \mathscr{S}^\sharp(A_{n,m})$ such that $\varphi(\Sigma)$ is admissible;
		\item the homeomorphism $\varphi$ is \emph{rigid outside $\Sigma$}, which means that the restriction \[\varphi : \mathscr{S}^\sharp(A_{n,m}) \backslash \Sigma \to \mathscr{S}^\sharp(A_{n,m}) \backslash \varphi( \Sigma)\]
		respects the rigid structure, i.e. it maps polygons to polygons. 
	\end{itemize}
	We call the group of isotopy classes of orientation-preserving asymptotically rigid homeomorphisms of $\mathscr{S}^\sharp(A_{n,m})$  the \emph{braided Higman-Thompson group}. It is denoted by $\mathrm{br}T_{n,m}$. Let us emphasize that isotopies have to fix each puncture. The special instance $\mathrm{br}(T_{2,3})$ is exactly the group $T^\sharp$ introduced in \cite{Funar-Kapoudjian_brT_finitely_presented}. Figure \ref{fig:homeo} illustrates an element of the group $\mathrm{br}(T_{2,3})$.

\medskip\noindent In what follows, two particular kinds of elements of $\mathrm{br}T_{n,m}$ will be important as they will be generators of this group: twists and rotations.

\begin{ex}\label{ex_twist}
Let  $p_i$ and $p_j$ be  punctures of two adjacent polygons, and let $\Sigma$ to be the union of these two polygons. The element of $\Mod(\Sigma)$ twisting these punctures clockwise is called a \emph{twist}. We denote it by $\tau_{i,j}$ (see Figure \ref{fig:twist}).
\end{ex}

\begin{figure}
	\begin{center}
		\includegraphics[scale=0.4]{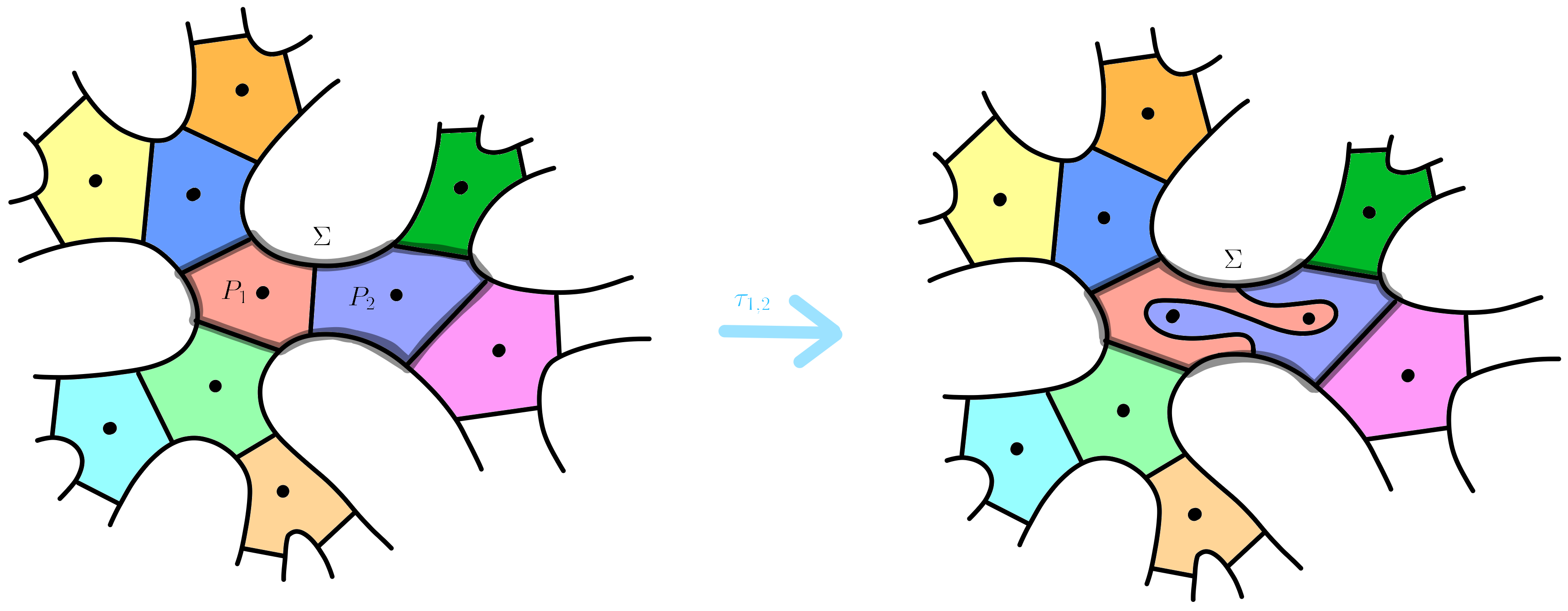}
		\caption{A twist in $\mathrm{mod}(\mathscr{S}^\#(A_{2,3}))$ \label{fig:twist}}
	\end{center}
\end{figure}
	
	\begin{ex}\label{ex_rotations}
	Let $\Sigma$ be any admissible subsurface containing the central polygon and exactly $k$ other polygons. The frontier of $\Sigma$ consists of exactly $m+k(n-1)$ arcs and so its complement in $\mathscr{S}^\sharp(A_{n,m})$ consists of $m+k(n-1)$ pairwise homeomorphic arboreal surfaces. Let $r_\Sigma$ be the asymptotically rigid homeomorphism that cyclically clockwise shifts the arcs of the frontier of $\Sigma$ (and hence the homeomorphic arboreal surfaces, without acting on them) and whose restriction to a disk in $\Sigma$ containing all the punctures is the identity (see Figure \ref{fig:rotation}). We call $r_\Sigma$ the \emph{rotation along $\Sigma$}. 
	\end{ex}

\begin{figure}
	\begin{center}
		\includegraphics[scale=0.4]{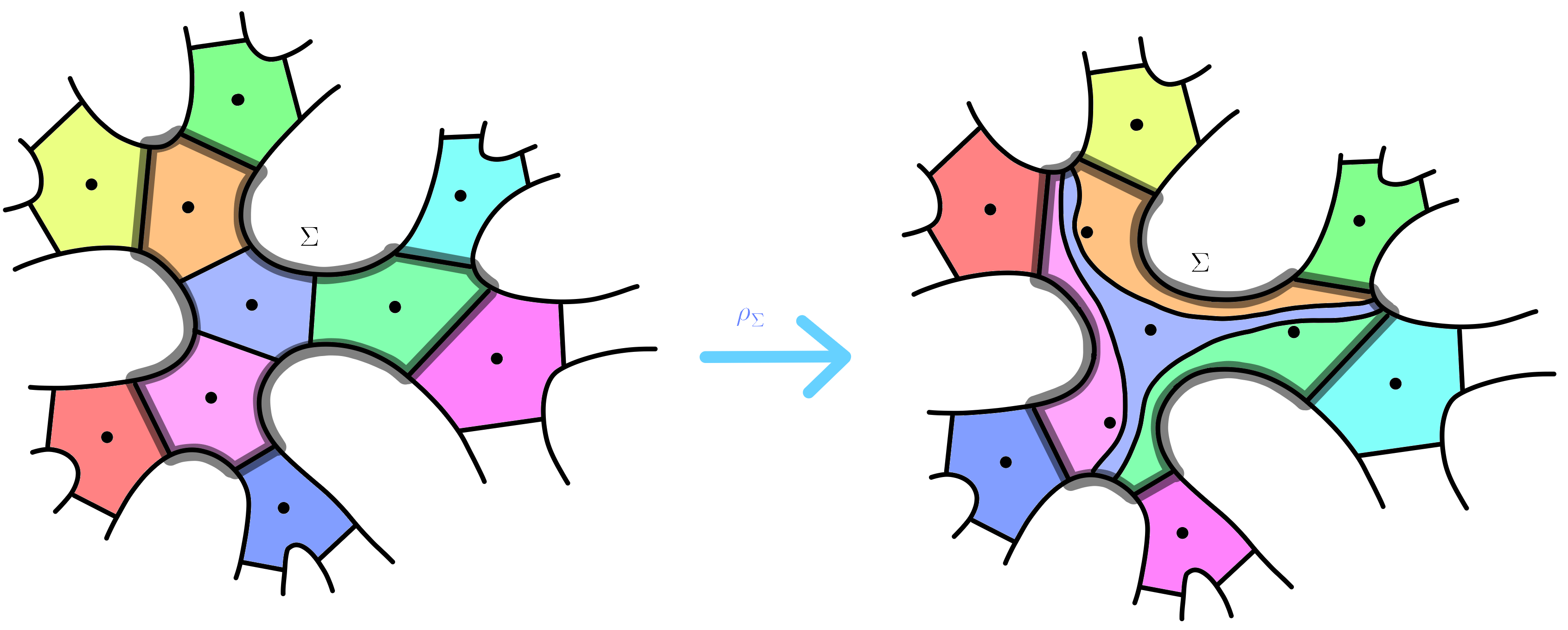}
		\caption{A rotation of  $\mathrm{mod}(\mathscr{S}^\#(A_{2,3}))$ \label{fig:rotation}}
	\end{center}
\end{figure}

\subsection{The spine cube complex}\label{subsection:spine} In \cite{GLU_finiteness}, the authors construct the \emph{spine complex}, a contractible cube complex on which $\mathrm{br}T_{n,m}$ acts. It is denoted by  $\mathscr{SC}(A_{n,m})$,  for $m,n\geq 2$. We recall this construction.

	\medskip\noindent A \emph{vertex} of $\mathscr{SC}(A_{n,m})$ is an equivalence class of a pair $(\Sigma,\varphi)$ consisting of an admissible subsurface containing the central polygon $\Sigma \subset \mathscr{S}^\sharp(A_{n,m})$  and an asymptotically rigid homeomorphism $\varphi : \mathscr{S}^\sharp(A_{n,m}) \to \mathscr{S}^\sharp(A_{n,m})$. The equivalence relation is given by: $(\Sigma_1, \varphi_1) \sim (\Sigma_2, \varphi_2)$ if $\varphi_2^{-1} \varphi_1$ is isotopic to an asymptotically rigid homeomorphism that maps $\Sigma_1$ to $\Sigma_2$ and that is moreover rigid outside $\Sigma_1$. We denote by $[\Sigma,\varphi]$ the vertex of $\mathscr{SC}(A_{n,m})$ that is represented by $(\Sigma, \varphi)$.
	
	\noindent If $[\Sigma, \varphi]$ is a vertex and if $H_1, \ldots, H_k$ are pairwise distinct polygons adjacent to $\Sigma$, we fill the subgraph spanned by \[\left\{ \left[  \Sigma \cup \bigcup\limits_{i \in I} H_i , \varphi \right] \mid I \subset \{1, \ldots, k\} \right\}\] with a \emph{$k$-cube}.

\medskip \noindent
The asymptotically rigid mapping class group $\mathrm{br}T_{n,m}$ acts on the spine cube complex $\mathscr{SC}(A_{n,m})$ by isometries: for an asymptotically rigid homeomorphism $g \in \mathrm{br}T_{n,m}$ and for a vertex $[\Sigma, \varphi] \in \mathscr{SC}(A_{n,m})$, we define
$$g \cdot [\Sigma, \varphi]:= [\Sigma, g \varphi].$$

 \noindent
Let us observe that, if $[\Sigma_1,\varphi_1]=[\Sigma_2,\varphi_2]$, then two surfaces $\Sigma_1$ and $\Sigma_2$ have to be homeomorphic, so they have the same number of punctures. With this, we define the \emph{height} of a vertex $x=[\Sigma, \varphi]$ as the \emph{height of $\Sigma$}, which is the number of punctures contained in $\Sigma$; we denote the height of $x$ by $h(x)$. Notice that, by construction of the complex $\mathscr{SC}(A_{n,m})$, if $x$ and $y$ are two adjacent vertices then we have $h(y)=h(x) \pm 1$. Hence, the edges of $\mathscr{SC}(A_{n,m})$ are naturally oriented by the height function from small to large height. Notice as well that the action of $\mathrm{br}T_{n,m}$ preserves the height function.

\noindent Later we will need the following lemma (note that in \cite{GLU_finiteness} it  was stated for the full cube complex instead of just the spine cube complex):

\begin{lemma}[{\cite[Lemma 4.2]{GLU_finiteness}}]\label{lemma_stab}
	The stabiliser in $\mathrm{br}T_{n,m}$ of a vertex $[\Sigma, \id]$ in $\mathscr{SC}(A_{n,m})$ is a subgroup of $\mathrm{stab}(\Sigma)$ in $\mathrm{Mod}(\mathscr{S}^\sharp(A_{n,m}))$, and it satisfies
	$$1 \to \mathrm{Mod}(\Sigma) \to \stab([\Sigma, \id]) \to \mathbb{Z}_{r(\Sigma)} \to 1$$
	for some integer $r(\Sigma) \geq 0$, where the morphism to $\mathbb{Z}_{r(\Sigma)}$ comes from the action by cyclic permutations of $\mathrm{stab}([\Sigma,\id])$ on components of $\mathrm{Fr}(\Sigma)$. 
\end{lemma}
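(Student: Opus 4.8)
The plan is to analyze the stabiliser $\stab([\Sigma,\id])$ directly in terms of asymptotically rigid homeomorphisms, exploiting the definition of the equivalence relation on vertices of $\mathscr{SC}(A_{n,m})$. First I would unpack what it means for $g \in \mathrm{br}T_{n,m}$ to fix $[\Sigma,\id]$: by definition of the vertices, $g \cdot [\Sigma,\id] = [\Sigma, g] = [\Sigma,\id]$ means that $g$ (which plays the role of $\varphi_2^{-1}\varphi_1 = g$) is isotopic to an asymptotically rigid homeomorphism mapping $\Sigma$ to $\Sigma$ that is rigid outside $\Sigma$. In particular, such a $g$ stabilises the subsurface $\Sigma$ set-wise, so $\stab([\Sigma,\id])$ is a subgroup of $\mathrm{stab}(\Sigma)$ in $\mathrm{Mod}(\mathscr{S}^\sharp(A_{n,m}))$, which is the first assertion. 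It then suffices to construct the claimed short exact sequence.

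Next I would define the map $\stab([\Sigma,\id]) \to \mathbb{Z}_{r(\Sigma)}$. Any $g$ in the stabiliser, being rigid outside $\Sigma$, permutes the arcs of $\mathrm{Fr}(\Sigma)$; since it is orientation-preserving and maps polygons to polygons outside $\Sigma$, this permutation is a cyclic rotation of the frontier arcs. Cyclically labelling the frontier arcs $1,\dots,\ell$ with $\ell = m + (\text{number of non-central polygons in }\Sigma)\cdot(n-1)$, this assigns to $g$ a well-defined shift in $\mathbb{Z}_\ell$; but not every shift need be realised by an element that genuinely stabilises the vertex (one needs the homeomorphic "branches" hanging off each frontier arc to be carried to one another compatibly by a rigid homeomorphism), so the image is a cyclic subgroup, which I would write as $\mathbb{Z}_{r(\Sigma)}$ for the appropriate divisor $r(\Sigma)$ of $\ell$ — with the convention $r(\Sigma)=0$ allowed if one wishes to phrase a degenerate case uniformly, though typically $r(\Sigma)\mid \ell$. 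This map is clearly a homomorphism since composition of rigid-outside-$\Sigma$ maps adds the shifts. Its kernel consists precisely of those $g$ that fix each frontier arc and are rigid (hence, up to isotopy, the identity) outside $\Sigma$; such $g$ is determined, up to isotopy rel the punctures, by its restriction to $\Sigma$, and conversely any mapping class of $\Sigma$ (fixing $\mathrm{Fr}(\Sigma)$ pointwise and permuting the punctures of $\Sigma$) extends by the identity to an asymptotically rigid homeomorphism fixing $[\Sigma,\id]$. This identifies the kernel with $\mathrm{Mod}(\Sigma)$, giving the exact sequence $1 \to \mathrm{Mod}(\Sigma) \to \stab([\Sigma,\id]) \to \mathbb{Z}_{r(\Sigma)} \to 1$. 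Surjectivity onto $\mathbb{Z}_{r(\Sigma)}$ holds by the very definition of $r(\Sigma)$ as the order of the image.

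Finally, since the statement is quoted as \cite[Lemma 4.2]{GLU_finiteness} with only the modification that it now concerns the spine cube complex $\mathscr{SC}(A_{n,m})$ rather than the full cube complex, I would make explicit that the relevant stabiliser is unchanged: a vertex $[\Sigma,\id]$ of the spine complex has the same underlying pair $(\Sigma,\id)$ and the same equivalence class description as in the larger complex — the spine is a subcomplex (or a retract) containing these vertices with the same local structure around them — so its $\mathrm{br}T_{n,m}$-stabiliser is literally the same group, and the cited computation applies verbatim. The main obstacle, and the one point deserving care, is the surjectivity/identification of the quotient: one must verify that the cyclic shift induced on $\mathrm{Fr}(\Sigma)$ is a well-defined invariant of the isotopy class of $g$ (independent of the choice of rigid representative) and that its image is exactly the finite cyclic group claimed — this is where the structure of $\mathscr{S}^\sharp(A_{n,m})$ and the rigidity condition are genuinely used, and where one invokes the symmetry of the branches of $A_{n,m}$ outside $\Sigma$.
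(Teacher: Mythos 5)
The paper does not actually prove this lemma: it is imported verbatim from \cite[Lemma 4.2]{GLU_finiteness}, with only the remark that the statement, originally made for the full cube complex, transfers to the spine complex because the vertices $[\Sigma,\id]$ and hence their stabilisers are unchanged — exactly the point you make in your final paragraph. Your reconstruction (a vertex stabiliser admits a representative preserving $\Sigma$ and rigid outside it, the induced permutation of the components of $\mathrm{Fr}(\Sigma)$ is a cyclic rotation by orientation-preservation and planarity, and the kernel of the resulting map to a finite cyclic group is identified with $\Mod(\Sigma)$) is the standard argument behind the cited result and is consistent with how the paper later uses it (for $\Sigma=\Sigma_k$ the image is in fact all of $\mathbb{Z}_{m+k(n-1)}$, realised by the rotation $r_{\Sigma_k}$), so your proposal is correct and follows essentially the intended route.
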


\subsection{A presentation of the braid group}\label{Subsection_braids}
\noindent By Lemma \ref{lemma_stab}, stabilisers of vertices are semi-direct products of braid groups and cyclic groups. We will use the following presentation of braid groups stated only in the tree case.

\begin{thm}[{\cite{Sergiescu_presentation_tresses}}]\label{thm_presentation_braid_group} Let $\Gamma$ be a planar locally finite tree. The braid group associated to $\Gamma$ has the following presentation: it is generated by the edges of $\Gamma$ and the relations are generated by three types of relations:
	\begin{itemize}
		\item disjunction: if $\sigma_1$ and $\sigma_2$ are two disjoint edges, then $\sigma_1\sigma_2=\sigma_2\sigma_1$,
		\item adjacency: if the eges $\sigma_1$ and $\sigma_2$ have a common vertex, then: $\sigma_1\sigma_2\sigma_1=\sigma_2\sigma_1\sigma_2$,
		\item nodal: if the three edges $\sigma_1$, $\sigma_2$ and $\sigma_3$ have a unique common vertex and are clockwise ordered, then  $\sigma_1\sigma_2\sigma_3\sigma_1=\sigma_2\sigma_3\sigma_1\sigma_2=\sigma_3\sigma_1\sigma_2\sigma_3$.
	\end{itemize}
\end{thm}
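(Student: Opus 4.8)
Since the theorem is quoted from \cite{Sergiescu_presentation_tresses}, I only sketch the argument. Write $B(\Gamma)$ for the braid group associated to $\Gamma$, i.e.\ the (finitely supported) surface braid group on the punctures placed at the vertices of the thickened tree $\mathscr{S}(\Gamma)$; for a finite subtree this is the classical Artin braid group $B_N$ with $N = |V(\Gamma)|$, and the general case follows by passing to the direct limit over finite subtrees, so I may assume $\Gamma$ finite. To each edge $\sigma$ of $\Gamma$ corresponds the half-twist $\hat{\sigma} \in B(\Gamma)$ exchanging the two punctures at its endpoints along a band following $\sigma$. The plan is to show (i) the $\hat{\sigma}$ generate $B(\Gamma)$, (ii) the three families of relations hold, and (iii) they generate all relations.

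For (ii) this is a direct isotopy check: disjoint edges give half-twists supported in disjoint disks, two edges at a common vertex give the classical braid relation for adjacent half-twists in a disk with three marked points, and three edges at a common vertex give, after restricting to the disk containing the three corresponding punctures, exactly the relation between the three half-twists around a central puncture (the "star with three prongs"), which is Sergiescu's nodal relation; the clockwise ordering fixes the ambiguity in the cyclic relation. For (i), I argue by induction on the number of edges: if $v$ is a leaf joined to $w$ by the edge $e$, then forgetting the puncture at $v$ gives the split Birman exact sequence $1 \to F \to B(\Gamma) \to B(\Gamma \smallsetminus v) \to 1$, with $F$ free of rank $N-1$ generated by the point-pushing loops of $v$ around the other punctures. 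By induction the quotient is generated by the edge half-twists of $\Gamma \smallsetminus v$; the point-push of $v$ around $w$ is $\hat{e}^{\,2}$, and every other point-pushing generator is obtained from it by conjugating by $\hat{e}$ and by words in the half-twists of $\Gamma \smallsetminus v$, so the $\hat{\sigma}$ generate $B(\Gamma)$.

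The crux is (iii): completeness of the listed relations. Here one combines the (free) presentation of the kernel $F$ with the inductive presentation of $B(\Gamma \smallsetminus v)$ via the semidirect-product (Reidemeister--Schreier) recipe; the new relations produced are exactly the conjugation rules expressing the action of each generator of $B(\Gamma \smallsetminus v)$ on each point-pushing generator, and a careful but elementary calculation shows these are consequences of the adjacency and nodal relations involving $e$ together with the relations already present. The base case, $\Gamma$ a path, is Artin's classical presentation of $B_N$ (disjunction and adjacency only, no nodal relations since a path has no vertex of degree $\geq 3$), which anchors the induction.

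The main obstacle is the bookkeeping in step (iii): tracking the conjugation action of $B(\Gamma \smallsetminus v)$ on the free kernel in terms of the edge generators and verifying that every relation so obtained is derivable from the adjacency and nodal families. This is precisely where the nodal relation is genuinely needed, since at a branch vertex $w$ of degree $\geq 3$ the pushing loops of $v$ interact with several edges at $w$ simultaneously, and it is the nodal relation that encodes how the corresponding half-twists cyclically permute. An alternative, avoiding the induction, is to observe that any two planar trees spanning the same punctured disk are connected by a finite sequence of elementary edge-flips, each realizing a Tietze transformation between the associated presentations, and to bootstrap from the path case; the difficulty is then transferred to proving this connectivity and that each flip carries the relation families to the relation families.
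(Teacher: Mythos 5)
This statement is quoted from \cite{Sergiescu_presentation_tresses}; the paper offers no proof of it, so your sketch has to stand on its own as a proof of Sergiescu's theorem (restricted to trees), and as written it does not. The central device of your induction, the ``split Birman exact sequence $1 \to F \to B(\Gamma) \to B(\Gamma\setminus v) \to 1$'' obtained by forgetting the puncture at a leaf $v$, does not exist for the full (non-pure) braid group: forgetting a puncture is only well defined on the subgroup of braids whose underlying permutation fixes that puncture (the mixed braid group $B_{N-1,1}$, of index $N$ in $B_N$), and there is no surjection $B_N \to B_{N-1}$ with free kernel generated by the point-pushes of $v$. Indeed the very generator your induction pivots on, the half-twist $\hat{e}$ along the leaf edge, interchanges the punctures at $v$ and $w$ and hence has no image under ``forget $v$''; and $B_N \cong F_{N-1}\rtimes B_{N-1}$ is false in general (that decomposition is a feature of the pure braid group). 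Consequently both your generation argument (i) and, more seriously, your completeness argument (iii) via Reidemeister--Schreier over this sequence collapse as organized. (Generation itself can be rescued by the classical fact that half-twists along the edges of any embedded spanning tree generate $B_N$, but that is a different argument.)

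Even setting the exact-sequence issue aside, step (iii) is the entire content of the theorem and in your sketch it is only asserted (``a careful but elementary calculation shows these are consequences of the adjacency and nodal relations''). Sergiescu's actual proof does not run through a Birman-type kernel computation; it proceeds by showing that the presentation attached to a planar graph is Tietze-equivalent to Artin's presentation for a path, via elementary moves on planar graphs --- essentially the alternative you mention in your last paragraph and then set aside, and whose connectivity statement you also do not prove. The direct-limit reduction from locally finite trees to finite subtrees and the verification (ii) that the three families of relations hold are fine, but the heart of the theorem remains unproved in your proposal, so it cannot replace the citation.
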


\subsection{A simply connected subcomplex of bounded height}\label{subsection:spine_height}
Let $\mathcal{C}$ be a cube complex equipped with a height function. For each $k\geq 1$ we denote by $\mathcal{C}_{\leq k}$ the subcomplex of $\mathcal{C}$ generated by the vertices of height $\leq k$. 
We are interested in the complex $\mathscr{SC}_{\leq k}(A_{n,m})$, where $n,m \geq 2$ and  $\mathscr{SC}(A_{n,m})$ is the spine complex. Because the action of  $\mathrm{br}T_{n,m}$ on $\mathscr{SC}(A_{n,m})$ preserves the height, it induces an action of $\mathrm{br}T_{n,m}$ on $\mathscr{SC}_{\leq k}(A_{n,m})$. 

\medskip \noindent The goal of this section is to find small values of $k$ such that $\mathscr{SC}_{\leq k}(A_{n,m})$ is still simply connected in order to reduce the number of relations that we need to compute to obtain a presentation of $\mathrm{br}T_{n,m}$. 

\begin{prop}\label{prop_subcplx_sconnected}
The complex $\mathscr{SC}_{\leq 6}(A_{2,2})$ is simply connected and $\mathscr{SC}_{\leq 5}(A_{n,m})$ is simply connected for $(n,m)\neq (2,2)$.
\end{prop}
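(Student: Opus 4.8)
The plan is to show that the inclusion $\mathscr{SC}_{\leq k}(A_{n,m}) \hookrightarrow \mathscr{SC}(A_{n,m})$ induces an isomorphism on $\pi_0$ and $\pi_1$ for the stated values of $k$, using the contractibility of the full spine complex $\mathscr{SC}(A_{n,m})$ established in \cite{GLU_finiteness} together with a Morse-theoretic argument on the height function. Concretely, I would run a descending-link argument: one wants to build $\mathscr{SC}(A_{n,m})$ from $\mathscr{SC}_{\leq k}(A_{n,m})$ by attaching, for each vertex $x$ of height $h(x) = j > k$, the cube-subcomplex consisting of $x$ together with the faces of cubes containing $x$ spanned by vertices of height $\leq j$. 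The relevant object controlling the homotopy type change is the \emph{descending link} $\Lk_{\downarrow}(x)$: the subcomplex of the link of $x$ spanned by directions towards vertices of strictly smaller height (equivalently, by the polygons of $\Sigma$ whose removal keeps $\Sigma$ admissible and connected). Standard combinatorial Morse theory then gives that $\mathscr{SC}_{\leq j}(A_{n,m})$ is obtained from $\mathscr{SC}_{\leq j-1}(A_{n,m})$ up to homotopy by coning off the descending links of the height-$j$ vertices, so it suffices to prove that $\Lk_{\downarrow}(x)$ is simply connected (indeed it is enough that it is connected and that its fundamental group is killed, but simple connectivity is the clean statement) whenever $h(x) \geq k+1$.

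The combinatorial heart is therefore the analysis of the descending link. For a vertex $x = [\Sigma, \varphi]$ with $\Sigma$ an admissible subsurface containing the central polygon, a down-direction corresponds to a polygon $H \subset \Sigma$, distinct from the central polygon, such that $\Sigma \setminus H$ is still connected and still admissible — i.e. $H$ is a ``leaf polygon'' of $\Sigma$ in the tree sense. A collection $\{H_1, \dots, H_\ell\}$ of such leaf polygons spans a simplex of $\Lk_{\downarrow}(x)$ precisely when they can be removed simultaneously, which by the cube structure happens exactly when $\Sigma \setminus (H_1 \cup \cdots \cup H_\ell)$ is connected and admissible. So $\Lk_{\downarrow}(x)$ is (the barycentric-type subdivision of) the complex whose simplices are the sets of simultaneously-removable leaf polygons of the finite subtree $T_\Sigma$ underlying $\Sigma$. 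I would then identify this with a complex of ``subforests'' or, more usefully, relate it to the complex of proper admissible sub-subsurfaces of $\Sigma$ still containing the central polygon; this is a poset whose geometric realisation one can often show is highly connected once $\Sigma$ has enough polygons. The key point: when $h(x)$ is large (so $T_\Sigma$ has many vertices and in particular the central polygon is ``deep inside''), there are many independent leaf polygons and the removability complex becomes simply connected; the threshold is where it may fail, and that is exactly why the bound is $5$ (or $6$ in the degenerate case $(n,m) = (2,2)$, where the central polygon has only two sides and removing polygons is more constrained, so one needs one extra layer before the descending links stabilise).

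In detail I would proceed as follows. First, fix a height $j \geq k+1$ and a vertex $x$ of height $j$; describe $T_\Sigma$ as a finite subtree of $A_{n,m}$ containing the root/central vertex, with $j$ vertices (punctures). Second, set up the combinatorial Morse theory: invoke the standard result (e.g.\ Bestvina--Brady style, applied to the affine-cell structure of a CAT(0) cube complex) that $\mathscr{SC}(A_{n,m})$ deformation retracts onto $\mathscr{SC}_{\leq k}(A_{n,m})$ provided every descending link of a vertex of height $> k$ is simply connected; since $\mathscr{SC}(A_{n,m})$ is contractible, this yields simple connectivity (even contractibility up to the relevant dimension) of $\mathscr{SC}_{\leq k}(A_{n,m})$. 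Third, carry out the descending-link computation: show $\Lk_{\downarrow}(x)$ is isomorphic to the nerve-type complex of leaf-polygon removals of $T_\Sigma$, and prove this complex is connected, and then simply connected, once $j$ exceeds the stated bound — I expect to split into cases according to whether $\Sigma$ is a ``path'', a ``star'', etc., and to the small cases by hand. Fourth, treat $(n,m) = (2,2)$ separately because there the central polygon is a bigon and $A_{2,2}$ is essentially a line, so $T_\Sigma$ is an interval and its leaf polygons are just the two endpoints: the removability complex is then just two points (disconnected!) until $\Sigma$ is long enough that... — actually here one must be careful, and I expect the genuine obstruction is that for $(2,2)$ the descending link of a height-$5$ vertex can still be disconnected or have nontrivial $\pi_1$, forcing $k=6$; this asymmetric case is the main obstacle and the place where a direct small computation is unavoidable.

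The hard part will be the precise identification and connectivity analysis of the descending link complex, especially pinning down why the threshold is exactly $5$ versus $6$: this requires understanding which configurations of leaf polygons of a small finite subtree of $A_{n,m}$ can be removed simultaneously, and checking by hand that at height $k$ there is a ``bad'' vertex whose descending link is not simply connected (showing the bound is sharp, or at least that a smaller $k$ does not obviously work with this method) while at height $k+1$ and above everything is fine. I would also need to double check the orientation and CW hypotheses of Brown's theorem survive the restriction to $\mathscr{SC}_{\leq k}(A_{n,m})$, but that is routine since the height function and the $\mathrm{br}T_{n,m}$-action both restrict.
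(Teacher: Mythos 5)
Your Morse-theoretic skeleton (reduce to simple connectivity of descending links of high vertices, using contractibility of the full spine complex; this is the paper's Lemma \ref{lemme_Morse_theory}) is the right frame, but the identification of the descending link is wrong, and that is where all the actual content of the proof lies. A vertex of $\mathscr{SC}(A_{n,m})$ is an equivalence class $[\Sigma,\varphi]$ modulo isotopy and asymptotic rigidity, so a down-direction at $[\Sigma,\varphi]$ is not the choice of a rigid ``leaf polygon'' of the finite subtree underlying $\Sigma$: because the mapping class group of the punctured disc $\Sigma$ intervenes, a descending edge corresponds to an isotopy class of arc in $\Sigma$ joining a frontier arc (marked point) to a puncture, and simultaneous descents correspond to collections of such arcs that are pairwise disjoint and $(n-1)$-separated along the boundary. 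Thus the descending link of a height-$k$ vertex is the \emph{infinite} arc complex $\mathfrak{C}(k,\,m+(k-1)(n-1),\,n-1)$ (truncated by height when $k\leq m$), as recorded from \cite{GLU_finiteness}, not the finite removability complex of leaf polygons you describe; your finite complex is the descending link one would get for the unbraided Higman--Thompson group and it sees none of the braiding. Consequently the hard step your plan has no counterpart for is precisely the paper's Proposition \ref{prop_simply_connected}: simple connectivity of $\mathfrak{C}(p,q,r)$ for $p\geq 5$, $q\geq 4r+3+\lceil r/2\rceil$, $r\geq 1$, proved by surgery on arcs reducing intersection numbers, connectivity of auxiliary complexes of arcs with restricted marked points, and a filtration of the complex by arc types allowing a loop to be pushed into the star of a fixed arc.

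Relatedly, your explanation of the $(2,2)$ anomaly is incorrect: $A_{2,2}$ is not ``essentially a line'' (its root has degree $2$ and every other vertex has degree $3$), and the extra level of height has nothing to do with intervals having only two leaves. The true source of the threshold is the arithmetic of the arc-complex bound: for a height-$k$ vertex one has $p=k$, $q=m+(k-1)(n-1)$, $r=n-1$, and the inequality $q\geq 4r+3+\lceil r/2\rceil$ is hardest to satisfy when $m=n=2$, forcing $k\geq 7$ there (hence $\mathscr{SC}_{\leq 6}(A_{2,2})$) versus $k\geq 6$ otherwise (hence $\mathscr{SC}_{\leq 5}(A_{n,m})$). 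Finally, the worry about Brown's theorem in your last paragraph is not relevant to this proposition (it only enters later for the presentation); what you do need, and what the paper supplies, is the elementary disk-replacement argument of Lemma \ref{lemme_Morse_theory} together with contractibility of $\mathscr{SC}(A_{n,m})$ from \cite{GLU_finiteness}.
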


\noindent The following lemma is well known and allows us to reduce the proof of Proposition \ref{prop_subcplx_sconnected} to the study of the simple connectedness of descending links of vertices of height $k$ in $\mathscr{SC}_{\leq k}(A_{n,m})$.

\begin{lemma}\label{lemme_Morse_theory}
	Let $\mathcal{C}$ be a simply connected cube complex with a height function and let $k\in\Z$. If the descending link of every vertex of height $\geq k$ is simply connected then $\mathcal{C}_{\leq k-1}$ is simply connected
\end{lemma}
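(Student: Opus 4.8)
The plan is to recognise this as an instance of the Morse lemma for cube complexes and to run the associated filtration argument. Write $\mathcal{C}_{\leq t}$, $t\in\Z$, for the sublevel subcomplexes, so that $\mathcal{C}=\bigcup_t\mathcal{C}_{\leq t}$ is an increasing union. It suffices to prove that for every $t\geq k$ the inclusion $\mathcal{C}_{\leq t-1}\hookrightarrow\mathcal{C}_{\leq t}$ induces an isomorphism on fundamental groups and does not affect connectedness. Indeed, a loop in $\mathcal{C}$ and a disc it bounds are compact, hence supported on finitely many cubes, so $\pi_1(\mathcal{C})=\varinjlim_t\pi_1(\mathcal{C}_{\leq t})$; a cofinal chain of isomorphisms $\pi_1(\mathcal{C}_{\leq k-1})\xrightarrow{\sim}\pi_1(\mathcal{C}_{\leq k})\xrightarrow{\sim}\cdots$ then yields $\pi_1(\mathcal{C}_{\leq k-1})\cong\pi_1(\mathcal{C})=1$, and connectedness of $\mathcal{C}_{\leq k-1}$ descends from that of $\mathcal{C}$.

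For the key step I would use the descending-star decomposition. On each cube of $\mathcal{C}$ the height function is affine and attains its maximum at a unique vertex; for $\mathscr{SC}(A_{n,m})$ the cube spanned by an admissible surface $\Sigma$ and adjacent polygons $H_1,\dots,H_j$ has top vertex $[\Sigma\cup H_1\cup\dots\cup H_j,\varphi]$. For a vertex $v$, let $\Lk^{\downarrow}(v)\subseteq\Lk(v)$ denote its descending link, spanned by the cubes of which $v$ is the top vertex, and let $\overline{\mathrm{st}}^{\downarrow}(v)$ be the corresponding closed descending star. The facts to verify, all elementary, are: (i) $\overline{\mathrm{st}}^{\downarrow}(v)$ is star-shaped at $v$, hence contractible; (ii) with $t=h(v)$, the intersection $\overline{\mathrm{st}}^{\downarrow}(v)\cap\mathcal{C}_{\leq t-1}$ is homotopy equivalent to $\Lk^{\downarrow}(v)$, via radial projection from $v$; (iii) $\mathcal{C}_{\leq t}=\mathcal{C}_{\leq t-1}\cup\bigcup_{h(v)=t}\overline{\mathrm{st}}^{\downarrow}(v)$; and (iv) for distinct vertices $v\neq v'$ of height $t$ one has $\overline{\mathrm{st}}^{\downarrow}(v)\cap\overline{\mathrm{st}}^{\downarrow}(v')\subseteq\mathcal{C}_{\leq t-1}$, because a cube all of whose vertices have height $\leq t$ has at most one vertex of height exactly $t$. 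Together (i)--(iv) say that $\mathcal{C}_{\leq t}$ is obtained from $\mathcal{C}_{\leq t-1}$ by coning off, one vertex at a time, the subcomplexes $\overline{\mathrm{st}}^{\downarrow}(v)\cap\mathcal{C}_{\leq t-1}\simeq\Lk^{\downarrow}(v)$ over all $v$ with $h(v)=t$ — fact (iv) ensuring that at each stage the subcomplex being coned off is still exactly $\overline{\mathrm{st}}^{\downarrow}(v)\cap\mathcal{C}_{\leq t-1}$. One then invokes van Kampen: coning off a connected subcomplex $Y$ of $X$ replaces $\pi_1(X)$ by $\pi_1(X)/\langle\langle\operatorname{im}(\pi_1 Y\to\pi_1 X)\rangle\rangle$, which equals $\pi_1(X)$ as soon as $Y$ is simply connected. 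By hypothesis every $\Lk^{\downarrow}(v)$ with $h(v)=t\geq k$ is simply connected, in particular non-empty and connected, so no coning-off disconnects the space, and $\pi_1(\mathcal{C}_{\leq t-1})\xrightarrow{\sim}\pi_1(\mathcal{C}_{\leq t})$ for all $t\geq k$, which is what was needed.

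I do not expect a genuine obstacle: this is a well-known special case of Bestvina--Brady-style discrete Morse theory, and the work is all bookkeeping. The points needing care are the verification of (i)--(iv) from the cubical structure — in particular the uniqueness of the top vertex of each cube of $\mathscr{SC}(A_{n,m})$, which is immediate from the description of cubes in Subsection~\ref{subsection:spine} — and the presence of infinitely many vertices of a given height, which means that both the successive coning-off (through limit stages) and the final identification $\pi_1(\mathcal{C})=\varinjlim_t\pi_1(\mathcal{C}_{\leq t})$ have to be justified via compactness of maps out of $S^1$ and $D^2$ rather than taken for granted.
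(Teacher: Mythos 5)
Your argument is correct, but it follows a genuinely different route from the paper's. The paper's proof is a direct disk-surgery argument: given a loop in $\mathcal{C}_{\leq k-1}$, simple connectivity of $\mathcal{C}$ provides a combinatorial filling disk $D$ in the $2$-skeleton, and one lowers $D$ step by step --- at a vertex $v$ of $D$ of maximal height $n\geq k$, the neighbours of $v$ in $D$ trace a loop in the descending link of $v$, a filling of that loop by triangles of the descending link corresponds to cubes containing $v$, and replacing the subdisk of $D$ around $v$ by the resulting disk of vertices of height $<n$ decreases the maximal height or the number of maximal-height vertices; iterating pushes $D$ into $\mathcal{C}_{\leq k-1}$. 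You instead run the standard Bestvina--Brady-style filtration: $\mathcal{C}_{\leq t}$ is obtained from $\mathcal{C}_{\leq t-1}$ by coning off the subcomplexes $\overline{\mathrm{st}}^{\downarrow}(v)\cap\mathcal{C}_{\leq t-1}\simeq\Lk^{\downarrow}(v)$ over the vertices $v$ of height $t$, so each inclusion $\mathcal{C}_{\leq t-1}\hookrightarrow\mathcal{C}_{\leq t}$ with $t\geq k$ is a $\pi_1$-isomorphism, and the conclusion follows from $\pi_1(\mathcal{C})=\varinjlim_t\pi_1(\mathcal{C}_{\leq t})$. Your facts (i)--(iv) do hold for the spine complex (each cube has a unique top vertex since adjacent heights differ by exactly one; in fact the radial projection in (ii) is a homeomorphism cube by cube), and the two points you flag --- van Kampen with infinitely many simultaneous cone attachments, and the direct-limit identification --- are exactly where extra justification is needed, but both are routine via compactness of loops and filling disks. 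Your approach buys a stronger and more reusable statement (every level inclusion above $k-1$ is a $\pi_1$-isomorphism, and the same scheme handles higher connectivity), at the price of more machinery; the paper's argument is more elementary and purely combinatorial, contracting the given loop inside $\mathcal{C}_{\leq k-1}$ directly, which is all the lemma needs.
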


\begin{proof}
	Consider a loop $\gamma$ inside $\mathcal{C}_{\leq k-1}$. Up to homotopy, we can suppose that it has no backtracks and that it lies in the $1$-skeleton of $\mathcal{C}$. Because $\mathcal{C}$ is simply connected, there exists a combinatorial disk $D$ in $\mathcal{C}$ with boundary $\gamma$; we may assume that $D$ is contained in the $2$-skeleton of $\mathcal{C}$. Let $v\in D$ be a vertex of maximal height. If this height is $\leq k-1$ then $\gamma$ is already contractible in $\mathcal{C}_{\leq k-1}$ and we are done. So let us assume that the height of $v$ is $n\geq k$ and hence that its descending link is simply connected. 
Consider all its neighbourhood of smaller height that are in $D$. Consider a loop $\ell$ in the descending link of $v$ that passes only through theses vertices. By definition, this means that there exists a loop $\gamma'$ made of vertices of height $n-1$ and $n-2$ that is the boundary of a subdisk $D'$ of $D$. Because the descending link of $v$ is simply connected, there exists a combinatorial disk $L$ made of triangle in the descending link of $v$ with boundary $\ell$.  By definition, a triangle in the descending link of $v$ corresponds to a cube spanned by $v$ and three of the vertices in its descending link. Hence, $\gamma'$ is also the boundary of a combinatorial disk $D''$ made of vertices of height $<n$, and $D'$ and $D''$ are homotopic. Replacing $D'$ by $D''$ in $D$ do not change the boundary $\gamma$
but now either the maximal height of the vertices of the disk has decreased or the number of vertices of maximal height has decreased. We can continue this process until the disk is inside $\mathcal{C}_{k\leq{k-1}}$. \end{proof}

\noindent Let us recall the description from \cite{GLU_finiteness} of the descending links of $\mathscr{SC}(A_{n,m})$. Fix a disc $\mathbb{D}$ with $p\geq 1$ punctures in its interior and $q\geq 1$ marked points on its boundary. Let $P$ denote the set of punctures and $M=\{m_i \mid i \in \mathbb{Z}_q \}$ denote the set of marked points, ordered cyclically. From now on, an arc in $\mathbb{D}$ refers to an arc that starts from a marked point and that ends at a puncture. Given an arc $\alpha$, $\alpha(0)$ denotes the marked point it starts at, and $\alpha(1)$ denotes the puncture it ends at.

\medskip \noindent
Let $r\geq0$. Two arcs starting from the marked points $m_i$ and $m_j$ respectively, are \emph{$r$-separated} if they are disjoint and if the distance between $i$ and $j$ in $\mathbb{Z}_q$ is $>r$ (where $\mathbb{Z}_q$ is metrically thought of as the cycle $\mathrm{Cayl}(\mathbb{Z}_q,\{1\})$). Notice that being $0$-separated amounts to being disjoint. We define $\mathfrak{C}(p,q,r)$ as the simplicial complex whose vertices are the isotopy classes of arcs and whose simplices are collections of arcs that are pairwise $r$-separated (up to isotopy).

\noindent The following proposition is the main tool for the proof of Proposition~\ref{prop_subcplx_sconnected}. In \cite[Proposition~5.16]{GLU_finiteness}, we showed that for each $k$ and for $p, q, r$ large enough, the complex $\mathfrak{C}(p,q,r)$ becomes $k$-connected. The following proposition gives optimal bounds for $p,q,$ and $r$ such that $\mathfrak{C}(p,q,r)$ is simply connected. 

\begin{prop}\label{prop_simply_connected}
	The complex of arcs $\mathfrak{C}(p,q,r)$ is simply connected if $p\geq 5, q\geq 4r+3+\lceil \frac{r}{2}\rceil$, $r\geq 1$.
\end{prop}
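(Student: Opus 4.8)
The plan is to prove simple connectedness of $\mathfrak{C}(p,q,r)$ by induction, peeling off one marked point or one puncture at a time via a standard "link argument" in the sense of combinatorial Morse theory, reducing to smaller parameters until we reach a base case that can be checked by hand. The natural inductive skeleton is: show that $\mathfrak{C}(p,q,r)$ is simply connected provided (i) $\mathfrak{C}(p,q-1,r)$ (or $\mathfrak{C}(p-1,q,r)$) is simply connected, and (ii) the relevant links of the vertices/arcs being removed are connected (for $\pi_1$-surjectivity of the inclusion) and the "link of the link" is connected as well (for injectivity). Concretely, fixing a marked point $m_q$, the subcomplex $\mathfrak{C}'$ spanned by arcs not starting at $m_q$ is isomorphic to $\mathfrak{C}(p,q-1,r)$ after relabeling, and one must show that any loop in $\mathfrak{C}(p,q,r)$ can be pushed off the arcs emanating from $m_q$. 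The key local statement is that if $\alpha$ starts at $m_q$, then the arcs $r$-separated from $\alpha$ (i.e. the link of $\alpha$) already span a connected, and for $\pi_1$ purposes simply connected, subcomplex — this is itself a complex of arcs in a disc with one puncture removed (namely $\alpha(1)$) and with the marked points in the forbidden window $[q-r, q+r]$ deleted, so it is again of the form $\mathfrak{C}(p-1, q', r)$ with $q' = q - (2r+1)$.

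First I would set up the bookkeeping carefully: declare what "the link of $\alpha$ in $\mathfrak{C}(p,q,r)$" is as an abstract arc complex, and verify the identification with a smaller $\mathfrak{C}(p-1,q',r)$; here the inequality $q \geq 4r+3+\lceil r/2\rceil$ is tuned precisely so that after removing $2r+1$ marked points and one puncture the surviving parameters still satisfy the hypothesis of the proposition (with $p-1 \geq 4$, and $q' = q-2r-1 \geq 2r+2+\lceil r/2 \rceil$), feeding the induction. Then I would run the Morse-type argument exactly as in the proof of Lemma~\ref{lemme_Morse_theory}: take a loop $\gamma$ in $\mathfrak{C}(p,q,r)$, realize it in the $1$-skeleton, and among the arcs it visits that start at $m_q$ pick one, say $\alpha$; the two neighbours of $\alpha$ along $\gamma$ lie in $\mathrm{Lk}(\alpha)$, which we have just argued is connected, so we can reroute $\gamma$ within the star of $\alpha$ to avoid $\alpha$; iterating removes all arcs at $m_q$, landing $\gamma$ in the subcomplex $\cong \mathfrak{C}(p,q-1,r)$, where it is contractible by induction. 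One must also check that the contracting disc can be chosen to avoid reintroducing arcs at $m_q$ — this is where simple connectedness of $\mathrm{Lk}(\alpha)$ (not merely connectedness) is used, to fill in the $2$-cells of the rerouting homotopy.

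For the base of the induction I would take $p=5$, $q = 4r+3+\lceil r/2\rceil$, and ideally also handle $r=1$ separately where the bound reads $q \geq 8$; a cleanest route is a double induction, downward on $q$ until $q$ hits the threshold, and then — since decreasing $q$ below threshold is no longer allowed — a separate argument for the threshold case, perhaps by instead removing a puncture (reducing $p$) or by a direct connectivity computation for $\mathfrak{C}(p, q, 1)$ using that the arc complex on a punctured disc with disjointness condition is well understood (it is a flag-type complex whose $1$-skeleton is easily seen to be simply connected when $p$ and $q$ are not too small). The main obstacle I anticipate is precisely the combinatorial accounting in the link identification and making the arithmetic of $r$-separation line up: one has to be scrupulous that "$r$-separated from $\alpha$" removes the window of marked points $\{m_{q-r}, \ldots, m_{q+r}\}$ — that is $2r+1$ points — and that the cyclic order on the survivors is genuinely the linear/cyclic order needed to recognize the result as an honest $\mathfrak{C}(p-1, q', r)$ rather than some complex with a defect at the "seam" where the window was excised; handling that seam correctly, and checking the ceiling term $\lceil r/2\rceil$ survives each reduction, is the delicate point, and it is also the reason the stated bound is claimed to be optimal.
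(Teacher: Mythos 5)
There is a genuine gap, and it sits exactly where your plan puts all the weight: the induction does not close. Your rerouting/filling step needs the link $\mathrm{Lk}(\alpha)$ of an arc based at the marked point being removed to be \emph{simply connected}, and you propose to get this from the inductive hypothesis. But, as you yourself record, after deleting one puncture and the $2r+1$ marked points in the forbidden window the surviving parameters are only $p-1\geq 4$ and $q'=q-(2r+1)\geq 2r+2+\lceil \frac{r}{2}\rceil$; these do \emph{not} satisfy the hypotheses $p\geq 5$, $q\geq 4r+3+\lceil\frac{r}{2}\rceil$ of the proposition, so the inductive hypothesis cannot be invoked for the link (and the same parameter loss recurs at every step, so no strengthening of the statement along these lines is available for free). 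Moreover, the identifications you rely on are not correct as stated: neither the link of $\alpha$ nor the subcomplex of arcs missing $m_q$ is an honest $\mathfrak{C}(p-1,q',r)$ or $\mathfrak{C}(p,q-1,r)$, because the separation relation on the surviving marked points is the one inherited from $\mathbb{Z}_q$, and two survivors on opposite sides of the excised window (or of the deleted point) can be $r$-separated in $\mathbb{Z}_q$ while being too close in the relabeled smaller cycle. You flag this seam as ``the delicate point,'' but the proposal contains no mechanism to handle it, and it is precisely why the paper works with the more general complexes $\mathfrak{R}(\mathbb{D}\setminus P',P',M',\sim)$ with an arbitrary relation $\sim$, for which only a \emph{connectivity} criterion is proved (Lemmas~\ref{lemma_relation} and~\ref{lem:consecutive_connected}); no simple connectivity of links is ever established or needed.

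For contrast, the paper's proof is not an induction on the parameters at all. It fixes one puncture $p$ and one marked point $m$, considers the subcomplex $\mathcal{R}_{-1}$ of arcs avoiding $p$ and starting at marked points $r$-separated from $m$, and pushes an arbitrary loop into $\mathcal{R}_{-1}$ through a filtration $\mathcal{R}_{-1}\subset\mathcal{R}_0\subset\dots\subset\mathcal{R}_{2r+1}=\mathfrak{C}(p,q,r)$ by ``types'' of arcs, where each stage is obtained by coning over links that are shown to be \emph{connected} (Claim~\ref{claim_link_RI_connected}, via Lemma~\ref{lem:consecutive_connected}); this alone gives $\pi_1$-surjectivity of the inclusion. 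It then contracts any loop lying in $\mathcal{R}_{-1}$ by pushing it into the star of a fixed arc $\beta$ from $m$ to $p$ (which is a cone, hence contractible), using a surgery near $p$ that strictly decreases intersections with $\beta$ and whose validity again only uses connectivity of an intersection of links. If you want to salvage your inductive scheme, you would have to either prove simple connectivity of the seam-defected link complexes under the weakened parameters (which is essentially a harder version of the proposition) or restructure the argument so that only connectivity of links is needed, which is what the paper does.
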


\noindent The end of this subsection is dedicated to prove this proposition. We will be interested in complexes obtained by filling in certain punctures or removing marked points from the boundary. For this reason, we introduce the following complexes, which were already used in \cite{GLU_finiteness}. Let $\sim$ be a symmetric relation on $M$.  We denote by $\mathfrak{R}(\mathbb{D}\setminus P,P,M,\sim)$ the following simplicial complex: the vertices of $\mathfrak{R}$ are the isotopy classes of arcs in $\mathbb{D}\setminus P$ connecting a point in $M$ to a point in $P$, and its simplices are collections of arcs that are pairwise disjoint and that start from marked points that are pairwise $\sim$-related. Note that if $\sim$ is the relation of being $r$-separated, then $\mathfrak{R}(\mathbb{D}\setminus P,P,M,\sim)=\mathfrak{C}(p,q,r)$.

\begin{lemma}\label{lemma_arcs_less_intersection}
	Consider the complex of arcs $\mathfrak{R}(\mathbb{D}\setminus P,P,M,\sim)$. Let $\alpha$ and $\beta$ be two arcs having at least an intersection point outside the extremities. There exists an arc $\alpha'$ intersecting $\alpha$ only in its both extremities and such that the number of intersection between $\alpha'$ and $\beta$ is strictly less than the one between $\alpha$ and $\beta$.
\end{lemma}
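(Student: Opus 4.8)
The plan is to do a standard innermost-bigon/innermost-disc surgery, pushing $\alpha$ across $\beta$ to remove one transverse intersection at a time. First I would fix minimal-position representatives of $\alpha$ and $\beta$ (within their isotopy classes, relative to the endpoints on $M$ and $P$), so that they intersect transversely in finitely many points, none of which is a point of $M\cup P$ other than a shared extremity. Let $k\geq 1$ be the number of interior intersection points of $\alpha\cap\beta$. Consider the points of $\alpha\cap\beta$ along $\alpha$, starting from $\alpha(0)$: let $x$ be the \emph{first} interior intersection point encountered, and let $a$ be the initial sub-arc of $\alpha$ from $\alpha(0)$ to $x$. By construction the interior of $a$ meets $\beta$ nowhere.

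Next I would split into the two cases according to whether $\alpha(0)=\beta(0)$ or not, since the surgery produces a slightly different picture. If $\alpha(0)\neq\beta(0)$: the point $x$ divides $\beta$ into two sub-arcs; pick the one, call it $b$, that runs from $x$ to the puncture $\beta(1)$ (or, if that choice makes $a\cup b$ non-embedded near $\beta(1)$, the other one — one of the two always works because $\beta$ ends at a puncture, not at a marked point, so we may assume $a$ does not also end there after a small perturbation; if $\alpha(1)=\beta(1)$ we simply take $b$ to be the sub-arc of $\beta$ from $x$ to the \emph{other} marked point's side, i.e.\ we route $b$ to $\beta(1)$ and absorb the coincidence). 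Then set $\alpha' := a\cup b$, pushed slightly off $\beta$ near $x$ on the side of $a$ so that $\alpha'$ is embedded. The arc $\alpha'$ still starts at $\alpha(0)\in M$ and ends at $\beta(1)\in P$, hence is a genuine arc of $\mathfrak{R}(\mathbb{D}\setminus P,P,M,\sim)$; note $\alpha'(0)=\alpha(0)$, so $\alpha'$ starts from the same marked point as $\alpha$, which matters for the "$\sim$-related" bookkeeping later even though it is not needed here. If instead $\alpha(0)=\beta(0)$, then near the common marked point $\alpha$ and $\beta$ emanate as two distinct germs; $x$ is still the first interior intersection along $\alpha$, and the same cut-and-paste along $a$ and a sub-arc $b$ of $\beta$ from $x$ produces the required $\alpha'$.

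Then I would count intersections. First, $\alpha'\cap\alpha$: the sub-arc $a\subset\alpha$ contributes only the two extremities of $\alpha'$ shared with $\alpha$ (namely $\alpha(0)$ and, after the push-off, the point $x$ lies on $\alpha$ but we can perturb $b$ to meet $\alpha$ only at its two endpoints, using that $b$ lies in a neighbourhood of $\beta$ and $\beta$ already meets $\alpha$ transversely — here a short general-position argument is needed, running $b$ parallel to $\beta$ and observing each point of $b\cap\alpha$ would have to be near a point of $\beta\cap\alpha$, then cancelling these too if necessary by taking $b$ to hug the correct side). After this cleanup $\alpha'$ meets $\alpha$ in exactly its two extremities, as required. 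Second, $\alpha'\cap\beta$: the sub-arc $b$ is a push-off of part of $\beta$ and so, in minimal position, can be taken disjoint from $\beta$ except possibly at the endpoint $\beta(1)$; the sub-arc $a$ meets $\beta$ exactly in the intersection points of $\alpha\cap\beta$ that lie on $a$ — but by the choice of $x$ as the \emph{first} such point along $\alpha$, $a$'s interior meets $\beta$ not at all, so $a\cap\beta=\{\alpha(0)\}\cap\beta$ together with $x$. Since $x$ has been pushed off, we conclude $\#(\alpha'\cap\beta)\leq \#(\alpha\cap\beta)-1 < \#(\alpha\cap\beta)$, which is exactly the claimed strict decrease.

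The main obstacle is the book-keeping in the degenerate cases where endpoints coincide ($\alpha(0)=\beta(0)$, $\alpha(1)=\beta(1)$, or the new arc $b$ accidentally runs into an endpoint of $\alpha$), together with making "push $b$ slightly off $\beta$ and cancel the spurious intersections with $\alpha$" rigorous rather than hand-wavy; the clean way is to phrase everything in terms of an innermost disc bounded by a sub-arc of $a$ and a sub-arc of $\beta$ (they cobound an embedded disc $\Delta$ since $a$'s interior is disjoint from $\beta$ and $\mathbb{D}$ is a disc), and define $\alpha'$ by isotoping that sub-arc of $\alpha$ across $\Delta$; then the intersection counts are immediate from the fact that $\Delta$'s interior is disjoint from $\alpha\cup\beta$. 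I would write the final version using this innermost-disc formulation to keep the case analysis minimal.
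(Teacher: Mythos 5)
Your construction does not prove the statement, for two concrete reasons. First, the arc you build ends at $\beta(1)$ rather than at $\alpha(1)$, whereas the lemma requires $\alpha'$ to meet $\alpha$ exactly in its two extremities, i.e.\ to run from $\alpha(0)$ to $\alpha(1)$ (this is how it is used later: in Cases B2 and C2 of Lemma \ref{lemma_relation} the arc $\alpha'$ must be joined to $\alpha$ via Case B1, which needs the same marked point \emph{and} the same puncture). Second, your choice of $x$ as the \emph{first} intersection point along $\alpha$ ruins the disjointness from $\alpha$: the subarc $b$ of $\beta$ from $x$ to $\beta(1)$ may still cross $\alpha$ many times, and such transverse crossings cannot be ``cancelled by taking $b$ to hug the correct side''---any arc running parallel to $\beta$ near a transverse crossing of $\beta$ with $\alpha$ must itself cross $\alpha$ there, on either side. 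The paper's proof fixes both points at once: it chooses the intersection point $a$ that is \emph{last along $\beta$} (so the subarc of $\beta$ between $a$ and $\beta(1)$ is disjoint from $\alpha$), follows a parallel copy of $\alpha$ from $\alpha(0)$ to $a$, then a parallel copy of $\beta$ towards $\beta(1)$, \emph{goes around the puncture $\beta(1)$}, comes back along the other side of $\beta$ to $a$, and finally follows $\alpha$ to $\alpha(1)$. The detour around the puncture, absent from your construction, is precisely what allows the new arc to return to $\alpha(1)$ while staying off $\alpha$ and crossing $\beta$ only near the points of $\alpha\cap\beta$ other than $a$, hence strictly fewer times.

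The proposed ``clean'' innermost-disc reformulation does not repair this. The region cobounded by $a$, a subarc of $\beta$ and (possibly) a piece of $\partial\mathbb{D}$ need not have interior disjoint from $\alpha\cup\beta$---the portion of $\alpha$ beyond $x$ and the portion of $\beta$ not on its boundary can re-enter it, and it may contain punctures---so the intersection counts are not ``immediate''. More fundamentally, $\alpha$ and $\beta$ may already be in minimal position, in which case \emph{no} arc isotopic to $\alpha$ in $\mathbb{D}\setminus P$ has fewer intersections with $\beta$; the $\alpha'$ of the lemma is in general not isotopic to $\alpha$ (they cobound a region containing punctures), so any argument phrased as isotoping a subarc of $\alpha$ across a disc is bound to fail unless the disc swallows punctures, at which point your control on intersections evaporates. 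A minor additional issue: as written, $\alpha'=a\cup b$ overlaps $\alpha$ along the whole of $a$, not only at extremities; the paper avoids this by running a parallel copy of $\alpha$ as well.
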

\begin{proof}
We may assume that $\alpha$ and $\beta$ intersect in finitely many points. Now, let $a\in\alpha\cap\beta$ be such that the subarc of $\beta$ between $a$ and $\beta(1)$ does not intersect $\alpha$ anymore. Let $\alpha'$ be an arc from $\alpha(0)$ to $\alpha(1)$ following (but not intersecting) very closely $\alpha$ until it reaches $a$, then following (but not intersecting) $\beta$, until $\beta(1)$ if $\beta(1)=\alpha(1)$, or otherwise in the direction of $\beta(1)$, go around the puncture $\beta(1)$, following $\beta$ on the other side, and then following $\alpha$ all the way to $\alpha(1)$. By construction, the number of intersection between $\alpha'$ and $\beta$ is strictly less than the one between $\alpha$ and $\beta$.
\end{proof}

\begin{lemma}\label{lemma_relation}
	Consider the complex of arcs $\mathfrak{R}(\mathbb{D}\setminus P,P,M,\sim)$. Assume that $\lvert P\rvert \geq 3$. If the relation $\sim$ satisfies the following: for all $m,n\in M$, either there exist $m',n'\in M$ such that $m\sim m'$, $m'\sim n'$,  and  $n'\sim n$, or there exists a $m'\in M$ such that $m\sim m'$ and $m'\sim n$, 
	then $\mathfrak{R}(\mathbb{D}\setminus P,P,M,\sim)$ is connected.
\end{lemma}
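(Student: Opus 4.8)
The plan is to show that any two arc vertices $\alpha$ and $\beta$ in $\mathfrak{R}(\mathbb{D}\setminus P,P,M,\sim)$ can be joined by an edge-path. The argument proceeds in two stages: first reduce to the case where the starting marked points $\alpha(0)$ and $\beta(0)$ coincide or are $\sim$-related in a controlled way, using the hypothesis on $\sim$; then handle arcs with the same (or adjacent) starting point by a "finger move / puncture hopping" argument that exploits $\lvert P\rvert\geq 3$. For the second stage, suppose $\alpha$ and $\beta$ both start at a marked point $m$ (or at $\sim$-related marked points $m,m'$). If $\alpha(1)\neq\beta(1)$, pick a third puncture $p\in P$ distinct from both (possible since $\lvert P\rvert\geq 3$) and an auxiliary marked point; one builds an arc $\gamma$ from $m$ to $p$ disjoint from $\alpha$, so $\alpha$ and $\gamma$ are adjacent, then slides the endpoint to realise a chain $\alpha - \gamma - \beta$. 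The point is that arcs ending at a common puncture, or whose endpoints can be routed through a common "free" puncture, are always connectable.

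Here is the order I would carry this out. First I would use Lemma~\ref{lemma_arcs_less_intersection} repeatedly to show: if $\alpha$ and $\beta$ have $\alpha(0)=\beta(0)=m$ (so in particular $m\sim m$ is not required — we only ever compare distinct marked points) and $\alpha(1)=\beta(1)$, then $\alpha$ and $\beta$ are isotopic; hence the subcomplex of arcs based at a fixed marked point $m$, with a fixed endpoint puncture, is a single vertex, and arcs based at $m$ with varying endpoints form a connected subcomplex provided $\lvert P\rvert\geq 2$ (slide the puncture-endpoint along, keeping disjointness by the finger-move construction of Lemma~\ref{lemma_arcs_less_intersection}). Second, I would show that for any two marked points $m,n$ that are $\sim$-related, the "based at $m$" cluster and the "based at $n$" cluster are connected to each other: choose arcs $a$ from $m$ and $b$ from $n$ ending at two distinct punctures (using $\lvert P\rvert\geq 2$) and realised disjointly; since $m\sim n$, $\{a,b\}$ is a simplex, so $a$ and $b$ are adjacent. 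Third, I would feed in the hypothesis on $\sim$: given arbitrary $m,n\in M$, either there is a $\sim$-path $m\sim m'\sim n'\sim n$ of length $3$ or a $\sim$-path $m\sim m'\sim n$ of length $2$; chaining the previous step along this $\sim$-path connects the cluster of $m$ to the cluster of $n$. Combining all three steps shows the whole complex is connected.

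The main obstacle is the bookkeeping in the second and third steps: when I say "choose arcs ending at distinct punctures, realised disjointly," I need to ensure these disjointness choices are compatible along the whole $\sim$-chain, since a $\sim$-path of length $3$ requires three or four arcs to coexist disjointly, and I must verify that $\lvert P\rvert\geq 3$ is exactly enough room to do this (e.g. route consecutive arcs in the chain to alternate among at least two punctures, or all into a single common puncture, so that adjacent pairs in the chain form simplices). A clean way to organise it: fix once and for all a puncture $p_0\in P$ and, for each $m\in M$, a "standard arc" $\delta_m$ from $m$ to $p_0$, chosen so that $\delta_m$ and $\delta_{m'}$ are disjoint whenever $m\sim m'$ (this uses $\lvert P\rvert\geq 3$ only to have enough room to avoid the other standard arcs when isotoping, and is where the finger-move lemma is applied). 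Then every arc based at $m$ is connected to $\delta_m$ inside the "based at $m$" cluster, and $\delta_m, \delta_{m'}$ are adjacent whenever $m\sim m'$; the hypothesis on $\sim$ then gives an edge-path $\delta_m \to \delta_{m'} \to (\delta_{n'}) \to \delta_n$, finishing the proof.
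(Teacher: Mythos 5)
Your outline founders on two claims that are false in this complex, and both are exactly the difficulties the paper's proof is designed to get around. First, two arcs with the same starting marked point $m$ and the same ending puncture are \emph{not} in general isotopic: in a disc with at least two other punctures they can wind around those punctures in infinitely many non-isotopic ways, so the ``based at $m$, ending at a fixed puncture'' vertices form an infinite set, not a single vertex. Lemma~\ref{lemma_arcs_less_intersection} does not prove any isotopy statement; it only produces a new arc $\alpha'$ meeting $\alpha$ in its extremities and meeting $\beta$ fewer times, and its role is to drive an induction on the intersection number (this is how the paper uses it in Cases B2 and C2). Second, arcs based at the same marked point are never adjacent in $\mathfrak{R}(\mathbb{D}\setminus P,P,M,\sim)$ unless $m\sim m$, which the hypothesis does not provide and which fails in the intended application ($\sim$ is $r$-separation with $r\geq 1$). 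So your ``cluster of arcs based at $m$'' carries no edges at all, and the claim that it is a connected subcomplex (by sliding endpoints while keeping disjointness) is not meaningful: any two such arcs must be joined through arcs based at \emph{other}, $\sim$-related marked points. That rerouting, combined with the induction on intersection numbers, is the actual content of the paper's proof (Cases A--D), and it is missing from your proposal.

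There is also a problem with your ``clean'' organisation: simplices require the arcs to be pairwise disjoint, and two arcs ending at the same puncture are not disjoint. Hence a family of standard arcs $\delta_m$ all ending at a single fixed puncture $p_0$ can never give adjacencies $\delta_m$--$\delta_{m'}$, no matter how you isotope them; this is precisely why the paper sends auxiliary arcs to punctures in $P\setminus\{p,q\}$ (using $\lvert P\rvert\geq 3$) rather than to a common endpoint. A corrected argument would have to (i) never use adjacency between arcs with a common base point or a common endpoint puncture, (ii) connect arcs based at the same or at non-related marked points through intermediate arcs based at marked points supplied by the hypothesis on $\sim$, and (iii) handle intersecting arcs by induction on the number of intersection points via Lemma~\ref{lemma_arcs_less_intersection} — which is, in substance, the proof given in the paper.
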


\begin{proof}
	Let $\alpha$ be an arc from a marked point $m\in M$ to a puncture $p\in P$ and $\beta$ an arc from $n\in M$ to $q\in P$. 

	\medskip\noindent
	{\bf Case A:}\emph{ $\alpha$ and $\beta$ do not intersect, except possibly in their marked point if $m=n$.} If there exists $m'\in M$ $\sim$-related to both $m$ and $n$, consider an arc $\gamma$ from $m'$ to a puncture in $P\setminus\{p,q\}$ that does not intersect neither $\alpha$ or $\beta$. Then the class of $\gamma$ is connected in the complex to both the classes of $\alpha$ and $\beta$. If it is not the case, then there are $m',n'\in M$ such that $m\sim m'$, $m'\sim n'$,  and  $n'\sim n$. Let $\gamma_1$ be an arc from $m'$ to $q$ not intersecting $\alpha$, let $\gamma_2$ be an arc from $n'$ to $p$ not intersecting $\beta$. Then $\alpha$ and $\gamma_1$, as well as $\gamma_1$ and $\gamma_2$ and also $\gamma_2$ and $\beta$ are connected by edges. 
	
	\medskip\noindent
	{\bf Case B:} \emph{$m=n$}. We distinguish two subcases.
	\begin{itemize}
		\item Case B1: Assume $p=q$ and that $\alpha$ and $\beta$ do not intersect outside their extremities. Let $r\in P\setminus\{p\}$ and let $\gamma_1$ be an arc from $m$ to $r$ not intersecting neither $\alpha$ nor $\beta$ outside $m$. Then the class of $\gamma_1$ is connected to both the classes of $\alpha$ and $\beta$ by Case A. 
		
		\item Case B2: General case. We may assume that $\alpha$ and $\beta$ intersect in finitely many points. We will do by induction on $k$ the number of intersection between $\alpha$ and $\beta$ outside their extremities. The case $k=0$ is covered by Case B1 if $p=q$ or by Case A otherwise. Now, by Lemma~\ref{lemma_arcs_less_intersection}, there exists an arc $\alpha'$ whose class is connected to the class of $\alpha$ by Case B1 and to the class of $\beta$ by induction.
	\end{itemize}

	\medskip\noindent
{\bf Case C:} \emph{$m\sim n$}. We distinguish two subcases.
\begin{itemize}		
		\item Case C1: Assume that $p=q$ and that $p$ is the only intersection point of $\alpha$ and $\beta$. Let $r,r'\in P\setminus\{p\}$ be two distinct punctures. Let $\gamma_1$ and $\gamma_2$ be two arcs joining respectively
		$m$ to $r$, and $n$ to $r'$, and intersecting respectively $\alpha$ only in $m$ and $\beta$ only in $n$, such that $\gamma_1$ and $\gamma_2$ do not intersect.
		Then by Case A, the classes of $\alpha$ and $\gamma_1$, and of $\gamma_2$ and $\beta$ are connected. Moreover, the classes of $\gamma_1$ and $\gamma_2$ are connected by an edge.

		\item Case C2: General case. We may assume that $\alpha$ and $\beta$ intersect in finitely many points. We will do by induction on $k$ the number of intersection between $\alpha$ and $\beta$ outside their extremities. The case $k=0$ is covered by Case C1 if $p=q$ and by Case A otherwise. 
		Now, by Lemma~\ref{lemma_arcs_less_intersection}, there exists an arc $\alpha'$ whose class is connected to the class of $\alpha$ by Case B1 and to the class of $\beta$ by induction.
	\end{itemize}
\medskip\noindent
{\bf Case D:} \emph{$m\neq n$}. If there exists $m'\in M$ $\sim$-connected to both $m$ and $n$, consider two arcs $\gamma_1$ and $\gamma_2$ joining $m'$ to respectively $p$ and $q$. By Case C2, the classes of $\alpha$ and $\gamma_1$, and of $\beta$ and $\gamma_2$ are connected, and by Case B2, the one of $\gamma_1$ and $\gamma_2$ are also connected. Otherwise, there exists $m',n'\in M$ such that $m\sim m'$, $m'\sim n'$,  and  $n'\sim n$. In this case, consider two arcs $\gamma_1$ and $\gamma_2$ joining respectively $m'$ and $p$, and $n'$ and $q$. Then by Case C2, the classes of $\alpha$ and $\gamma_1$, of $\gamma_1$ and $\gamma_2$ and of $\beta$ and $\gamma_2$ are connected.
\end{proof}

\noindent Fix a set of punctures $P'\subset P$, a set of marked points $M' \subseteq M$. In what follows, we always consider the complex $\mathfrak{R}(\mathbb{D}\setminus P',P',M',\sim)$, where $\sim$ is the relation on $M'$ induced by the relation of being $r$-separated in $M$. Let us make the following  technical observation, which is a consequence of Lemma~\ref{lemma_relation}.

\begin{lemma}\label{lem:consecutive_connected}
If the cardinality of $M'$ is at least $2r+2$, the one of $P'$ is at least $3$ and two consecutive points of $M'$ are $\sim$-related then $\mathfrak{R}(\mathbb{D}\setminus P',P',M',\sim)$ is connected.
\end{lemma}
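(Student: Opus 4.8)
The plan is to verify that the relation $\sim$ on $M'$ satisfies the hypothesis of Lemma~\ref{lemma_relation}, after which the conclusion is immediate since $|P'|\geq 3$ is assumed. So fix two marked points $m,n\in M'$; I must produce either an intermediate $m'$ with $m\sim m'\sim n$, or a chain $m\sim m'\sim n'\sim n$. The key observation is that $\sim$ on $M'$ is the restriction of the "$r$-separated" relation on $M$, so $x\sim y$ on $M'$ means precisely that $x,y\in M'$, that arcs from $x$ and $y$ can be made disjoint (automatic for distinct punctures, and arrangeable in general), and that the cyclic distance $d_{\mathbb{Z}_q}(x,y)>r$. The extra data I get to use is that $|M'|\geq 2r+2$ and that some two \emph{consecutive} points $a,b$ of $M'$ (consecutive in the cyclic order inherited from $M$) are $\sim$-related, i.e.\ their cyclic distance in $\mathbb{Z}_q$ exceeds $r$.

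First I would set up notation: list the points of $M'$ in cyclic order as $x_0,x_1,\dots,x_{N-1}$ with $N=|M'|\geq 2r+2$, where $x_0=a$, $x_1=b$ are the given consecutive $\sim$-related pair. The crucial numerical fact is that for any index $i$, the cyclic gap between $x_i$ and $x_{i+1}$ in $\mathbb{Z}_q$ is at least the cyclic gap between $a=x_0$ and $b=x_1$ (this is not literally true gap-by-gap; rather, what I actually need is weaker). Let me instead argue directly: since $a$ and $b$ are consecutive in $M'$ but have cyclic distance $>r$ in $M$, the "$M'$-distance" of $1$ already guarantees an $M$-distance of $>r$; more importantly, I claim that any two points of $M'$ at $M'$-distance $\geq 1$ whose separating path uses a gap of size $\geq$ that of $(a,b)$ are $\sim$-related, and — stepping back — the cleanest route is: \emph{for any $m,n\in M'$, at least one of $m\sim a$, $m\sim b$ holds, and likewise at least one of $n\sim a$, $n\sim b$ holds.} Granting this, I finish: if $m$ and $n$ are each related to the same one of $a,b$, that common point is the desired $m'$; if $m\sim a$ and $n\sim b$ (or vice versa), then $m\sim a\sim b\sim n$ (using $a\sim b$) is the desired length-three chain. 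Hence the hypothesis of Lemma~\ref{lemma_relation} holds and $\mathfrak{R}(\mathbb{D}\setminus P',P',M',\sim)$ is connected.

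It remains to justify the claim that every $m\in M'$ is $\sim$-related to $a$ or to $b$. Suppose not, so $d_{\mathbb{Z}_q}(m,a)\leq r$ and $d_{\mathbb{Z}_q}(m,b)\leq r$. Since $a,b$ are adjacent in $M'$, the point $m$ lies on the arc of $\mathbb{Z}_q$ not between $a$ and $b$ (there are no points of $M'$ strictly between them), so going around the circle from $a$ to $m$ and then from $m$ to $b$ traverses all of $\mathbb{Z}_q$ except the short $a$–$b$ arc; thus $q - d_{\mathbb{Z}_q}(a,b) \leq d_{\mathbb{Z}_q}(a,m)+d_{\mathbb{Z}_q}(m,b)\leq 2r$, i.e.\ $q\leq 2r + d_{\mathbb{Z}_q}(a,b)$. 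Now I also need a lower bound on $q$ forcing a contradiction: since $|M'|=N\geq 2r+2$ and distinct points of $M'$ occupy distinct positions in $\mathbb{Z}_q$, we get $q\geq N\geq 2r+2$; combined with $d_{\mathbb{Z}_q}(a,b)\leq q/2$ this is not yet a contradiction, so I must be more careful — the correct extra input is that $d_{\mathbb{Z}_q}(a,b)$ is \emph{small} relative to $q$ because the other $N-2\geq 2r$ points of $M'$ all sit on the complementary arc, forcing $q - d_{\mathbb{Z}_q}(a,b)\geq N-1 \geq 2r+1$, whence $d_{\mathbb{Z}_q}(a,b)\leq q-2r-1$; plugging into $q\leq 2r+d_{\mathbb{Z}_q}(a,b)$ gives $q\leq 2r + q - 2r - 1 = q-1$, a contradiction. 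This pins down the claim, and the main obstacle in writing this up cleanly will be handling the cyclic-distance bookkeeping on $\mathbb{Z}_q$ carefully (orientation, which arc the various points lie on, off-by-one counts), together with the routine verification that the required arcs can always be drawn disjointly so that $\sim$-relatedness of marked points is the only constraint.
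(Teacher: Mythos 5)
Your overall strategy is the same as the paper's: prove that every point of $M'$ is $\sim$-related to $a$ or to $b$, deduce for any $m,n\in M'$ a chain of length two (common neighbour) or three (through $a\sim b$), and conclude via Lemma~\ref{lemma_relation} together with $\lvert P'\rvert\geq 3$. That reduction and the statement of the key claim are exactly what the paper does, so the architecture is fine.

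However, your justification of the key claim has a real gap. The inequality $q-d_{\mathbb{Z}_q}(a,b)\leq d_{\mathbb{Z}_q}(a,m)+d_{\mathbb{Z}_q}(m,b)$ does not follow from the picture you describe, because the arcs of $\mathbb{Z}_q$ realizing the cyclic distances $d_{\mathbb{Z}_q}(a,m)$ and $d_{\mathbb{Z}_q}(m,b)$ may wrap around through the $a$--$b$ gap instead of staying on the complementary arc, in which case the two arcs do not cover $\mathbb{Z}_q$ minus the gap. Symptomatically, your counting never uses the hypothesis $a\sim b$, and without that hypothesis the claim you are proving is false: take the gap between $a$ and $b$ of length $1$, $r=3$, $q$ large, $\lvert M'\rvert\geq 8$, and $m\in M'$ at distance $2$ from $b$ along the long arc; then $d_{\mathbb{Z}_q}(m,b)=2\leq r$ and $d_{\mathbb{Z}_q}(m,a)=3\leq r$ (the geodesic from $m$ to $a$ passes through the gap), so $m$ is related to neither $a$ nor $b$, and your inequality $q-d_{\mathbb{Z}_q}(a,b)\leq d_{\mathbb{Z}_q}(a,m)+d_{\mathbb{Z}_q}(m,b)$ fails badly. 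The repair is short and is precisely where $a\sim b$ must be used a second time: since $d_{\mathbb{Z}_q}(a,b)>r$, the gap arc has length $>r$, so any $m$ with $d_{\mathbb{Z}_q}(m,a)\leq r$ and $d_{\mathbb{Z}_q}(m,b)\leq r$ has both distances realized by sub-arcs of the long arc; their total length is the length of the long arc, which is at least $\lvert M'\rvert-1\geq 2r+1$ because all points of $M'$ lie on it, contradicting $d_{\mathbb{Z}_q}(m,a)+d_{\mathbb{Z}_q}(m,b)\leq 2r$. With this correction your argument coincides with the paper's proof.
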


\begin{proof}
Let $k=\lvert M' \rvert$. And rename by $m_1,\dots, m_k$ the consecutive marked point in $M'$ such that $m_1$ and $m_k$ are $\sim$-related. Because $k\geq 2r+2$ then for any $m\in M'$, there exist $n\in M'$ such that $m\sim n$. 
Take $p,q\in M'$. Either there exists $m'\in M'$ such that $p\sim m'$ and $q\sim m'$, or because $k\geq 2r+2$ we have that either $p\sim m_1$ and $q\sim m_k$, or $p\sim m_k$ or $q\sim m_1$. In any case, the set of marked points $M'$ satisfies the conditions of Lemma \ref{lemma_relation}, hence $\mathfrak{R}(\mathbb{D}\setminus P',P',M',\sim)$ is connected.
\end{proof}

\begin{proof}[Proof of Proposition \ref{prop_simply_connected}]
	 Fix a puncture $p\in P$ and a marked point $m\in M$. We define $\mathcal{R}_{-1}$ to be the subcomplex of $\mathfrak{C}(p,q,r)$ generated by the vertices corresponding to the arcs connecting marked points that are $r$-separated from $m$ to punctures in $P\setminus \{p\}$. 
	 
	 \medskip\noindent The first step consists in proving that the inclusion of $R_{-1}$ in $\mathfrak{C}(p,q,r)$ induces an isomorphism on the fundamental groups. 
	 For $0\leq k\leq 2r$, we said that an arc is of \emph{type $k$} if it connects the marked point $m_k:=m+(-1)^{k+1}\lceil\frac{k}{2}\rceil$ to a puncture in $P\setminus \{p\}$, and of type \emph{$2r+1$} if it ends in the puncture $p$. For $0\leq k\leq 2r+1$, we define inductively the subcomplex $\mathcal{R}_{k}$ of $\mathfrak{C}(p,q,r)$ generated by the subcomplex $R_{k-1}$ and by the classes of arcs of type $k$. Note that $R_{2r+1}$ is the whole complex $ \mathfrak{C}(p,q,r)$. Note also that two vertices of the same type are never adjacent in the complex, hence $\mathcal{R}_{k}$ is obtained from $\mathcal{R}_{k-1}$ by gluing cones over the link in $\mathcal{R}_{k-1}$ of vertices of type $k$. Hence, it remains to show the following claim.

\begin{claim}\label{claim_link_RI_connected}
	 For $0\leq k\leq 2r+1$, the link inside $R_{k-1}$ of a vertex $\alpha_k$ of type $k$ is connected.
\end{claim}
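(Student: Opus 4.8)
The strategy is, for each $0\le k\le 2r+1$, to identify the link $\mathrm{lk}(\alpha_k,\mathcal R_{k-1})$ with a complex of arcs of the form $\mathfrak R(\mathbb D'\setminus P',P',M',\sim)$ on a sub-disc $\mathbb D'$, and then to invoke Lemma~\ref{lemma_relation} (or its corollary Lemma~\ref{lem:consecutive_connected}) to conclude connectedness. The hypothesis $q\ge 4r+3+\lceil r/2\rceil$ will be used exactly once, to guarantee that $M'$ is large enough and well spread around the boundary circle.

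First I would set up the identification. Delete from $\mathbb D$ an open regular neighbourhood of the arc $\alpha_k$ together with its puncture endpoint $\alpha_k(1)$; since one removes an open disc that meets $\partial\mathbb D$, the result is again a disc $\mathbb D'$, carrying the punctures $P':=P\setminus\{p,\alpha_k(1)\}$ and the marked points $M\setminus\{m_k\}$ in their original cyclic order. A vertex $\beta$ of $\mathcal R_{k-1}$ lies in $\mathrm{lk}(\alpha_k,\mathcal R_{k-1})$ precisely when $\beta$ is $r$-separated from $\alpha_k$, i.e.\ $\beta$ is disjoint from $\alpha_k$ (hence descends to $\mathbb D'$) and $\beta(0)$ has cyclic distance $>r$ from $m_k$; and, since $\mathfrak C(p,q,r)$ and $\mathcal R_{k-1}$ are flag complexes, a set of such $\beta$'s spans a simplex of the link if and only if they are pairwise $r$-separated. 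As the vertices of $\mathcal R_{k-1}$ are exactly the arcs ending in $P\setminus\{p\}$ whose starting marked point is $r$-separated from $m$ or is one of $m_0,\dots,m_{k-1}$, this gives
\[\mathrm{lk}(\alpha_k,\mathcal R_{k-1})\;\cong\;\mathfrak R(\mathbb D'\setminus P',P',M',\sim),\]
where $M'$ consists of the marked points at cyclic distance $>r$ from $m_k$ that are moreover $r$-separated from $m$ or lie in $\{m_0,\dots,m_{k-1}\}$, and $\sim$ is the restriction to $M'$ of ``being $r$-separated in $M$''. Since $p\ge 5$ we always have $|P'|\ge 3$ (with $P'=P\setminus\{p\}$ when $\alpha_k(1)=p$, which is the case $k=2r+1$), so Lemmas~\ref{lemma_relation} and~\ref{lem:consecutive_connected} apply to this complex.

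It then remains to check that $(M',\sim)$ satisfies the hypotheses of those lemmas, and here I would split into two cases. If $k=0$ or $k=2r+1$, then $M'$ is a single arc of $M$: for $k=0$ it is the set of marked points $r$-separated from $m$, and for $k=2r+1$ every marked point is accessible in $\mathcal R_{2r}$, so $M'$ is the set of marked points $r$-separated from $\alpha_k(0)$. In both cases $|M'|=q-(2r+1)\ge 2r+2$ and the two endpoints of the arc $M'$ are $r$-separated in $M$, so Lemma~\ref{lem:consecutive_connected} applies. For $1\le k\le 2r$, writing $s:=\lceil k/2\rceil$ for the cyclic distance from $m$ to $m_k$, a short computation shows that $M'$ is a union of at most two arcs of $M$: the arc of marked points $r$-separated from both $m$ and $m_k$ (of size $q-2r-1-s$), and a possibly empty shorter arc of size at most $2s-r$ coming from those $m_j$ with $j<k$ that happen to be $r$-separated from $m_k$. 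One verifies that such an $M'$ has at least $2r+2$ elements and meets the hypothesis of Lemma~\ref{lemma_relation}, namely that any two of its points are joined inside $M'$ by at most three $\sim$-edges; connectedness of the link then follows.

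The main obstacle is this last verification for $1\le k\le 2r$, and it is exactly what forces the numerical bound. The quantity $|M'|$ is smallest when $m_k$ sits roughly in the middle of the ball of radius $r$ about $m$: one finds $\min_k|M'| = q-2r-1-\lceil r/2\rceil$ (attained, e.g., at the odd type $k=2\lceil r/2\rceil-1$), so the requirement $|M'|\ge 2r+2$ is equivalent to $q\ge 4r+3+\lceil r/2\rceil$, which is precisely the hypothesis of Proposition~\ref{prop_simply_connected}. Everything else — the identification of the link, the estimate $|P'|\ge 3$, and the cases $k\in\{0,2r+1\}$ — is routine; the only genuinely delicate point is checking the three-term chain condition of Lemma~\ref{lemma_relation} when $M'$ breaks into two arcs, which is elementary but requires keeping track of the explicit description of those arcs together with the lower bound on $|M'|$.
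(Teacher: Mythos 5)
Your proposal follows essentially the same route as the paper: identify the link of a type-$k$ vertex with a complex $\mathfrak{R}(\mathbb{D}\setminus P',P',M',\sim)$ with $P'=P\setminus\{p,\alpha_k(1)\}$, note $\lvert P'\rvert\geq 3$, describe $M'$ case by case (your arc decomposition and the minimum $\lvert M'\rvert=q-2r-1-\lceil r/2\rceil$ agree with the paper's counts, which is exactly how the hypothesis on $q$ enters), and conclude via Lemmas~\ref{lemma_relation} and~\ref{lem:consecutive_connected}. The only step you leave as ``one verifies'' --- the chain condition for $1\leq k\leq 2r$, when $M'$ may split into two arcs --- is precisely where the paper does its remaining work, and it closes it the same way you handle $k\in\{0,2r+1\}$: it exhibits an explicit pair of points that are consecutive in $M'$ and $\sim$-related (for instance, for $r+1\leq k\leq 2r$, the point at distance $r+1$ from $m_k$ that is $\sim$-related to $m$, together with $m_{2r-k+1}$) and then applies Lemma~\ref{lem:consecutive_connected} uniformly in all cases.
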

\begin{proof}
 For $0\leq k\leq 2r+1$, the link of $\alpha_k$ in $R_{k-1}$ is isomorphic to $\mathfrak{R}(\mathbb{D}\setminus P'_k,P_k',M_k',\sim)$ where $P_k'=P\setminus \{p,\alpha_k(1)\}$ and for $0\leq k\leq 2r+1$, $M_k'$ consists of the marked points of $M$ that are $\sim$-related to $m$ and to $\alpha_k(0)$ together with the marked points $m_{k-1},m_{k-3},\dots m_{r-(k-r-1)}$ if $r+1\leq k\leq2r$, and $M_{2r+1}'$ consists of the marked points of $M$ that are $\sim$-related to $\alpha_{2r+1}$. Note that for all  $0\leq k\leq 2r+1$, $\lvert P_k'\rvert \geq 3$. 
We now distinguish three cases:
 \begin{itemize}
 	\item Because $\lvert M\rvert \geq  4r+3+\lceil \frac{r}{2}\rceil$, $\lvert M'_0\rvert=\lvert M'_{2r+1}\rvert \geq 2r +2 + \lceil \frac{r}{2}\rceil \geq 2r+2$ and in each case, the two points who are at distance (in $M$) exactly $r+1$ of respectively $m$ and $\alpha_{2r+1}(0)$ are $\sim$-related.
 	\item For $1\leq k\leq r$, $\lvert M'_k\rvert=\lvert M'_{0}\rvert-\lceil \frac{k}{2}\rceil  \geq 2r +2 + \lceil \frac{r}{2}\rceil -\lceil \frac{k}{2}\rceil\geq 2r+2$. Moreover the marked point that is at distance (in $M$) $r+1$ from $m$ and that is $\sim$-separated from $m_k$ and the marked point that is $\sim$-separated from $m$ and at distance (in $M$) exactly $r+1$ from $m_k$ are $\sim$-separated.
 	\item For $r+1\leq k\leq 2r$, $\lvert M'_k\rvert=\lvert M'_{0}\rvert-\lceil \frac{k}{2}\rceil + k-r\geq 2r +2 + \lceil \frac{r}{2}\rceil -\lceil \frac{k}{2}\rceil\ +k-r\geq 2r+2$.
 	Moreover the marked point that is at distance (in $M$) $r+1$ from $m_k$ and that is $\sim$-separated from $m$ and the marked point $m_{r-(k-r-1)}$ are $\sim$-separated.
 \end{itemize}
 
\noindent Hence, applying Lemma \ref{lem:consecutive_connected}, we obtain that the link inside $R_{k-1}$ of a vertex $\alpha_k$ of type $k$ is connected.
\end{proof}

\noindent As a consequence of Claim \ref{claim_link_RI_connected}, we can study the simply-connectedness of $\mathfrak{C}(p,q,r)$ by considering a loop lying in $R_{-1}$. Fix $\beta$ a simple arc connecting $m$ to $p$. Consider a loop $L$ in $R_{-1}$ we want to homotope it into the star of $\beta$. Since it is contractible, this will end the proof.

\noindent The arcs $\{\alpha_i\}_{1\leq i\leq n}$ representing the vertices of $L$ have their final points distinct from $p$ and their starting point $r$-separated from $m$, but they may intersect $\beta$. If there is no such intersection, then the vertices of $L$ already lie in the star of $\beta$, so there is nothing to prove in this case. Otherwise, let $1\leq j\leq n$ such that $\alpha_j$ is the arc that intersects $\beta$ the closest to $p$. Fix a small disc $D \subseteq S$ containing $p$ such that $D \cap \alpha_j$ is a subarc contained in $\partial D$ and such that $D$ is disjoint from all the $\alpha_i$ for $i\neq j$. Now let $\alpha'$ denote the arc obtained from $\alpha_j$ by replacing the subarc $\alpha_j \cap \partial D$ with $\partial D \backslash \alpha_j$. Notice that the vertex represented by $\alpha'$ is still connected to the vertices represented by $\alpha_{j-1}$ and by $\alpha_{j+1}$. Moreover the intersection of the links in $\mathfrak{C}(p,q,r)$ of $\alpha_j$ and $\alpha'$ is isomorphic to $\mathfrak{R}(\mathbb{D}\setminus P',P',M',\sim)$ where $P'=P\setminus\{\alpha_j(1)\}$ and $M'=M\setminus\{\alpha_j(0)\}$. By Lemma \ref{lem:consecutive_connected}, this intersection is connected and so $L$ is homotopic to the path $L'$ in $R_{-1}$ whose vertices are the same except that the vertex represented by $\alpha_j$ has been replaced by the one represented by $\alpha'$. Notice that doing this procedure, the total number of intersections between $\beta$ and the arcs representing the vertices of $L'$ is smaller than the total number of intersections between $\beta$ and the arcs representing the vertices of $L$. By iterating the argument, we find a loop homotopic to $L$ and whose vertices lie in the star of $\beta$, as desired. This concludes the proof.
\end{proof}

\begin{proof}[Proof of Proposition \ref{prop_subcplx_sconnected}]
	As a consequence of \cite[Proposition 5.8]{GLU_finiteness}, the descending link of a vertex of height $k$ is isomorphic to: 
\[\left\{ \begin{array}{cl} \mathfrak{C}(k,m+ (k-1)(n-1),n- 1) & \text{if $k \geq m+1$} \\ \mathfrak{C}_{\leq k-1}(k,m+(k-1)(n-1),n-1) & \text{if $k \leq m$} \end{array} \right.. \]
	By Proposition \ref{prop_simply_connected}, the descending link of a vertex of height $\geq 7$ when $m=n=2$, and of height $\geq 6$ otherwise, is connected. Moreover, by 
{\cite[Proposition 5.2]{GLU_finiteness}} the spine complex $\mathscr{SC}(A_{n,m})$ is contractible for all $m,n\geq2$.
	Hence, we conclude by Lemma \ref{lemme_Morse_theory}, that $\mathscr{SC}_{\leq 6}(A_{2,2})$ and $\mathscr{SC}_{\leq 5}(A_{n,m})$ for $(n,m)\neq (2,2)$ are simply connected.
\end{proof}

	\section{A presentation of $\mathrm{br}T_{n,m}$ for $m, n\geq 2$}\label{section_presentation}Let $m,n\geq 2$, we set $h(2,2)=6$ and $h(n,m)=5$ otherwise.
	 Consider the action of the braided Higman-Thompson groups $\mathrm{br}T_{n,m}$ on the subcomplex of the spine complex $\mathscr{SC}_{\leq h(n,m)}(A_{n,m})$ generated by vertices of height at most $h(n,m)$. To shorten the definition, in what follows it will be denoted by $\mathscr{SC}_{\leq}(A_{n,m})$ The cube complex $\mathscr{SC}_{\leq}(A_{n,m})$ is oriented (the orientation is given by the height of vertices), and, according to Proposition \ref{prop_subcplx_sconnected}, it is simply-connected. Moreover, the action preserves the orientation. We follow the construction of \cite{Brown_presentation} step by step keeping its notations that we have recalled in Subsection \ref{subsection:Brown} (see Theorem \ref{thm_Brown}).

			\subsection{Set-up}\label{section:set-up}
			Before making the choices needed for Brown's method, we set once and for all the notations used in this article. We also state some preliminaries facts. 
			
			\begin{figure}
				\begin{center}
					\includegraphics[scale=0.5]{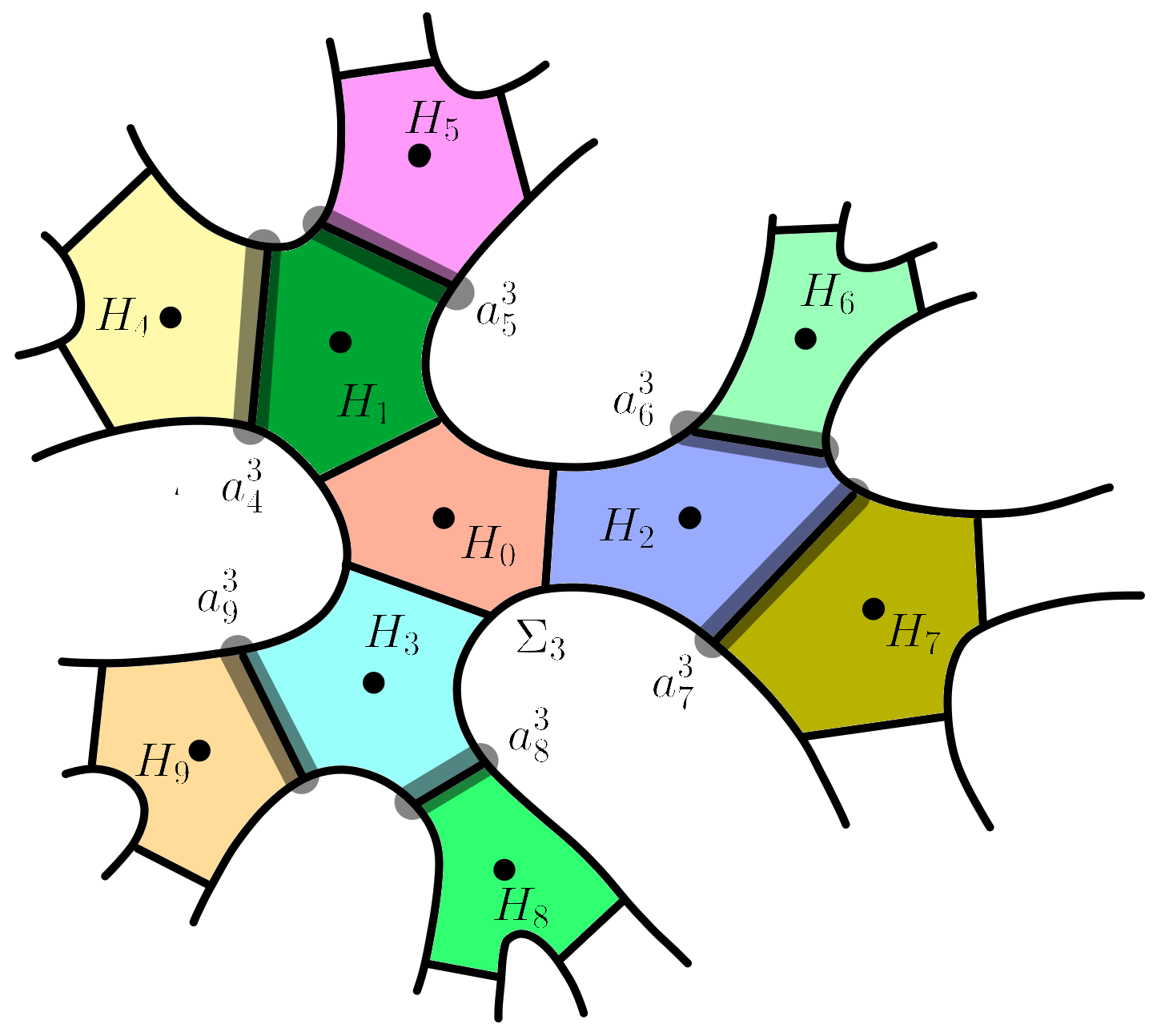}
					\caption{Arcs and polygons in $\mathrm{mod}(\mathscr{S}^\#(A_{2,3}))$ \label{fig:polygon}}
				\end{center}
			\end{figure}
			
			\medskip\noindent First, we choose inductively an ordered sequence of rigid polygons $\{H_k\}_{k\in\N}$ in $\mathscr{S}^\sharp(A_{n,m})$ and we denote by $p_i$ the puncture of $H_i$. Let $H_0$ be the central polygon and $H_1$ one of its adjacent polygon. If $H_k$ is defined for $k\geq 1$, $H_{k+1}$ is the next clockwise polygon adjacent to $H_0\cup H_1\cup \dots \cup H_{k-1}$ (see Figure \ref{fig:polygon}). For any $k\in \{0,\dots, h(n,m)-1\}$, we denote by $\Sigma_k$ the admissible subsurface of $\mathscr{S}^\sharp(A_{n,m})$ obtained as the union
			\[\Sigma_k:=\underset{0\leq i \leq k}{\bigcup}H_i.\] 
			Remark that the height of $\Sigma_k$ is $k+1$ and that it has $m+k(n-1)$ arcs in its frontier denoted by $\{a^k_i\}$ in such a way that $a^k_i=\Sigma_k\cap H_i$ for any polygon $H_i$ adjacent  to $\Sigma_k$.
			Because an element of $\mathrm{br}T_{n,m}$ preserving $\Sigma_k$ permutes cyclically its arcs, they will be indexed modulo $m+k(n-1)$ i.e. $a_i^k=a^k_{i+m+k(n-1)}$. We denote by
			\[\mathcal{I}_k=\{i\mid k+1\leq i\leq m+kn \}\] a complete set of representatives of the indices of the arcs of $\Sigma_k$.
			Note that $a_1^0=H_0\cap H_1$ while $a_1^1=a_{m+n}^1=H_1\cap H_{m+n}$.
			
			\medskip\noindent Consider a polygon $H$, not necessarily rigid, that is included either in $\Sigma_k$ or in its complementary. We denote by $\partial_k H$ the set of arcs of the polygon $H$ in the frontier of $\Sigma_k$: $\partial_k H=\Fr\Sigma_k\cap H$. Note that this set can be empty, can contain a single arc or several arcs.
			
				\begin{fact}\label{fact_arcs_Hk}
				For $k\geq 1$,  $\partial_kH_k=\{a_{m+(k-1)n+1}^k,\dots, a_{m+kn}^k\}$, and $\partial_0H_0=\{a_{1}^0,\dots, a_{m}^0\}$. Moreover, the indices of the arcs given belong respectively to $I_k$ and $I_0$.
			\end{fact}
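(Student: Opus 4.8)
The plan is to prove the statement by induction on $k\geq 0$, carrying along the stronger assertion that the polygons adjacent to $\Sigma_k$, listed in clockwise cyclic order, are exactly $H_{k+1},H_{k+2},\dots,H_{m+kn}$; equivalently, extending indices modulo $m+k(n-1)$, that $a^k_i=\Sigma_k\cap H_i$ for every $i\in\mathcal{I}_k$, together with the desired description of $\partial_kH_k$. The base case $k=0$ is the defining set-up: $\Sigma_0=H_0$ is the central polygon, all of its $m$ arcs lie in $\Fr(\Sigma_0)$, and by the labelling convention $a^0_i=H_0\cap H_i$ these are precisely $a^0_1,\dots,a^0_m$, so $\partial_0H_0=\{a^0_1,\dots,a^0_m\}$ with index set $\{1,\dots,m\}=\mathcal{I}_0$, the adjacent polygons being $H_1,\dots,H_m$ in clockwise order by the choice of $H_1$ and the construction.

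For the inductive step, assume the assertion at level $k-1$ for some $k\geq 1$. Since $A_{n,m}$ is a tree, the subtree carried by $\Sigma_{k-1}$ and the vertex carried by $H_k$ are joined by a single edge, so $H_k$ meets $\Sigma_{k-1}$ along exactly one arc, which by the induction hypothesis is $a^{k-1}_k$ (the position of least index in $\mathcal{I}_{k-1}$). Because $k\geq 1$, the polygon $H_k$ carries a vertex of valence $n+1$ and thus has $n+1$ arcs: the arc $a^{k-1}_k$ becomes interior to $\Sigma_k=\Sigma_{k-1}\cup H_k$, while its remaining $n$ arcs lie in $\Fr(\Sigma_k)$ and are exactly $\partial_kH_k$. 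Hence passing from $\Sigma_{k-1}$ to $\Sigma_k$ removes $a^{k-1}_k$ from the frontier and inserts, at its slot in the clockwise cyclic order, the $n$ new arcs of $H_k$, while the arcs $a^{k-1}_{k+1},\dots,a^{k-1}_{m+(k-1)n}$ survive. Reading off the clockwise-spiralling rule defining $\{H_j\}_j$ (Figure~\ref{fig:polygon}), the polygons on the other side of these new arcs---the neighbours of $H_k$ other than its parent---are the next $n$ polygons of the enumeration, $H_{m+(k-1)n+1},\dots,H_{m+kn}$; so with respect to the representatives $\mathcal{I}_k=\{k+1,\dots,m+kn\}$ the new arcs receive the labels $a^k_{m+(k-1)n+1},\dots,a^k_{m+kn}$ while the surviving arcs keep the labels $a^k_{k+1},\dots,a^k_{m+(k-1)n}$. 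This re-establishes the assertion at level $k$, and in particular yields $\partial_kH_k=\{a^k_{m+(k-1)n+1},\dots,a^k_{m+kn}\}$.

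The remaining clause on index sets is elementary: $\{1,\dots,m\}=\mathcal{I}_0$, and for $k\geq 1$ one has $m+kn=\max\mathcal{I}_k$ while $m+(k-1)n+1\geq k+1=\min\mathcal{I}_k$ since $m+(k-1)n-k\geq 2+2(k-1)-k=k\geq 0$ (using $m,n\geq 2$). I expect the only genuinely delicate point to be the bookkeeping in the inductive step: one must fix the clockwise cyclic ordering of the frontier of each $\Sigma_j$ carefully enough to see that the $n$ neighbours of $H_k$ newly appearing in $\Fr(\Sigma_k)$ carry precisely the \emph{consecutive} indices $m+(k-1)n+1,\dots,m+kn$ (rather than being interleaved with the surviving arcs or shifted); everything else then follows at once.
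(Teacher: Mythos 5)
Your overall scheme (induction on $k$, the tree argument giving a single arc $H_k\cap\Sigma_{k-1}=a^{k-1}_k$, the count of $n$ new frontier arcs, and the closing arithmetic showing the indices lie in $\mathcal{I}_k$) is sound, but the crux of the Fact is exactly the step you do not prove: that the $n$ neighbours of $H_k$ outside $\Sigma_k$ are the polygons carrying the \emph{consecutive} labels $H_{m+(k-1)n+1},\dots,H_{m+kn}$. You assert this by ``reading off the clockwise-spiralling rule'' and then concede in your last paragraph that the bookkeeping has not been done; since everything else in your write-up is routine, this is the whole content of the statement. Moreover, the induction as you set it up cannot deliver it: your hypothesis at level $k-1$ only describes the frontier of $\Sigma_{k-1}$ (the polygons $H_k,\dots,H_{m+(k-1)n}$), whereas the labels of the new polygons are fixed by the recursive rule ``$H_{j+1}$ is the next clockwise polygon adjacent to $H_0\cup\dots\cup H_{j-1}$'' applied for $j$ up to $m+kn-1$, i.e.\ by the frontiers of much larger unions and by where the traversal currently sits on them. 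To make your route rigorous you would need a finer induction on the polygon index $j$, carrying along the position of the spiral on the frontier, not an induction on the height $k$ alone. (For comparison, the paper offers no proof at all: the Fact is treated as built into the set-up conventions and Figure~\ref{fig:polygon}.)

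There is also a short argument that avoids all clockwise bookkeeping and is closer to what the set-up already provides. The labelling convention $a^k_i=\Sigma_k\cap H_i$ with representative indices $\mathcal{I}_k=\{k+1,\dots,m+kn\}$ presupposes that the polygons adjacent to $\Sigma_k$ are exactly $H_{k+1},\dots,H_{m+kn}$. Granting this for $\Sigma_{k-1}$ and $\Sigma_k$, the tree structure of $A_{n,m}$ implies that a polygon adjacent to $\Sigma_k$ shares an arc with exactly one of $H_0,\dots,H_k$; hence the polygons meeting $\Sigma_k$ along arcs of $H_k$ are precisely those adjacent to $\Sigma_k$ but not to $\Sigma_{k-1}$, i.e.\ those with index in $\{k+1,\dots,m+kn\}\setminus\{k,\dots,m+(k-1)n\}=\{m+(k-1)n+1,\dots,m+kn\}$, a set of cardinality $n$, matching the $n$ arcs of $H_k$ left on the frontier after its unique arc $a^{k-1}_k$ becomes interior. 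This yields $\partial_kH_k$ as claimed (and $\partial_0H_0$ trivially), with your final index computation unchanged; if instead you insist on proving the spiral description of the enumeration itself, that is where your argument currently has a genuine gap.
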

			
				\begin{fact}\label{fact_arcs_intersection}
				If $i\in \mathcal{I}_{k}\cap \mathcal{I}_{\ell} $, then $a_{i}^{k}=a_{i}^{\ell}$.
			\end{fact}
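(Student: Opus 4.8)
The plan is to unwind the definitions of the index sets $\mathcal{I}_k$ and of the arcs $a_i^k$, and to reduce the claim to a transparent geometric statement: if an arc lies in the frontier of both $\Sigma_k$ and $\Sigma_\ell$ and receives the same index, then it is literally the same arc. First I would recall that $\Sigma_k = \bigcup_{0 \le i \le k} H_i$ and that the polygons $\{H_j\}_{j \in \N}$ were fixed once and for all in the set-up, so $\Sigma_k \subseteq \Sigma_\ell$ whenever $k \le \ell$ (say, without loss of generality, $k \le \ell$). Thus an arc of $\Fr \Sigma_\ell$ is either an arc of $\Fr \Sigma_k$ (if it separates $\Sigma_k$ from its complement) or it lies ``inside'' $\Sigma_\ell$ but meets only polygons $H_{k+1}, \dots, H_\ell$. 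The key point is that, by construction, the arcs of $\Fr \Sigma_k$ are indexed by declaring $a_i^k = \Sigma_k \cap H_i$ for the polygon $H_i$ adjacent to $\Sigma_k$ across that arc, and this indexing is compatible for varying $k$ precisely on the overlap of the ranges.

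The heart of the argument is a bookkeeping computation with the representative sets $\mathcal{I}_k = \{ i \mid k+1 \le i \le m + kn \}$. Suppose $i \in \mathcal{I}_k \cap \mathcal{I}_\ell$ with $k \le \ell$; then in particular $i \ge \ell + 1 > k$, so $i > \ell \ge k$, meaning the index $i$ is strictly larger than every index of a polygon contained in $\Sigma_\ell$ (hence in $\Sigma_k$). Therefore the polygon $H_i$ is \emph{not} contained in $\Sigma_\ell$, and since $i \le m + kn \le m + \ell n$ we have, using Fact~\ref{fact_arcs_Hk} and the inductive description of how $H_{k+1}, \dots, H_\ell$ are glued on (each $H_{j}$ for $j > k$ being the next clockwise polygon adjacent to $\Sigma_{j-1}$), that $H_i$ is in fact adjacent to $\Sigma_k$ and remains a polygon of the complement of $\Sigma_\ell$ adjacent to $\Sigma_\ell$ — the passage from $\Sigma_k$ to $\Sigma_\ell$ only absorbs the polygons with indices $k+1, \dots, \ell$, all of which are $< i$. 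Consequently the arc across which $H_i$ meets $\Sigma_k$, namely $a_i^k = \Sigma_k \cap H_i$, and the arc across which $H_i$ meets $\Sigma_\ell$, namely $a_i^\ell = \Sigma_\ell \cap H_i$, are both equal to the unique arc of the rigid structure crossing the edge of $A_{n,m}$ that separates $H_i$ from the rest; hence $a_i^k = a_i^\ell$.

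The main obstacle, and the step I would be most careful about, is verifying that the clockwise enumeration conventions make the indexing genuinely consistent — i.e. that the arc receiving label $i$ in $\Fr \Sigma_k$ is the \emph{same} arc that receives label $i$ in $\Fr \Sigma_\ell$, and not merely an arc crossing the same edge counted from a shifted starting point. This is where one must use that the arcs are indexed modulo $m + k(n-1)$ with the normalisation $a_1^0 = H_0 \cap H_1$ and $a_1^1 = H_1 \cap H_{m+n}$ propagated through the inductive construction, and that $\mathcal{I}_k$ was chosen precisely as the representatives $\{k+1, \dots, m+kn\}$ so that the ``new'' arcs appearing when we pass from $\Sigma_k$ to $\Sigma_{k+1}$ occupy exactly the index positions vacated by removing $H_{k+1}$ from the complement. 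Once this compatibility of the labelling scheme across successive heights is pinned down — which is really a consequence of the way $H_{k+1}$ was defined as ``the next clockwise polygon adjacent to $\Sigma_k$'' — the equality $a_i^k = a_i^\ell$ for $i$ in the common range is immediate, and the general case follows by transitivity of these identifications along $k \le k+1 \le \cdots \le \ell$.
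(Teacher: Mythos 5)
Your argument is correct and is essentially the verification the paper leaves implicit (the Fact is stated without proof): since $a_i^k$ is by definition $\Sigma_k\cap H_i$, and for $i\in\mathcal{I}_k\cap\mathcal{I}_\ell$ the fixed polygon $H_i$ is adjacent to both $\Sigma_k$ and $\Sigma_\ell$, each intersection is the single arc of $H_i$ crossing the tree-edge towards the central polygon (uniqueness coming from the absence of cycles in $A_{n,m}$), which is exactly your second paragraph. The worry in your final paragraph about a possible shift in a clockwise enumeration is moot, because the labels are assigned intrinsically by the index of the adjacent polygon rather than by counting from a basepoint, so no inconsistency can arise.
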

			
		\noindent
			Recall that $r_{\Sigma_k}$ is the rotation around $\Sigma_k$ introduced in Example \ref{ex_rotations}. 
				\begin{fact}\label{fact_rotation_arc}
				For $i\in \mathcal{I}_k$, $r_{\Sigma_k}^j(a_{i}^k)=a_{i+j}^k$.
			\end{fact}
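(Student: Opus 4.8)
The plan is simply to unwind the definitions. Recall from Example~\ref{ex_rotations} that $r_{\Sigma_k}$ is the asymptotically rigid homeomorphism whose restriction to a disc in $\Sigma_k$ containing all the punctures is the identity and which cyclically shifts, in the clockwise direction, the $m+k(n-1)$ arcs of $\Fr(\Sigma_k)$ (together with the homeomorphic arboreal surfaces hanging off them, without acting on them). In particular $r_{\Sigma_k}$ permutes the set $\{a_i^k\}$ of frontier arcs of $\Sigma_k$ cyclically, so the only thing to pin down is which cyclic permutation it is.

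First I would check that this cyclic permutation is exactly $a_i^k \mapsto a_{i+1}^k$. This is where the indexing convention enters: the polygons $H_k$ were chosen so that $H_{k+1}$ is the next clockwise polygon adjacent to $H_0\cup\dots\cup H_{k-1}$, and $a_i^k = \Sigma_k \cap H_i$, so the labels of the frontier arcs of $\Sigma_k$ increase precisely when one runs clockwise around $\partial\Sigma_k$. Since the shift performed by $r_{\Sigma_k}$ is clockwise by definition, it carries each arc to the next one in this cyclic order, that is $r_{\Sigma_k}(a_i^k) = a_{i+1}^k$, the indices being read modulo $m+k(n-1)$ as in the set-up. Note that this equality of isotopy classes of arcs is consistent with the cyclic identification $a_i^k = a_{i+m+k(n-1)}^k$, since $r_{\Sigma_k}$ has order $m+k(n-1)$ in its action on $\Fr(\Sigma_k)$.

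The general statement then follows by an immediate induction on $j\geq 0$: assuming $r_{\Sigma_k}^j(a_i^k) = a_{i+j}^k$, one applies $r_{\Sigma_k}$ and invokes the case $j=1$ with $i$ replaced by $i+j$ to obtain $r_{\Sigma_k}^{j+1}(a_i^k) = a_{i+j+1}^k$; since $r_{\Sigma_k}$ is invertible, the formula extends to all $j\in\Z$. The only point requiring attention — a bookkeeping matter rather than a genuine obstacle — is to make sure that ``clockwise'' in the definition of the rotation, the clockwise order in which the polygons $H_k$ are attached, and the direction of increasing index all agree; with the conventions fixed in Figures~\ref{fig:rotation} and~\ref{fig:polygon}, there is nothing further to prove.
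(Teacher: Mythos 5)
Your argument is correct and matches the paper's treatment: the paper states this as a Fact with no proof, regarding it as immediate from the definition of $r_{\Sigma_k}$ as a clockwise cyclic shift of the frontier arcs together with the convention that the indices $a_i^k=\Sigma_k\cap H_i$ increase in the clockwise order in which the polygons $H_i$ were attached. Your write-up simply spells out that definitional unwinding (case $j=1$ plus induction and invertibility), which is exactly the intended justification.
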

		\noindent We emphasize  that $i+j$ does not always belong to $\mathcal{I}_k$.
		Facts \ref{fact_arcs_Hk} and \ref{fact_rotation_arc} imply the following fact.
			\begin{fact}\label{fact_rotation}
		For $k\geq 1$, $\partial_kr_{\Sigma_k}^j(H_k)=\{a_{m+(k-1)n+1+j}^k,\dots, a_{m+kn+j}^k\}$.
		\end{fact}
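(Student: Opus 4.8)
The statement to prove is Fact~\ref{fact_rotation}, namely that for $k\geq 1$,
\[\partial_k r_{\Sigma_k}^j(H_k)=\{a_{m+(k-1)n+1+j}^k,\dots, a_{m+kn+j}^k\}.\]

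My plan is to derive this directly from the two facts the excerpt explicitly says it combines, namely Fact~\ref{fact_arcs_Hk} and Fact~\ref{fact_rotation_arc}, together with the definition of the rotation $r_{\Sigma_k}$ from Example~\ref{ex_rotations}. First I would recall that $r_{\Sigma_k}$ is an asymptotically rigid homeomorphism that fixes $\Sigma_k$ setwise and cyclically shifts the arcs of $\Fr\Sigma_k$; in particular, for any polygon $H$ adjacent to $\Sigma_k$, the image $r_{\Sigma_k}^j(H)$ is again a polygon adjacent to $\Sigma_k$, and its arc in $\Fr\Sigma_k$ is the image under $r_{\Sigma_k}^j$ of the arc of $H$ in $\Fr\Sigma_k$. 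More generally, for a polygon $H$ included in the complement of $\Sigma_k$ (not necessarily adjacent), the set $\partial_k\big(r_{\Sigma_k}^j(H)\big)$ is exactly $r_{\Sigma_k}^j\big(\partial_k H\big)$, since $r_{\Sigma_k}^j$ permutes the arcs of $\Fr\Sigma_k$ and maps $H$ to another polygon in the complement. This reduces the claim to computing $r_{\Sigma_k}^j\big(\partial_k H_k\big)$.

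Next I would apply Fact~\ref{fact_arcs_Hk}: for $k\geq 1$ we have $\partial_k H_k=\{a^k_{m+(k-1)n+1},\dots,a^k_{m+kn}\}$. Applying the rotation and using Fact~\ref{fact_rotation_arc}, which says $r_{\Sigma_k}^j(a_i^k)=a_{i+j}^k$ for $i\in\mathcal I_k$, each arc $a^k_i$ in this list is sent to $a^k_{i+j}$, so
\[r_{\Sigma_k}^j\big(\partial_k H_k\big)=\{a^k_{m+(k-1)n+1+j},\dots,a^k_{m+kn+j}\},\]
which is precisely the asserted set. One must be slightly careful here: Fact~\ref{fact_rotation_arc} is stated for $i\in\mathcal I_k$, and all the indices $m+(k-1)n+1,\dots,m+kn$ lie in $\mathcal I_k$ by the last sentence of Fact~\ref{fact_arcs_Hk}, so the rotation formula applies to every term; the resulting indices $i+j$ are then interpreted modulo $m+k(n-1)$, consistently with the indexing convention $a^k_i=a^k_{i+m+k(n-1)}$ fixed in the Set-Up.

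The only genuine content — and the step I would treat most carefully — is the first reduction: justifying that $\partial_k\big(r_{\Sigma_k}^j(H_k)\big)=r_{\Sigma_k}^j\big(\partial_k H_k\big)$, i.e.\ that the rotation commutes with the operation of intersecting a polygon with $\Fr\Sigma_k$. This follows because $r_{\Sigma_k}$ is a homeomorphism of $\mathscr S^\sharp(A_{n,m})$ that preserves $\Sigma_k$ (hence preserves $\Fr\Sigma_k$ as a set of arcs) and preserves the rigid structure outside $\Sigma_k$ (it maps polygons to polygons), so $\partial_k(r_{\Sigma_k}^j(H_k))=\Fr\Sigma_k\cap r_{\Sigma_k}^j(H_k)=r_{\Sigma_k}^j(\Fr\Sigma_k)\cap r_{\Sigma_k}^j(H_k)=r_{\Sigma_k}^j(\Fr\Sigma_k\cap H_k)=r_{\Sigma_k}^j(\partial_k H_k)$. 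Everything else is a routine index bookkeeping, so there is no real obstacle; the proof is a two-line chain of equalities once this commutation is noted.
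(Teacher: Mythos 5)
Your proof is correct and follows exactly the route the paper intends: the paper gives no separate argument for this fact, simply noting that it follows from Facts~\ref{fact_arcs_Hk} and~\ref{fact_rotation_arc}, which is precisely the combination you carry out. Your explicit justification that $\partial_k\big(r_{\Sigma_k}^j(H_k)\big)=r_{\Sigma_k}^j\big(\partial_k H_k\big)$ (because the rotation preserves $\Sigma_k$, hence $\Fr\Sigma_k$, and maps polygons to polygons) is the only detail the paper leaves implicit, and you have supplied it correctly.
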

	
	\noindent We denote by $e_k$ the edge linking $[\Sigma_k,\id]$ and $[\Sigma_{k+1},\id]$, and by $\stab([\Sigma_k,\id])$ the stabiliser of $[\Sigma_k,\id]$.
		\begin{fact}\label{fact_edge_facile}
			Let $e$ be an edge starting in $[\Sigma_k, \id]$ and ending in $[\Sigma_k\cup H_r,\id]$, for some $k+1\leq r\leq m+kn$. The rotation $r_{\Sigma_k}^{r-(k+1)}$ belongs to
			$\stab([\Sigma_k,\id])$ and it sends $H_{k+1}$ to $H_r$. Consequently, it sends the edge $e_k$ to the edge $e$.
		\end{fact}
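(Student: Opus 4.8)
The plan is to verify the three assertions in order, each by directly unwinding the definition of the spine complex and the description of the rotation $r_{\Sigma_k}$ from Example~\ref{ex_rotations}. First note that $r-(k+1)\geq 0$ since $r\geq k+1$, so $r_{\Sigma_k}^{r-(k+1)}$ makes sense.

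The first step is to show that $r_{\Sigma_k}$, and hence each of its powers, stabilises $[\Sigma_k,\id]$. By Example~\ref{ex_rotations}, $r_{\Sigma_k}$ is an asymptotically rigid homeomorphism that maps the subsurface $\Sigma_k$ onto itself (it only permutes the frontier arcs of $\Sigma_k$ cyclically and is the identity on a disc of $\Sigma_k$ containing all its punctures) and that is rigid outside $\Sigma_k$. By the definition of the equivalence relation on vertices of $\mathscr{SC}(A_{n,m})$, this says precisely that $(\Sigma_k,r_{\Sigma_k})\sim(\Sigma_k,\id)$, i.e.\ $r_{\Sigma_k}\cdot[\Sigma_k,\id]=[\Sigma_k,r_{\Sigma_k}]=[\Sigma_k,\id]$; as stabilisers are subgroups, $r_{\Sigma_k}^{r-(k+1)}\in\stab([\Sigma_k,\id])$.

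The second step is to identify the image of $H_{k+1}$. Using the convention $a_i^k=\Sigma_k\cap H_i$, the polygon $H_{k+1}$ is the unique rigid polygon adjacent to $\Sigma_k$ with frontier arc $a_{k+1}^k$, and $H_r$ is the unique one with frontier arc $a_r^k$; this is legitimate since $k+1\leq r\leq m+kn$ means $r\in\mathcal I_k$, and $\lvert\mathcal I_k\rvert=m+k(n-1)$ is exactly the number of frontier arcs of $\Sigma_k$. By Fact~\ref{fact_rotation_arc}, $r_{\Sigma_k}^{r-(k+1)}(a_{k+1}^k)=a_{k+1+(r-(k+1))}^k=a_r^k$. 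Since $r_{\Sigma_k}^{r-(k+1)}$ preserves $\Sigma_k$ and is rigid outside $\Sigma_k$, it carries the rigid polygon $H_{k+1}$ glued to $\Sigma_k$ along $a_{k+1}^k$ onto the rigid polygon glued to $\Sigma_k$ along $a_r^k$ on the side away from $\Sigma_k$, which is exactly $H_r$.

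Finally, I would deduce the statement about edges. Put $g:=r_{\Sigma_k}^{r-(k+1)}$. The edge $e_k$ joins $[\Sigma_k,\id]$ to $[\Sigma_{k+1},\id]=[\Sigma_k\cup H_{k+1},\id]$, and $e$ joins $[\Sigma_k,\id]$ to $[\Sigma_k\cup H_r,\id]$. By the first step $g\cdot[\Sigma_k,\id]=[\Sigma_k,\id]$; and by the first two steps $g$ maps $\Sigma_{k+1}=\Sigma_k\cup H_{k+1}$ onto $\Sigma_k\cup H_r$ and is rigid outside $\Sigma_k$, hence rigid outside the larger admissible subsurface $\Sigma_{k+1}$, so $g\cdot[\Sigma_{k+1},\id]=[\Sigma_{k+1},g]=[\Sigma_k\cup H_r,\id]$. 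Thus $g$ sends the $1$-cube $e_k$ to the $1$-cube joining $[\Sigma_k,\id]$ and $[\Sigma_k\cup H_r,\id]$ obtained by adjoining $H_r$, that is, to $e$. The argument is routine; the only point that deserves a moment's attention is the bookkeeping of the ``rigid outside'' condition, together with the observation that being rigid outside $\Sigma_k$ entails being rigid outside any admissible subsurface containing $\Sigma_k$. I do not anticipate any genuine obstacle.
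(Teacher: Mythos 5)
Your argument is correct and is exactly the routine unwinding that the paper leaves implicit: Fact~\ref{fact_edge_facile} is stated without proof, and its intended justification is precisely your combination of the equivalence relation defining vertices of $\mathscr{SC}(A_{n,m})$, the description of the rotation in Example~\ref{ex_rotations}, and Fact~\ref{fact_rotation_arc}. Your added remark that rigidity outside $\Sigma_k$ implies rigidity outside the larger admissible surface $\Sigma_{k+1}$ is the right bookkeeping point and causes no difficulty.
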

			
\noindent
			Let $H_{r_1}$ and $H_{r_2}$ be two polygons adjacent to $\Sigma_k$. We define \emph{the distance between $H_{r_1}$ and $H_{r_2}$} as $r_1-r_2$ modulo $m+k(n-1)$. Finally, we recall that $\tau_{i,j}$ denotes the twist between the punctures $p_i$ and $p_j$ of two adjacent polygons (see Example \ref{ex_twist}). Notice that in the case $m=2$ (respectively $m=3$), the polygons $H_3$ and $H_4$ (respectively $H_4$) are not adjacent to $H_0$ but to $H_1$. In the same way if $(n,m)=(2,2)$ then $H_5$ is adjacent to $H_2$. Hence, to shorten the notations, we set for $1\leq i\leq 4$,  \[\tau_i:=\begin{cases}
		\tau_{0,i} & \text{if } i\leq m\\
		\tau_{1,i}& \text{ if } i>m
	\end{cases} \ \ \ \ \text{ and } \ \ \ \ \tau_5:=\tau_{2,5}.\]

					\subsection{Choices of representatives}
				Recall that we denote by $o(\alpha)$ the vertex of origin of an edge $\alpha$ of $\mathscr{SC}_{\leq}(A_{n,m})$ and by $t(\alpha)$ the terminal vertex. We need now to do several choices: 
				\begin{itemize}
				\item a choice of a \emph{tree of representatives} $T$, meaning a tree $T$ such that its set of vertices $V$ is a set of representatives of the vertices of $\mathscr{SC}_{\leq}(A_{n,m})$ under the action of $\mathrm{br}T_{n,m}$;
				\item a choice of a set of representatives (under the action of $\mathrm{br}T_{n,m}$) of edges $E^+$ starting in a vertex of $T$ and that contains the edges of $T$;
			\item a choice of representatives of squares $F$ based on vertices of $T$. 
				\end{itemize}

		\subsubsection{Choice of a tree of representatives}
		We choose the following tree $T$ as tree of representatives. The set of vertices $V$ of $T$ is $\{[\Sigma_k, \id]\}_{0\leq k\leq h(n,m)-1}$ and the set of edges of $T$ is $\{e_k\}_{0\leq k\leq h(n,m)-2}$  (see Figure \ref{fig:tree_rep}).
		 
		 \begin{figure}
		 	\begin{center}
		 		\begin{tikzpicture}
		 			\draw (0,0) node{$\bullet$} node[below]	{$[\Sigma_0, \id]$};
		 			\draw (3,0) node{$\bullet$} node[below]	{$[\Sigma_1, \id]$};
		 			\draw (6,0) node{$\bullet$} node[below]	{$[\Sigma_2, \id]$};
		 			\draw (9,0) node{$\bullet$} node[below]	{$[\Sigma_3, \id]$};
		 				\draw (12,0) node{$\bullet$} node[below]	{$[\Sigma_4, \id]$};
		 			\draw (0,0) -- (3,0) -- (6,0)-- (9,0)--(12,0);
		 			\draw (1.5,0) node[above]	{$e_0$};
		 			\draw (4.5,0) node[above]	{$e_1$};
		 			\draw (7.5,0) node[above]	{$e_2$};
		 			\draw (10.5,0) node[above]	{$e_3$};
		 		\end{tikzpicture}
		 	\end{center}
	 	 		\caption{Tree $T$: our choice of a tree of representatives of the action of $\mathrm{br}T_{n,m}$ on $\mathscr{SC}_{\leq}(A_{n,m})$.\label{fig:tree_rep}}
		 \end{figure}
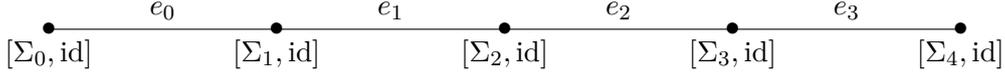
		
		\begin{lemma}\label{lemma_tree_rep}
		Let $n,m\geq 2$. $T$ is a tree of representatives for the action of $\mathrm{br}T_{n,m}$ on $\mathscr{SC}_{\leq}(A_{n,m})$.
		\end{lemma}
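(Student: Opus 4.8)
The plan is to verify the two defining properties of a tree of representatives: that $T$ is indeed a subtree of $\mathscr{SC}_{\leq}(A_{n,m})$, and that $V = \{[\Sigma_k,\id]\}_{0\leq k\leq h(n,m)-1}$ meets every orbit of the $\mathrm{br}T_{n,m}$-action on vertices exactly once. The first point is immediate: by construction $e_k$ is the edge joining $[\Sigma_k,\id]$ to $[\Sigma_{k+1},\id]$ (since $\Sigma_{k+1} = \Sigma_k \cup H_{k+1}$ and $H_{k+1}$ is adjacent to $\Sigma_k$), so $T$ is a path on $h(n,m)$ vertices, in particular a tree.

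For the orbit-transversal property, first I would show that every vertex orbit meets $V$. Take an arbitrary vertex $[\Sigma,\varphi]$ of $\mathscr{SC}_{\leq}(A_{n,m})$. Its height $h([\Sigma,\varphi])$ equals the number of punctures in $\Sigma$, which is $k+1$ for some $0 \leq k \leq h(n,m)-1$; thus $\Sigma$ is an admissible subsurface containing the central polygon and $k$ further polygons. Since any two such admissible subsurfaces are related by an asymptotically rigid homeomorphism (an element of $\mathrm{br}T_{n,m}$ can carry $\Sigma$ to the "standard" $\Sigma_k = \bigcup_{i\leq k} H_i$, because both are connected unions of $k+1$ polygons of the rigid structure containing the central polygon), there is $g \in \mathrm{br}T_{n,m}$ with $g\varphi^{-1}$ mapping $\Sigma$ to $\Sigma_k$ and rigid outside $\Sigma$; then $g\cdot[\Sigma,\varphi] = [\Sigma, g\varphi]$, and by the equivalence relation defining vertices this equals $[\Sigma_k, \id]$. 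Hence every orbit meets $V$.

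Next I would show distinct elements of $V$ lie in distinct orbits — this is the only place requiring a small argument. If $g\cdot[\Sigma_k,\id] = [\Sigma_\ell,\id]$ for some $g \in \mathrm{br}T_{n,m}$, then since the action preserves the height function (as recalled in Subsection \ref{subsection:spine}), we get $h([\Sigma_k,\id]) = h([\Sigma_\ell,\id])$, i.e. $k+1 = \ell+1$, so $k = \ell$. Thus $V$ is a transversal, and combined with the first paragraph this proves $T$ is a tree of representatives. The main (and genuinely only) obstacle is the claim that $\mathrm{br}T_{n,m}$ acts transitively on the admissible subsurfaces of a given height that contain the central polygon; this follows from the fact that such a subsurface is a connected union of polygons of the rigid structure, so the underlying finite subtrees of $A_{n,m}$ are isomorphic rel the root, and one can realize any such isomorphism by an asymptotically rigid homeomorphism (extending rigidly outside), which is exactly an element of $\mathrm{br}T_{n,m}$.
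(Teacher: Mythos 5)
Your overall structure (show every vertex of a given height is in the orbit of $[\Sigma_k,\id]$, then use height-preservation to separate orbits) matches the paper's, and the height argument is fine. However, your justification of the key transitivity claim contains a genuine error. Two admissible subsurfaces of the same height containing the central polygon correspond to two subtrees of $A_{n,m}$ containing the root and having the same number of vertices, and such subtrees are in general \emph{not} isomorphic rel the root, nor even as abstract trees: already at height $3$ (resp.\ $4$) one can take the central polygon together with two polygons adjacent to it, or the central polygon, one adjacent polygon, and a polygon adjacent to that one; the corresponding subtrees are a star and a path. So there is no subtree isomorphism to ``realize by an asymptotically rigid homeomorphism extending rigidly outside'', and indeed a homeomorphism that is rigid \emph{everywhere} cannot carry such a $\Sigma$ to $\Sigma_k$ in general. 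The correct point, which is how the paper argues, is that asymptotic rigidity imposes no condition on the map inside $\Sigma$: one cuts $\mathscr{S}^\sharp(A_{n,m})$ along the frontier arcs of $\Sigma$ and along those of $\Sigma_{h(\Sigma)-1}$, observes that $\Sigma$ and $\Sigma_{h(\Sigma)-1}$ are homeomorphic (same number of punctures and the same number $m+(n-1)(h(\Sigma)-1)$ of frontier arcs) while all complementary pieces are homeomorphic to $\mathscr{S}^\sharp(A_{n,n})$ and equinumerous, and then glues these identifications into an orientation-preserving homeomorphism of $\mathscr{S}^\sharp(A_{n,m})$ that sends $\Sigma$ to $\Sigma_{h(\Sigma)-1}$ and is rigid outside $\Sigma$. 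Your argument as written does not produce this homeomorphism, so the transitivity step is not established.

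A smaller bookkeeping slip: with the paper's equivalence relation, $[\Sigma, g\varphi]=[\Sigma_k,\id]$ requires that $g\varphi$ (up to isotopy) map $\Sigma$ to $\Sigma_k$ and be rigid outside $\Sigma$ --- not $g\varphi^{-1}$ as you wrote. The fix is immediate: take $g=h\varphi^{-1}$ where $h$ is the asymptotically rigid homeomorphism above, or, as in the paper, first act by $\varphi^{-1}$ to replace $[\Sigma,\varphi]$ by $[\Sigma,\id]$ and then apply $h$.
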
 
		\begin{proof}
		 Let $[\Sigma, f]$ be a vertex of $\mathscr{SC}_{\leq}(A_{n,m})$. It is in the orbit of $[\Sigma, \id]$. Consider $\mathscr{S}^\sharp(A_{n,m})$ and cut it along the $m+(n-1)(h(\Sigma)-1)$ extremal arcs of $\Sigma$. We obtain  the surface $\Sigma$ and $m+(n-1)(h(\Sigma)-1)$ infinite surfaces $S_1, \dots S_{m+(n-1)(h(\Sigma)-1)}$ homeomorphic to $\mathscr{S}^\sharp(A_{n,n})$. Cutting $\mathscr{S}^\sharp(A_{n,m})$ along the extremal arcs of $\Sigma_{h(\Sigma)-1}$ gives the surface $\Sigma_{h(\Sigma)-1}$ and $m+(n-1)(h(\Sigma)-1)$ infinite surfaces $S'_1,\dots, S'_{h(\Sigma)+2}$ homeomorphic to $\mathscr{S}^\sharp(A_{n,n})$. The surfaces $\Sigma$ and $\Sigma_{h(\Sigma)-1}$ are homeomorphic as they have the same number of puncture $h(\Sigma)$. Consequently, there exists an homeomorphism $g$ of $\mathscr{S}^\sharp(A_{n,m})$ preserving the orientation, sending $\Sigma$ to $\Sigma_{h(\Sigma)-1}$ and $\{S_i\}_{1\leq i \leq m+(n-1)(h(\Sigma)-1)}$ to $\{S'_i\}_{1\leq i \leq m+(n-1)(h(\Sigma)-1)}$. Hence, $g\in \mathrm{br}T_{n,m}$ is rigid outside $\Sigma$, so the image of $[\Sigma, \id]$ by  the class of $g$ is $[\Sigma_{h(\Sigma)-1},\id]$ and so $[\Sigma_{h(\Sigma)-1},\id]$ belongs to the orbit of $[\Sigma, f]$. 
		 
	\medskip\noindent To conclude that $T$ is a tree of representatives we notice that the action of $\mathrm{br}T_{n,m}$ preserves the height so two distinct vertices of $T$ are not in the same orbit. 
		\end{proof}

		\subsubsection{Choice of a special set of representatives of edges}
	We choose a set $E^+$ of representatives of edges of $\mathscr{SC}_{\leq}(A_{n,m})$ under the action of $\mathrm{br}T_{n,m}$ containing all the edges of $T$ and starting at a vertex of the set $V$.
	
		\begin{lemma}\label{lemma_edge_T}
				Let $m,n\geq 2$,  $E^+=\{e_k\}_{0\leq k\leq h(n,m)-2}$.
		\end{lemma}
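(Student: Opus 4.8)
The plan is to verify that $E^+:=\{e_k\}_{0\leq k\leq h(n,m)-2}$ fulfils the three requirements imposed on a set of edge representatives in Subsection~\ref{subsection:Brown}: each of its elements has its origin in $V$, it contains all edges of the tree $T$ fixed in Figure~\ref{fig:tree_rep}, and it is a complete and irredundant set of representatives for the action of $\mathrm{br}T_{n,m}$ on the edges of $\mathscr{SC}_{\leq}(A_{n,m})$. The first two points are immediate from the set-up: by definition $e_k$ joins $[\Sigma_k,\id]$ and $[\Sigma_{k+1},\id]$, and since $h(\Sigma_k)=k+1<k+2=h(\Sigma_{k+1})$ while edges are oriented from smaller to larger height, we have $o(e_k)=[\Sigma_k,\id]\in V$; and $\{e_k\}_{0\leq k\leq h(n,m)-2}$ is by construction exactly the edge set of $T$. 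So the content of the lemma lies in the orbit statement.

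For completeness of $E^+$, I would take an arbitrary edge $\alpha$ of $\mathscr{SC}_{\leq}(A_{n,m})$, oriented so that $h(t(\alpha))=h(o(\alpha))+1$. Writing $o(\alpha)=[\Sigma,\varphi]$, the facts that $h(\Sigma)\geq 1$ and $h(\Sigma)+1=h(t(\alpha))\leq h(n,m)$ force $h(\Sigma)=k+1$ for some $0\leq k\leq h(n,m)-2$. By Lemma~\ref{lemma_tree_rep} the vertex $[\Sigma,\varphi]$ lies in the same $\mathrm{br}T_{n,m}$-orbit as the unique vertex of $V$ of height $k+1$, namely $[\Sigma_k,\id]$; so there is $g\in\mathrm{br}T_{n,m}$ with $g\cdot o(\alpha)=[\Sigma_k,\id]$. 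Since the action preserves orientation and height, $g\alpha$ is an edge with $o(g\alpha)=[\Sigma_k,\id]$ and $h(t(g\alpha))=k+2$; by the way the cubes of the spine complex are attached, the neighbours of $[\Sigma_k,\id]$ of larger height are exactly the vertices $[\Sigma_k\cup H,\id]$ with $H$ a polygon adjacent to $\Sigma_k$, and these polygons are the $H_r$ with $r\in\mathcal{I}_k$. Hence $t(g\alpha)=[\Sigma_k\cup H_r,\id]$ for some $k+1\leq r\leq m+kn$, and Fact~\ref{fact_edge_facile} then yields that $r_{\Sigma_k}^{\,r-(k+1)}\in\mathrm{br}T_{n,m}$ sends $e_k$ to $g\alpha$. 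Consequently $\alpha=g^{-1}r_{\Sigma_k}^{\,r-(k+1)}\cdot e_k$ lies in the orbit of $e_k$.

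For irredundancy I would simply observe that if $g\cdot e_k=e_{k'}$ for some $g\in\mathrm{br}T_{n,m}$, then $g$ carries $o(e_k)$ to $o(e_{k'})$ (orientation is preserved), and since the action preserves the height function, $k+1=h(o(e_k))=h(o(e_{k'}))=k'+1$, so $k=k'$; thus the $e_k$ lie in pairwise distinct orbits. Altogether this shows $E^+$ has the required properties. The argument is essentially bookkeeping with the conventions of Section~\ref{section:set-up}, and involves no genuine difficulty; the only step that requires a little care is the identification of $t(g\alpha)$ with a vertex of the normalised form $[\Sigma_k\cup H_r,\id]$, which is precisely what makes Fact~\ref{fact_edge_facile} applicable and which rests on the description of the star of $[\Sigma_k,\id]$ in the spine complex together with the classification of the polygons adjacent to $\Sigma_k$.
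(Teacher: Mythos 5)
Your proof is correct and follows essentially the same route as the paper: translate an arbitrary edge so that its origin is $[\Sigma_k,\id]$ via Lemma~\ref{lemma_tree_rep}, identify its terminal vertex as $[\Sigma_k\cup H_r,\id]$ for some adjacent polygon, apply a power of $r_{\Sigma_k}$ (Fact~\ref{fact_edge_facile}) to reach $e_k$, and use height-preservation for irredundancy. The only cosmetic difference is that the paper quotes \cite[Lemma~3.4]{GLU_finiteness} for the description of the upward neighbours of $[\Sigma_k,\id]$, whereas you argue it directly from the cube construction.
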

		\begin{proof}
			Consider an edge $e$ of $\mathscr{SC}_{\leq}(A_{n,m})$. By Lemma \ref{lemma_tree_rep}, we can assume that $o(e)\in V$, hence $o(e)=[\Sigma_k,\id]$. By \cite[Lemma 3.4]{GLU_finiteness}, $t(e)=[\Sigma_k\cup H,\id]$ where $H$ is a polygon adjacent to $\Sigma_k$. There exists a power of $r_{\Sigma_{k}}$ that sends $H$ to $H_{k+1}$. Hence, $e$ belongs to the orbit of $e_k$. On the other hand, the action preserving the height of the vertices, two edges $e_k$ are not in the same orbit.
		\end{proof} 

\subsubsection{Choice of a special set of representatives of squares}

We choose a set of representatives of $2$-cells of $\mathscr{SC}_{\leq}(A_{n,m})$ under the action of $\mathrm{br}T_{n,m}$ such that the representatives are based on a vertex of $V$. 

\medskip \noindent
Let $F$ be the set of squares spanned by the vertex $[\Sigma_k,\id]$ and the polygons $H_{k+1}$ and $H_r$, for $0\leq k\leq h(n,m)-3$ and for $k+2\leq r\leq k+1+\left\lceil\frac{m+(n-1)k-1}{2}\right\rceil$ (see Figure \ref{fig:square}).
		
\begin{figure}
	\begin{center}
		\begin{tikzcd}
			{[\Sigma_k\cup H_r,\id] } \arrow[d,"\alpha_4"] & 	{[\Sigma_{k+1}\cup H_r,\id] } \arrow[l,"\alpha_3"] \\ 
			{[\Sigma_k,\id] }\arrow[r, "\alpha_1=e_k"]	& {[\Sigma_{k+1},\id] } \arrow[u,"\alpha_2"] \end{tikzcd}
	\end{center}
	
	\caption{Squares of $F$: $0\leq k\leq h(n,m)-3$ and $k+2\leq r\leq k+ 1+\left\lceil\frac{m+(n-1)k-1}{2}\right\rceil$. \label{fig:square}}
\end{figure}

\begin{lemma}\label{lemma_rep_squares}
	Let $m,n\geq 2$. $F$ is a set of representatives of the squares of $\mathscr{SC}_{\leq}(A_{n,m})$ under the action of $\mathrm{br}T_{n,m}$.
\end{lemma}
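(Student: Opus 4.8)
The plan is to show that every $2$-cell of $\mathscr{SC}_{\leq}(A_{n,m})$ is in the $\mathrm{br}T_{n,m}$-orbit of exactly one square in $F$. Recall that by construction of the spine complex, a $2$-cell is a square spanned by a vertex $[\Sigma,\varphi]$ and two distinct polygons $H$, $H'$ adjacent to $\Sigma$; its four vertices are $[\Sigma,\varphi]$, $[\Sigma\cup H,\varphi]$, $[\Sigma\cup H',\varphi]$, $[\Sigma\cup H\cup H',\varphi]$. So first I would fix an arbitrary square $s$ and, using Lemma~\ref{lemma_tree_rep}, apply an element of $\mathrm{br}T_{n,m}$ to move its lowest vertex (the one of smallest height, namely $[\Sigma,\varphi]$) onto some $[\Sigma_k,\id]\in V$; note $0\leq k\leq h(n,m)-3$ since the square must have vertices of heights $k+1,k+2,k+2,k+3$ all at most $h(n,m)$. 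After this move $s$ is the square spanned by $[\Sigma_k,\id]$ and two distinct polygons $H$, $H'$ adjacent to $\Sigma_k$.

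Next I would use the rotation $r_{\Sigma_k}\in\stab([\Sigma_k,\id])$ (Example~\ref{ex_rotations}, Fact~\ref{fact_edge_facile}) to normalise the pair of adjacent polygons. Since $\stab([\Sigma_k,\id])$ acts on the $m+k(n-1)$ polygons adjacent to $\Sigma_k$ with $r_{\Sigma_k}$ inducing a full cyclic shift (Fact~\ref{fact_rotation_arc}), I can apply a suitable power of $r_{\Sigma_k}$ to send $H$ to $H_{k+1}$, so that $s$ becomes the square spanned by $[\Sigma_k,\id]$, $H_{k+1}$, and $H_r$ for some $r$ with $k+2\leq r$; using that $r_{\Sigma_k}$ stabilises $[\Sigma_k,\id]$ and fixes the vertex $[\Sigma_k,\id]$, this does not disturb the earlier normalisation. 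The remaining freedom is the stabiliser of the edge $e_k$ inside $\stab([\Sigma_k,\id])$, which still allows the ``reflection'' $H_{k+1}\leftrightarrow$ the polygon at distance $-1$, i.e.\ sending $H_r$ to the polygon at distance $-(r-(k+1))$ from $H_{k+1}$; composing with the appropriate rotation that fixes $H_{k+1}$, this means the distance $d:=r-(k+1)$ between $H_{k+1}$ and $H_r$ can be replaced by $(m+k(n-1))-d$. Hence I can always arrange $1\leq d\leq \lceil\frac{m+k(n-1)-1}{2}\rceil$, which is precisely the range $k+2\leq r\leq k+1+\lceil\frac{m+(n-1)k-1}{2}\rceil$ defining $F$. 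This proves that every square is in the orbit of some element of $F$.

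For the converse (distinctness of orbits), I would argue that no two squares in $F$ are in the same orbit. Two squares in $F$ based at $[\Sigma_k,\id]$ and $[\Sigma_{k'},\id]$ with $k\neq k'$ have lowest vertices of different heights $k+1\neq k'+1$, and the action preserves height, so they cannot be equivalent. For two squares $s_r$, $s_{r'}$ in $F$ with the same $k$ but $k+2\leq r<r'\leq k+1+\lceil\frac{m+(n-1)k-1}{2}\rceil$, any element $g$ carrying $s_r$ to $s_{r'}$ would have to fix the set of heights and hence send $[\Sigma_k,\id]$ to $[\Sigma_k,\id]$, i.e.\ $g\in\stab([\Sigma_k,\id])$; moreover $g$ must send the edge-pair of $s_r$ to that of $s_{r'}$. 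Since $g$ fixes $[\Sigma_k,\id]$ and permutes the adjacent polygons $H_{k+1},H_r$ onto $H_{k+1},H_{r'}$ (as an unordered pair), using Lemma~\ref{lemma_stab} the element $g$ acts on $\mathrm{Fr}(\Sigma_k)$ through a cyclic permutation, so it either preserves or reverses the cyclic order of the adjacent polygons; in the first case it must fix both $H_{k+1}$ and $H_r$ (the only common polygon is $H_{k+1}$ if $r\neq r'$ forces... ) giving $r=r'$, and in the second case the distance $d$ goes to $(m+k(n-1))-d$, but both $d=r-(k+1)$ and $r'-(k+1)$ lie in $\{1,\dots,\lceil\frac{m+k(n-1)-1}{2}\rceil\}$, a set meeting each such reflection orbit at most once, so again $r=r'$.

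The main obstacle I expect is the careful bookkeeping in the reflection step: precisely identifying how the edge-stabiliser $\stab(e_k)\cap\stab([\Sigma_k,\id])$ acts on the adjacent polygons of $\Sigma_k$ (via the $\mathbb{Z}_{r(\Sigma_k)}$-quotient of Lemma~\ref{lemma_stab}), verifying that it really does realise the distance-reversing involution $d\mapsto(m+k(n-1))-d$ while fixing $H_{k+1}$, and checking the edge cases where $m=2$ or $m=3$ (so that $H_3,H_4$, resp.\ $H_4$, are adjacent to $H_1$ rather than $H_0$) and the small-$k$ cases where the ceiling bound is tight. All of these are concrete topological verifications on $\mathscr{S}^\sharp(A_{n,m})$ using the explicit rotations and twists from the Set-Up, but they need to be done with care to be sure the claimed fundamental domain $F$ is neither too large nor too small.
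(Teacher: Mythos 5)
Your overall skeleton matches the paper's proof (normalise the bottom vertex into $V$ via Lemma~\ref{lemma_tree_rep}, rotate one polygon onto $H_{k+1}$, reduce the index of the second polygon to the stated range, then use height-preservation plus the cyclic action on adjacent polygons for distinctness), but the mechanism you use for the key reduction step is wrong. You claim that ``the remaining freedom is the stabiliser of the edge $e_k$ inside $\stab([\Sigma_k,\id])$, which still allows the reflection'' sending $H_r$ to the polygon at distance $-(r-(k+1))$ from $H_{k+1}$. No such reflection exists: all elements of $\mathrm{br}T_{n,m}$ are orientation-preserving, by Lemma~\ref{lemma_stab} the stabiliser of $[\Sigma_k,\id]$ acts on the components of $\Fr(\Sigma_k)$ (hence on the adjacent polygons) only through \emph{cyclic} permutations, and the edge stabiliser $\stab(e_k)\cong\Mod(\Sigma_k)$ (Proposition~\ref{lemma_stab_edge}) fixes every adjacent polygon, so it cannot move $H_r$ at all. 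The same confusion reappears in your injectivity argument, where you allow a cyclic permutation to ``reverse the cyclic order'' (it never does), and the first case of that dichotomy is left literally unfinished.

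The correct way to get the replacement $d\mapsto (m+k(n-1))-d$ — and the one the paper uses — is not a reflection but the fact that the square only depends on the \emph{unordered} pair $\{H_{k+1},H_s\}$: if $s$ is out of range, apply the rotation $r_{\Sigma_k}^{-(s-(k+1))}\in\stab([\Sigma_k,\id])$, which by Fact~\ref{fact_rotation_arc} sends $H_s$ to $H_{k+1}$ and simultaneously sends $H_{k+1}$ to $H_\ell$ with $\ell=2(k+1)-s+m+k(n-1)$, and one checks $k+2\leq\ell\leq k+1+\left\lceil\frac{m+(n-1)k-1}{2}\right\rceil$; so the image square lies in $F$. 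Correspondingly, in the distinctness step the case to rule out is not an order-reversal but the possibility that $g$ (acting by a cyclic shift) swaps the roles of the two polygons of the pair, i.e.\ sends $H_{k+1}\mapsto H_{r_2}$ and $H_{r_1}\mapsto H_{k+1}$; this forces $r_1-(k+1)\equiv m+k(n-1)-(r_2-(k+1))$, and your range argument (each orbit $\{d,\,m+k(n-1)-d\}$ meets the interval $\{1,\dots,\lceil\frac{m+k(n-1)-1}{2}\rceil\}$ in at most one point) then correctly yields $r_1=r_2$. With the reflection replaced by this rotation trick, your argument becomes essentially the paper's proof; as written, however, the surjectivity-onto-$F$ step rests on a group element that does not exist.
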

\begin{proof}
Consider a square $C$ in $\mathscr{SC}_{\leq}(A_{n,m})$. We denote by $k$ the smallest height of its vertices. Note that $0\leq k\leq 2$. By Lemma \ref{lemma_edge_T}, we can assume that this square is generated by the vertex $[\Sigma_k,\id]$, and by two of its adjacent polygons $H_{k+1}$ and $H_s$, for some $k+2\leq s\leq m+nk$. If $s\leq k+1+\left\lceil\frac{m+(n-1)k-1}{2}\right\rceil$ then $C$ belongs to $F$. Otherwise, by Fact \ref{fact_rotation_arc}, $r_{\Sigma_k}^{-(s-(k+1))}$ sends $a^k_{s}$ to $a^k_{k+1}$, and $a^k_{k+1}$ to $a^k_{2(k+1)-s}=a^k_{\ell}$ for $\ell=2(k+1)-s+ m+k(n-1)$. Note that \[k+2\leq \ell\leq k+1+\left\lceil\frac{m+(n-1)k-1}{2}\right\rceil,\] and in particular $\ell\in I_k$. Hence, $r_{\Sigma_k}^{-(s-(k+1))}$ which belongs to $\stab([\Sigma_k,\id])$, sends $H_s$ to $H_{k+1}$ and $H_{k+1}$ to $H_{\ell}$. As a consequence, it sends the square $C$ to a square of $F$.
	
	\medskip\noindent On the other hand, two squares of $F$ whose the respective smallest heights of its vertices are different can not be in the same equivalence class. So consider two different squares $C_1$ and $C_2$ of $F$ both based on $[\Sigma_k,\id]$, for $0\leq k\leq 2$, and generated by $H_{k+1}$ and respectively by $H_{r_1}$ and $H_{r_2}$, for two distinct indices $r_1,r_2 \in  \{k+2, \dots, k+1+\left\lceil\frac{m+(n-1)k-1}{2}\right\rceil \}$. Assume they are in the same orbit and let $g\in \mathrm{br}T_{n,m}$ sending the square $C_1$ to $C_2$. Then $g$ has to fix the vertex $[\Sigma_k,\id]$ so there exists a representative of $g$ that preserves $\Sigma_k$. In particular it permutes cyclically the polygons adjacent to $\Sigma_k$, hence, it has to preserve the distance between $H_{k+1}$ and $H_{r_1}$, which is different from the distance between $H_{k+1}$ and $H_{r_2}$. Consequently the two squares $C_1$ and $C_2$ are not in the same orbit and this achieves the proof that $F$ is a set of representatives of squares.
\end{proof}

\subsection{Presentations of the vertex and edge stabilisers of the tree of representatives}
By Theorem \ref{thm_Brown}, to compute a presentation of  $\mathrm{br}T_{n,m}$, we need to obtain a presentation of the vertex stabilisers and to identify elements between the stabilisers of two adjacent vertices through the edge stabilisers.

\subsubsection{Isotropy subgroups of vertices} In this paragraph we study the presentations of the vertex stabilisers $\stab[\Sigma_k,\id]$, for $0\leq k\leq h(n,m)-1$.

\begin{prop}\label{lemma_presentation_Stab_Sigma_k}
	For $0\leq k\leq h(n,m)-1$, the subgroup $\stab[\Sigma_k,\id]$ is generated by $r_{\Sigma_k}$ and $\tau_i$ for $1\leq i\leq k$. The relations are generated by :
	\begin{itemize}
		\item the {braids relations:} 
			\begin{enumerate}
				\item when $m<k$, $\tau_i\tau_\ell=\tau_\ell\tau_i$ for any $2\leq i\leq m<\ell\leq \min(k,4)$,
				\item $\tau_i\tau_j\tau_i=\tau_j\tau_i\tau_j$ for any $1\leq i<j\leq \min(k,m)$,
				\item when $m<k	$,   $\tau_1\tau_\ell\tau_1=\tau_\ell\tau_1\tau_\ell$ for any $m<\ell \leq\min( k,m+n)$,
				\item $\tau_i\tau_j\tau_s\tau_i=\tau_j\tau_s\tau_i\tau_j=\tau_s\tau_i\tau_j\tau_s$ for any $1\leq i<j<s\leq \min(k,m)$,
				\item  when $m=2$ and $k\geq4$, $\tau_3\tau_4\tau_3=\tau_4\tau_3\tau_4$ and  $\tau_1\tau_{3}\tau_4\tau_1=\tau_{3}\tau_4\tau_1\tau_{3}=\tau_4\tau_1\tau_{3}\tau_4$,
				\item when $(n,m,k)=(2,2,5)$, $\tau_5\tau_i=\tau_i\tau_5$, for any $i\in\{1,3,4\}$ and $\tau_2\tau_5\tau_2=\tau_5\tau_2\tau_5$.
	\end{enumerate}

		\item the \emph{commutation relations}: $r_{\Sigma_k} \tau_{i}=\tau_{i}r_{\Sigma_k}$ for $1\leq i\leq k$.
	
		\item the \emph{rotation relation}: $r_{\Sigma_k}^{m+k(n-1)}=(\tau_{k}\tau_{k-1}\dots\tau_{1})^{-(k+1)}$.
	\end{itemize}
\end{prop}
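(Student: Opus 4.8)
The plan is to unpack the short exact sequence furnished by Lemma~\ref{lemma_stab},
\[ 1 \longrightarrow \mathrm{Mod}(\Sigma_k) \longrightarrow \stab([\Sigma_k,\id]) \longrightarrow \mathbb{Z}_{r(\Sigma_k)} \longrightarrow 1, \]
and to make all three terms explicit. First I would pin down the quotient: the frontier $\Fr(\Sigma_k)$ consists of $m+k(n-1)$ cyclically ordered arcs, every element of $\stab([\Sigma_k,\id])$ permutes them cyclically, and by Example~\ref{ex_rotations} the rotation $r_{\Sigma_k}$ realises the one-step shift; hence the subgroup of cyclic permutations realised by $\stab([\Sigma_k,\id])$ is all of $\mathbb{Z}_{m+k(n-1)}$, so $r(\Sigma_k)=m+k(n-1)$ and $r_{\Sigma_k}$ is a lift of a generator. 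Next I would identify the kernel: $\Sigma_k$ is a $(k+1)$-punctured disc whose punctures $p_0,\dots,p_k$ lie at the vertices of the finite planar subtree $\Gamma_k\subseteq A_{n,m}$ spanned by $H_0,\dots,H_k$, so $\mathrm{Mod}(\Sigma_k)$ is the braid group associated to $\Gamma_k$; applying Theorem~\ref{thm_presentation_braid_group} with the edges of $\Gamma_k$ taken to be the twists $\tau_1,\dots,\tau_k$ between consecutive polygons presents $\mathrm{Mod}(\Sigma_k)$ by the disjunction, adjacency and nodal relations among those edges.

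The bookkeeping step is then to translate these relations into the explicit list (1)--(6). For this I would use the inductive construction of the $H_i$ to describe the shape of $\Gamma_k$: for $k\le m$ it is the star centred at $p_0$ with leaves $p_1,\dots,p_k$; once $k>m$ the new vertices attach as leaves of $p_1$, and in the only surviving case with $k=5$, namely $(n,m)=(2,2)$, one further leaf attaches to $p_2$. Since $k\le 5$ there are finitely many possibilities, and in each one it is routine to list which pairs of edges of $\Gamma_k$ are disjoint (giving the commutation relations of types (1) and part of (6)), which pairs share an endpoint (giving the braid relations (2),(3) and parts of (5),(6)), and which triples meet at a common vertex in the clockwise order prescribed by the indexing $1<2<\dots<k$, since that is exactly the order in which the polygons were added (giving the nodal relations (4) and the rest of (5)). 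Matching this against Theorem~\ref{thm_presentation_braid_group} reproduces precisely the listed braids relations.

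It remains to splice the kernel and the quotient together. The key geometric observation is that $r_{\Sigma_k}$ commutes with each $\tau_i$: one may take $r_{\Sigma_k}$ to be the identity on a regular neighbourhood $N(\Gamma_k)$ of the spine, which is a disc containing every puncture and disjoint from $\Fr(\Sigma_k)$, while for $i\le k$ the twist $\tau_i$ is, up to isotopy, supported inside $N(\Gamma_k)$; the two then have disjoint supports. Since the $\tau_i$ generate $\mathrm{Mod}(\Sigma_k)$, this shows $r_{\Sigma_k}$ is central in $\stab([\Sigma_k,\id])$ and that the extension has trivial conjugation action, so the standard presentation of a group extension applies: $\stab([\Sigma_k,\id])$ is generated by the $\tau_i$ and $r_{\Sigma_k}$, subject to the relations of $\mathrm{Mod}(\Sigma_k)$, the $k$ commutation relations $r_{\Sigma_k}\tau_i=\tau_i r_{\Sigma_k}$, and the single relation that records $r_{\Sigma_k}^{m+k(n-1)}\in\mathrm{Mod}(\Sigma_k)$ as a word in the $\tau_i$. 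Producing that word is the main obstacle. Geometrically, $r_{\Sigma_k}^{m+k(n-1)}$ performs a full turn of the annulus $\Sigma_k\setminus N(\Gamma_k)$ while fixing $N(\Gamma_k)$, hence equals, up to the sign dictated by the clockwise conventions, the Dehn twist along $\partial N(\Gamma_k)$, i.e.\ the full twist $\Delta^2$ of the $(k+1)$-punctured disc. One must then recognise $\Delta^{-2}=(\tau_k\tau_{k-1}\cdots\tau_1)^{-(k+1)}$ in the generating set coming from $\Gamma_k$; I would verify this identity directly for each $k\le 5$ using the braid relations already established (this is where the nodal relations serve to bring the non-star edges into position), or, more conceptually, invoke the fact that any product of the edge-generators in which each edge occurs exactly once, raised to the power $k+1$, equals the full twist. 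This yields the rotation relation and completes the presentation; for $k=0$ it degenerates correctly to $\stab([\Sigma_0,\id])=\mathbb{Z}_m$, in accordance with $\mathrm{Mod}(\Sigma_0)$ being trivial.
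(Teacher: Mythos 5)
Your proposal is correct and follows essentially the same route as the paper: the short exact sequence of Lemma~\ref{lemma_stab} with quotient $\mathbb{Z}_{m+k(n-1)}$, Sergiescu's presentation of $\mathrm{Mod}(\Sigma_k)$ as the braid group of the planar subtree spanned by $H_0,\dots,H_k$, centrality of $r_{\Sigma_k}$ via disjoint supports, and the rotation relation obtained by identifying $r_{\Sigma_k}^{m+k(n-1)}$ with the full twist and recognising it as $(\tau_k\cdots\tau_1)^{k+1}$. The only cosmetic difference is that you package the final step as the standard presentation of a central cyclic extension, whereas the paper argues directly that any relation has the form $r_{\Sigma_k}^{i}w=\id$ with $i$ a multiple of $m+k(n-1)$; the content is the same.
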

\begin{proof}
	Using \cite[Lemma 4.2]{GLU_finiteness}, we have the following short exact sequence:
	\[1 \to \mathrm{Mod}(\Sigma_k) \to \stab([\Sigma_k, \id]) \to \mathbb{Z}_{m+k(n-1)} \to 1.\]
	We use the presentation of $\mathrm{Mod}(\Sigma_k)$ given by Theorem \ref{thm_presentation_braid_group}, hence $\stab[\Sigma_k,\id]$ is generated by $r_{\Sigma_k}$ and the $\tau_{i}$ for $1\leq i\leq k$. 
	
	\medskip\noindent The elements $r_{\Sigma_k}$ and $\tau_{i}$ commute because the first one fixes the punctures and permutes the element in $\Fr\Sigma_k$ whereas $\tau_{i}$ twist the punctures $0$ (respectively $1$ if $k>m$ and $2$ if $(n,m)=(2,2)$) and $i$, and fixes the elements in $\Fr\Sigma_k$. Consequently we have the commutation relations: \[r_{\Sigma_k} \tau_{i}=\tau_{i}r_{\Sigma_k}, \ \ \text{ for  }1\leq i\leq k.\]
	
	\medskip\noindent Using the commutation relations we obtain that a relation is of the form $r_{\Sigma_k}^iw=\id$ for some power $i$ and some $w\in \mathrm{Mod}(\Sigma_k)$. Note that because $\id$ fixes the elements in $\Fr\Sigma_k$, $i$ has to be a multiple of $m+k(n-1)$. Hence to obtain generators of the relations, we only need to consider the case $i=0$ and $i=1$. When $i=0$, relations are generated by the braids relations obtained from Theorem˜\ref{thm_presentation_braid_group} (see Figure \ref{fig:tree_sigmak} ). When $i=1$, relations are generated by the rotation relation. 
	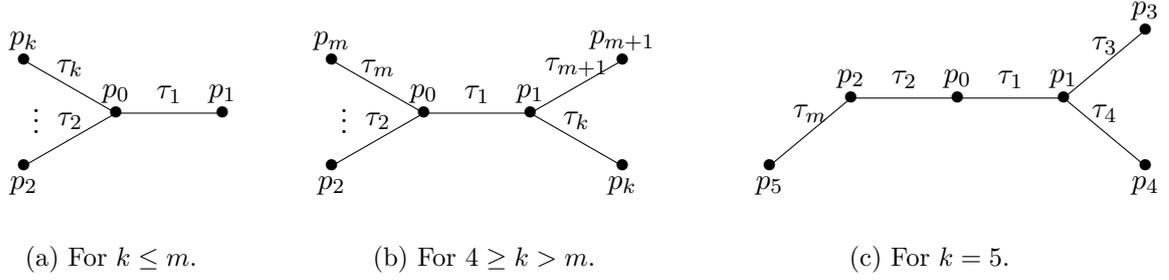
\begin{figure}
		\begin{subfigure}{.2\textwidth}
			\begin{center}
				\begin{tikzpicture}
					\begin{scope}[scale=0.7]
					\draw (0,0) node {$\bullet$} node[above] {$p_0$};
					\draw (2,0) node {$\bullet$} node[above] {$p_1$};
					\draw (-1.5,0) node {$\vdots$};
					\draw ({-2*cos(30)},{2*sin(30)}) node {$\bullet$} node[above] {$p_k$};
					\draw ({-2*cos(30)},{-2*sin(30)}) node {$\bullet$} node[below] {$p_2$};
					\draw (0,0) -- (2,0) node[midway, above]{$\tau_1$};
					\draw (0,0) -- ({-2*cos(30)},{2*sin(30)}) node[midway, above]{$\tau_k$};
					\draw (0,0) -- ({-2*cos(30)},{-2*sin(30)}) node[midway, above]{$\tau_2$};
					\end{scope}
				\end{tikzpicture}
			\end{center}
			\caption{For $k\leq m$.\label{fig:braid_relations_k<m}}
		\end{subfigure}
		\hfill
		\begin{subfigure}{.33\textwidth}
			\begin{center}
				\begin{tikzpicture}
					\begin{scope}[scale=0.7]
					\draw (0,0) node {$\bullet$} node[above] {$p_0$};
					\draw (2,0) node {$\bullet$} node[above] {$p_1$};
					\draw (-1.5,0) node {$\vdots$};
					\draw ({-2*cos(30)},{-2*sin(30)}) node {$\bullet$} node[below] {$p_2$};
					\draw ({-2*cos(30)},{2*sin(30)}) node {$\bullet$} node[above] {$p_m$};
					\draw (0,0) -- (2,0) node[midway, above]{$\tau_1$};
					\draw (0,0) -- ({-2*cos(30)},{-2*sin(30)}) node[midway, above]{$\tau_2$};
					\draw (0,0) -- ({-2*cos(30)},{2*sin(30)}) node[midway, above]{$\tau_m$};
					\draw ({2+2*cos(30)},{2*sin(30)}) node {$\bullet$} node[above] {$p_{m+1}$};
					\draw ({2+2*cos(30)},{-2*sin(30)}) node {$\bullet$} node[below] {$p_k$};
					\draw (2,0) -- ({2+2*cos(30)},{2*sin(30)}) node[midway, above]{$\tau_{m+1}$};
					\draw (2,0) -- ({2+2*cos(30)},{-2*sin(30)}) node[midway, above]{$\tau_k$};
						\end{scope}
				\end{tikzpicture}
			\end{center}
			\caption{For $4\geq k> m$.\label{fig:braid_relations_k>m}}
		\end{subfigure}
		\hfill
				\begin{subfigure}{.33\textwidth}
			\begin{center}
				\begin{tikzpicture}
					\begin{scope}[scale=0.7]
					\draw (0,0) node {$\bullet$} node[above] {$p_0$};
					\draw (2,0) node {$\bullet$} node[above] {$p_1$};
					\draw (-2,0) node {$\bullet$} node[above] {$p_2$};
					\draw ({-2-2*cos(40)},{-2*sin(40)}) node {$\bullet$} node[below] {$p_5$};
					\draw (0,0) -- (2,0) node[midway, above]{$\tau_1$};
					\draw (0,0) -- (-2,0) node[midway, above]{$\tau_2$};
					\draw (-2,0) -- ({-2-2*cos(40)},{-2*sin(40)}) node[midway, above]{$\tau_m$};
					\draw ({2+2*cos(40)},{2*sin(40)}) node {$\bullet$} node[above] {$p_{3}$};
					\draw ({2+2*cos(40)},{-2*sin(40)}) node {$\bullet$} node[below] {$p_4$};
					\draw (2,0) -- ({2+2*cos(40)},{2*sin(40)}) node[midway, above]{$\tau_{3}$};
					\draw (2,0) -- ({2+2*cos(40)},{-2*sin(40)}) node[midway, above]{$\tau_4$};
				\end{scope}
				\end{tikzpicture}
			\end{center}
			\caption{For $k=5$.\label{fig:braid_relations_k=5}}
		\end{subfigure}
		\caption{The subtree of $A_{n,m}$ inside $\Sigma_k$.\label{fig:tree_sigmak}}
	\end{figure}
	The rotation relation is a consequence of the fact that $r_{\Sigma_k}^{m+k(n-1)}$ fixes the punctures pointwise and has make done to any polygon inside $\Sigma_k$ a full twist. Hence, to undo this, $w$ has to be the inverse of a full twist of the punctures inside $\Sigma_k$. Note that 
	the braid $\tau_{k}\tau_{k-1}\dots\tau_{1}$ cyclically permutes clockwise the punctures $p_0,p_1,\dots,p_k$ if $k\leq m$, $p_1,p_2,\dots, p_m, p_0,p_{m+1},\dots, p_k$ if $4\geq k>m$ (see Figure\ref{fig:permutation_des pointes} and $p_0,p_3,p_4,p_1,p_5,p_2$ if $k=5$ (see Figure\ref{fig:permutation_des pointes2}.).

		\begin{figure}
		\begin{center}
			\begin{tikzcd}[cells={nodes={draw=black}}, row sep=small, column sep = small]
&&&&&p_{m+1}\arrow[drr, dash]\arrow[r, bend left=50,blue] & p_k \arrow[dr, dash]\arrow[dr, bend left=50,blue] \\
p_1 \arrow[dr, dash] \arrow[r, bend left=50,blue] & \dots\arrow[r, bend left=50,blue]\arrow[d, dash] & p_k\arrow[dl, dash]\arrow[dl, bend left=50,blue]&&&&	&	p_1 \arrow[dr, dash] \arrow[r, bend left=50,blue] &\dots\arrow[r, bend left=50,blue]\arrow[d, dash] & p_m\arrow[dl, dash]\arrow[dl, bend left=50,blue]\\
&p_0\arrow[ul, bend left=50,blue]&&&&&&&	p_0\arrow[uulll, bend left=50,blue]\\
			\end{tikzcd}
			\caption{$\tau_{k}\tau_{k-1}\dots\tau_{1}$ induces a clockwise cyclic permutations of the punctures\label{fig:permutation_des pointes}. On the left for $k\leq m$ and on the right for $4\geq k>m$.}
		\end{center}
	\end{figure}
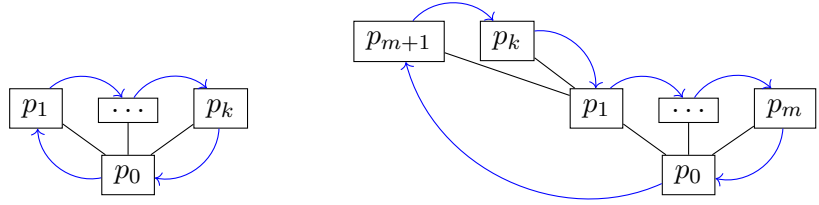

	\begin{figure}
	\begin{center}
		\begin{tikzcd}[cells={nodes={draw=black}},  row sep=small, column sep = small]
		p_{3}\arrow[drr, dash]\arrow[r, bend left=50,blue] & p_4 \arrow[dr, dash]\arrow[dr, bend left=50,blue] &&& p_{5}\arrow[d, dash]\arrow[d, bend left=50,blue] \\
		&	&	p_1 \arrow[dr, dash] \arrow[urr, bend left=50,blue] && p_2\arrow[dl, dash]\arrow[dl, bend left=50,blue]&&\\
		&&&	p_0\arrow[uulll, bend left=50,blue]&&&\\
		\end{tikzcd}
		\caption{$\tau_{5}\tau_{4}\dots\tau_{1}$ induces a clockwise cyclic permutations of the punctures\label{fig:permutation_des pointes2} in the case $(m,n)=(2,2)$.}
	\end{center}
\end{figure}
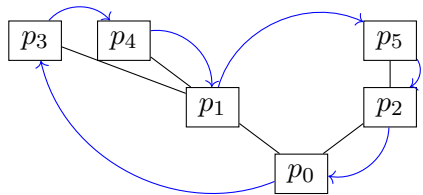

	\medskip\noindent As a consequence, the relations are generated by the ones of the braid groups, by the commutation relations and the rotation relation  announced.
\end{proof}

\subsubsection{Isotropy subgroups of edges}
\noindent In this paragraph we compute the edge stabilisers.
\begin{prop}\label{lemma_stab_edge}
	For $0\leq k\leq h(n,m)-2$, $\stab e_k$ is isomorphic to $\Mod(\Sigma_k)$.
\end{prop}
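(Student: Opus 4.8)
The plan is to use Lemma~\ref{lemma_stab} applied to the vertex $[\Sigma_k,\id]$ and to show that, among the elements of $\stab([\Sigma_k,\id])$, exactly those lying in the subgroup $\Mod(\Sigma_k)$ also stabilise the edge $e_k$. Recall $e_k$ joins $[\Sigma_k,\id]$ to $[\Sigma_{k+1},\id]$, and an element of $\mathrm{br}T_{n,m}$ stabilising $e_k$ must in particular stabilise both endpoints; conversely an element stabilising both endpoints of $e_k$ stabilises $e_k$ since the complex has no multiple edges. So $\stab(e_k)=\stab([\Sigma_k,\id])\cap\stab([\Sigma_{k+1},\id])$, and the task reduces to identifying this intersection.

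First I would describe $\stab([\Sigma_k,\id])$ using Proposition~\ref{lemma_presentation_Stab_Sigma_k} (or directly Lemma~\ref{lemma_stab}): it is generated by $\Mod(\Sigma_k)$ together with the rotation $r_{\Sigma_k}$, and the quotient $\stab([\Sigma_k,\id])/\Mod(\Sigma_k)\cong\Z_{m+k(n-1)}$ records the cyclic permutation induced on the arcs $\{a_i^k\}$ of $\Fr(\Sigma_k)$. An element $g\in\stab([\Sigma_k,\id])$ has the form $g=r_{\Sigma_k}^{\,j}\cdot w$ with $w\in\Mod(\Sigma_k)$ and $0\le j<m+k(n-1)$, and modulo $\Mod(\Sigma_k)$ its action on the arcs is the cyclic shift $a_i^k\mapsto a_{i+j}^k$ by Fact~\ref{fact_rotation_arc}. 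Now I impose that $g$ also fixes $[\Sigma_{k+1},\id]$. Since $w\in\Mod(\Sigma_k)\subseteq\Mod(\Sigma_{k+1})$ fixes $[\Sigma_{k+1},\id]$ (it is supported inside $\Sigma_k\subset\Sigma_{k+1}$), this is equivalent to $r_{\Sigma_k}^{\,j}$ fixing $[\Sigma_{k+1},\id]$. But $\Sigma_{k+1}=\Sigma_k\cup H_{k+1}$, and by Fact~\ref{fact_rotation} the rotation $r_{\Sigma_k}^{\,j}$ sends $H_{k+1}$ to the polygon adjacent to $\Sigma_k$ occupying the arc $a_{k+2+j}^k$ (in the indexing of Fact~\ref{fact_edge_facile}); equivalently it sends $e_k$ to the edge from $[\Sigma_k,\id]$ to $[\Sigma_k\cup H_{k+2+j},\id]$. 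This equals $e_k$ if and only if $H_{k+1}$ is sent to itself, i.e.\ $j\equiv 0 \pmod{m+k(n-1)}$, because the $m+k(n-1)$ polygons adjacent to $\Sigma_k$ are pairwise distinct and $r_{\Sigma_k}$ permutes them freely and transitively by a single cyclic shift. Hence $j=0$, so $g=w\in\Mod(\Sigma_k)$. The reverse inclusion is immediate since every element of $\Mod(\Sigma_k)$ is supported inside $\Sigma_k$, hence fixes $[\Sigma_k,\id]$, $[\Sigma_{k+1},\id]$, and therefore $e_k$.

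The main (and essentially only) point requiring care is the claim that the rotation $r_{\Sigma_k}$ acts on the set of polygons adjacent to $\Sigma_k$ as a single free cyclic permutation of order exactly $m+k(n-1)$, so that no nonzero power of it can fix $H_{k+1}$. This follows from the construction of $r_{\Sigma_k}$ in Example~\ref{ex_rotations}: it cyclically clockwise shifts the $m+k(n-1)$ arcs of $\Fr(\Sigma_k)$ by one step, and the polygons adjacent to $\Sigma_k$ are in bijection with those arcs via $H_i\mapsto a_i^k$, so the induced permutation on adjacent polygons is again a single $(m+k(n-1))$-cycle. Note also that this is consistent with $h(n,m)\le 6$: for every $k$ in range, the surface $\Sigma_{k+1}$ is still within the bounded-height subcomplex, so $e_k$ genuinely is an edge of $\mathscr{SC}_{\leq}(A_{n,m})$ and the argument applies verbatim. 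Finally, I would note in passing that an alternative phrasing is to run Lemma~\ref{lemma_stab} directly on $\Sigma_{k+1}$ too and observe that the kernel of the permutation action on $\Fr(\Sigma_k)$, intersected with $\stab([\Sigma_{k+1},\id])$, is exactly $\Mod(\Sigma_k)$; both routes give $\stab(e_k)\cong\Mod(\Sigma_k)$.
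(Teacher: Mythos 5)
Your proof is correct and follows essentially the same route as the paper: identify $\stab(e_k)=\stab([\Sigma_k,\id])\cap\stab([\Sigma_{k+1},\id])$ using that the height-preserving action cannot invert edges, write $g=r_{\Sigma_k}^{j}w$ with $w\in\Mod(\Sigma_k)$ via the vertex-stabiliser structure, and force the rotation part to be trivial because it must fix $H_{k+1}$. The only cosmetic difference is that the paper lets the exponent be arbitrary and then absorbs the resulting multiple of $m+k(n-1)$ into $\Mod(\Sigma_k)$ via the rotation relation (a full twist is a product of twists), whereas you normalise $0\leq j<m+k(n-1)$ from the outset; both are fine.
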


\begin{proof}
The action preserving the height of vertices, it does not inverse any edge, hence $\stab e_k=\stab[\Sigma_k,\id]\cap \stab[\Sigma_{k+1},\id]$.
	Note that $\Mod(\Sigma_k)$ is included in $\stab e_k$.
	
	\medskip\noindent An element $g\in \stab e_k$, is an element of $\stab[\Sigma_k,\id]$ that sends $H_{k+1}$ to itself. Using the presentation of $\stab[\Sigma_k,\id]$ from Lemma \ref{lemma_presentation_Stab_Sigma_k}, $g$ can be written as follows: $g=r_{\Sigma_k}^\l w$ where $w\in \Mod(\Sigma_k)$. Using the fact that $w\in \stab[\Sigma_{k+1},\id]$, we obtain that $r_{\Sigma_k}^\l\in \stab[\Sigma_{k+1},\id]$, and so $\l$ is a multiple of $m+k(n-1)$, hence using the rotation relation, $g$ can be written as a product of twists and their inverses. Consequently, $g\in \Mod(\Sigma_k)$ and $\stab e_k$ is isomorphic to $\Mod(\Sigma_k)$ as expected.
\end{proof}

\begin{remark}
	This lemma justifies that we took the same notation for a twist seen in $\Sigma_k$ and in $\Sigma_{k+1}$.
\end{remark}

\subsection{Construction of relations corresponding to squares}\label{Subsection:relation_squares}
\noindent The last step is to compute the relations given by the squares of $F$ (see Lemma \ref{lemma_rep_squares} and Figure \ref{fig:square}).

\medskip\noindent Following the proof of Brown (see Section \ref{subsection:Brown}) we will associate an element $h_i\in \stab([\Sigma_{h(o(\alpha_i))},\id])$ to each edge $\alpha_i$ depending on their orientation (see Figure \ref{fig:same_orientation} for $\alpha_1$ and $\alpha_2$, and Figure \ref{fig:opposite_orientation} for $\alpha_3$ and $\alpha_4$). Note that $h_1$ can be chosen to be $\id$. We will then obtain that $h_1h_2h_3h_4\in \stab([\Sigma_k,\id])$. Note that it will be easier to find an element in $\stab([\Sigma_k,\id])$ that equals to $(h_1h_2h_3h_4)^{-1}$ instead of $(h_1h_2h_3h_4)$. 

\begin{remark}\label{rmk_element_stab_vertices}
By Lemma \ref{lemma_presentation_Stab_Sigma_k}, the stabiliser of a vertex of the form  $[\Sigma_k,\id]$ is a product of a power of the rotation $r_{\Sigma_k}$ and of an element of the braid group. Hence to obtain the power of $r_{\Sigma_k}$ it is enough to understand the images of two adjacent polygons, and to obtain the element of the braid group we need to understand how the punctures inside $\Sigma_k$ are braided. As a consequence, we will follow at each step to what are sent, by $h_1^{-1}$, by $(h_1h_2)^{-1}$, by $(h_1h_2h_3)^{-1}$ and by $(h_1h_2h_3h_4)^{-1}$, the polygons $H_{k+1} $  and $H_r$ as well as how the punctures $\{p_i\}_{1\leq i\leq k}$ are braided.
\end{remark}

\noindent Fix $1\leq i\leq h(n,m)-2$. To shorten the notation let introduce the following braids:
\[\eta_i=\begin{cases}
	\tau_{i+i}\tau_{i} & \text{ if } m\neq i\\
	\tau_{i+1}\tau_{1}\tau_{i} & \text{ if } m=i\\
	\tau_5\tau_2\tau_1\tau_4 & \text{ if } i=4
	
\end{cases}\ \  \text{ and }\ \  \gamma_i=\begin{cases}
\tau_{i}^{-1}& \text{ if } m\neq i\\
\tau_{i}^{-1}\tau_{1}^{-1} & \text{ if } m=i\\
\tau_{4}^{-1}\tau_{1}^{-1}\tau_2^{-1} & \text{ if } i=4
\end{cases}.\] 

\begin{prop}\label{lemma_relation_square_1}
For $m,n\geq 2$, the relations given by squares of $F$ based on a vertex of height $1\leq i\leq h(n,m)-2$ can be chosen as follows: for all  $1+i\leq r\leq i+\left\lceil\frac{m+(n-1)(i-1)-1}{2}\right\rceil$,
	\[
	\gamma_{i}r_{\Sigma_i}^{-r-n+i}\eta_ir_{\Sigma_{i+1}}^{r+n-(i+1)}r_{\Sigma_i}^{i+1-r}=r_{\Sigma_{i-1}}^{i-r}. \]
\end{prop}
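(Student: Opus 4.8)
The plan is to follow Brown's recipe (the third step in Subsection~\ref{subsection:Brown}) applied to a fixed square $s\in F$ based at $[\Sigma_i,\id]$, whose boundary edges are $\alpha_1=e_i$, $\alpha_2$, $\alpha_3$, $\alpha_4$ as in Figure~\ref{fig:square}, and to track explicitly the elements $h_1,h_2,h_3,h_4$ associated to these four edges. Since $\alpha_1=e_i$ itself lies in the tree $T$, we may take $h_1=\id$. The edge $\alpha_2$ starts at $[\Sigma_{i+1},\id]\in V$ and goes up to $[\Sigma_{i+1}\cup H_r,\id]$; by Fact~\ref{fact_edge_facile} the rotation $r_{\Sigma_{i+1}}^{r-(i+2)}$ sends $e_{i+1}$ to $\alpha_2$, so we may take $h_2=r_{\Sigma_{i+1}}^{r-(i+2)}$. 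The two remaining edges $\alpha_3$ and $\alpha_4$ are oriented downwards (from height $i+2$ to $i+1$, and from height $i+1$ to $i$ respectively), so they fall under the second bullet of the construction (Figure~\ref{fig:opposite_orientation}): here one must choose $h_3\in\stab([\Sigma_{i+1}\cup H_r,\id])$ and $h_4\in\stab([\Sigma_i\cup H_r,\id])$ and then, as explained in Remark~\ref{rmk_element_stab_vertices}, only the composite $h_1h_2h_3h_4$ (which lands in $\stab([\Sigma_i,\id])$) matters, not the individual choices.

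The core of the argument is the bookkeeping described in Remark~\ref{rmk_element_stab_vertices}: at each of the four stages one records, under $h_1^{-1}$, $(h_1h_2)^{-1}$, $(h_1h_2h_3)^{-1}$, $(h_1h_2h_3h_4)^{-1}$, the images of the two relevant adjacent polygons $H_{i+1}$ and $H_r$ (which pins down the power of $r_{\Sigma_i}$ in the resulting stabiliser element) together with the induced braiding of the punctures $p_1,\dots,p_i$ inside $\Sigma_i$ (which pins down the braid-group part). First I would chase $H_{i+1}$ and $H_r$ through the string of rotations using Facts~\ref{fact_rotation_arc} and~\ref{fact_edge_facile}: the net displacement on the frontier arcs after composing the powers $r_{\Sigma_i}^{-r-n+i}$, $r_{\Sigma_{i+1}}^{r+n-(i+1)}$, $r_{\Sigma_i}^{i+1-r}$ should, by a direct index computation modulo the relevant periods $m+i(n-1)$, $m+(i+1)(n-1)$, $m+(i-1)(n-1)$, collapse to the single power $r_{\Sigma_{i-1}}^{i-r}$ appearing on the right-hand side. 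Here one uses crucially that $a_j^{k}=a_j^{\ell}$ whenever the index $j$ lies in $\mathcal I_k\cap\mathcal I_\ell$ (Fact~\ref{fact_arcs_intersection}), to pass between the frontiers of $\Sigma_{i-1}$, $\Sigma_i$ and $\Sigma_{i+1}$. Second I would identify the braid contribution: moving a polygon past the punctures while undoing a partial rotation produces precisely a product of twists, and the outcome is the braid $\eta_i$ (respectively its inverse part recorded by $\gamma_i$), with the three cases $m\neq i$, $m=i$, $i=4$ in the definitions of $\eta_i,\gamma_i$ arising exactly because of the exceptional adjacencies noted in Subsection~\ref{section:set-up} (namely $H_3,H_4$ being adjacent to $H_1$ rather than $H_0$ when $m=2$, etc.).

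Having computed $h_1h_2h_3h_4\in\stab([\Sigma_i,\id])$ in the normal form of Proposition~\ref{lemma_presentation_Stab_Sigma_k} as a power of $r_{\Sigma_i}$ times an explicit braid, Brown's method lets us choose $g_s$ to be any element of $\stab([\Sigma_i,\id])$ equal to $(h_1h_2h_3h_4)^{-1}$, and then the relation $r_s$ reads $h_1h_2h_3h_4\,g_s^{-1}=\id$. Rearranging — and absorbing the edge stabiliser identifications of Proposition~\ref{lemma_stab_edge}, which legitimise writing the twists $\tau_j$ ambiguously across the $\Sigma_k$'s — yields exactly the displayed identity
\[
\gamma_{i}r_{\Sigma_i}^{-r-n+i}\eta_ir_{\Sigma_{i+1}}^{r+n-(i+1)}r_{\Sigma_i}^{i+1-r}=r_{\Sigma_{i-1}}^{i-r}.
\]
I expect the main obstacle to be the second step above: correctly computing how the punctures $p_1,\dots,p_i$ get braided as a polygon is dragged around a partially-rotated subsurface, and checking that this braiding is insensitive to the (non-canonical) intermediate choices of $h_3$ and $h_4$ — this is where one must argue carefully that a rotation $r_{\Sigma_k}$ acts trivially on a disc containing all punctures (Example~\ref{ex_rotations}), so that the braid part is entirely determined by the down-edges $\alpha_3,\alpha_4$ and comes out as $\eta_i$. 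The index arithmetic modulo the three distinct periods is routine but error-prone, and handling the boundary cases $i\in\{1,\dots,h(n,m)-2\}$ together with the exceptional adjacencies will require splitting into the same cases that already appear in the definitions of $\eta_i$ and $\gamma_i$.
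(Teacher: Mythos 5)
Your overall strategy is the paper's: run Brown's recipe on the representative square and use the bookkeeping of Remark~\ref{rmk_element_stab_vertices} (images of the two distinguished adjacent polygons plus the braid on the punctures) to identify each $h_j$ and the composite. However, as written the plan has two concrete defects that would derail it. First, you are working with the wrong square: a square of $F$ based on a vertex of \emph{height} $i$ is based at $[\Sigma_{i-1},\id]$, with $\alpha_1=e_{i-1}$ and vertices $[\Sigma_{i-1},\id]$, $[\Sigma_i,\id]$, $[\Sigma_i\cup H_r,\id]$, $[\Sigma_{i-1}\cup H_r,\id]$ (heights $i,i+1,i+2,i+1$ -- which is what your own height count for $\alpha_3,\alpha_4$ says), not at $[\Sigma_i,\id]$ with $\alpha_1=e_i$. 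With your labels the correct up-edge element is not the one you need: the proof requires $h_2=r_{\Sigma_i}^{r-(i+1)}$, and carrying your shifted set-up through would produce the relation with $r_{\Sigma_i},r_{\Sigma_{i+1}},r_{\Sigma_{i+2}}$, i.e.\ the statement with all indices moved up by one, not the displayed identity involving $r_{\Sigma_{i-1}},r_{\Sigma_i},r_{\Sigma_{i+1}}$.

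Second, and more seriously, you place the down-edge elements in $\stab([\Sigma_{i+1}\cup H_r,\id])$ and $\stab([\Sigma_i\cup H_r,\id])$. These vertices are not in $V$, and Brown's method requires each $h_j$ to lie in the isotropy group of a representative vertex (after translating the edge back so that its traversal origin lies in $V$); otherwise the resulting word is not a word in the chosen generators and yields no relation of the presentation. In the paper one finds $h_3=(\eta_i r_{\Sigma_{i+1}}^{r+n-(i+1)})^{-1}\in\stab([\Sigma_{i+1},\id])$ and $h_4=(\gamma_i r_{\Sigma_i}^{-r-n+i})^{-1}\in\stab([\Sigma_i,\id])$, established by Claims~\ref{claim_h1_a_2_2}--\ref{claim_h1_a_4}. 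Relatedly, your remark that ``only the composite matters'' is backwards: the relation is precisely the word $h_1h_2h_3h_4g_s^{-1}$ in elements of the vertex stabilisers, so the whole content of the proposition is the explicit choice of the individual $h_j$ together with the identification, via the arc-index computations (Facts~\ref{fact_rotation_arc}, \ref{fact_arcs_intersection}, \ref{fact_rotation}) and the puncture bookkeeping of Claim~\ref{claim_h1_a_4_2}, of the composite with $r_{\Sigma_{i-1}}^{i-r}$ (it sends $H_i\mapsto H_{m+(i-1)n+(i+1)-r}$, $H_r\mapsto H_i$ and braids $p_0,\dots,p_{i-1}$ trivially). That computation -- the analogue of Claims~\ref{claim_h1_a_2_2}, \ref{claim_h1_a_3_2}, \ref{claim_h1_a_4_2}, including the case split $m>i$, $m=i$, $m<i$, $i=4$ that produces $\eta_i$ and $\gamma_i$ -- is only promised in your proposal, and it is the substance of the proof.
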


\begin{figure}
	\begin{center}
		\includegraphics[scale=0.55]{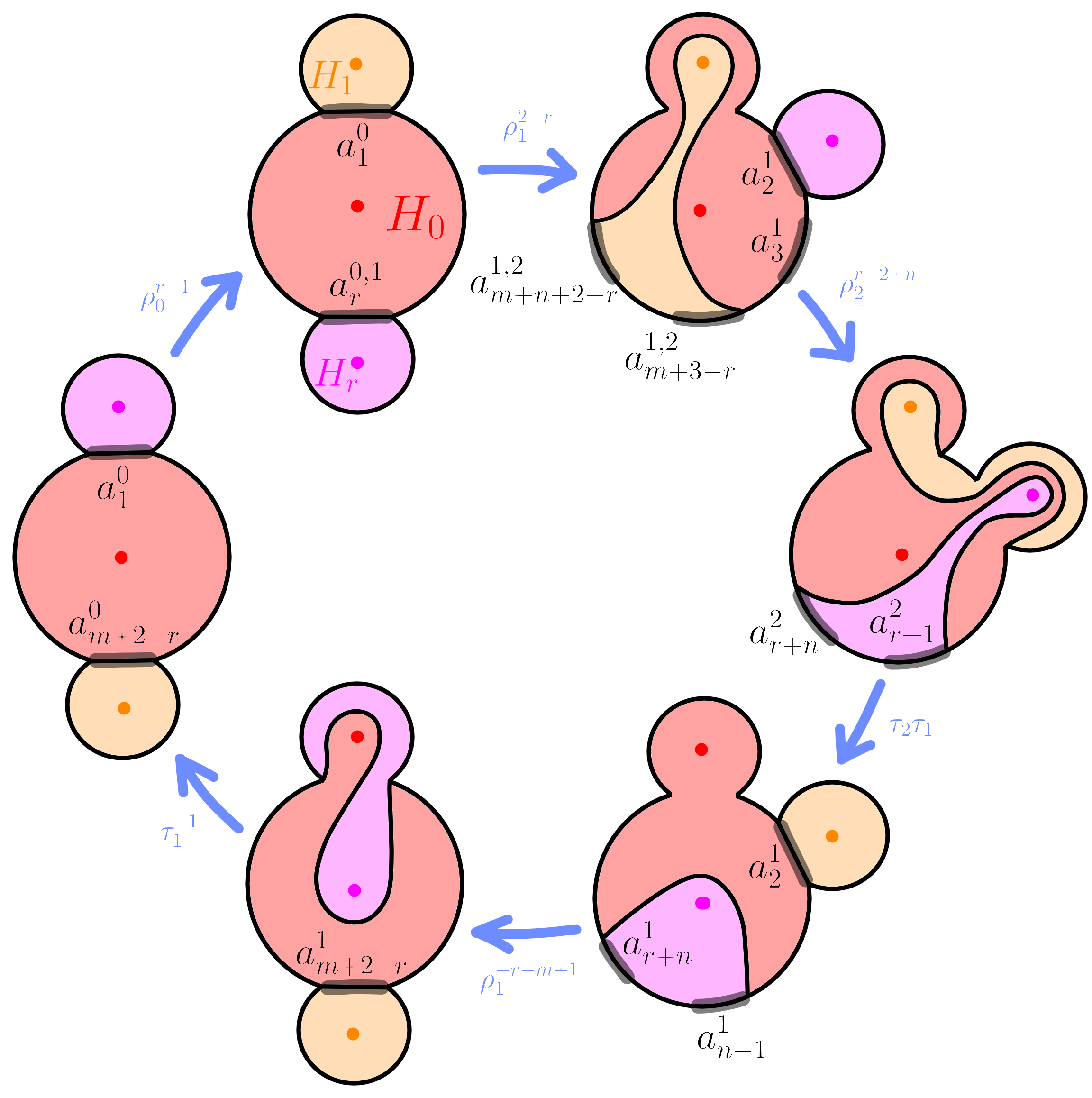}
			\caption{Relation based on a vertex of height $1$. \label{fig:h0}}
	\end{center}
	\end{figure}

\begin{figure}
	\begin{center}
		\includegraphics[scale=0.55]{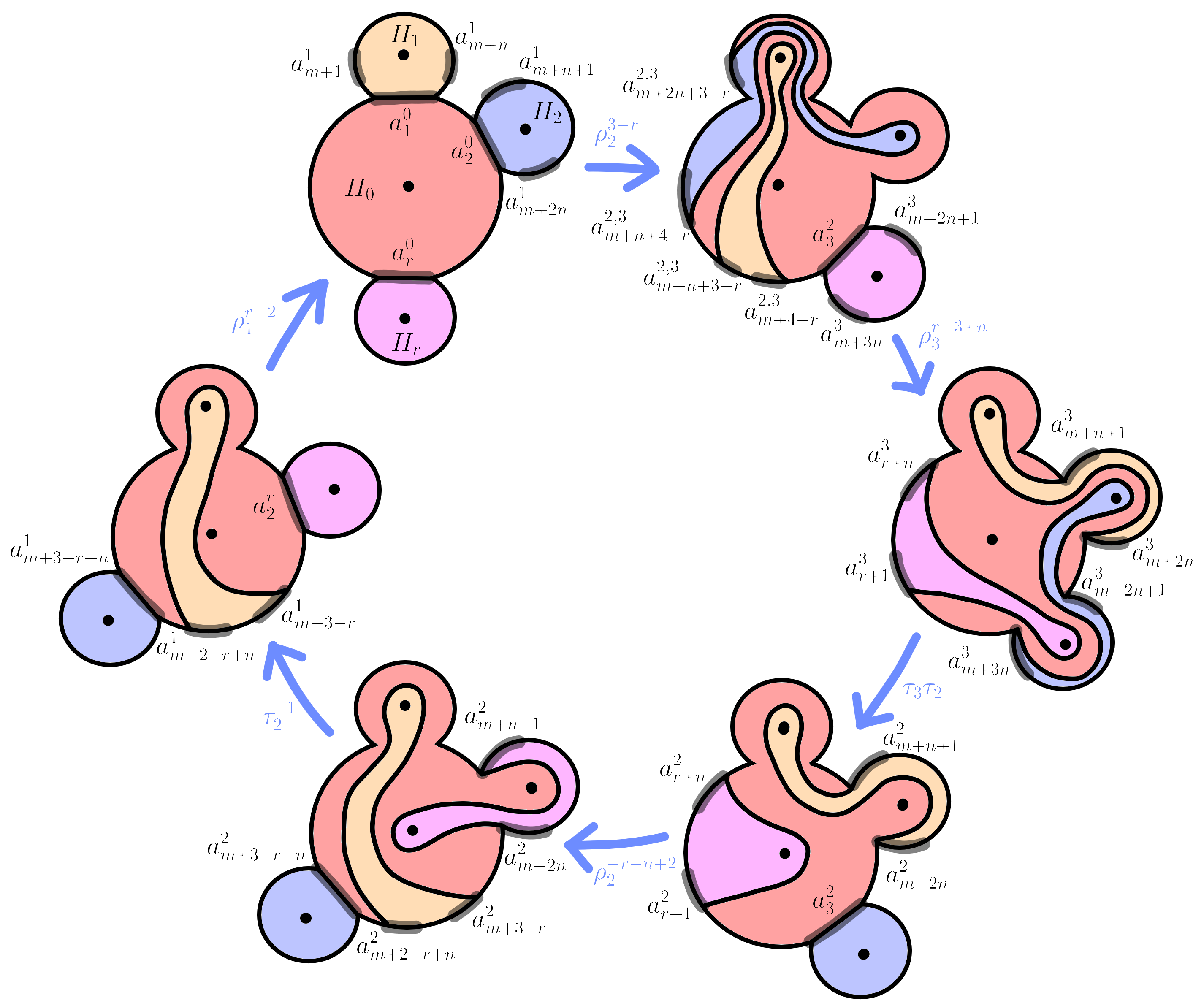}
	\end{center}
	\caption{Relation for $m\geq 3$ based on a vertex of height $2$ for $r\leq m$. \label{fig:h1}}
\end{figure}

	\begin{figure}
	\begin{center}
		\includegraphics[scale=0.50]{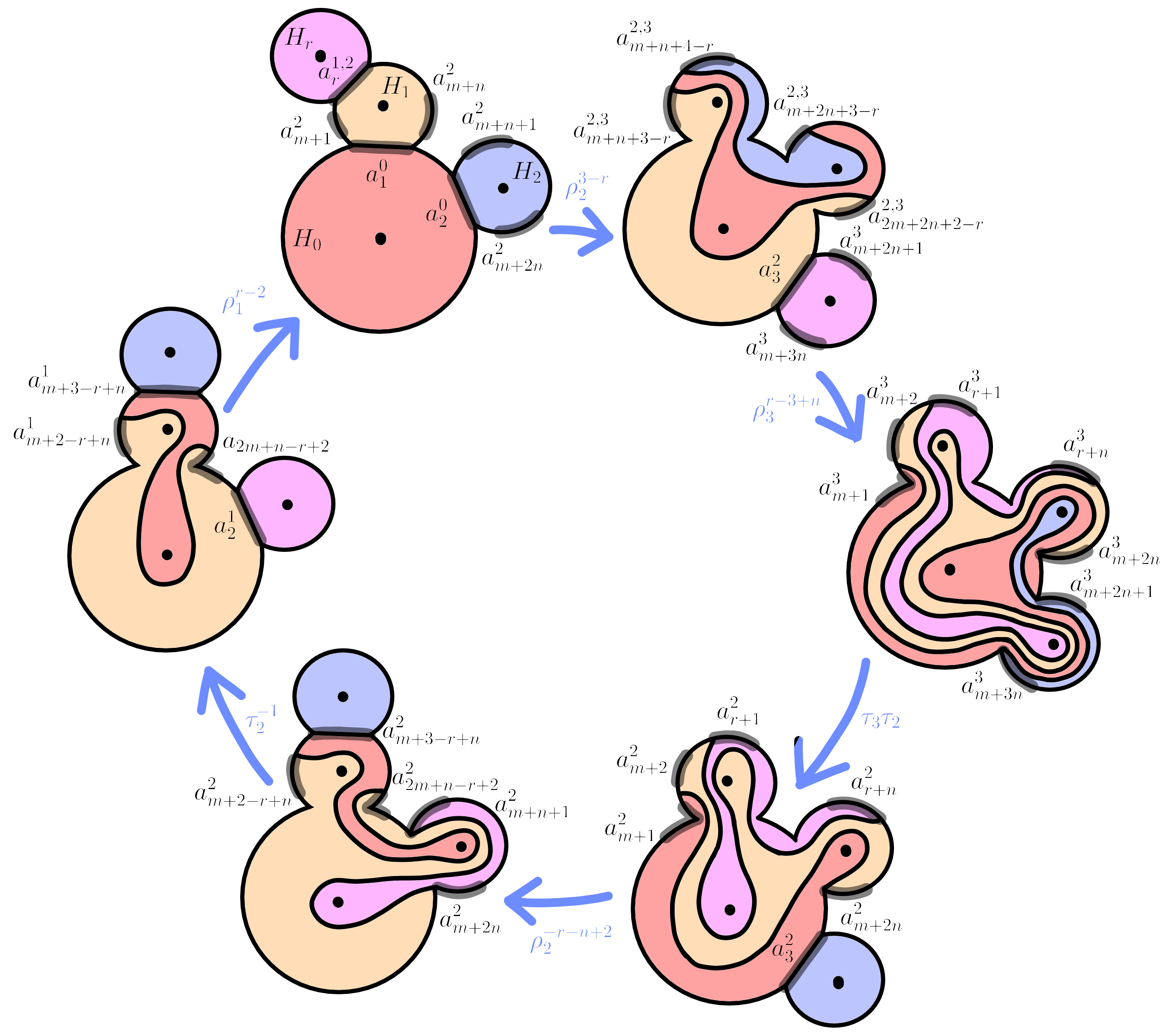}
		
		\caption{Relation for $m\geq 3$ based on a vertex of height $2$ for $r>m$. \label{fig:h1_r}}
	\end{center}
\end{figure}

\begin{figure}
		\begin{center}
			\includegraphics[scale=0.50]{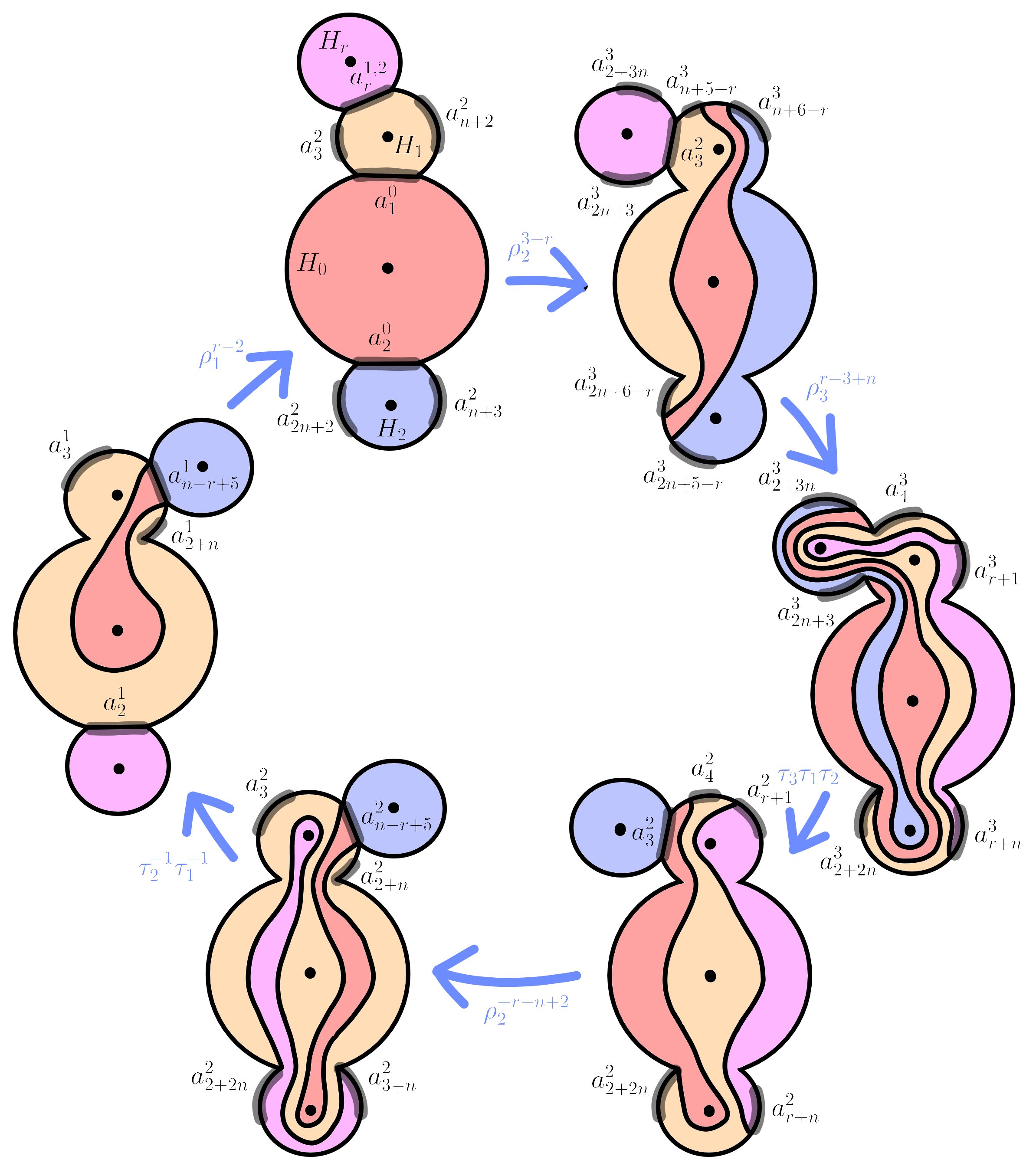}
		\end{center}
	\caption{Relation based on a vertex of height $2$ when $m=2$. \label{fig:h1_m2}}
	\end{figure}

\begin{figure}
		\begin{center}
			\includegraphics[scale=0.50]{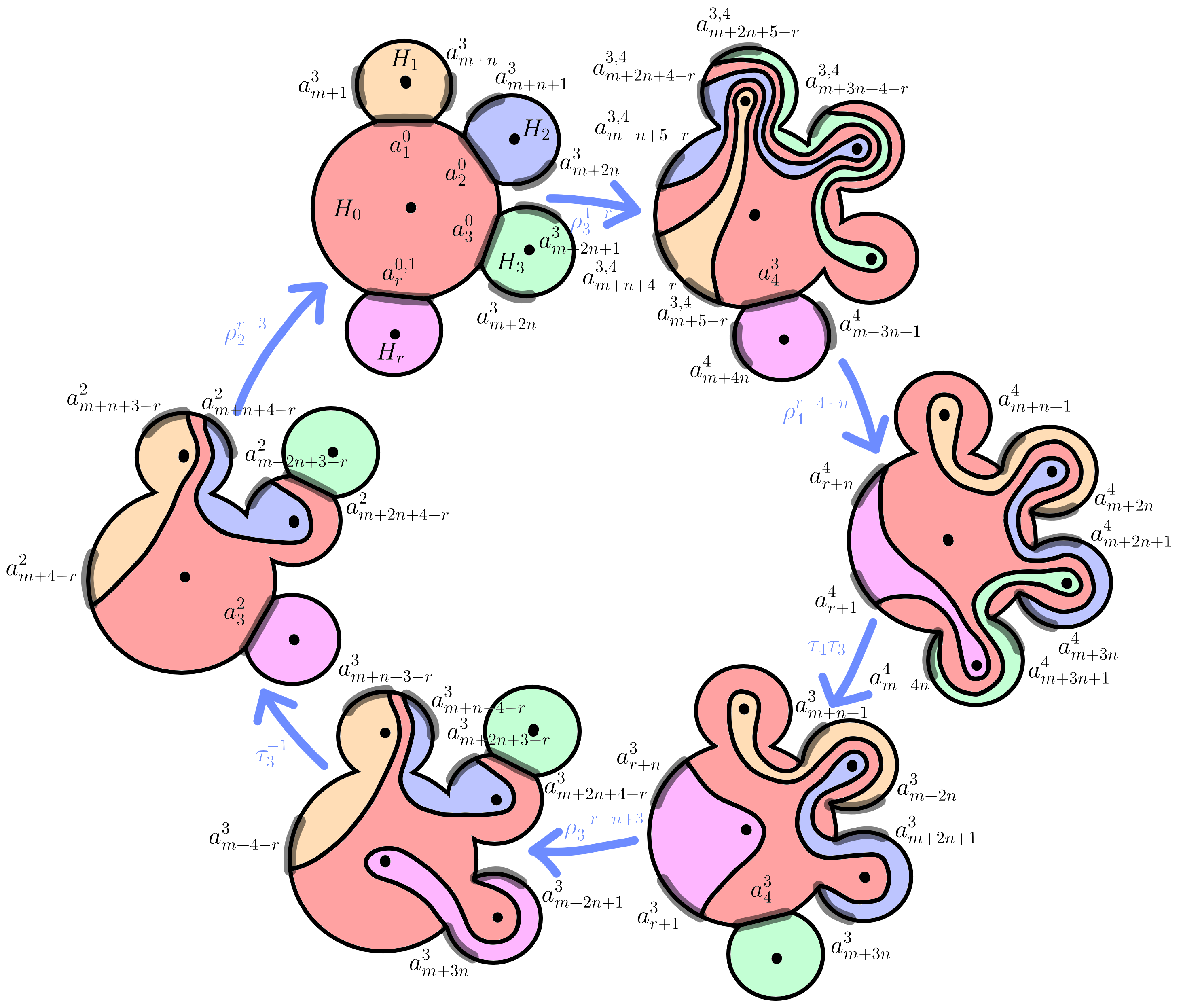}
		\end{center}
	\caption{Relation for $m\geq 3$ based on a vertex of height $3$ for $r\leq m$. \label{fig:h2}}
\end{figure}

\begin{figure}
		\begin{center}
			\includegraphics[scale=0.50]{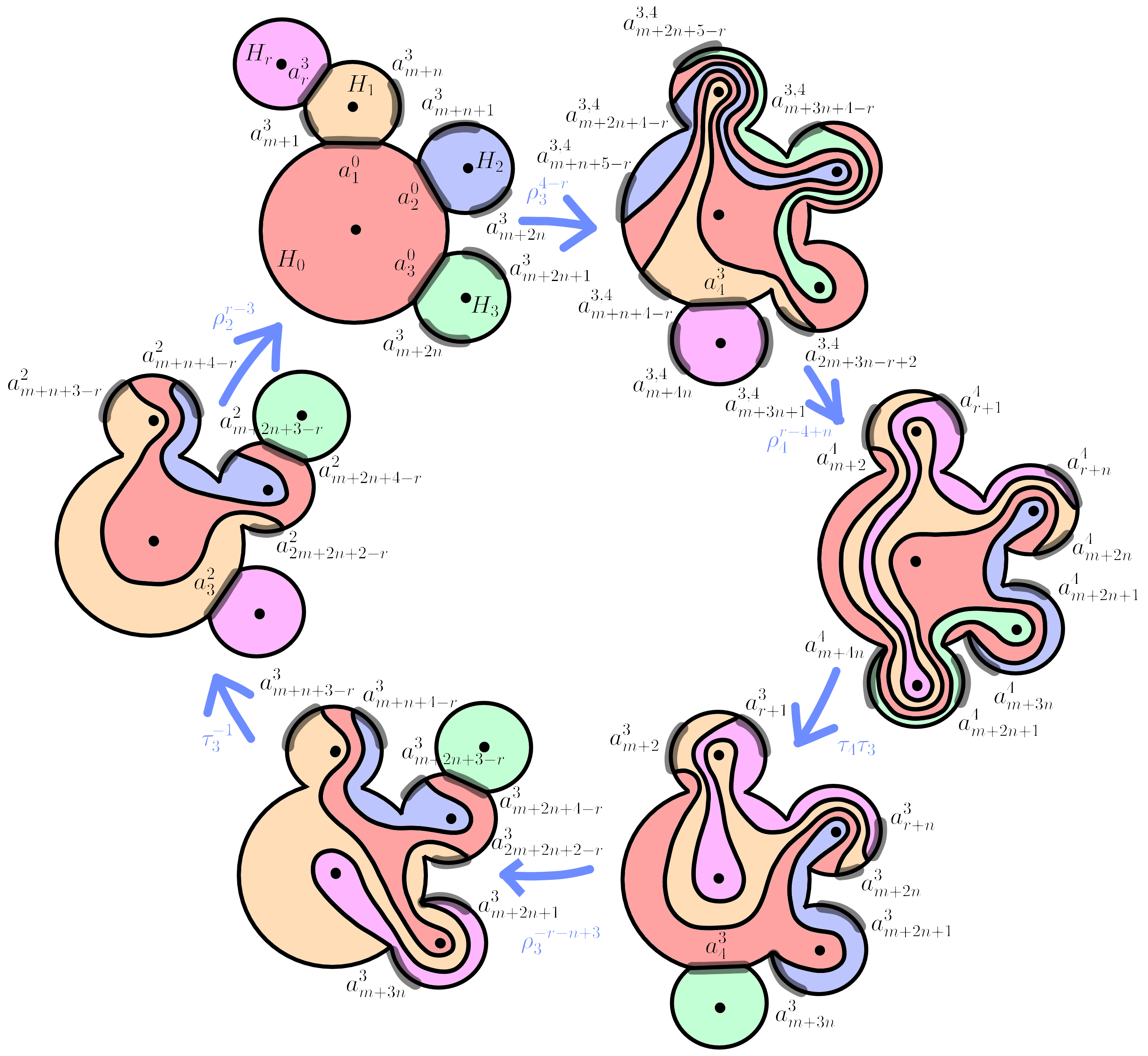}
		\end{center}
	\caption{Relation for $m\geq 3$ based on a vertex of height $3$ for $r>m$. \label{fig:h2_r}}
\end{figure}

\begin{figure}
		\begin{center}
			\includegraphics[scale=0.50]{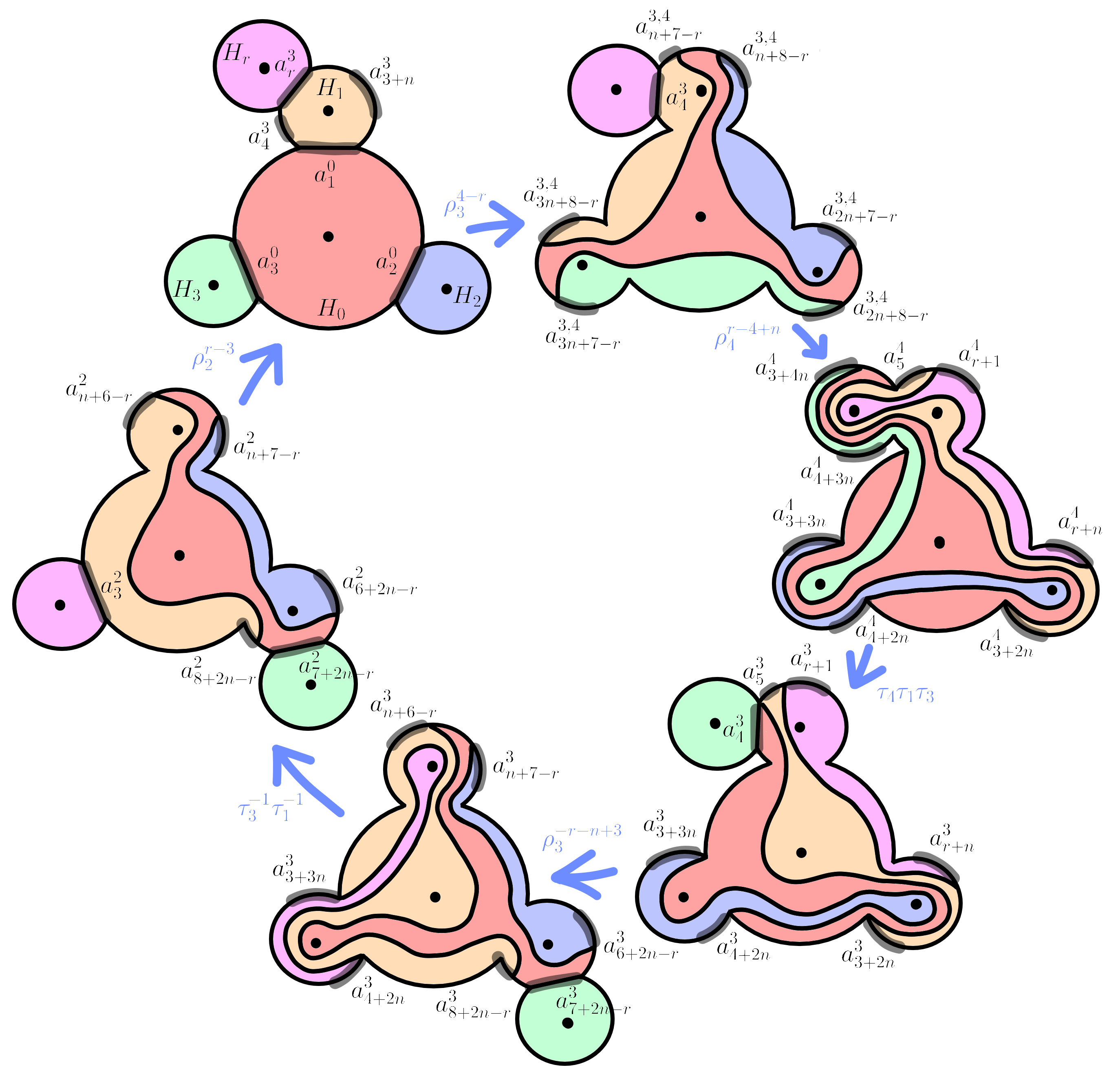}
		\end{center}
	\caption{Relation based on a vertex of height $3$ for $m=3$. \label{fig:h2_m3}}
\end{figure}

\noindent For the completeness of the article the proof is detailed below, but it can be easily read on the figures \ref{fig:h0}, \ref{fig:h1}, \ref{fig:h1_r}, \ref{fig:h1_m2}, \ref{fig:h2}, \ref{fig:h2_r}, \ref{fig:h2_m3}, \ref{fig:h2_m2} and \ref{fig:h4}. By Lemma \ref{lemma_rep_squares}, the squares based on a vertex of height $i$ can be assumed to be of the form of Figure \ref{fig:square} for $k=i-1$ and $1+i\leq r\leq i+\left\lceil\frac{m+(n-1)(i-1)-1}{2}\right\rceil$. Fix $i$ and $r$ as in the statment of Lemma \ref{lemma_relation_square_1}.
%

 	\begin{figure}
 		\begin{center}
 			\includegraphics[scale=0.40]{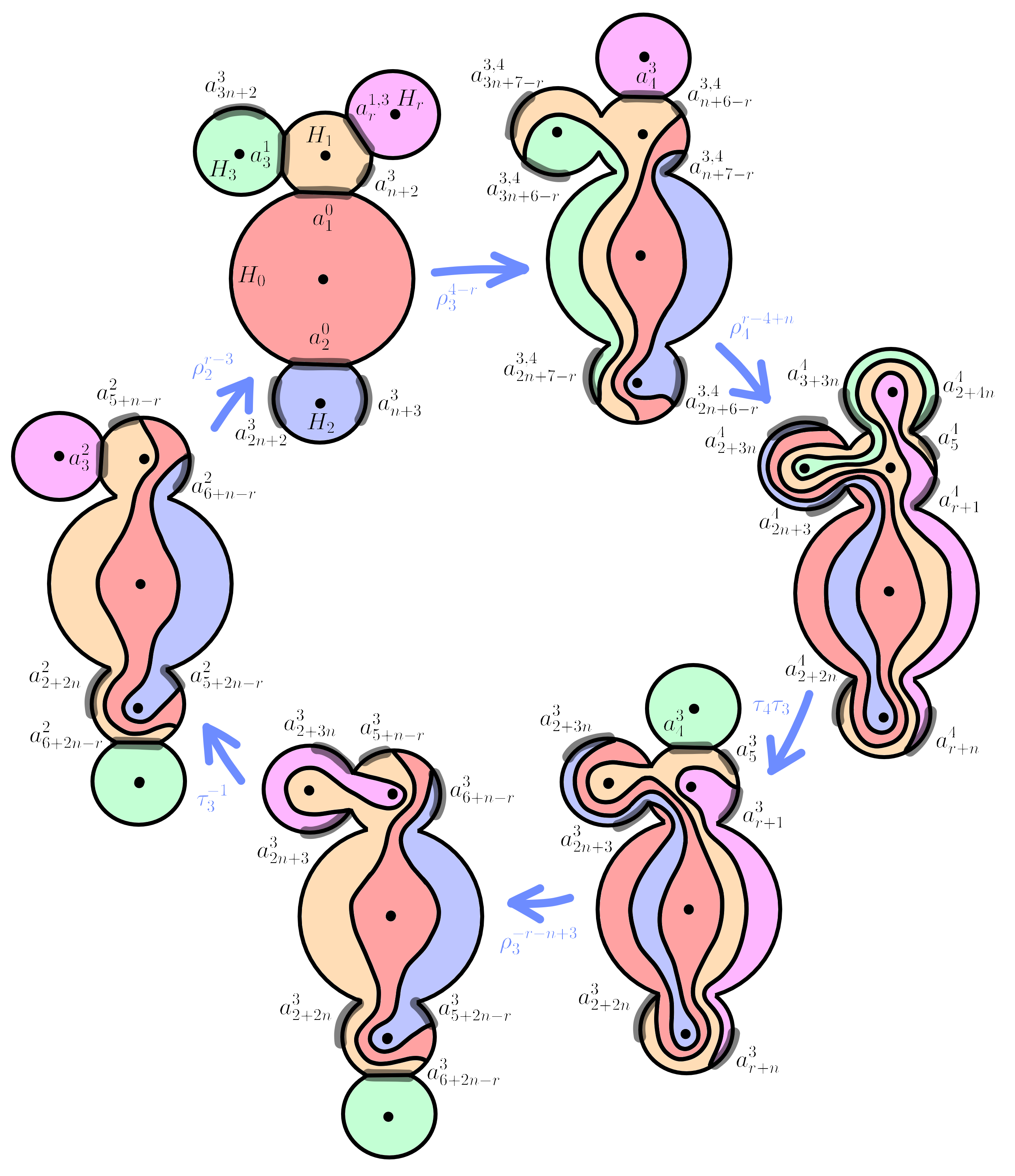}
 		\end{center}
 		\caption{Relation based on a vertex of height $3$ for $m=2$. \label{fig:h2_m2}}
 	\end{figure}
 	
 	\begin{figure}
 		\begin{center}
 			\includegraphics[width=\linewidth]{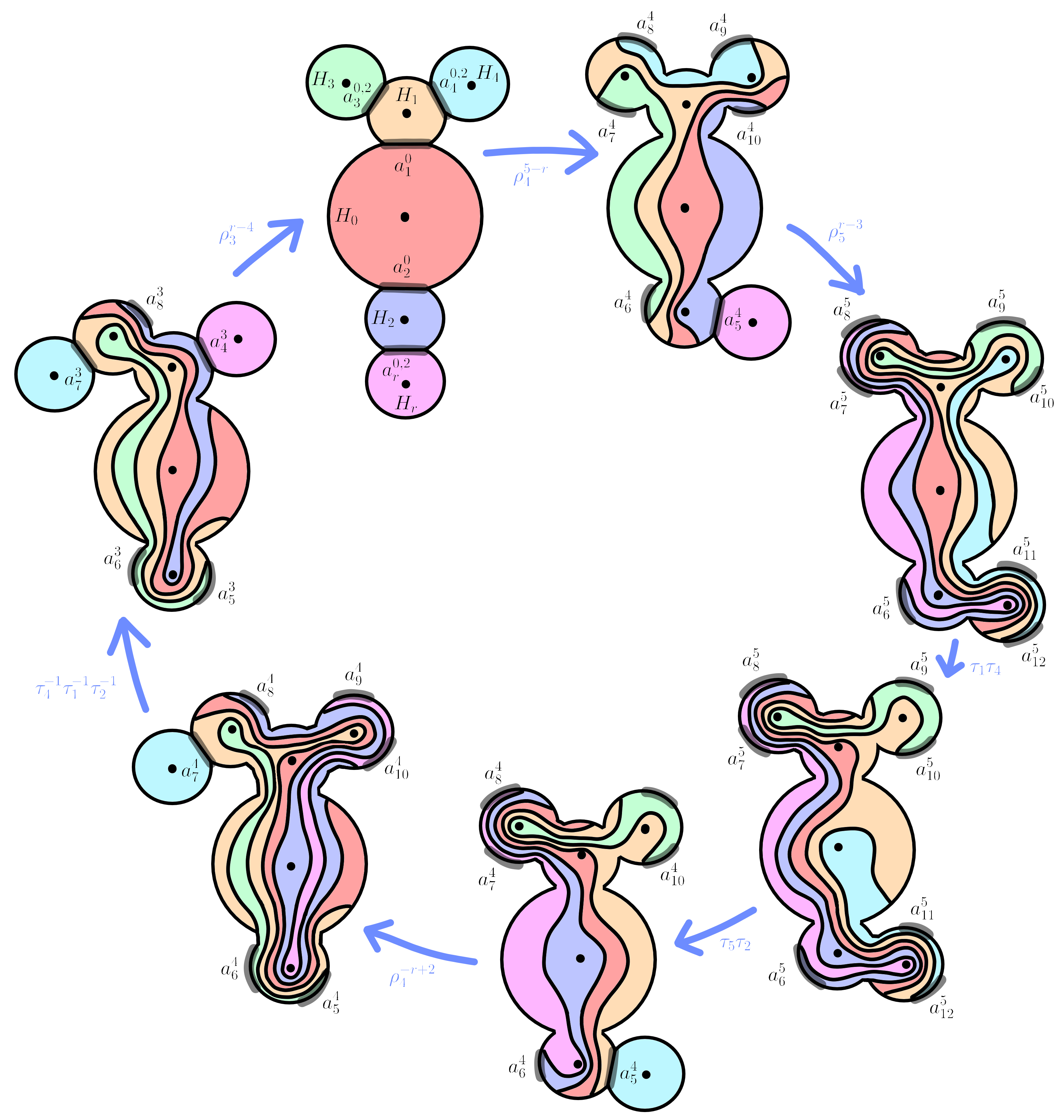}
 		\end{center}
 		\caption{Relation for $m=2$ based on a vertex of height $4$. \label{fig:h4}}
 	\end{figure}
\begin{claim}\label{claim_h1_a_2}
	We can associate the rotation $r_{\Sigma_i}^{r-(i+1)}\in \stab([\Sigma_i,\id])$ to the edge $\alpha_2$.
	\begin{center}
		\begin{tikzcd}
			{[\Sigma_{i-1}\cup H_r,\id] } \arrow[d, "\alpha_4"] & 	{[\Sigma_i\cup H_r,\id] } \arrow[l, "\alpha_3"] \\ 
			{[\Sigma_{i-1},\id] }\arrow[r, "\alpha_1", "\id"' blue ]	& {[\Sigma_i,\id] } \arrow[u,red, "\alpha_2" black]\arrow[r,red, "e_i" black]	 & {[\Sigma_{i+1},\id] } \arrow[ul, dashrightarrow, bend right, red, "r_{\Sigma_i}^{r-(i+1)}" red].\end{tikzcd}
	\end{center}
\end{claim}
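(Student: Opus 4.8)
The goal is simply to identify, following Brown's recipe from Subsection~\ref{subsection:Brown}, the group element $h_2 \in \stab([\Sigma_i,\id])$ attached to the edge $\alpha_2$. Recall that $\alpha_2$ goes from $[\Sigma_i,\id]$ (which lies in $T$) up to $[\Sigma_i \cup H_r,\id]$, and it carries the orientation of the height function, so we are in the situation of Figure~\ref{fig:same_orientation}: we must produce $h \in G_{o(\alpha_2)} = \stab([\Sigma_i,\id])$ together with an edge $e \in E^+$ with $o(\alpha_2)=o(e)$ and $t(\alpha_2) = h\cdot t(e)$. Since $h_1$ was chosen to be $\id$, after applying $h_1^{-1}$ nothing changes, so we genuinely just need to compare $\alpha_2$ with the tree edge $e_i$ issuing from $[\Sigma_i,\id]$ (the unique edge of $E^+$ with origin $[\Sigma_i,\id]$, by Lemma~\ref{lemma_edge_T}).

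First I would invoke Fact~\ref{fact_edge_facile} with $k=i$: the edge $e_i$ ends at $[\Sigma_i \cup H_{i+1},\id]$, and the rotation $r_{\Sigma_i}^{r-(i+1)}$ lies in $\stab([\Sigma_i,\id])$ and sends $H_{i+1}$ to $H_r$ (this uses $i+1 \le r \le m+ni$, which holds since $r \le i + \lceil \tfrac{m+(n-1)(i-1)-1}{2}\rceil \le m+ni$). Hence $r_{\Sigma_i}^{r-(i+1)}$ sends the edge $e_i$ to the edge of $\mathscr{SC}_{\leq}(A_{n,m})$ from $[\Sigma_i,\id]$ to $[\Sigma_i \cup H_r,\id]$, which is precisely $\alpha_2$. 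In Brown's notation this says $t(\alpha_2) = r_{\Sigma_i}^{r-(i+1)} \cdot t(e_i)$ while $o(\alpha_2)=o(e_i)=[\Sigma_i,\id]$, so we may take $h_2 = r_{\Sigma_i}^{r-(i+1)}$, which is what the claim asserts. The commuting triangle in the displayed diagram records exactly this: $e_i$ is sent by $r_{\Sigma_i}^{r-(i+1)}$ to $\alpha_2$.

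There is essentially no obstacle here; the only point requiring a word of care is that $r_{\Sigma_i}^{r-(i+1)}$ genuinely fixes the vertex $[\Sigma_i,\id]$ (it is a power of the rotation along $\Sigma_i$, hence preserves $\Sigma_i$ and is rigid outside it, so by the definition of the equivalence relation on vertices of $\mathscr{SC}(A_{n,m})$ it fixes $[\Sigma_i,\id]$) and that the action on the link of $[\Sigma_i,\id]$ — i.e.\ on polygons adjacent to $\Sigma_i$ — is by the cyclic shift of Fact~\ref{fact_rotation_arc}, so $H_{i+1} \mapsto H_r$ as required. The choice of $h_2$ is of course only unique up to left multiplication by $\stab(e_i) = \Mod(\Sigma_i)$ (Proposition~\ref{lemma_stab_edge}); we record the specific representative $r_{\Sigma_i}^{r-(i+1)}$ since it will be convenient in the subsequent claims computing $h_3$ and $h_4$.
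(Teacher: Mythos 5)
Your proposal is correct and matches the paper's own (one-line) proof: both simply apply Fact~\ref{fact_edge_facile} with $k=i$ to see that $r_{\Sigma_i}^{r-(i+1)}\in\stab([\Sigma_i,\id])$ sends $e_i$ to $\alpha_2$, which is exactly what Brown's recipe requires for the element attached to $\alpha_2$. The extra checks you include (the range of $r$, that the rotation fixes $[\Sigma_i,\id]$, and non-uniqueness up to $\stab(e_i)$) are harmless elaborations of the same argument.
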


\begin{proof}
The edge $\alpha_2$ starts in the vertex $[\Sigma_i,\id]\in V$, and by Fact \ref{fact_edge_facile} $r_{\Sigma_i}^{r-(i+1)}$ belongs to $\stab([\Sigma_i,\id])$ and it sends the edge $e_i$ to the edge $\alpha_2$.
\end{proof}

\begin{claim}\label{claim_h1_a_2_2}
	The rotation $r_{\Sigma_i}^{(i+1)-r}$ sends $H_r$ to $H_{i+1}$ and the braid on the punctures $p_0, \dots, p_i$ is trivial. Moreover, it sends the arcs of $H_i$ to the following arcs of the frontier of $\Sigma_{i+1}$:  \[ \partial_{i+1}r_{\Sigma_i}^{(i+1)-r}(H_i)=\{a_{m+(i-1)n+1+(i+1)-r}^{i+1},\dots, a_{m+in+(i+1)-r}^{i+1}\}.\]
\end{claim}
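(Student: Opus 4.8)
The plan is to prove the three assertions separately, in each case reducing to the Set-Up facts (Facts~\ref{fact_arcs_Hk}, \ref{fact_arcs_intersection} and~\ref{fact_rotation_arc}) and to the description of the rotation $r_{\Sigma_i}$ in Example~\ref{ex_rotations}. Write $j:=(i+1)-r$, so that $j\le 0$ by the hypothesis $i+1\le r$. Note that by Claim~\ref{claim_h1_a_2} we have $h_2=r_{\Sigma_i}^{\,r-(i+1)}$ and $h_1=\id$, so $(h_1h_2)^{-1}=r_{\Sigma_i}^{\,j}$; thus the claim is exactly the description, required by Remark~\ref{rmk_element_stab_vertices}, of where $(h_1h_2)^{-1}$ sends the polygons $H_r$ and $H_i$ and of how it braids the punctures of $\Sigma_i$. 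Throughout, indices of arcs of $\Sigma_i$ are read modulo $m+i(n-1)$ and those of $\Sigma_{i+1}$ modulo $m+(i+1)(n-1)$.

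\emph{Image of $H_r$ and triviality of the braid.} Since $a_r^i=\Sigma_i\cap H_r$ is an arc of $\Fr\Sigma_i$, Fact~\ref{fact_rotation_arc} gives $r_{\Sigma_i}^{\,j}(a_r^i)=a_{r+j}^i=a_{i+1}^i=\Sigma_i\cap H_{i+1}$; as $r_{\Sigma_i}$ carries the arboreal surface glued to $\Sigma_i$ along a given frontier arc onto the one glued along its image arc, and $H_r$ (resp.\ $H_{i+1}$) is the polygon of that surface adjacent to $\Sigma_i$, this yields $r_{\Sigma_i}^{\,j}(H_r)=H_{i+1}$. For the braid: by Example~\ref{ex_rotations}, $r_{\Sigma_i}$ restricts to the identity on a disc of $\Sigma_i$ containing all the punctures $p_0,\dots,p_i$, hence so does every power of it, so $r_{\Sigma_i}^{\,j}$ induces the trivial braid on $p_0,\dots,p_i$.

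\emph{Frontier arcs of $r_{\Sigma_i}^{\,j}(H_i)$.} By Fact~\ref{fact_arcs_Hk}, $\partial_iH_i=\{a_{m+(i-1)n+1}^i,\dots,a_{m+in}^i\}$. Since $r_{\Sigma_i}$ maps $\Sigma_i$ onto itself and permutes the arcs of $\Fr\Sigma_i$ cyclically, the subsurface $r_{\Sigma_i}^{\,j}(H_i)$ lies in $\Sigma_i\subset\Sigma_{i+1}$ and meets $\Fr\Sigma_i$ precisely in $r_{\Sigma_i}^{\,j}(\partial_iH_i)=\{a_{m+(i-1)n+1+j}^i,\dots,a_{m+in+j}^i\}$, again by Fact~\ref{fact_rotation_arc}. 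Now $\Fr\Sigma_{i+1}$ is obtained from $\Fr\Sigma_i$ by deleting the single arc $a_{i+1}^i=\Sigma_i\cap H_{i+1}$ and adjoining the remaining arcs of $H_{i+1}$, the latter being disjoint from $\Sigma_i\supset r_{\Sigma_i}^{\,j}(H_i)$. Hence $\partial_{i+1}\bigl(r_{\Sigma_i}^{\,j}(H_i)\bigr)=\partial_i\bigl(r_{\Sigma_i}^{\,j}(H_i)\bigr)$ provided $a_{i+1}^i$ is not among the arcs listed above, i.e.\ (applying $r_{\Sigma_i}^{-j}$) provided $r\notin\{m+(i-1)n+1,\dots,m+in\}$ modulo $m+i(n-1)$. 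This is where the range of $r$ enters: from $r\le i+\lceil\tfrac{m+(n-1)(i-1)-1}{2}\rceil$ and the elementary bound $\lceil\tfrac{N-1}{2}\rceil\le N-1$ applied to $N=m+(n-1)(i-1)\ge 2$, one gets $r\le i-1+m+(n-1)(i-1)=m+(i-1)n$; since $\{i+1,\dots,m+in\}$ is a complete set of residues modulo $m+i(n-1)$, the integer intervals $\{i+1,\dots,m+(i-1)n\}$ and $\{m+(i-1)n+1,\dots,m+in\}$ represent disjoint residue classes, and $r$ lies in the first, so indeed $r\notin\{m+(i-1)n+1,\dots,m+in\}$. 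Finally one checks that the indices $m+(i-1)n+1+j,\dots,m+in+j$ all lie in $\mathcal{I}_i\cap\mathcal{I}_{i+1}$, so Fact~\ref{fact_arcs_intersection} allows one to replace the superscript $i$ by $i+1$, giving the announced equality.

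The only genuinely delicate point is this last bookkeeping step: verifying, via the precise bound on $r$, that passing from $\Sigma_i$ to $\Sigma_{i+1}$ by adjoining $H_{i+1}$ neither removes a frontier arc of $r_{\Sigma_i}^{\,j}(H_i)$ nor alters its index. The first two assertions and the identification of the shifted arc set are immediate consequences of Facts~\ref{fact_arcs_Hk} and~\ref{fact_rotation_arc} and of the definition of $r_{\Sigma_i}$, and everything can also be read off directly from the relevant figure.
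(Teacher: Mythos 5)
Your proof is correct and follows essentially the same route as the paper: the image of $H_r$ and the triviality of the braid come from the rotation's definition (the paper simply cites Fact~\ref{fact_edge_facile} where you re-derive it), and the frontier computation is the same application of Facts~\ref{fact_arcs_Hk}, \ref{fact_rotation_arc} and \ref{fact_arcs_intersection} combined with the bound $i+1\le r\le i+\lceil\frac{m+(n-1)(i-1)-1}{2}\rceil$ to place the indices in $\mathcal{I}_i\cap\mathcal{I}_{i+1}$. Your explicit verification that the arc $a_{i+1}^i$ swallowed when adjoining $H_{i+1}$ is not among the shifted arcs is only a spelled-out version of what the paper handles implicitly through that same index check.
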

\begin{proof}
By Fact \ref{fact_edge_facile}, $r_{\Sigma_i}^{(i+1)-r}$ sends the polygon $H_r$ to the polygon $H_{i+1}$. Being a rotation around $\Sigma_i$, the braid induced on $p_0,\dots, p_i$ is trivial.

\noindent Fact~\ref{fact_rotation} gives us: \[\partial_ir_{\Sigma_i}^{(i+1)-r}(H_i)=\{a_{m+(i-1)n+1+(i+1)-r}^{i},\dots, a_{m+in+(i+1)-r}^{i}\}. \]
 Moreover as $m,n\geq 2$ and $r\leq i+\left\lceil\frac{m+(n-1)(i-1)-1}{2}\right\rceil$, we have that:
\[m+(i-1)n+1+(i+1)-r\geq m+(i-1)n-i - \left\lceil\frac{m+n(i-1)-i}{2}\right\rceil+ i+2\geq i+2,\]
and on the other hand, using that $i+1\leq r$ we obtain that
$m+in+(i+1)-r \leq m+in$. Consequently, the indices all belong to $\mathcal{I}_{i}\cap \mathcal{I}_{i+1}$. We conclude using Fact~\ref{fact_arcs_intersection}. \end{proof}

\begin{claim}\label{claim_h1_a_3_2}
	The element $\eta_ir_{\Sigma_{i+1}}^{r+n-(i+1)}r_{\Sigma_i}^{i+1-r}$ sends $H_i$ to $H_{i+1}$, and it sends the arcs of $H_r$ to the following arcs of the frontier of $\Sigma_i$: 
 \[\partial_i\eta_ir_{\Sigma_{i+1}}^{r+n-(i+1)}r_{\Sigma_i}^{i+1-r}(H_r)=\{a_{r+1}^i,\dots, a_{r+n}^i\}.\] Moreover,  \begin{itemize}
\item if $m>i$, it sends $p_r$ to $ p_0$, $p_0$ to $p_i$, $p_i$ to $p_{i+1}$ and fixes $p_1,\dots,p_{i-1}$;
\item if $m=i$, it sends $p_r$ to $p_1$, $p_1$ to $p_0$, $p_0$ to $p_i$, $p_i$ to $p_{i+1}$ and fixes $p_2,\dots,p_{i-1}$;
\item if $m<i\leq 3$, it sends $p_r$ to $p_1$, $p_1$ to $p_i$, $p_i$ to $p_{i+1}$ and fixes $p_0, p_2, \dots, p_{i-1}$.
\item if $i=4$, it sends $p_r$ to $p_2$, $p_2$ to $p_0$, $p_0$ to $p_{1}$, $p_1$ to $p_4$, $p_4$ to $p_5$ and fixes $p_3$.
 \end{itemize} 
\end{claim}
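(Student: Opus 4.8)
The plan is to read the claim as the record of Brown's running data after the third edge $\alpha_3$ of the fundamental square of Figure~\ref{fig:square} (with $k=i-1$), in the spirit of Remark~\ref{rmk_element_stab_vertices}. We already have $h_1=\id$ and, by Claim~\ref{claim_h1_a_2}, $h_2=r_{\Sigma_i}^{r-(i+1)}$, so $(h_1h_2)^{-1}=r_{\Sigma_i}^{i+1-r}$. Since $\alpha_3$ is of the ``opposite orientation'' type of Figure~\ref{fig:opposite_orientation} and $o(\alpha_3)$ has height $i+2$, Brown's recipe asks for $h_3\in\stab([\Sigma_{i+1},\id])$; I would simply \emph{choose} $h_3:=r_{\Sigma_{i+1}}^{-(r+n-(i+1))}\eta_i^{-1}$, which lies in that stabiliser because $r_{\Sigma_{i+1}}$ fixes $[\Sigma_{i+1},\id]$ and $\eta_i$ is a product of twists, each supported on the union of two adjacent polygons contained in $\Sigma_{i+1}$, whence $\eta_i\in\Mod(\Sigma_{i+1})\le\stab([\Sigma_{i+1},\id])$ by Lemma~\ref{lemma_stab}. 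With this choice $(h_1h_2h_3)^{-1}=\eta_i\,r_{\Sigma_{i+1}}^{r+n-(i+1)}\,r_{\Sigma_i}^{i+1-r}$ is exactly the element of the claim, and it remains to $(i)$ check that this is a legitimate Brown choice, i.e. that it carries the bottom corner $[\Sigma_{i-1}\cup H_r,\id]$ of the square into $V$, and $(ii)$ establish its stated effect on $H_i$, $H_r$ and the punctures $p_0,\dots,p_i$.

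For $(ii)$ I would compose the three factors from right to left and track the two spanning polygons and the punctures, using the facts of Subsection~\ref{section:set-up}. The first factor $r_{\Sigma_i}^{i+1-r}$ is already analysed in Claim~\ref{claim_h1_a_2_2}: it fixes $p_0,\dots,p_i$, carries $H_r$ onto the polygon occupying slot $i+1$ of $\Fr\Sigma_i$ — namely $H_{i+1}$, by Fact~\ref{fact_rotation_arc} — carries the arboreal piece at slot $r$ (which contains $p_r$) onto the slot-$(i+1)$ region, so that $p_r$ is sent to the former location of $p_{i+1}$, and moves the arcs $\partial_iH_i$ to $\{a^{i+1}_{m+(i-1)n+1+(i+1)-r},\dots,a^{i+1}_{m+in+(i+1)-r}\}$ by Facts~\ref{fact_rotation} and~\ref{fact_arcs_intersection}. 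Applying next $r_{\Sigma_{i+1}}^{r+n-(i+1)}$ — which fixes $p_0,\dots,p_{i+1}$, hence still fixes the displaced $p_r$, now lying inside $\Sigma_{i+1}$ — shifts that block of $H_i$-arcs forward by $r+n-(i+1)$, landing on $\{a^{i+1}_{m+in+1},\dots,a^{i+1}_{m+(i+1)n}\}=\partial_{i+1}H_{i+1}$ by Fact~\ref{fact_arcs_Hk}; together with the puncture computation below this yields $H_i\mapsto H_{i+1}$, and the analogous arc bookkeeping sends the arcs of the image of $H_r$ to $\{a^i_{r+1},\dots,a^i_{r+n}\}$, all of whose indices lie in $\mathcal I_i$ by the bound $r\le i+\lceil\tfrac{m+(n-1)(i-1)-1}{2}\rceil$ exactly as in Claim~\ref{claim_h1_a_2_2}. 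Finally $\eta_i\in\Mod(\Sigma_{i+1})$ fixes every frontier arc and only braids the punctures; expanding $\eta_i$ as a product of half-twists about $p_0$, $p_1$ or $p_2$ according to whether the relevant index is $\le m$, $>m$, or equals $5$, and composing with the effect of the two rotations (which have placed $p_r$ in the old $p_{i+1}$-slot), produces precisely the stated cyclic permutation in each of the four cases $m>i$, $m=i$, $m<i\le 3$, $i=4$. Item $(i)$ is then immediate: $(h_1h_2h_3)^{-1}$ sends the top corner $[\Sigma_i\cup H_r,\id]$ to $[\Sigma_{i+1},\id]$ — because $(h_1h_2)^{-1}$ already does (Claim~\ref{claim_h1_a_2_2}, via Fact~\ref{fact_edge_facile}) and $h_3^{-1}$ fixes $[\Sigma_{i+1},\id]$ — and, combined with $H_i\mapsto H_{i+1}$, this forces $[\Sigma_{i-1}\cup H_r,\id]\mapsto[\Sigma_i,\id]\in V$.

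The main obstacle will be the bookkeeping in the middle paragraph: the two rotations are about the \emph{different} subsurfaces $\Sigma_i\subsetneq\Sigma_{i+1}$, acting on overlapping but distinct sets of frontier arcs, so one must keep exact track of which arboreal piece — and which puncture — sits in which slot after the first rotation, and of the re-indexing of arcs when passing between $\mathcal I_i$ and $\mathcal I_{i+1}$ via Fact~\ref{fact_arcs_intersection}. Layered on top of this is the case split forced by the definitions of $\eta_i$ and $\tau_j$, in particular the exceptional small configurations ($m\le 3$, and $(n,m)=(2,2)$) in which $H_3$, $H_4$, $H_5$ hang off $H_1$ or $H_2$ rather than off the central polygon $H_0$ — which is precisely why $\eta_i$ acquires the extra factor $\tau_1$ when $m=i$ and becomes $\tau_5\tau_2\tau_1\tau_4$ when $i=4$. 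As with the rest of this lemma, the efficient way to organise and verify all of this is to follow the figures, here Figures~\ref{fig:h0}--\ref{fig:h4}.
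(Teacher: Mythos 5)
Your proof is correct and follows essentially the same route as the paper: compose right-to-left, use Claim~\ref{claim_h1_a_2_2} together with Facts~\ref{fact_arcs_Hk}--\ref{fact_rotation} to track the arcs of $H_i$ onto $\partial_{i+1}H_{i+1}$ and those of $H_r$ onto $\{a^i_{r+1},\dots,a^i_{r+n}\}$ (indices in $\mathcal I_i\cap\mathcal I_{i+1}$, then Fact~\ref{fact_arcs_intersection}), and let $\eta_i$ unbraid to get $H_i\mapsto H_{i+1}$ and the stated puncture permutation. The extra framing about the Brown choice of $h_3$ and the image of $[\Sigma_{i-1}\cup H_r,\id]$ is really the content of Claim~\ref{claim_h1_a_3} rather than of this claim, but it is harmless.
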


\begin{proof}
The rotation $r_{\Sigma_{i+1}}^{r+n-(i+1)}$ fixes the punctures of the polygons in $\Sigma_{i+1}$ and, applying Fact~\ref{fact_rotation_arc} to Claim \ref{claim_h1_a_2_2}, we have
\[\partial_{i+1}r_{\Sigma_{i+1}}^{r-(i+1)+n}r_{\Sigma_i}^{(i+1)-r}(H_i)=\{a_{m+in+1}^{i+1},\dots, a_{m+(i+1)n}^{i+1}\}=\partial_{i+1}H_{i+1}.\] 
\noindent Applying then $\eta_i$ to unbraid the polygon, we obtain that 
\[	\eta_i r_{\Sigma_{i+1}}^{r-(i+1)+n}r_{\Sigma_i}^{(i+1)-r}(H_i)=H_{i+1}\] and that the punctures are sent as claimed. 

\medskip\noindent By Claim \ref{claim_h1_a_2_2}, $H_{i+1}=r_{\Sigma_i}^{(i+1)-r}(H_r)$. Consequently, by Fact \ref{fact_rotation} and using the equivalence relation $a_{j}^{i+1}=a_{j+m+(i+1)(n-1)}^{i+1}$, we obtain that  \[\partial_{i+1}\eta_ir_{\Sigma_{i+1}}^{r+n-(i+1)}r_{\Sigma_i}^{i+1-r}(H_r)=\{a_{r+1}^{i+1},\dots, a_{r+n}^{i+1}\}.\]  Note that the indices all belong to $\mathcal{I}_{i+1}\cap \mathcal{I}_i$ since $i+1\leq r \leq i+\left\lceil\frac{m+(n-1)(i-1)-1}{2}\right\rceil$, so we conclude using Fact \ref{fact_arcs_intersection}.
\end{proof}

\begin{claim}\label{claim_h1_a_3}
We can associate $(\eta_ir_{\Sigma_{i+1}}^{r-(i+1)+n})^{-1}\in \stab([\Sigma_{i+1},\id])$ to the edge $\alpha_3$.
\begin{center}
	\begin{tikzcd}
		{[\Sigma_{i-1}\cup H_r,\id] } \arrow[d, "\alpha_4"] & 	{[\Sigma_i\cup H_r,\id] } \arrow[l, "\alpha_3"] \\ 
		{[\Sigma_{i-1},\id] }\arrow[r, "\alpha_1", "\id"' blue ]	& {[\Sigma_i,\id] } \arrow[u,"\alpha_2", "r_{\Sigma_i}^{r-(i+1)}"' blue] \arrow[dr, dashrightarrow, bend right, red, "(\eta_ir_{\Sigma_{i+1}}^{r-(i+1)+n})^{-1}"' red]	 & {[\Sigma_{i+1},\id] }\arrow[d,red, "r_{\Sigma_i}^{(i+1)-r}(\alpha_3)" black]\arrow[l,red, "e_1"' black]\\
		&&{[\Sigma_{i-1}\cup H_r,r_{\Sigma_i}^{(i+1)-r}] } \end{tikzcd}
\end{center}
\end{claim}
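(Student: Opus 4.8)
The plan is to run Brown's bookkeeping for the third edge of the fundamental square, in the exact form prepared in Remark~\ref{rmk_element_stab_vertices}, and to extract the element $h_3$ from the computation already carried out in Claim~\ref{claim_h1_a_3_2}. Recall that we have already chosen $h_1=\id$ (because $\alpha_1=e_{i-1}\in E^+$) and, by Claim~\ref{claim_h1_a_2}, $h_2=r_{\Sigma_i}^{r-(i+1)}$; so the next edge to process is $(h_1h_2)^{-1}\alpha_3=r_{\Sigma_i}^{(i+1)-r}\alpha_3$. First I would locate its two endpoints using Claim~\ref{claim_h1_a_2_2}: since $r_{\Sigma_i}^{(i+1)-r}$ sends $H_r$ to $H_{i+1}$, it sends $[\Sigma_i\cup H_r,\id]$ to $[\Sigma_i\cup H_{i+1},\id]=[\Sigma_{i+1},\id]\in V$ and $[\Sigma_{i-1}\cup H_r,\id]$ to $[\Sigma_{i-1}\cup H_r,r_{\Sigma_i}^{(i+1)-r}]$; note that this last vertex does lie outside $V$, because $r_{\Sigma_i}^{(i+1)-r}$, though it fixes $\Sigma_i$, moves $H_r$ away from $\Sigma_{i-1}$.

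Next I would identify which case of the recipe applies. The edge $\alpha_3$ runs from the height-$(i+2)$ vertex $[\Sigma_i\cup H_r,\id]$ to the height-$(i+1)$ vertex $[\Sigma_{i-1}\cup H_r,\id]$, hence is traversed against the height orientation of $\mathscr{SC}_{\leq}(A_{n,m})$; so we are in the situation of Figure~\ref{fig:opposite_orientation} with the edge $e=e_i\in E^+$, whose terminal vertex is $[\Sigma_{i+1},\id]$ and whose origin is $o(e_i)=[\Sigma_i,\id]$. According to that recipe it suffices to exhibit an element $h_3$ lying in the stabiliser of the higher endpoint, $\stab([\Sigma_{i+1},\id])$, and carrying $o(e_i)=[\Sigma_i,\id]$ onto the lower endpoint $[\Sigma_{i-1}\cup H_r,r_{\Sigma_i}^{(i+1)-r}]$ of the translated edge; equivalently, $h_3$ must be such that $(h_1h_2h_3)^{-1}$ sends $[\Sigma_{i-1}\cup H_r,\id]$ back into $V$, which is precisely what makes the subsequent edge $(h_1h_2h_3)^{-1}\alpha_4$ start in $V$.

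I would then check that $h_3:=(\eta_i r_{\Sigma_{i+1}}^{r-(i+1)+n})^{-1}$ does the job. The membership $h_3\in\stab([\Sigma_{i+1},\id])$ is immediate: $r_{\Sigma_{i+1}}^{r-(i+1)+n}$ is a power of the rotation along $\Sigma_{i+1}$, and $\eta_i$ is a product of twists between punctures of polygons contained in $\Sigma_{i+1}$, so $\eta_i\in\Mod(\Sigma_{i+1})$; both stabilise $[\Sigma_{i+1},\id]$ by Lemma~\ref{lemma_stab}, hence so does $h_3$. For the vertex condition, one computes $(h_1h_2h_3)^{-1}=\eta_i r_{\Sigma_{i+1}}^{r+n-(i+1)}r_{\Sigma_i}^{i+1-r}$, and then unwinding the equivalence relation on vertices reduces the claim to showing that this element maps $\Sigma_{i-1}\cup H_r$ onto $\Sigma_i$ and is rigid outside $\Sigma_{i-1}\cup H_r$ — which is exactly what Claim~\ref{claim_h1_a_3_2} records (it sends $H_i$ to $H_{i+1}$, carries the arcs of $H_r$ onto $\{a^i_{r+1},\dots,a^i_{r+n}\}\subset\Fr\Sigma_i$, and permutes the punctures $p_0,\dots,p_{i+1}$ in the way listed there). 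This closes the argument.

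The algebraic manipulations — composing the two rotations $r_{\Sigma_i}$, $r_{\Sigma_{i+1}}$ and the braid $\eta_i$ by means of Facts~\ref{fact_rotation_arc}--\ref{fact_rotation} — and the stabiliser membership are routine. I expect the only delicate point to be the bookkeeping around Figure~\ref{fig:opposite_orientation}: for an edge traversed against the orientation, the associated element must be taken in the stabiliser of the \emph{higher} endpoint and must move the lower \emph{representative} $o(e_i)$ onto the actual lower endpoint of the translated edge (not the other way round), and one must correctly distil from the polygon- and puncture-images in Claim~\ref{claim_h1_a_3_2} the single clean statement that $\eta_i r_{\Sigma_{i+1}}^{r+n-(i+1)}r_{\Sigma_i}^{i+1-r}$ carries $\Sigma_{i-1}\cup H_r$ to $\Sigma_i$ rigidly outside. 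The figures (Figures~\ref{fig:h0}--\ref{fig:h4}) give the quickest way to double-check all of this.
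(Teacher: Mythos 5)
Your argument is essentially the paper's own proof: the same $h_1=\id$, $h_2=r_{\Sigma_i}^{r-(i+1)}$, the same identification of the against-orientation case with $e_i\in E^+$, and the same reduction to showing that $\eta_i r_{\Sigma_{i+1}}^{r+n-(i+1)}r_{\Sigma_i}^{i+1-r}$ carries $\Sigma_{i-1}\cup H_r$ to $\Sigma_i$ rigidly outside, verified via Claims~\ref{claim_h1_a_2_2} and~\ref{claim_h1_a_3_2}. The only small imprecision is attributing that last statement ``exactly'' to Claim~\ref{claim_h1_a_3_2}, whereas one also needs Claim~\ref{claim_h1_a_2_2} (rigidity outside $\Sigma_i\cup H_r$ and $\Sigma_i\cup H_r\mapsto\Sigma_{i+1}$) before upgrading via $H_i\mapsto H_{i+1}$, which is exactly how the paper concludes.
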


\begin{proof}
 By Claim \ref{claim_h1_a_2}, $r_{\Sigma_i}^{(i+1)-r}$ sends respectively $o(\alpha_3)=[\Sigma_i\cup H_r,\id]$ and $t(\alpha_3)$ to the vertices $[       , \id]\in V$ and $[\Sigma_{i-1}\cup H_r, r_{\Sigma_i}^{(i+1)-r}]$. Noting that $\eta_ir_{\Sigma_{i+1}}^{r-(i+1)+n}\in \stab([\Sigma_{i+1},\id])$, it remains to prove that $(\eta_ir_{\Sigma_{i+1}}^{r-(i+1)+n})^{-1}$ sends $[\Sigma_i, \id]$ to $[\Sigma_{i-1}\cup H_r, r_{\Sigma_i}^{(i+1)-r}]$.

\medskip\noindent
By Claim \ref{claim_h1_a_2_2}, $\eta_ir_{\Sigma_{i+1}}^{r-(i+1)+n}r_{\Sigma_i}^{(i+1)-r}$ is rigid outside $\Sigma_i\cup H_r$ and it sends $\Sigma_i\cup H_r$ to $\Sigma_{i+1}$. 
By Claim \ref{claim_h1_a_3_2}, $\eta_ir_{\Sigma_{i+1}}^{r-(i+1)+n}r_{\Sigma_i}^{(i+1)-r}$ sends $H_i$ to $H_{i+1}$, hence this application is in fact rigid outside $\Sigma_{i-1}\cup H_r$, and it sends $\Sigma_{i-1}\cup H_r$ to $\Sigma_i$.
Consequently, $(\eta_ir_{\Sigma_{i+1}}^{r-(i+1)+n})^{-1}$ sends
$[\Sigma_i,\id]$ to $[\Sigma_{i-1}\cup H_r, r_{\Sigma_i}^{(i+1)-r}]$ as expected.
\end{proof}

\begin{claim}\label{claim_h1_a_4_2}
	The element $\gamma_{i}r_{\Sigma_i}^{-r-n+i}\eta_ir_{\Sigma_{i+1}}^{r+n-(i+1)}r_{\Sigma_i}^{i+1-r}$ sends $H_i$ to $H_{m+(i-1)n+(i+1)-r}$, $H_r$ to $H_i$, and it induces a trivial braid on the punctures $p_0,\dots, p_{i-1}$. 
\end{claim}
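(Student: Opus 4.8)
The plan is to continue tracking the four geometric data introduced in Remark~\ref{rmk_element_stab_vertices} — the images of $H_{i+1}$ and $H_r$, and the braiding of $p_0,\dots,p_{i-1}$ — now composing on the left with $\gamma_i r_{\Sigma_i}^{-r-n+i}$, starting from the output of Claim~\ref{claim_h1_a_3_2}. First I would apply the rotation $r_{\Sigma_i}^{-r-n+i}$: by Claim~\ref{claim_h1_a_3_2} the element $\eta_i r_{\Sigma_{i+1}}^{r+n-(i+1)}r_{\Sigma_i}^{i+1-r}$ carries $H_i$ to $H_{i+1}$ and carries the arcs $\partial_i(H_r)$ to $\{a^i_{r+1},\dots,a^i_{r+n}\}=\partial_i H_r$ again (since $r+n-(i+1)+(i+1)-r=n$ and the index arithmetic is controlled by the hypothesis $i+1\le r\le i+\lceil\frac{m+(n-1)(i-1)-1}{2}\rceil$, placing these indices in $\mathcal I_i\cap\mathcal I_{i+1}$). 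Applying Fact~\ref{fact_rotation} and Fact~\ref{fact_rotation_arc} with the shift $-r-n+i$, one computes that $H_{i+1}$ is sent to $H_{(i+1)-r-n+i}=H_{m+(i-1)n+(i+1)-r}$ after reducing modulo $m+(i-1)n$ (using $\partial_i H_{i+1}$ begins at index $m+(i-1)n+1$, so the new starting index is $m+(i-1)n+1-r-n+i$, which identifies the polygon $H_{m+(i-1)n+(i+1)-r}$), and that $H_r$, whose frontier arcs start at index $r+1$, is sent to the polygon whose frontier arcs start at $r+1-r-n+i = i+1-n$; modulo $m+(i-1)n$ this is $m+(i-2)n+i+1$, but since at this point we are working inside $\Sigma_i$ the relevant reduction identifies it with $H_i$ (its frontier arcs being $\{a^i_{m+(i-1)n+1},\dots\}$ after the identification $a^i_j=a^i_{j+m+i(n-1)}$). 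Since $r_{\Sigma_i}^{-r-n+i}$ is a rotation around $\Sigma_i$ it fixes the punctures $p_0,\dots,p_i$ pointwise, so the braid on $p_0,\dots,p_{i-1}$ is unchanged by this step.

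Next I would apply $\gamma_i$. By Claim~\ref{claim_h1_a_3_2}, before the rotation step the punctures were cyclically permuted (the exact cycle depending on whether $m>i$, $m=i$, $m<i\le3$, or $i=4$), and the rotation $r_{\Sigma_i}^{-r-n+i}$ does not move any $p_j$, so $\gamma_i$ is applied to exactly that permuted configuration. The point is that $\gamma_i$ was defined precisely so as to undo the part of $\eta_i$ acting on $p_0,\dots,p_{i-1}$: in the generic case $m\neq i$ we have $\eta_i=\tau_{i+i}\tau_i$ and $\gamma_i=\tau_i^{-1}$, and $\tau_i^{-1}$ cancels the cycle among $p_0,p_i$ relevant to those strands; in the case $m=i$, $\eta_i=\tau_{i+1}\tau_1\tau_i$ and $\gamma_i=\tau_i^{-1}\tau_1^{-1}$ cancels the $p_1\mapsto p_0\mapsto p_i$ part; and for $i=4$, $\gamma_4=\tau_4^{-1}\tau_1^{-1}\tau_2^{-1}$ cancels the $p_2\mapsto p_0\mapsto p_1$ part of $\eta_4=\tau_5\tau_2\tau_1\tau_4$. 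In each case one checks directly (reading off the cycles listed in Claim~\ref{claim_h1_a_3_2}, or from the corresponding figure) that after post-composing with $\gamma_i$ the induced braid on $p_0,\dots,p_{i-1}$ is trivial. Since $\gamma_i$ is supported on a disc containing only punctures (it is a product of twists in $\Mod(\Sigma_i)$, hence fixes $\Fr\Sigma_i$), it does not move the polygons $H_i$ and $H_{m+(i-1)n+(i+1)-r}$ already identified, so those images are unaffected.

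The main obstacle I anticipate is purely bookkeeping: keeping the index arithmetic consistent under the two competing cyclic identifications (modulo $m+k(n-1)$ for the arcs of $\Sigma_k$, for $k=i$ and $k=i+1$), and making sure the shift computations land in $\mathcal I_i\cap\mathcal I_{i+1}$ so that Fact~\ref{fact_arcs_intersection} applies and the identification of the target polygons as $H_i$ and $H_{m+(i-1)n+(i+1)-r}$ is legitimate — this is exactly the range constraint $i+1\le r\le i+\lceil\frac{m+(n-1)(i-1)-1}{2}\rceil$ already used in Claims~\ref{claim_h1_a_2_2} and~\ref{claim_h1_a_3_2}. The verification that $\gamma_i$ trivialises the braid is a finite case check (four cases) that can either be done by hand on the permutation cycles or simply read off the figures \ref{fig:h0}--\ref{fig:h4}; I would present it by invoking Claim~\ref{claim_h1_a_3_2} for the cycle structure and then observing the cancellation case by case.
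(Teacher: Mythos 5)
Your overall strategy (pushing the frontier arcs through with Facts~\ref{fact_rotation_arc} and \ref{fact_rotation} and the modular identification of indices, then a case-by-case check that $\gamma_i$ kills the braid coming from $\eta_i$) is the same as the paper's, but the way you identify the image of $H_r$ contains a genuine gap. After the rotation step you declare that the image of $H_r$ is ``identified with $H_i$'' solely because its arcs on $\Fr\Sigma_i$ become $\{a^i_{m+(i-1)n+1},\dots,a^i_{m+in}\}=\partial_i H_i$, and you then assert that $\gamma_i$ ``does not move the polygons already identified''. Both points fail for this polygon: at that stage the image of $H_r$ contains the image of the puncture $p_r$, which by Claim~\ref{claim_h1_a_3_2} is $p_0$ (resp.\ $p_1$, resp.\ $p_2$, depending on the case), not $p_i$, and the configuration is still braided, so matching frontier arcs alone does not identify the polygon with $H_i$. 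Moreover $\gamma_i$, although it fixes $\Fr\Sigma_i$, does move the punctures inside $\Sigma_i$ and hence does move this polygon; indeed $\gamma_i$ is exactly what transports the relevant puncture to $p_i$ and unbraids the configuration (only the image of $H_i$, which lies outside $\Sigma_i$, is untouched by $\gamma_i$). The missing step, which the paper makes explicit, is to track $p_r$: in each case one checks that $\gamma_i r_{\Sigma_i}^{-r-n+i}$ composed with the element of Claim~\ref{claim_h1_a_3_2} sends $p_r$ to $p_i$ while inducing a trivial braid on $p_0,\dots,p_{i-1}$, and only this, combined with the arc computation, lets one conclude that the image of $H_r$ is $H_i$. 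Your proposal never tracks $p_r$ at all.

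There are also bookkeeping slips worth correcting, even though your final indices happen to be right: the arcs of $\Fr\Sigma_i$ are indexed modulo $m+i(n-1)$, not $m+(i-1)n$ (you use the wrong modulus before correcting with $a^i_j=a^i_{j+m+i(n-1)}$); the set $\{a^i_{r+1},\dots,a^i_{r+n}\}$ is not ``$\partial_i H_r$ again'' ($\partial_i H_r$ is the single arc $a^i_r$); and $\partial_i H_{i+1}$ is the single arc $a^i_{i+1}$, not a set of arcs beginning at index $m+(i-1)n+1$ --- the shift computation for the image of $H_i$ should be run on $a^i_{i+1}$ with the modulus $m+i(n-1)$, as in the paper's application of Fact~\ref{fact_rotation_arc}.
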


\begin{proof}
Applying Fact \ref{fact_rotation_arc} to Claim \ref{claim_h1_a_3_2} and then using the relation $a_{j}^i=a_ {j+m+(n-1)i}^i$, we obtain
\begin{align*}
\gamma_{i}r_{\Sigma_i}^{-r-n+i}\eta_ir_{\Sigma_{i+1}}^{r+n-(i+1)}r_{\Sigma_i}^{i+1-r}(H_i)&= H_{m+(i-1)n+(i+1)-r},\\
\partial_i\gamma_{i}r_{\Sigma_i}^{-r-n+i}\eta_ir_{\Sigma_{i+1}}^{r+n-(i+1)}r_{\Sigma_i}^{i+1-r}(H_i)(H_r)&=\{a_{m+(i-1)n+1}^i,\dots, a_{m+in}^i\}=\partial H_i.
\end{align*}
Moreover, $\gamma_{i}r_{\Sigma_i}^{-r-n+i}$ exchanges $p_0$ and $p_1$ if $i<m$, it exchanges $p_1$ and $p_i$ if $i>m$, and it sends $p_0$ on $p_1$, $p_1$ on $p_i$ and $p_i$ on $p_1$ so by Claim~\ref{claim_h1_a_3_2}, $\gamma_{i}r_{\Sigma_i}^{-r-n+i}\eta_ir_{\Sigma_{i+1}}^{r+n-(i+1)}r_{\Sigma_i}^{i+1-r}(H_i)$ induces a trivial braid on the punctures $p_0,\dots, p_{i-1}$ and it sends $p_r$ to $p_i$.
 Consequently  $\gamma_{i}r_{\Sigma_i}^{-r-n+i}\eta_ir_{\Sigma_{i+1}}^{r+n-(i+1)}r_{\Sigma_i}^{i+1-r}(H_r)=H_i$.
 \end{proof}

\begin{claim}\label{claim_h1_a_4}
	We can associate $(\gamma_{i}r_{\Sigma_i}^{-r-n+i})^{-1}\in \stab([\Sigma_i,\id])$ to the edge $\alpha_4$.
	\begin{center}
		\begin{tikzcd}
			{[\Sigma_{i-1}\cup H_r,\id] } \arrow[d, "\alpha_4"] & 	{[\Sigma_i\cup H_r,\id] } \arrow[l, "(\eta_ir_{\Sigma_{i+1}}^{r+n-(i+1)})^{-1}"' blue, "\alpha_3"] \\ 
			{[\Sigma_{i-1},\id] }\arrow[r, "\id"' blue, "\alpha_1" ]\arrow[rd, dashrightarrow, bend right, red, "(\gamma_{i}r_{\Sigma_i}^{-r-n+i})^{-1}"' red]	& {[\Sigma_i,\id] } \arrow[u,"r_{\Sigma_i}^{r-(i+1)}"' blue, "\alpha_2"]\arrow[d,red,"\eta_ir_{\Sigma_{i+1}}^{r+n-(i+1)}r_{\Sigma_i}^{i+1-r}(\alpha_4)" black]\arrow[l,red,shift left=1.5ex]\\
			&{[\Sigma_{i-1},\eta_ir_{\Sigma_{i+1}}^{r+n-(i+1)}r_{\Sigma_i}^{i+1-r}] } \end{tikzcd}
	\end{center}
\end{claim}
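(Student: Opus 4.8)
The plan is to close off the Brown–method computation for the square by attaching an element $h_4$ to the last edge $\alpha_4$, in exactly the style of Claims~\ref{claim_h1_a_2} and~\ref{claim_h1_a_3}. By those claims we have already attached $h_1=\id$, $h_2=r_{\Sigma_i}^{r-(i+1)}$ and $h_3=(\eta_ir_{\Sigma_{i+1}}^{r+n-(i+1)})^{-1}$ to $\alpha_1,\alpha_2,\alpha_3$, so that $g:=(h_1h_2h_3)^{-1}=\eta_ir_{\Sigma_{i+1}}^{r+n-(i+1)}r_{\Sigma_i}^{i+1-r}$. First I would record, using Claim~\ref{claim_h1_a_3}, that $g$ carries the endpoint $[\Sigma_{i-1}\cup H_r,\id]=t(\alpha_4)$ to $[\Sigma_i,\id]\in V$, and that $g$ carries the other endpoint $o(\alpha_4)=[\Sigma_{i-1},\id]$ to $[\Sigma_{i-1},g]$ (directly from $g\cdot[\Sigma,\id]=[\Sigma,g]$). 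Hence the translated edge $g\cdot\alpha_4$ runs between $[\Sigma_i,\id]\in V$ and $[\Sigma_{i-1},g]$, and we are in the configuration of Figure~\ref{fig:opposite_orientation} with $e=e_{i-1}$, since $t(e_{i-1})=[\Sigma_i,\id]$. Thus it remains only to produce $h_4\in\stab([\Sigma_i,\id])$ sending $o(e_{i-1})=[\Sigma_{i-1},\id]$ to $[\Sigma_{i-1},g]$, and to check that $h_4:=(\gamma_{i}r_{\Sigma_i}^{-r-n+i})^{-1}$ does the job.

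The verification then breaks into two short checks. The first is that $h_4\in\stab([\Sigma_i,\id])$: the factor $r_{\Sigma_i}^{-r-n+i}$ is a power of the rotation $r_{\Sigma_i}$, hence fixes $[\Sigma_i,\id]$, while a glance at the three cases in the definition of $\gamma_i$ shows that $\gamma_i$ is a product of inverse twists between punctures all lying inside $\Sigma_i$, so $\gamma_i\in\Mod(\Sigma_i)\leq\stab([\Sigma_i,\id])$ by Lemma~\ref{lemma_stab}. The second is that $h_4$ sends $[\Sigma_{i-1},\id]$ to $[\Sigma_{i-1},g]$; by the action formula this is equivalent to $h_4^{-1}g\in\stab([\Sigma_{i-1},\id])$, that is, to $\gamma_{i}r_{\Sigma_i}^{-r-n+i}\eta_ir_{\Sigma_{i+1}}^{r+n-(i+1)}r_{\Sigma_i}^{i+1-r}\in\stab([\Sigma_{i-1},\id])$.

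This last element is precisely the one analysed in Claim~\ref{claim_h1_a_4_2}: it induces the trivial braid on the punctures $p_0,\dots,p_{i-1}$ of $\Sigma_{i-1}$ and it sends the polygons $H_i$ and $H_r$ (both adjacent to $\Sigma_{i-1}$) to the adjacent polygons $H_{m+(i-1)n+(i+1)-r}$ and $H_i$. Consequently it maps $\Sigma_{i-1}$ to itself; combined with Claim~\ref{claim_h1_a_3} (which gives that $\eta_ir_{\Sigma_{i+1}}^{r+n-(i+1)}r_{\Sigma_i}^{i+1-r}$ is rigid outside $\Sigma_{i-1}\cup H_r$) and the fact that the extra factor $\gamma_{i}r_{\Sigma_i}^{-r-n+i}$ is rigid outside $\Sigma_i$, one concludes that the whole word is rigid outside $\Sigma_{i-1}$, so by Lemma~\ref{lemma_stab} it lies in $\stab([\Sigma_{i-1},\id])$. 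Assembling the two checks proves the claim. The genuinely delicate bookkeeping — tracking exactly which polygons and punctures the accumulated word displaces — has already been isolated in Claim~\ref{claim_h1_a_4_2}, so the main obstacle remaining here is just keeping Brown's orientation conventions straight for the reversed edge $\alpha_4$, together with the (very short) case analysis confirming $\gamma_i\in\Mod(\Sigma_i)$.
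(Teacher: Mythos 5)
Your argument is correct and follows essentially the same route as the paper: translate $\alpha_4$ by $g=(h_1h_2h_3)^{-1}$, use Claim~\ref{claim_h1_a_3} to locate the endpoints $[\Sigma_i,\id]$ and $[\Sigma_{i-1},g]$, note $\gamma_ir_{\Sigma_i}^{-r-n+i}\in\stab([\Sigma_i,\id])$, and reduce the remaining condition to showing $\gamma_{i}r_{\Sigma_i}^{-r-n+i}\eta_ir_{\Sigma_{i+1}}^{r+n-(i+1)}r_{\Sigma_i}^{i+1-r}$ preserves $\Sigma_{i-1}$ and is rigid outside it, exactly as the paper does via Claim~\ref{claim_h1_a_4_2} together with the rigidity statement extracted from Claim~\ref{claim_h1_a_3}. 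The only differences are cosmetic: your $o/t$ labelling of $\alpha_4$ follows the height orientation rather than the traversal order, and you phrase the final step as the equivalent condition $h_4^{-1}g\in\stab([\Sigma_{i-1},\id])$.
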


\begin{proof}
By Claim \ref{claim_h1_a_3}, $\eta_ir_{\Sigma_{i+1}}^{r+n-(i+1)}r_{\Sigma_i}^{i+1-r}$ sends respectively $o(\alpha_4)=[\Sigma_{i-1}\cup H_r,\id]$ and $t(\alpha_4)$ to the vertices $[\Sigma_i, \id]\in V$ and $[\Sigma_{i-1}, \eta_ir_{\Sigma_{i+1}}^{r+n-(i+1)}r_{\Sigma_i}^{i+1-r}]$. 
Noting that $\gamma_{i}r_{\Sigma_i}^{-r-n+i}\in \stab([\Sigma_i,\id])$, it remains to prove that $(\gamma_{i}r_{\Sigma_i}^{-r-n+i})^{-1}$ sends $[\Sigma_{i-1}, \id]$ to $[\Sigma_{i-1}, \eta_ir_{\Sigma_{i+1}}^{r+n-(i+1)}r_{\Sigma_i}^{i+1-r}]$.

\medskip\noindent
As just seen $\gamma_{i}r_{\Sigma_i}^{-r-n+i}\eta_ir_{\Sigma_{i+1}}^{r+n-(i+1)}r_{\Sigma_i}^{i+1-r}$ is rigid outside $\Sigma_{i-1}\cup H_r$, and it sends $\Sigma_{i-1}\cup H_r$ to $\Sigma_i$. By Claim \ref{claim_h1_a_4_2}, $\gamma_{i}r_{\Sigma_i}^{-r-n+i}\eta_ir_{\Sigma_{i+1}}^{r+n-(i+1)}r_{\Sigma_i}^{i+1-r}$ sends $H_r$ to $H_i$, hence this application is in fact rigid outside $\Sigma_{i-1}$ and preserves it. Consequently, $(\gamma_{i}r_{\Sigma_i}^{-r-n+i})^{-1}$ sends $[\Sigma_{i-1} \id]$ to $[\Sigma_{i-1}, \eta_ir_{\Sigma_{i+1}}^{r+n-(i+1)}r_{\Sigma_i}^{i+1-r}]$ as expected.
\end{proof}

\begin{proof}[Proof of Lemma \ref{lemma_relation_square_1}]	Fix $1\leq i\leq h(n,m)=(2,2)$ and $1+i\leq r\leq i+\left\lceil\frac{m+(n-1)(i-1)-1}{2}\right\rceil$. 
	Using Brown's method, we obtain by Claims \ref{claim_h1_a_2}, \ref{claim_h1_a_3} and \ref{claim_h1_a_4} that $\gamma_{i}r_{\Sigma_i}^{-r-n+i}\eta_ir_{\Sigma_{i+1}}^{r+n-(i+1)}r_{\Sigma_i}^{i+1-r}$ belongs to $\stab\left([\Sigma_{i-1}, \id]\right)$:
	\begin{center}
		\begin{tikzcd}
			{[\Sigma_{i-1}\cup H_r,\id] } \arrow[d,"(\gamma_{i}r_{\Sigma_i}^{-r-n+i})^{-1}"' blue ] & 		{[\Sigma_i\cup H_r,\id] } \arrow[l, "(\eta_ir_{\Sigma_{i+1}}^{r+n-(i+1)})^{-1}"' blue] \\ 
			{[\Sigma_{i-1},\id] }\arrow[r, "\id" blue ]	& {[\Sigma_i,\id] }\arrow[u,"r_{\Sigma_i}^{r-(i+1)}"' blue] \end{tikzcd}
	\end{center}
	and, by Claim \ref{claim_h1_a_4_2} that it sends $H_i$ to $H_{m+(i-1)n+(i+1)-r}$, $H_r$ to $H_i$ and it induces a trivial braid on the punctures $p_0,\dots, p_{i-1}$.
	This implies, by Remark~\ref{rmk_element_stab_vertices}, the relation announced.
\end{proof}

\subsection{Presentation of the braided Higman-Thompson groups for $n,m\geq 2$}
\noindent Applying \cite[Theorem 1]{Brown_presentation}, and using Lemmas \ref{lemma_presentation_Stab_Sigma_k}, \ref{lemma_stab_edge} and \ref{lemma_relation_square_1}, we obtain the following presentation of the braided Higman-Thompson groups.

\begin{thm}\label{thm_presentation_BHT} For $n,m\geq 2$, the group $\mathrm{br}T_{n,m}$ is generated by $\{r_{\Sigma_k}\}_{0\leq k \leq \bar{h}(n,m)}$ and by $\{\tau_{k}\}_{1\leq k\leq \bar{h}(n,m)}$, where $\bar{h}(2,2)=5$ and $\bar{h}(n,m)=4$ otherwise. The relations are generated by:
	\begin{itemize}
			\item \emph{the braids relations: } 
			\begin{enumerate}
					\item when $m<4$, $\tau_i\tau_\ell=\tau_\ell\tau_i$, for any $2\leq i\leq m<\ell\leq 4$,
					\item $\tau_i\tau_j\tau_i=\tau_j\tau_i\tau_j$, for any $1\leq i<j\leq \min(4,m)$,
					\item\label{item:rel_braid_3} when $m<4$, $\tau_1\tau_\ell\tau_1=\tau_\ell\tau_1\tau_\ell$, for any $m<\ell \leq 4$,
					\item $\tau_i\tau_j\tau_s\tau_i=\tau_j\tau_s\tau_i\tau_j=\tau_s\tau_i\tau_j\tau_s$, for any $1\leq i<j<s\leq \min(4,m)$,
					\item when $m=2$, $\tau_{3}\tau_4\tau_{3}=\tau_4\tau_{3}\tau_4$ and  $\tau_1\tau_{3}\tau_4\tau_1=\tau_{3}\tau_4\tau_1\tau_{3}=\tau_4\tau_1\tau_{3}\tau_4$,
					\item when $(n,m)=(2,2)$, $\tau_5\tau_i=\tau_i\tau_5$, for any $i\in\{1,3,4\}$ and $\tau_2\tau_5\tau_2=\tau_5\tau_2\tau_5$.
				\end{enumerate}
			\item \emph{the commutation relations:} $r_{\Sigma_k}\tau_i=\tau_ir_{\Sigma_k}$ for $1\leq i\leq k\leq \bar{h}(n,m)$,
			\item \emph{the rotation relations:} $r_{\Sigma_k}^{m+k(n-1)}=(\tau_k\tau_{k-1}\dots\tau_1)^{-(k+1)}$ for $0\leq k\leq \bar{h}(n,m)$.
			\item \emph{the square relations}: for $1\leq i\leq \bar{h}(n,m)-1$ and $1\leq j_i \leq \left\lceil\frac{m+(n-1)(i-1)-1}{2}\right\rceil$
		
			\[	r_{\Sigma_{i-1}}^{j_i}\gamma_ir_{\Sigma_i}^{-n-j_i}\eta_{i}r_{\Sigma_{i+1}}^{j_i+n-1}r_{\Sigma_i}^{1-j_i}=\id ,\]
	
			where
			\[\eta_i=\begin{cases}
					\tau_{i+i}\tau_{i} & \text{ if } m\neq i\\
					\tau_{i+1}\tau_{1}\tau_{i} & \text{ if } m=i\\
					\tau_5\tau_2\tau_1\tau_4 & \text{ if } i=4
					
				\end{cases}\ \  \text{ and }\ \  \gamma_i=\begin{cases}
					\tau_{i}^{-1}& \text{ if } m\neq i\\
					\tau_{i}^{-1}\tau_{1}^{-1} & \text{ if } m=i\\
					\tau_{4}^{-1}\tau_{1}^{-1}\tau_2^{-1} & \text{ if } i=4
				\end{cases}.\] 
		\end{itemize}
\end{thm}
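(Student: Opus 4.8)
The statement is a direct application of Brown's presentation theorem (Theorem~\ref{thm_Brown}) to the action of $\mathrm{br}T_{n,m}$ on the simply connected cube complex $\mathscr{SC}_{\leq}(A_{n,m})$, so the proof is essentially an assembly of the pieces proved in the preceding subsections. First I would invoke Proposition~\ref{prop_subcplx_sconnected}: the subcomplex $\mathscr{SC}_{\leq h(n,m)}(A_{n,m})$ is simply connected, it is oriented by the height function, and the action of $\mathrm{br}T_{n,m}$ preserves the orientation (since it preserves the height). This verifies the hypotheses of Theorem~\ref{thm_Brown}. By Lemma~\ref{lemma_tree_rep}, $T$ is a tree of representatives for the vertices; by Lemma~\ref{lemma_edge_T}, $E^+=\{e_k\}_{0\le k\le h(n,m)-2}$ is exactly the edge-set of $T$, so the special hypothesis in Theorem~\ref{thm_Brown} (that $E^+$ can be taken to be the edges of the tree of representatives) is satisfied; and by Lemma~\ref{lemma_rep_squares}, $F$ is a set of representatives of the $2$-cells, all based at vertices of $V$.

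\textbf{Assembling the generators and relations.} Theorem~\ref{thm_Brown} then tells us that $\mathrm{br}T_{n,m}$ is generated by the vertex stabilisers $\{\stab([\Sigma_k,\id])\}_{0\le k\le h(n,m)-1}$, with relations of the three listed types. For the generators: by Proposition~\ref{lemma_presentation_Stab_Sigma_k}, each $\stab([\Sigma_k,\id])$ is generated by $r_{\Sigma_k}$ and $\tau_1,\dots,\tau_k$, so the full generating set is $\{r_{\Sigma_k}\}_{0\le k\le\bar h(n,m)}\cup\{\tau_k\}_{1\le k\le\bar h(n,m)}$ with $\bar h(n,m)=h(n,m)-1$, i.e.\ $\bar h(2,2)=5$ and $\bar h(n,m)=4$ otherwise. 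For relation type~\eqref{item1:Brownrel} (relations inside the $G_v$): these are exactly the braid relations, commutation relations and rotation relations listed in Proposition~\ref{lemma_presentation_Stab_Sigma_k}, taken over all $0\le k\le\bar h(n,m)$; since the braid and commutation relations for $\Sigma_k$ are subsumed by those for the largest $k$, they collapse to the lists stated in the theorem, and the rotation relation $r_{\Sigma_k}^{m+k(n-1)}=(\tau_k\cdots\tau_1)^{-(k+1)}$ must be kept for each $k$. For relation type~\eqref{item2:Brownrel} (amalgamation along edge stabilisers $\iota_{o(e_k)}(g)=\iota_{t(e_k)}(g)$): by Proposition~\ref{lemma_stab_edge}, $\stab(e_k)\cong\Mod(\Sigma_k)$ embeds into both $\stab([\Sigma_k,\id])$ and $\stab([\Sigma_{k+1},\id])$ as the subgroup generated by $\tau_1,\dots,\tau_k$, and in both presentations these generators and their relations are literally the same symbols; hence these relations are already consequences of the relations inside the $G_v$ and contribute nothing new (this is the point noted in the Remark after Proposition~\ref{lemma_stab_edge}). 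For relation type~\eqref{item3:Brownrel}: by Proposition~\ref{lemma_relation_square_1}, the relation attached to the square of $F$ based at a vertex of height $i$ with parameter $r=i+j_i$ is, after the substitution $j_i=r-i$ and rewriting $r_{\Sigma_{i-1}}^{i-r}$ on the other side, exactly the square relation $r_{\Sigma_{i-1}}^{j_i}\gamma_i r_{\Sigma_i}^{-n-j_i}\eta_i r_{\Sigma_{i+1}}^{j_i+n-1}r_{\Sigma_i}^{1-j_i}=\id$ for $1\le i\le\bar h(n,m)-1$ and $1\le j_i\le\lceil\frac{m+(n-1)(i-1)-1}{2}\rceil$.

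\textbf{The main obstacle.} The genuinely hard work is all contained in Proposition~\ref{lemma_relation_square_1} and its Claims, which we are assuming; given those, the remaining step with any content is the bookkeeping needed to see that the Brown-theoretic relations (i)-(iii), when written out over all representatives and all $k$, coincide \emph{exactly} with the compact list in the statement — in particular that the edge-stabiliser relations are redundant, that the braid relations for smaller $\Sigma_k$ are redundant, and that the index change of variable $r\mapsto j_i:=r-i$ turns Proposition~\ref{lemma_relation_square_1} into the displayed square relations (here one checks $-r-n+i=-n-j_i$, $\;r+n-(i+1)=j_i+n-1$, $\;i+1-r=1-j_i$, $\;i-r=-j_i$, so $r_{\Sigma_{i-1}}^{i-r}=r_{\Sigma_{i-1}}^{-j_i}$ moves to the left-hand side as $r_{\Sigma_{i-1}}^{j_i}$). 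None of this is deep, but it is where care is required, and it is the only part of this final proof that is not a verbatim quotation of earlier results.
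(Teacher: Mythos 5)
Your proposal is correct and follows essentially the same route as the paper, which proves the theorem simply by applying Brown's theorem (Theorem~\ref{thm_Brown}) to the action on $\mathscr{SC}_{\leq}(A_{n,m})$ and quoting Propositions~\ref{lemma_presentation_Stab_Sigma_k}, \ref{lemma_stab_edge} and \ref{lemma_relation_square_1}; your extra bookkeeping (the substitution $j_i=r-i$, the redundancy of the edge-amalgamation relations via the common symbols $\tau_i$, and the absorption of the braid relations of the smaller stabilisers into those of the largest one) is exactly the implicit content of the paper's one-line argument and checks out.
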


\noindent As a direct corollary, we obtain a new presentation of the Higman-Thompson groups $T_{n,m}$.
\begin{cor}
	For $n,m\geq 2$, $T_{n,m}$ is generated by $\{r_{\Sigma_0}\}_{\bar{h}(n,m)}$, where $\bar{h}(2,2)=5$ and $\bar{h}(n,m)=4$ otherwise. The relations are generated by:
	\begin{itemize}
			\item \emph{the rotation relations:} $r_{\Sigma_k}^{m+k(n-1)}=\id$ for $0\leq k\leq \bar{h}(n,m)$.
			\item \emph{the square relations}: for $1\leq i\leq \bar{h}(n,m)-1$ and $1\leq j_i \leq \left\lceil\frac{m+(n-1)(i-1)-1}{2}\right\rceil$
			
			\[	r_{\Sigma_{i-1}}^{j_i}r_{\Sigma_i}^{-n-j_i}r_{\Sigma_{i+1}}^{j_i+n-1}r_{\Sigma_i}^{1-j_i}=\id.\]
		\end{itemize}
\end{cor}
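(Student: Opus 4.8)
This presentation is a formal consequence of Theorem~\ref{thm_presentation_BHT} together with the short exact sequence $1 \to B_\infty \to \mathrm{br}T_{n,m} \to T_{n,m} \to 1$ recalled in the introduction, in which the second map is the forgetful map and $B_\infty$ is the group of finitely supported braids on the punctures of $\mathscr{S}^\sharp(A_{n,m})$. The plan is threefold: (i) identify $B_\infty$ with the normal closure $N := \langle\langle \tau_1,\dots,\tau_{\bar{h}(n,m)}\rangle\rangle$ in $\mathrm{br}T_{n,m}$ of the twist generators; (ii) conclude that $T_{n,m}\cong\mathrm{br}T_{n,m}/N$, so that a presentation of $T_{n,m}$ is obtained from the one of Theorem~\ref{thm_presentation_BHT} by adjoining the relators $\tau_1,\dots,\tau_{\bar{h}(n,m)}$; (iii) simplify by Tietze transformations, deleting the now-trivial generators $\tau_i$ and reading off what survives of each family of relations.

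For step (i), the inclusion $N\subseteq B_\infty$ is immediate: each $\tau_i$ is a finitely supported braid, hence lies in $B_\infty$, which is normal in $\mathrm{br}T_{n,m}$. For the reverse inclusion, recall from Theorem~\ref{thm_presentation_braid_group} that the braid group of the tree $A_{n,m}$ — that is, $B_\infty$ — is generated by the edges of $A_{n,m}$, which are precisely the twists $\tau_{p,q}$ between pairs of adjacent polygons. It therefore suffices to show that every such twist is conjugate in $\mathrm{br}T_{n,m}$ to one of the $\tau_i^{\pm 1}$; then $N$, being normal, contains every twist and hence all of $B_\infty$. The required conjugacy is produced exactly as in the proof of Lemma~\ref{lemma_tree_rep}: two adjacent polygons $H,H'$ have a two-punctured admissible union $\Sigma = H\cup H'$, and one constructs an orientation-preserving asymptotically rigid homeomorphism of $\mathscr{S}^\sharp(A_{n,m})$, rigid outside $\Sigma$, carrying the ordered pair $(H,H')$ to a standard pair of adjacent polygons defining some $\tau_i$; conjugating $\tau_{p,q}$ by this homeomorphism yields $\tau_i^{\pm 1}\in N$.

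For step (iii), impose $\tau_i=\id$ for $1\le i\le\bar{h}(n,m)$ in the presentation of Theorem~\ref{thm_presentation_BHT}. The braid relations and the commutation relations $r_{\Sigma_k}\tau_i=\tau_i r_{\Sigma_k}$ become vacuous; the rotation relations $r_{\Sigma_k}^{m+k(n-1)}=(\tau_k\tau_{k-1}\cdots\tau_1)^{-(k+1)}$ collapse to $r_{\Sigma_k}^{m+k(n-1)}=\id$; and, since $\eta_i$ and $\gamma_i$ are words in the $\tau$'s and their inverses, each square relation $r_{\Sigma_{i-1}}^{j_i}\gamma_i r_{\Sigma_i}^{-n-j_i}\eta_i r_{\Sigma_{i+1}}^{j_i+n-1}r_{\Sigma_i}^{1-j_i}=\id$ becomes $r_{\Sigma_{i-1}}^{j_i}r_{\Sigma_i}^{-n-j_i}r_{\Sigma_{i+1}}^{j_i+n-1}r_{\Sigma_i}^{1-j_i}=\id$. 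Deleting the trivial generators $\tau_i$ leaves exactly the presentation of $T_{n,m}$ on the rotation generators $r_{\Sigma_0},\dots,r_{\Sigma_{\bar{h}(n,m)}}$ asserted in the corollary. The only genuinely non-formal point is step (i), and more precisely the claim that every twist between adjacent polygons is conjugate in $\mathrm{br}T_{n,m}$ to one of the chosen $\tau_i$: this is a transitivity statement for the action on ordered pairs of adjacent polygons, entirely parallel to Lemma~\ref{lemma_tree_rep}; everything else is routine bookkeeping with Tietze transformations.
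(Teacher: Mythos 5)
Your overall route is the intended one (the paper treats this as a direct corollary of Theorem~\ref{thm_presentation_BHT} via the exact sequence $1\to B_\infty\to \mathrm{br}T_{n,m}\to T_{n,m}\to 1$, exactly your steps (ii)--(iii), which are indeed routine Tietze moves). But the one step you yourself single out as non-formal --- that $B_\infty$ is the normal closure of $\tau_1,\dots,\tau_{\bar h(n,m)}$ --- is justified by a construction that does not exist in general. You propose a homeomorphism rigid outside $\Sigma=H\cup H'$ carrying the ordered pair $(H,H')$ to a standard pair defining some $\tau_i$. A homeomorphism that is rigid outside $\Sigma$ must map the frontier arcs of $\Sigma$ bijectively onto those of the image, so the numbers of frontier arcs must agree. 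If $H,H'$ are both non-central their union has $2n$ frontier arcs, while a pair $(H_0,H_i)$ has $m+n-1$; these agree only when $m=n+1$. For $m\leq 3$ you could instead aim at the pair $(H_1,H_{m+1})$, but for $m\geq 4$ every standard pair among $\tau_1,\dots,\tau_4$ contains the central polygon $H_0$, so no asymptotically rigid homeomorphism rigid outside $H\cup H'$ can carry $(H,H')$ to a standard pair, and the conjugacy as you produce it fails precisely in the case needed to absorb the Sergiescu generators supported away from the centre.

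The claim itself is true and the gap is local: drop the requirement that the support be exactly $H\cup H'$. Choose $k$ large enough that the admissible subsurface $\Sigma_k$ (which contains the central polygon) contains both $H$ and $H'$. Then the edge twist $\tau_{p,q}$ and $\tau_1$ are both half-twists along embedded arcs joining two punctures of the punctured disc $\Sigma_k$, and $\Mod(\Sigma_k)$ acts transitively on isotopy classes of such arcs (change of coordinates in the punctured disc); since $\Mod(\Sigma_k)\subset B_\infty\subset \mathrm{br}T_{n,m}$ and conjugation by an orientation-preserving class preserves the sense of the twist, $\tau_{p,q}$ is conjugate in $\mathrm{br}T_{n,m}$ to $\tau_1$, hence lies in the normal closure $N$. (Alternatively, one may argue as you do but with a homeomorphism rigid outside a large $\Sigma_k$ rather than outside $H\cup H'$.) With this repair, $B_\infty=N$ by Theorem~\ref{thm_presentation_braid_group}, and the rest of your argument goes through and yields the stated presentation.
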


	\section{Abelianisation}\label{section_abelianisation}
	\noindent In this section we compute the abelianisation of $\mathrm{br}T_{n,m}$ for $m,n\geq 2$ in order to obtain some restriction for the isomorphism problem of Section \ref{section_isom_problem}.

\begin{proof}[Proof of Theorem \ref{thm_abelianise}]
	In order to compute the abelianisation of $\mathrm{br}T_{n,m}$, we look at the presentation of $\mathrm{br}T_{n,m}$ that we computed in Theorem \ref{thm_presentation_BHT} and we deduce from it a presentation of the abelianisation.
	
	\medskip\noindent By Theorem \ref{thm_presentation_BHT}, $\mathrm{br}T_{n,m}$ is generated by $r_{\Sigma_i}$ for $0\leq i\leq \bar{h}(n,m)$ and by $\tau_j$ for $1\leq j\leq\bar{h}(n,m)$, hence the abelianisation is generated by their class $\bar{r}_{\Sigma_i}$ and $\bar{\tau_i}$. Moreover the relations of Theorem \ref{thm_presentation_BHT} gives us the following relations in the quotient.
	\begin{itemize}
\item The braided relations gives us that $\bar{\tau_1}=\bar{\tau_i}$ for $2\leq i\leq \bar{h}(n,m)$. So, in what follows we will rename it $\bar{\tau}$.
\item The rotation relations gives us: for $0\leq k \leq \bar{h}(n,m)$, $\bar{r}_{\Sigma_k}^{m+k(n-1)}=\bar{\tau}^{-k(k+1)}$.

\item The square relations gives us: for $1\leq i\leq \bar{h}(n,m)-1$ and $1\leq j_i \leq \left\lceil\frac{m+(n-1)(i-1)-1}{2}\right\rceil$

 \[\bar{\tau}\bar{r}_{\Sigma_{i-1}}^{j_i}\bar{r}_{\Sigma_i}^{-n-2j_i+1}\bar{r}_{\Sigma_{i+1}}^{j_i+n-1}=\bar{\id}. \]
 
\noindent More explicitly, this gives us the following relations:
\begin{enumerate}
	\item\label{carre_1} for $i=1$, $\bar{\tau}\bar{r}_{\Sigma_0}\bar{r}_{\Sigma_1}^{-1-n}\bar{r}_{\Sigma_2}^n=\bar{\id}$ and, when $m\geq 4$: $\bar{r}_{\Sigma_0}\bar{r}_{\Sigma_1}^{-2}\bar{r}_{\Sigma_2}=\bar{\id}$,
	\item \label{carre_2}for $i=2$, $\bar{\tau}\bar{r}_{\Sigma_1}\bar{r}_{\Sigma_2}^{-1-n}\bar{r}_{\Sigma_3}^n=\bar{\id}$ and, when $(n,m)\neq (2,2)$: $\bar{r}_{\Sigma_1}\bar{r}_{\Sigma_2}^{-2}\bar{r}_{\Sigma_3}=\bar{\id}$,
	\item \label{carre_3}for $i=3$, $\bar{\tau}\bar{r}_{\Sigma_2}\bar{r}_{\Sigma_3}^{-1-n}\bar{r}_{\Sigma_4}^n=\bar{\id}$ and  $\bar{r}_{\Sigma_2}\bar{r}_{\Sigma_3}^{-2}\bar{r}_{\Sigma_4}=\bar{\id}$.
		\item \label{carre_4}for $i=4$ (only when $(n,m)=(2,2)$), $\bar{\tau}\bar{r}_{\Sigma_3}\bar{r}_{\Sigma_4}^{-1-n}\bar{r}_{\Sigma_5}^n=\bar{\id}$ and  $\bar{r}_{\Sigma_3}\bar{r}_{\Sigma_4}^{-2}\bar{r}_{\Sigma_5}=\bar{\id}$.
\end{enumerate}
	\end{itemize}
	
\noindent For any $m,n\geq 2$, the square relations \ref{carre_1}, \ref{carre_2}, \ref{carre_3} and \ref{carre_4} are equivalent to:
\[\left\{ \begin{array}{l} 
		\bar{\tau} =\bar{r}_{\Sigma_3}^{n-1} \bar{r}_{\Sigma_4}^{-(n-1)}\\
			\bar{r}_{\Sigma_5}= \bar{r}_{\Sigma_3}^{-1} \bar{r}_{\Sigma_4}^{2}\\
	\bar{r}_{\Sigma_2}= \bar{r}_{\Sigma_3}^2 \bar{r}_{\Sigma_4}^{-1}\\
	\bar{r}_{\Sigma_1}= \bar{r}_{\Sigma_3}^3 \bar{r}_{\Sigma_4}^{-2} \\
	\bar{r}_{\Sigma_0}= \bar{r}_{\Sigma_3}^4 \bar{r}_{\Sigma_4}^{-3}\\
\end{array} \right. \]
and so the abelianisation is generated by $\bar{r}_{\Sigma_3}$ and $\bar{r}_{\Sigma_4}$.
Plugging them in the rotation relations we obtain:
\begin{enumerate}[\alph*.]
\item\label{carre_bis0} $\bar{r}_{\Sigma_3}^{4m}= \bar{r}_{\Sigma_4}^{3m}$,
\item\label{carre_bis1} $\bar{r}_{\Sigma_3}^{3m+5(n-1)}= \bar{r}_{\Sigma_4}^{2m+4(n-1)}$,
\item\label{carre_bis2} $\bar{r}_{\Sigma_3}^{2m+10(n-1)}= \bar{r}_{\Sigma_4}^{m+8(n-1)}$,
\item\label{carre_bis3} $\bar{r}_{\Sigma_3}^{m+15(n-1)}= \bar{r}_{\Sigma_4}^{12(n-1)}$,
\item\label{carre_bis4} $\bar{r}_{\Sigma_3}^{20(n-1)}= \bar{r}_{\Sigma_4}^{-m+16(n-1)}$
\item\label{carre_bis5} when $(n,m)=(2,2)$,  $\bar{r}_{\Sigma_3}^{23}= \bar{r}_{\Sigma_4}^{16}$.
\end{enumerate}

\noindent When $(n,m)=(2,2)$, using \eqref{carre_bis1} and \eqref{carre_bis5}, we obtain that $\bar{r}_{\Sigma_3}=\bar{\id}$. Hence we have that the abelianisation of $\mathrm{br}T_{2,2}$ is isomorphic to $\Z_2$ and is generated by the class of $\bar{r}_{\Sigma_4}=\bar{r}_{\Sigma_0}$.

\medskip\noindent Now we focus on the case $(n,m)\neq (2,2)$.
 As \eqref{carre_bis4} is equal (in additive notation) to \eqref{carre_bis0}-(\eqref{carre_bis1}+\eqref{carre_bis3}), \eqref{carre_bis3} is equal to \eqref{carre_bis0}-(\eqref{carre_bis1}+\eqref{carre_bis2}) and \eqref{carre_bis2} is equal to 2\eqref{carre_bis1}-\eqref{carre_bis0}, these rotation relations are equivalent to:\[\left\{ \begin{array}{l} 
\bar{r}_{\Sigma_3}^{4m}= \bar{r}_{\Sigma_4}^{3m}\\
\bar{r}_{\Sigma_3}^{3m+5(n-1)}= \bar{r}_{\Sigma_4}^{2m+4(n-1)}\end{array} \right..\]

\noindent Hence, we obtain that the abelianisation is generated by:
\begin{align*}
\langle 
\bar{r}_{\Sigma_3},\bar{r}_{\Sigma_4} \mid \bar{r}_{\Sigma_3}^{4m}\bar{r}_{\Sigma_4}^{-3m}= \bar{\id}, &\ \bar{r}_{\Sigma_3}^{3m+5(n-1)}\bar{r}_{\Sigma_4}^{-2m-4(n-1)} = \bar{\id}\rangle\\
&\underset{t=\bar{r}_{\Sigma_3}\bar{r}_{\Sigma_4}^{-1}}{=}\langle 
t,\bar{r}_{\Sigma_4} \mid t^{4m}\bar{r}_{\Sigma_4}^{m}=\bar{\id}, \ t^{3m+5(n-1)} \bar{r}_{\Sigma_4}^{m+(n-1)}=\bar{\id} \rangle\\
&\underset{\bar{r}_{\Sigma_0}=t^4\bar{r}_{\Sigma_4}}{=}\langle 
t,\bar{r}_{\Sigma_0}\mid \bar{r}_{\Sigma_0}^{m}
=\bar{\id}, t^{-m+n-1} \bar{r}_{\Sigma_0}^{m+(n-1)}=\bar{\id} \rangle\\
&\underset{v=t\bar{r}_{\Sigma_0}}{=}\langle 
v,\bar{r}_{\Sigma_0}\mid \bar{r}_{\Sigma_0}^{m}
=\bar{\id}, v^{-m+n-1}=\bar{\id} \rangle.
\end{align*}
and so that the abelianisation of $\mathrm{br}T_{n,m}$ is isomorphic to $\Z_m\times \Z_{\lvert m-n+1\rvert }$ as expected.
\end{proof}

\begin{remark}\label{rmk_abel_tnr}
As explained in \cite{GLU_finiteness}, there exists the following short exact sequence:
\[1 \to B_\infty \to \mathrm{br}T_{n,m} \to T_{n,m} \to 1.\]
As a consequence, killing $\bar{\tau}$ in the computation of the abelianisation of $\mathrm{br}T_{n,m}$ allows us to recover a presentation of the abelianisation of the Brown-Thompson groups $T_{n,m}$ (\cite{Brown_finiteness_properties}) which is isomorphic to $\Z_{\gcd(m,n-1)} \times\Z_{\gcd(m,n-1)}$. More precisely, we have to add the relation $\bar{r}_{\Sigma_3}^{n-1}= \bar{r}_{\Sigma_4}^{n-1}$, which allows us to obtain the following presentation \[\langle 
t,\bar{r}_{\Sigma_0}\mid \bar{r}_{\Sigma_0}^{m}
=\bar{\id}, t^{n-1}=\bar{\id}, t^{-m} \bar{r}_{\Sigma_0}^{(n-1)}=\bar{\id} \rangle.\] The result can be deduced by putting for instance the last relation to the power $\frac{m}{\gcd(m,n-1)}$.
\end{remark}

\section{Isomorphism problem}\label{section_isom_problem}
\noindent
This section is dedicated to the proof of the partial result on the isomorphism problem of the braided Higman-Thompson groups $\mathrm{br}T_{n,m}$ given by Theorem \ref{thm:BigIntro}.

\medskip\noindent
The strategy is the following. As a consequence of the algebraic characterisation of the subgroup $B_\infty$ given by Theorem~\ref{thm:BraidSubCharacteristic}, an isomorphism $\mathrm{br}T_{n,m} \to \mathrm{br}T_{r,s}$ induces an isomorphism $T_{n,m} \to T_{r,s}$. By a standard argument based on Rubin's theorem, we deduce that $r=n$. Next, we deduce the equality $s=m$ or $s=\lvert m-n+1\rvert $ from our previous computation of abelianisations. 

\begin{thm}\label{thm:BraidSubCharacteristic}
Let $n,m \geq 2$ be integers. The subgroup $B_\infty$ of $\mathrm{br}T_{n,m}$ is the unique subgroup that is maximal (with respect to the inclusion) among the subgroups $N$ satisfying the property
\begin{itemize}
	\item[($\ast$)] $N$ is normal and $\mathrm{br}T_{n,m}/N$ does not surject onto a virtually abelian group with a kernel that has a non-trivial centre. 
\end{itemize}
\end{thm}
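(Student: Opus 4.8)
The plan is to read the characterisation off the short exact sequence $1 \to B_\infty \to \mathrm{br}T_{n,m} \xrightarrow{\pi} T_{n,m} \to 1$, using three inputs. From the theory of Higman--Thompson groups (classical; see e.g.\ \cite{Brown_finiteness_properties}): the commutator subgroup $S := [T_{n,m},T_{n,m}]$ is infinite simple and of finite index in $T_{n,m}$, with $T_{n,m}/S$ finite abelian, it has trivial centraliser in $T_{n,m}$, and the normal subgroups of $T_{n,m}$ are exactly the trivial one and those containing $S$. From braid group theory (classical): $B_\infty$, together with each of its finite-index subgroups, has trivial centre, and $H_1(B_\infty)\cong\mathbb{Z}$ with $\mathrm{br}T_{n,m}$ acting trivially on it because writhe is a conjugacy invariant, so that $[\mathrm{br}T_{n,m},B_\infty]=[B_\infty,B_\infty]$. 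Finally, a structural input about $\mathrm{br}T_{n,m}$, to be extracted from its action on the spine complex of \cite{GLU_finiteness}: every normal subgroup of $\mathrm{br}T_{n,m}$ is comparable to $B_\infty$, i.e.\ is contained in, or contains, $B_\infty$. Writing $G=\mathrm{br}T_{n,m}$, I would proceed in three steps.

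First, I would check that $B_\infty$ satisfies $(\ast)$. If $\phi\colon G/B_\infty = T_{n,m}\twoheadrightarrow Q$ with $Q$ virtually abelian, then $\phi(S)$ is a quotient of the simple group $S$, hence either trivial or isomorphic to $S$; it cannot be isomorphic to $S$, which is infinite and not virtually abelian, so $\ker\phi\supseteq S$. Consequently $Z(\ker\phi)\subseteq C_{T_{n,m}}(S)=1$. Thus no virtually abelian quotient of $T_{n,m}$ has a kernel with non-trivial centre, i.e.\ $B_\infty$ has property $(\ast)$.

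Second, I would show that any normal $N\trianglelefteq G$ with $B_\infty\subsetneq N$ (and $N\neq G$) fails $(\ast)$. By comparability we have $N=\pi^{-1}(\bar N)$ for some $1\neq\bar N\trianglelefteq T_{n,m}$, hence $\bar N\supseteq S$ and $G/N\cong T_{n,m}/\bar N$ is a non-trivial finite abelian group; mapping this onto the trivial group, the kernel is $G/N$ itself, which is non-trivial and abelian, so it has non-trivial centre, contradicting $(\ast)$. Together with comparability this means every normal subgroup satisfying $(\ast)$ is contained in $B_\infty$, and combining with the first step, $B_\infty$ is the unique maximal subgroup satisfying $(\ast)$.

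The hard part will be the structural input, in effect a strong almost-simplicity statement for $\mathrm{br}T_{n,m}$: namely that if $N\trianglelefteq G$ with $\pi(N)\neq 1$, then $N\supseteq B_\infty$ (so that there is no ``partial complement'' of $B_\infty$ with non-abelian quotient). The mechanism I would use: writing $H=N\cap B_\infty$, the normal subgroups $N/H$ and $B_\infty/H$ of $G/H$ meet trivially and hence commute, so $N/H$ centralises $B_\infty/H$; since $N/H\cong\pi(N)$ contains a copy of the non-abelian simple group $S$, the centraliser $C_{G/H}(B_\infty/H)$ must be strictly larger than $Z(B_\infty/H)$. I would then combine the triviality of the conjugation action on $H_1(B_\infty)=\mathbb{Z}$, the transitivity of the $T_{n,m}$-action on the strands (forcing $H$, if proper, to involve infinitely many of them, which pins the $\mathrm{br}T_{n,m}$-invariant normal subgroups of $B_\infty$ down to the congruence subgroups above $[B_\infty,B_\infty]$), and the fact that the relabelling action of the non-finitary Thompson permutations on a non-abelian quotient of $B_\infty$ is faithful modulo inner automorphisms, to derive a contradiction unless $H=B_\infty$. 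I expect to carry this out with the spine-complex techniques of \cite{GLU_finiteness}, in the style of the standard proofs that Thompson-like groups are almost simple; the remaining ingredients, about $T_{n,m}$ and about centres of finite-index subgroups of braid groups, are classical.
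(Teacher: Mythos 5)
Your Steps 1 and 2 are fine and essentially reproduce the paper's Lemma~\ref{lem:BhasAst} (the paper actually proves the triviality of the centraliser of the simple subgroup inside $T_{n,m}$ by a short dynamical argument rather than citing it, and works with $T_{n,m}^s=[T_{n,m}^0,T_{n,m}^0]$ rather than $[T_{n,m},T_{n,m}]$, but that is cosmetic). The genuine gap is your ``structural input'': the claim that every normal subgroup of $G=\mathrm{br}T_{n,m}$ is comparable to $B_\infty$, i.e.\ that $\pi(N)\neq 1$ forces $N\supseteq B_\infty$. This is not just the unproven hard part of your plan --- it is false in general. Take $N=[G,G]$: then $\pi(N)=[T_{n,m},T_{n,m}]\neq 1$, yet $N$ need not contain $B_\infty$, because the twists can have non-trivial image in the abelianisation. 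Indeed, by Theorem~\ref{thm_abelianise} (and its proof) the common image $\bar\tau$ of the twists in $G^{\mathrm{ab}}\cong \mathbb{Z}_m\times\mathbb{Z}_{|m-n+1|}$ equals $v^{n-1}\bar r_{\Sigma_0}^{-(n-1)}$, which is non-trivial for most $(n,m)$ (e.g.\ for $(n,m)=(2,3)$ it has order $6$ in $\mathbb{Z}_3\times\mathbb{Z}_2$), so $B_\infty\not\subseteq[G,G]$ while $[G,G]\not\subseteq B_\infty$. Consequently your reduction ``$N\not\subseteq B_\infty\Rightarrow N\supseteq B_\infty\Rightarrow G/N$ finite abelian'' breaks down, and the mechanism you sketch for proving comparability cannot be repaired: for $N=[G,G]$ one has $B_\infty\cap N\supseteq[B_\infty,B_\infty]$, so $B_\infty/(B_\infty\cap N)$ is abelian and the commuting-normal-subgroups observation (together with ``faithfulness of the outer action on a non-abelian quotient'') yields no contradiction --- the centralising phenomenon you want to exclude actually occurs.

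What the paper does instead (Lemma~\ref{lem:NotAst}) is prove the weaker statement that suffices: if $N\not\subseteq B_\infty$, then $\pi(N)\supseteq T_{n,m}^s$, so $G/N$ surjects onto the virtually abelian group $T_{n,m}/\pi(N)$ with kernel $B_\infty/(B_\infty\cap N)$, and this kernel automatically has non-trivial centre. The point is a displacement trick: since $\pi(N)\supseteq T_{n,m}^s$, one finds $g\in N$ acting on the space of ends with an attracting point, hence an infinite union of polygons $P$ with the $g^kP$ pairwise disjoint; choosing $\beta\in B_\infty\setminus N$ supported in $P$ (possible after conjugating by an element of $B_\infty$), any braid $\alpha$ commutes with $g^k\beta g^{-k}$ for large $k$, while $g^k\beta g^{-k}\equiv\beta$ modulo $N$, so $\beta$ is central and non-trivial in $B_\infty/(B_\infty\cap N)$. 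This argument makes no comparability assumption and is the missing idea in your proposal; note also that it only needs some $\beta\in B_\infty\setminus N$, i.e.\ exactly the case your approach tried to rule out.
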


\noindent
Recall from \cite{Brown_finiteness_properties} that there exists a morphism $\theta : T_{n,m} \twoheadrightarrow \mathbb{Z}/d\mathbb{Z}$, where $d:= \mathrm{gcd}(m,n-1)$, such that the commutator subgroup $T_{n,m}^s$ of the finite-index subgroup $T_{n,m}^0:= \mathrm{ker}(\theta)$ is simple. More precisely, $\theta$ is defined as follows. Given an element $g \in T_{n,m}$, we represent it as a triple $(R,\sigma,S)$, where $R$ and $S$ are two finite binary rooted trees with the same number of leaves and where $\sigma$ is a bijection from the leaves of $R$ to the leaves of $S$. A requirement is that $\sigma$ preserves the ``cyclic orderings'' on the leaves of $R$ and $S$. Namely, we think of the leaves of $R$ and $S$ as numbered from left to right modulo $N$, the total number of leaves, and $\sigma$ then sends each leaf numbered $i$ to the leaf numbered $i+k$ for some fixed $k$. Then $\theta(g)$ is defined by taking $k$ mod $d$. 

\begin{prop}[\cite{Brown_finiteness_properties}]\label{prop:TAlmostSimple}
Let $n,m \geq 2$ be integers. Every non-trivial normal subgroup of $T_{n,m}$ contains $T_{n,m}^s$. 
\end{prop}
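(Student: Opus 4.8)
The plan is to show that $T_{n,m}^{s}$ is the \emph{monolith} of $T_{n,m}$, i.e.\ that it is contained in every non-trivial normal subgroup. First I would record that $T_{n,m}^{s}$ is itself normal in $T_{n,m}$: it is the derived subgroup of $T_{n,m}^{0}=\ker(\theta)$, hence characteristic in $T_{n,m}^{0}$, and $T_{n,m}^{0}$ is normal in $T_{n,m}$ as the kernel of a homomorphism. Now let $N$ be an arbitrary non-trivial normal subgroup of $T_{n,m}$. Then $N\cap T_{n,m}^{s}$, being the intersection of two normal subgroups of $T_{n,m}$, is normal in $T_{n,m}$ and in particular in $T_{n,m}^{s}$; since $T_{n,m}^{s}$ is simple, either $N\cap T_{n,m}^{s}=T_{n,m}^{s}$, in which case $T_{n,m}^{s}\leq N$ and we are done, or $N\cap T_{n,m}^{s}=\{1\}$. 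It remains to rule out the second possibility.

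So assume $N\cap T_{n,m}^{s}=\{1\}$. For $a\in N$ and $b\in T_{n,m}^{s}$ the commutator $[a,b]$ can be written both as $a\cdot(ba^{-1}b^{-1})\in N$, using normality of $N$, and as $(aba^{-1})\cdot b^{-1}\in T_{n,m}^{s}$, using normality of $T_{n,m}^{s}$; hence $[a,b]\in N\cap T_{n,m}^{s}=\{1\}$, so $N$ and $T_{n,m}^{s}$ commute elementwise. Thus $N\subseteq C_{T_{n,m}}(T_{n,m}^{s})$, and everything comes down to proving that the centraliser $C_{T_{n,m}}(T_{n,m}^{s})$ is trivial, which then forces $N=\{1\}$, contradicting the choice of $N$.

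To handle this last point I would use the faithful action of $T_{n,m}$ by homeomorphisms on the compact space $X$ given by the boundary of the underlying tree (a Cantor set endowed with a $T_{n,m}$-invariant cyclic order). This action is \emph{micro-supported}: for every non-empty open $U\subseteq X$ there is a non-trivial element supported inside $\overline{U}$, and such an element can moreover be found inside $T_{n,m}^{s}$. Indeed, the rigid stabiliser $G_{U}=\{g\in T_{n,m}\mid \mathrm{supp}(g)\subseteq\overline{U}\}$ contains an isomorphic copy of an $F$-type Higman--Thompson group, hence is non-solvable, so its second derived subgroup $G_{U}''$ is non-trivial; and since $T_{n,m}/T_{n,m}^{0}$ is abelian we have $G_{U}'\leq[T_{n,m},T_{n,m}]\leq T_{n,m}^{0}$, whence $G_{U}''\leq[T_{n,m}^{0},T_{n,m}^{0}]=T_{n,m}^{s}$. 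Now suppose $g\in C_{T_{n,m}}(T_{n,m}^{s})$ does not fix some point $x\in X$. Since $X$ is compact Hausdorff, there is an open neighbourhood $U$ of $x$ with $\overline{U}\cap g(\overline{U})=\emptyset$. Choose a non-trivial $h\in T_{n,m}^{s}$ with $\mathrm{supp}(h)\subseteq\overline{U}$; then $ghg^{-1}=h$ gives $\mathrm{supp}(h)=g(\mathrm{supp}(h))$, which together with $\mathrm{supp}(h)\subseteq\overline{U}$ forces $\mathrm{supp}(h)\subseteq\overline{U}\cap g(\overline{U})=\emptyset$, contradicting $h\neq 1$. Hence $g$ fixes $X$ pointwise and $g=1$ by faithfulness.

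The first two paragraphs are soft; the substance is concentrated in the last step, and within it in the claim that $T_{n,m}^{s}$ itself (not merely $T_{n,m}$) acts micro-supportedly. Making this precise requires the explicit tree/Cantor-set model of $T_{n,m}$ together with the identification of its rigid stabilisers — essentially the combinatorics that already underlies the simplicity of $T_{n,m}^{s}$ quoted from \cite{Brown_finiteness_properties}. Alternatively, one could circumvent the hands-on support computation by appealing to Rubin-type reconstruction theorems, but the route above keeps the argument self-contained modulo the two facts (existence of $\theta$ and simplicity of $T_{n,m}^{s}$) recalled in the excerpt.
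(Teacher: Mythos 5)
Your argument is correct, but note that the paper does not prove this proposition at all: it is quoted from Brown (the reference \cite{Brown_finiteness_properties}), with the simplicity of $T_{n,m}^s$ and the map $\theta$ recalled only as background. What you have written is therefore a genuine proof where the paper has a citation. Your reduction is the standard monolith argument: $N\cap T_{n,m}^s$ is normal, so by simplicity it is either all of $T_{n,m}^s$ (done) or trivial, in which case $[N,T_{n,m}^s]=1$ and everything rests on showing $C_{T_{n,m}}(T_{n,m}^s)=1$. That last point is sound as you present it: rigid stabilisers of open sets contain copies of an $F$-type group, hence are non-solvable, their second derived subgroups are non-trivial and land in $T_{n,m}^s$ because $T_{n,m}/T_{n,m}^0$ and $T_{n,m}^0/T_{n,m}^s$ are abelian, and the displacement/commutation argument then forces a centralising element to fix every point of the (faithful) boundary action. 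It is worth observing that the paper proves exactly this centraliser statement, but for a different purpose (the Claim inside the proof of Lemma~\ref{lem:BhasAst}), and by a more hands-on route: for each $n$-adic point $x$ it exhibits an element of $T_{n,m}^s$ whose support is an interval with endpoint $x$, so a centralising element fixes $x$, and density of $n$-adics finishes. Your micro-supported/rigid-stabiliser version is more machinery-flavoured but equally valid and somewhat more robust (it does not need the explicit interval-with-$n$-adic-endpoint construction); the paper's version is more elementary. The only caveats are the standard facts you invoke without proof --- non-solvability of the $F$-type rigid stabilisers, faithfulness of the boundary action, and of course the simplicity of $T_{n,m}^s$ itself, which is precisely Brown's input and is legitimately assumed here.
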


\noindent We first verify that:

\begin{lemma}\label{lem:BhasAst}
The subgroup $B_\infty$ of $\mathrm{br}T_{n,m}$ satisfies the property ($\ast$).
\end{lemma}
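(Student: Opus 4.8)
The plan is to unwind property ($\ast$) for $N=B_\infty$ using the short exact sequence $1 \to B_\infty \to \mathrm{br}T_{n,m} \to T_{n,m} \to 1$, which in particular tells us that $B_\infty$ is normal and that $\mathrm{br}T_{n,m}/B_\infty \cong T_{n,m}$. Thus the statement to prove reduces to: for every surjection $\phi\colon T_{n,m}\twoheadrightarrow V$ onto a virtually abelian group $V$, the kernel $K:=\ker\phi$ has trivial centre.

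First I would dispose of the case $K=1$, where $Z(K)=1$ trivially (this case in fact never arises, since $T_{n,m}$ is not virtually abelian, but that observation is not needed). So assume $K\neq 1$. By Proposition~\ref{prop:TAlmostSimple} we get $T_{n,m}^s\leq K$. Now $Z(K)$ is a characteristic subgroup of $K$, and $K$ is normal in $T_{n,m}$, so $Z(K)$ is normal in $T_{n,m}$. If $Z(K)$ were non-trivial, Proposition~\ref{prop:TAlmostSimple} would force $T_{n,m}^s\leq Z(K)$; since $Z(K)$ is abelian, $T_{n,m}^s$ would be abelian, hence, being simple, isomorphic to $\mathbb{Z}/p\mathbb{Z}$ for some prime $p$ — contradicting that $T_{n,m}^s$ is infinite. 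Therefore $Z(K)=1$, which is exactly property ($\ast$) for $B_\infty$. The whole argument is a short normal-subgroup chase built on Proposition~\ref{prop:TAlmostSimple}.

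The only input beyond Proposition~\ref{prop:TAlmostSimple} — and thus the only point one has to be slightly careful about — is that $T_{n,m}^s$ is infinite (equivalently, non-abelian). This is classical (see \cite{Brown_finiteness_properties}); if one prefers a self-contained argument, one can reason as follows: were $T_{n,m}^s=[T_{n,m}^0,T_{n,m}^0]$ finite, then, $T_{n,m}^0$ being finitely generated (it has finite index in the finitely generated group $T_{n,m}$), the conjugacy class of each generator would be finite, as it is contained in a coset of the finite commutator subgroup; hence $Z(T_{n,m}^0)$ would have finite index in $T_{n,m}^0$, making $T_{n,m}^0$ — and therefore $T_{n,m}$, which contains it with finite index — virtually abelian, contradicting that $T_{n,m}$ contains a non-abelian free subgroup. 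Apart from this, no serious obstacle is expected.
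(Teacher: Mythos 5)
Your proof is correct, but it follows a genuinely different route from the paper. Both arguments start from the same reduction: since $\mathrm{br}T_{n,m}/B_\infty \cong T_{n,m}$, one must show that the kernel $K$ of any surjection of $T_{n,m}$ onto a virtually abelian group has trivial centre, and both invoke Proposition~\ref{prop:TAlmostSimple} to get $T_{n,m}^s \leq K$ when $K \neq 1$. From there the paper proves the stronger statement that the centraliser of $T_{n,m}^s$ in $T_{n,m}$ is trivial, and does so dynamically: given $g$ centralising $T_{n,m}^s$, it produces elements of $T_{n,m}^s$ supported on intervals of the circle with a prescribed $n$-adic endpoint, deduces that $g$ fixes every $n$-adic point, hence $g=\id$. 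You instead run a purely algebraic normal-subgroup chase: $Z(K)$ is characteristic in $K$, hence normal in $T_{n,m}$, so if it were non-trivial Proposition~\ref{prop:TAlmostSimple} would put the simple group $T_{n,m}^s$ inside the abelian group $Z(K)$, forcing it to be finite cyclic, contradicting that $T_{n,m}^s$ is infinite. Your approach is shorter and avoids any use of the circle action, at the cost of needing the (classical, and contained in \cite{Brown_finiteness_properties}) fact that $T_{n,m}^s$ is infinite (equivalently non-abelian); the paper's approach needs no such input and yields the stronger, potentially reusable fact that $T_{n,m}^s$ has trivial centraliser. The one point in your write-up that deserves a citation or a word of justification is the assertion, used in your self-contained fallback, that $T_{n,m}$ contains a non-abelian free subgroup (or at least is not virtually abelian); this is standard (e.g.\ via ping-pong or the Margulis--Ghys alternative for circle actions, since $T_{n,m}$ preserves no probability measure on the circle), but as stated it is the only unreferenced ingredient.
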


\begin{proof}
In other words, we want to prove that $T_{n,m}$ does not surject onto a virtually abelian group with a kernel has a non-trivial centre. As a consequence of Proposition~\ref{prop:TAlmostSimple}, it suffices to show that:

\begin{claim}
The centraliser of $T_{n,m}^s$ in $T_{n,m}$ is trivial. 
\end{claim}

\noindent
Let $g \in T_{n,m}$ be an element centralising $T_{n,m}^s$. Fix an $n$-adic number $x \in \mathbb{R}/m\mathbb{Z}$. We can find an element $f$ in $T_{n,m}^s$ whose support in the circle $\mathbb{R}/m \mathbb{Z}$ is an interval with $x$ as an endpoint (e.g.\ take an arbitrary element of $T_{n,m}^s$ whose support is an interval and conjugate it by an element of $T_{n,m}^0$ in order to send this interval to an interval having $x$ as an endpoint). Because $g$ commutes with $f$, it has to stabilise the support of $f$, hence $g(x)=x$. We conclude that $g$ fixes every $n$-adic number, which implies that it must be the identity.
\end{proof}

\noindent
Next, we oberve that normal subgroups of $\mathrm{br}T_{n,m}$ that satisfies ($\ast$) are contained in $B_\infty$.

\begin{lemma}\label{lem:NotAst}
If a normal subgroup $N \lhd \mathrm{br}T_{n,m}$ is not contained in $B_\infty$, then $\mathrm{br}T_{n,m}$ surjects onto a virtually abelian group with a kernel that has a non-trivial centre.
\end{lemma}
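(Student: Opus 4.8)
The plan is to exploit the short exact sequence $1 \to B_\infty \to \mathrm{br}T_{n,m} \to T_{n,m} \to 1$ together with Proposition~\ref{prop:TAlmostSimple}, and then to show that the relevant kernel is abelian. We may clearly assume $N \neq \mathrm{br}T_{n,m}$. Since $N$ is not contained in $B_\infty$, its image $\bar N$ in $T_{n,m}$ is a non-trivial normal subgroup, so $\bar N \supseteq T_{n,m}^s$ by Proposition~\ref{prop:TAlmostSimple}. As $T_{n,m}^s$ has finite index in $T_{n,m}$ and $T_{n,m}/T_{n,m}^s$ is metabelian (an extension of $\mathbb{Z}/d\mathbb{Z}$ by the abelian group $T_{n,m}^0/T_{n,m}^s$), the quotient $Q := T_{n,m}/\bar N$ is a finite solvable group. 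The composite $\mathrm{br}T_{n,m} \to T_{n,m} \to Q$ has kernel equal to the full preimage $N B_\infty$ of $\bar N$, hence it kills $N$ and induces a surjection $\bar\pi\colon \mathrm{br}T_{n,m}/N \twoheadrightarrow Q$ with $\ker\bar\pi = N B_\infty/N \cong B_\infty/(N\cap B_\infty)$.

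The heart of the proof — and the step I expect to require the most care — will be to show that $B_\infty/(N\cap B_\infty)$ is abelian, i.e.\ that $[B_\infty,B_\infty]\leq N$. First, since $N\not\leq B_\infty$ and any element of $\mathrm{br}T_{n,m}$ centralising $B_\infty$ must fix every puncture of $\mathscr{S}^\sharp(A_{n,m})$ and hence be trivial (recall $Z(B_\infty)=\{1\}$), some $g\in N$ fails to commute with some twist; using $\bar N\supseteq T_{n,m}^s$ one may moreover choose $g$ so that the homeomorphism it induces carries a region $H_i\cup H_j$ formed by two adjacent polygons lying deep in $A_{n,m}$ to a disjoint region $H_{i'}\cup H_{j'}$ of the same combinatorial type. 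Then $\rho := [g,\tau_{i,j}] = (g\,\tau_{i,j}\,g^{-1})\,\tau_{i,j}^{-1} = \tau_{i',j'}\,\tau_{i,j}^{-1}$ lies in $N$, lies in $B_\infty$ (because $\tau_{i,j}\in B_\infty$ has trivial image in $T_{n,m}$), and is non-trivial (the two twists have disjoint support). Let $M$ be the normal closure of $\rho$ in $\mathrm{br}T_{n,m}$, so $M\leq N\cap B_\infty$. Since $\mathrm{br}T_{n,m}$ acts transitively on ordered pairs of disjoint two-polygon regions of this type, $M$ contains $\tau_S\,\tau_{S'}^{-1}$ for every such disjoint pair $(S,S')$; chaining two arbitrary such twists through a region disjoint from both, all these twists become equal in $B_\infty/M$, and then the braid and nodal relations of Theorem~\ref{thm_presentation_braid_group} at a regular vertex adjacent to the central polygon (available since $n\geq2$) force even the twists incident to the central polygon to equal this common class. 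As twists generate $B_\infty$, the group $B_\infty/M$ is therefore cyclic, hence so is the further quotient $B_\infty/(N\cap B_\infty)$; in particular $[B_\infty,B_\infty]\leq N$. The delicate part here is the bookkeeping behind the transitivity statements and the ``spreading'' of the single relation carried by $\rho$ to a full collapse of $B_\infty/(N\cap B_\infty)$ onto an abelian group; everything surrounding it is formal.

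Finally, granting the previous paragraph, $A := N B_\infty/N \cong B_\infty/(N\cap B_\infty)$ is an abelian normal subgroup of $\mathrm{br}T_{n,m}/N$ of finite index $|Q|$. If $A\neq\{1\}$ (that is, $B_\infty\not\leq N$), then $\bar\pi$ exhibits $\mathrm{br}T_{n,m}/N$ as a group surjecting onto the finite (hence virtually abelian) group $Q$ with kernel $A$ a non-trivial abelian group, so $Z(A)=A\neq\{1\}$. If $A=\{1\}$ (that is, $B_\infty\leq N$), then $\mathrm{br}T_{n,m}/N\cong Q$ is a non-trivial finite solvable group, so it has a non-trivial elementary abelian normal subgroup $A_0$, and the quotient map $\mathrm{br}T_{n,m}/N\twoheadrightarrow(\mathrm{br}T_{n,m}/N)/A_0$ onto a finite group has kernel $A_0$ with $Z(A_0)=A_0\neq\{1\}$. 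In either case $\mathrm{br}T_{n,m}/N$ surjects onto a virtually abelian group with a kernel that has a non-trivial centre, which is precisely the failure of property $(\ast)$ for $N$.
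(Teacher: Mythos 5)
Your overall route is genuinely different from the paper's. The paper makes the same first reduction (since $\pi(N)\supseteq T_{n,m}^s$ by Proposition~\ref{prop:TAlmostSimple}, the quotient of $\mathrm{br}T_{n,m}/N$ by the image of $B_\infty$ is virtually abelian), but for the kernel it only exhibits a \emph{single} non-trivial central element of $B_\infty/(B_\infty\cap N)$: it picks $g\in N$ whose action on the space of ends has an attracting point, conjugates a braid $\beta\in B_\infty\setminus N$ so that its support lies in a region $P$ with the $g^kP$ pairwise disjoint, and uses $\beta\equiv g^k\beta g^{-k}\ (\mathrm{mod}\ N)$ to see that $\beta$ commutes with every braid modulo $N$. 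You instead aim at the much stronger statement that $B_\infty/(B_\infty\cap N)$ is abelian (even cyclic), by putting one twist-difference $\tau_{S'}\tau_S^{-1}$ into $N$, spreading it around by transitivity, and collapsing Sergiescu's presentation; the nodal-plus-adjacency computation you invoke does force the remaining generators to coincide (the nodal relation gives commutation with the common class $t$, adjacency then gives equality), and your assembly at the end also covers the degenerate case $B_\infty\leq N$, which the paper's proof silently excludes when it picks $\beta\in B_\infty\setminus N$. (Your appeal to finiteness of $T_{n,m}/\bar N$ is unnecessary: solvability plus virtual abelianity already gives the non-trivial abelian normal subgroup you use.) The price of your route is two geometric inputs the paper never needs: rigid control of how an element of $N$ moves a two-polygon region, and transitivity of $\mathrm{br}T_{n,m}$ on ordered pairs of disjoint such regions with rigid control on them.

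There is one concrete gap: your justification of the first input is not right as stated. Knowing $\bar N\supseteq T_{n,m}^s$ only lets you prescribe the image of $g$ in $T_{n,m}$; the lifts of a fixed $\bar g$ lying in $N$ form a single coset $g_0(N\cap B_\infty)$, so you cannot ``choose'' the braid part of $g$ (if $N\cap B_\infty$ happened to be trivial, the lift is unique and uncontrolled). For a general $g\in N$ over $\bar g$, the conjugate $g\tau_{i,j}g^{-1}$ is the twist about the curve $g(c)$, which encloses two punctures away from $p_i,p_j$ but need not be the standard curve of a two-polygon region, nor even be disjoint up to isotopy from the curves you use afterwards; your transitivity bookkeeping then does not apply to $\rho$. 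The step can be repaired, but the missing idea is asymptotic rigidity rather than the size of $\bar N$: any $g_0\in N\setminus B_\infty$ is rigid outside some compact admissible subsurface, so choosing the two-polygon region $S$ deep inside a branch displaced by $\pi(g_0)$ and beyond the non-rigid part of $g_0$, the element $g_0$ maps $S$ rigidly onto a disjoint rigid two-polygon region and $g_0\tau_Sg_0^{-1}$ is an honest twist, giving $\rho$ the form you need. With that fixed, the transitivity statement and the propagation do go through (once all deep twists are identified you can in fact propagate inward vertex by vertex, since at every vertex at least $n\geq 2$ incident edges point away from the root), but these are exactly the deferred pieces of bookkeeping you acknowledge, and they must be written out for the argument to be complete.
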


\begin{proof}
Let $\pi$ denote the projection $\mathrm{br}T_{n,m} \twoheadrightarrow T_{n,m}$. According to Proposition~\ref{prop:TAlmostSimple}, the normal subgroup $\pi(N)$ in $T_{n,m}$ either is trivial or it contains $T_{n,m}^s$. In the former case, $N$ is contained in $B_\infty$ (which coincides with the kernel of $\pi$), which is forbidden by assumption. So $\pi(N)$ must contain $T_{n,m}^s$. Because $T_{n,m}^s$ is the commutator subgroup of the finite-index subgroup $T_{n,m}^0$ of $T_{n,m}$, this implies that $\mathrm{br}T_{n,m}/N$ is virtually abelian. It remains to verify that $B_\infty/(B_\infty \cap N)$ has a non-trivial centre.

\medskip \noindent
Because $\pi(N)$ contains $T_{n,m}^s$, we can find an element $g \in N$ such that the action of $g$ on the space of ends of $\mathscr{S}(A_{n,m})$ has an attracting point. (Notice that the action of $\mathrm{br}T_{n,m}$ on the space of ends of $\mathscr{S}(A_{n,m})$ is $\pi$-equivariantly equivalent to the action of $T_{n,m}$ on $\partial A_{n,m}$.) Consequently, there exists an infinite connected union of polygons $P \subset \mathscr{S}(A_{n,m})$ such that the $g^kP$ are pairwise disjoint for $k \geq 1$. Now, fix an arbitrary braid $\beta \in B_\infty \backslash N$. Up to conjugating by an element of $B_\infty$, we can assume that the support of $\beta$ is contained in $P$. Notice that, because $N$ is a normal subgroup, $\beta$ still does not belong to $N$. We claim that (the image of) $\beta$ is central in $B_\infty/ (B_\infty \cap N)$. 

\medskip \noindent
Indeed, if $\alpha$ is another braid, then there exists some $k \geq 1$ such that $\alpha$ and $g^k\beta g^{-k}$ have disjoint supports, and consequently commute in $B_\infty$. But $\beta$ and $g^k\beta g^{-k}$ coincide modulo $N$, so the images of $\alpha$ and $\beta$ in $B_\infty/ (B_\infty \cap N)$ must commute.
\end{proof}

\begin{proof}[Proof of Theorem~\ref{thm:BraidSubCharacteristic}.]
We know from Lemma~\ref{lem:NotAst} that every subgroup satisfying ($\ast$) is contained in $B_\infty$, and we know from Lemma~\ref{lem:BhasAst} that $B_\infty$ satisfies ($\ast$). Thus, our theorem follows. 
\end{proof}

\noindent
Now, we deduce by standard arguments a partial solution to the isomorphism problem among Thompson groups.

\begin{prop}\label{prop:IsoThompson}
	Let $n,m,r,s \geq 2$ be integers. If $T_{n,m}$ and $T_{r,s}$ are isomorphic, then $n=r$.
\end{prop}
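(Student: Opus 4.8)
The plan is to reduce the statement to a rigidity result for the natural actions of the Higman--Thompson groups $T_{n,m}$ on the circle, or equivalently on the space of ends $\partial A_{n,m}$, and then to apply Rubin's reconstruction theorem. First I would recall that $T_{n,m}$ acts faithfully by homeomorphisms on the circle $\mathbb{R}/m\mathbb{Z}$ (equivalently, on a Cantor set $\partial A_{n,m}$), and that this action is \emph{locally dense} (also called \emph{micro-supported} or \emph{flexible}): for every non-empty open set $U$ there is a non-trivial element supported in $U$, and the action has no global fixed points on the relevant dense orbits. This is exactly the setting in which Rubin's theorem applies, giving that any group isomorphism $T_{n,m}\to T_{r,s}$ is induced by a homeomorphism $\varphi : \partial A_{n,m} \to \partial A_{r,s}$ conjugating the two actions; moreover $\varphi$ extends to a homeomorphism between the underlying one-dimensional topological realisations.

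Next I would extract the combinatorial invariant $n$ from such a conjugating homeomorphism. The key point is that $n$ is detected by the local structure of the Cantor set together with the action: around each point, the space $\partial A_{n,m}$ looks like $n$ branches, and elements of $T_{n,m}$ act by ``prefix replacement'' which multiplies or divides the relevant scaling data by powers of $n$. Concretely, I would use the subgroup $T_{n,m}^s$ (the simple commutator subgroup of the finite-index subgroup $T_{n,m}^0$, as in Proposition~\ref{prop:TAlmostSimple}), whose elements with small interval support generate a rich supply of compactly-supported homeomorphisms; the conjugating map $\varphi$ must carry interval-supported elements to interval-supported elements, so $\varphi$ is locally a homeomorphism between standard $n$-adic and $r$-adic Cantor sets that intertwines the $n$-adic and $r$-adic affine structures. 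A standard computation with the ``slopes'' (the derivative cocycle valued in $\langle n\rangle \subset \mathbb{Q}^\times$ versus $\langle r\rangle$) then forces $n=r$: an isomorphism between $\mathbb{Z}[1/n]$-affine interval exchange-type groups and $\mathbb{Z}[1/r]$-affine ones forces the primes dividing $n$ and $r$ to coincide with matching multiplicities, hence $n=r$. Alternatively, and perhaps more cleanly, one invokes the known classification (Higman, Brown, Pardo) that $T_{n,m}\cong T_{r,s}$ forces $n=r$; I would cite this but also sketch the Rubin-theoretic argument for self-containedness.

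The main obstacle I anticipate is making the ``local $n$-adic structure is an isomorphism invariant'' step precise without reproving large parts of Rubin's machinery. The cleanest route is: (i) quote Rubin's theorem to get the conjugating homeomorphism $\varphi$ of Cantor sets; (ii) observe that $\varphi$ must send the groupoid of ``standard basic clopen identifications'' realised by elements of $T_{n,m}$ to that of $T_{r,s}$, because these are characterised purely in terms of being restrictions of group elements that are the identity outside a proper clopen subset; (iii) conclude that a basic clopen set of $\partial A_{n,m}$, whose ``self-similar branching degree'' is a power of $n$, maps to one whose branching degree is a power of $r$, forcing $n=r$. Step (iii) is where one must be careful that $\varphi$ need not be a local homeomorphism of self-similar structures \emph{a priori} — it is only continuous — but the abundance of interval-supported elements in $T_{n,m}^s$, transported via $\varphi$, pins down the local combinatorics up to the claimed equality. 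I would write this out in the form: an isomorphism induces, via Rubin, a homeomorphism of Cantor sets conjugating the actions; restricting to the action of the simple subgroups and their supports shows that clopen pieces and their subdivision degrees correspond; hence $n = r$.
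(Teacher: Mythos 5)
Your first step coincides with the paper's: apply Rubin's theorem to the faithful, locally dense actions to obtain a homeomorphism conjugating the $T_{n,m}$-action to the $T_{r,s}$-action. The gap is in the second step, where you try to read off $n$ from this conjugating homeomorphism. The mechanisms you propose do not work as stated. The slope (derivative) cocycle is not transported by the Rubin homeomorphism, which is merely continuous, so there is no identification of the group $\langle n\rangle\subset\mathbb{R}_{>0}^{\times}$ with $\langle r\rangle$; and the abstract germ groups at adic points are $\mathbb{Z}^2$ for every $n$, so local germ data alone cannot distinguish $n$ from $r$. Likewise, the ``self-similar branching degree'' of a clopen piece is not an invariant of the topological Cantor set, only of the extra tree structure, which a homeomorphism need not respect; and even granting that basic pieces correspond, the conclusion ``a power of $n$ equals a power of $r$ hence $n=r$'' is false (take $n=4$, $r=2$). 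Finally, the fallback citation to a ``known classification (Higman, Brown, Pardo)'' is not available here: those classifications concern the $V_{n,r}$ (and $F$) families, and the paper explicitly treats the $T$-case as requiring proof (indeed the authors do not even know whether $T_{n,m}$ and $T_{n,n-1-m}$ are isomorphic), so you would be assuming something at least as strong as the statement to be proved.

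What the paper does after Rubin is different and is exactly the missing idea: it computes the germ groups of $T_{p,q}$ at a point $x$ (Claim~\ref{claim:Germs}): $\mathbb{Z}^2$ at $p$-adic points, $\mathbb{Z}$ at rational non-adic points, trivial at irrational points. This does not give $n=r$ directly, but it shows the conjugating homeomorphism must send $n$-adic points to $r$-adic points. One then uses a genuinely global invariant: the number of orbits of pairs of distinct adic points on the circle, which equals $n-1$ for $T_{n,m}$ and $r-1$ for $T_{r,s}$ (quoting Bieri--Strebel), whence $n=r$. If you want to salvage your sketch, you should replace the slope/branching-degree step by such an orbit-counting (or another genuinely conjugation-invariant) argument; as written, the proposal identifies the right tool (Rubin) but does not supply a valid extraction of $n$.
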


\noindent
We are grateful to Jim Belk for having explained to us that the proposition is a rather straightforward consequence of Rubin's theorem.

\begin{proof}[Proof of Proposition~\ref{prop:IsoThompson}.]
We think of $T_{n,m}$ and $T_{r,s}$ as acting by piecewise linear homeomorphisms on $\mathbb{R}/n \mathbb{Z}$ and $\mathbb{R}/r \mathbb{Z}$ respectively. As a consequence of Rubin's theorem \cite[Corollary~3.5]{MR0988881}, if $T_{n,m}$ and $T_{r,s}$ are isomorphic, then there must exist a homeomorphism $\mathbb{R}/n \mathbb{Z} \to \mathbb{R}/r \mathbb{Z}$ that is equivariant with respect to the actions of $T_{n,m}$ and $T_{r,s}$. Claim~\ref{claim:Germs} below justifies that such a homeomorphism necessarily sends $n$-adic numbers to $r$-adic numbers, which implies that the number of $T_{n,m}$-orbits of pairs of distincts $n$-adic numbers in $\mathbb{R}/n \mathbb{Z}$ must equal the number of $T_{r,s}$-orbits of pairs of distinct $r$-adic numbers in $\mathbb{R}/r \mathbb{Z}$. But we know from \cite[Theorem~A4.1]{MR3560537} (see also \cite[Proposition~1]{MR4161164} for a proof focused on the case we are interested in) that these numbers are respectively $n-1$ and $r-1$. Hence $n=r$, as desired. 

\begin{claim}\label{claim:Germs}
Let $p,q \geq 2$ be two integers. For every $x \in \mathbb{R}/p \mathbb{Z}$, the group of germs of $T_{p,q}$ at $x$ is isomorphic to $\mathbb{Z}^2$ if $x$ is $p$-adic, to $\mathbb{Z}$ if $x$ is rational but not $p$-adic, and trivial if $x$ is irrational.
\end{claim}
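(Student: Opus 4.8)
The group of germs of $T_{p,q}$ at $x$ is the quotient of the stabiliser $\mathrm{Stab}_{T_{p,q}}(x)$ by the subgroup of elements restricting to the identity on a neighbourhood of $x$. Since $T_{p,q}$ acts by piecewise-linear homeomorphisms with $p$-adic breakpoints and slopes in $p^{\mathbb{Z}}$, the plan is to first record the following. If $g$ fixes $x$, then on a punctured neighbourhood of $x$ the map $g$ consists of one affine piece of the form $t\mapsto p^{a_-}(t-x)+x$ to the left of $x$ and one of the form $t\mapsto p^{a_+}(t-x)+x$ to the right, and the germ of $g$ at $x$ is determined by the pair $(a_-,a_+)$. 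Hence $g\mapsto(a_-,a_+)$ realises the germ group as a subgroup of $\mathbb{Z}^2$, and if $x$ is not $p$-adic then $x$ is a breakpoint of no element, so $a_-=a_+$ and the germ group embeds into $\mathbb{Z}$. In particular the germ group is always free abelian of rank at most $2$, and it only remains to compute the rank in each of the three cases.

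I would treat the irrational case first, as that is where the arithmetic does the work. Let $g$ fix an irrational $x$ and be non-trivial near $x$; on the maximal arc $J\ni x$ on which $g$ is affine one has $g(t)=p^{a}t+(1-p^{a})x$. If $a=0$ this forces $g=\mathrm{id}$ on $J$, a contradiction; so $a\neq0$, whence $J$ is a proper arc (as $g$ is a homeomorphism of the whole circle) and has an endpoint $u$, necessarily a breakpoint of $g$ and hence $p$-adic. Then $g(u)$ is a breakpoint of $g^{-1}\in T_{p,q}$, hence also $p$-adic; but $g(u)=p^{a}u+(1-p^{a})x$ with $p^{a}u$ $p$-adic and $1-p^{a}\neq0$, which would force $x$ to be $p$-adic. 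So the germ group at an irrational point is trivial.

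For the two remaining cases I would invoke the standard flexibility of Higman--Thompson groups: any piecewise-linear bijection between two proper unions of $p$-adic standard intervals of the circle, with slopes in $p^{\mathbb{Z}}$, extends to an element of $T_{p,q}$ by filling in the complementary arcs with a tree-pair; equivalently, local affine data with $p$-adic breakpoints and $p$-adic images can be completed to a global element, and the ``slope at an endpoint'' homomorphism on the $F$-type group attached to an interval surjects onto $\mathbb{Z}$. If $x$ is $p$-adic, then for $\ell$ large $[x-p^{-\ell},x]$ and $[x,x+p^{-\ell}]$ are standard intervals, so one can pick $g_-\in T_{p,q}$ supported on $[x-p^{-\ell},x]$ which is the identity to the right of $x$ and has left slope $p^{a}$ at $x$, and $g_+\in T_{p,q}$ supported on $[x,x+p^{-\ell}]$ which is the identity to the left of $x$ and has right slope $p^{b}$ at $x$; then $g_-g_+$ has germ $(a,b)$, so the germ group is all of $\mathbb{Z}^2$. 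If $x=k/b$ is rational with $b=p^{s}b'$, $\gcd(b',p)=1$ and $b'\geq2$ (which is precisely the ``rational but not $p$-adic'' case), set $d$ to be the multiplicative order of $p$ modulo $b'$, so $d\geq1$ and $b'\mid(1-p^{d})$; then the shift $(1-p^{d})x=(1-p^{d})k/(p^{s}b')$ is $p$-adic, so the affine map $t\mapsto p^{d}t+(1-p^{d})x$ carries $p$-adic points to $p$-adic points and extends to $g\in T_{p,q}$ fixing $x$ with slope $p^{d}\neq1$ near $x$. Together with the first paragraph, the germ group at such an $x$ is a non-trivial subgroup of $\mathbb{Z}$, hence isomorphic to $\mathbb{Z}$.

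The step requiring genuine care is the flexibility/realisation argument used in the last paragraph — certifying that once the breakpoints and images of the prescribed local pieces are $p$-adic, these pieces really do glue into an element of $T_{p,q}$ — though this is routine for Higman--Thompson groups. The actual content of the claim lies elsewhere: the obstruction that $(1-p^{a})x$ must be $p$-adic, which kills every non-trivial germ at an irrational point, and the congruence $p^{d}\equiv1\pmod{b'}$, which manufactures a non-trivial germ at a rational non-$p$-adic point.
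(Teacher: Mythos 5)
Your argument is correct and follows essentially the same route as the paper: the germ group is identified with the image of the (left, right) slope-exponent homomorphism into $\mathbb{Z}^2$, realised fully at $p$-adic points by one-sided elements, pinned to the diagonal at non-$p$-adic points since breakpoints are $p$-adic, and killed at irrational points because a rational affine map fixing an irrational point is locally the identity; your explicit use of the order of $p$ modulo $b'$ just makes the paper's assertion of non-triviality at rational non-$p$-adic points concrete. One small overstatement: in the irrational case the displayed identity only forces $x$ to be rational, not $p$-adic, but that already contradicts irrationality, so the proof stands.
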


\noindent
Recall that, given a group $G$ acting on a topological space $X$ and a point $x \in X$, the \emph{group of germs at $x \in X$} is the quotient $\mathrm{stab}(x)/ \mathrm{rig}(x)$, where $\mathrm{rig}(x)$ is the normal subgroup of $\mathrm{stab}(x)$ given by the elements fixing pointwise some neighbourhood of $x$.

\medskip \noindent
Claim~\ref{claim:Germs} can be proved by using the morphism
$$\Theta : \left\{ \begin{array}{ccc} \mathrm{stab}(x) & \to & \mathbb{Z} \times \mathbb{Z} \\ g & \mapsto & \left( \log(g'(x^-))/\log(p), \log(g'(x^+))/\log(p) \right) \end{array} \right.,$$
which gives the left- and right-derivates at $x$. Notice that the kernel of $\Theta$ coincides with $\mathrm{rig}(x)$, so the group of germs we are looking for is the image of $\Theta$. If $x$ is a $p$-adic number, we can construct elements of $T_{p,q}$ fixing $x$ and having left- and right-derivatives equal to arbitrary powers of $p$. In this case, $\Theta$ is surjective. If $x$ is irrational, then the identity is the only element of $T_{p,q}$ fixing $x$, since locally every element of $T_{p,q}$ is an affine map with rational coefficients. So the image of $\Theta$ is trivial in this case. If $x$ is rational but not $p$-adic, then the left- and right-derivatives of an element of $T_{p,q}$ fixing $x$ must be equal, but they can take as a common value an arbitrary power of $p$. In other words, the image of $\Theta$ is the infinite cyclic subgroup $\{(a,a) \mid a \in \mathbb{Z}\}$ of $\mathbb{Z}^2$. 
\end{proof}

\begin{proof}[Proof of Theorem~\ref{thm:BigIntro}.]
	As a consequence of Theorem~\ref{thm:BraidSubCharacteristic}, an isomorphism $\mathrm{br}T_{n,m} \to \mathrm{br}T_{r,s}$ induces an isomorphism $T_{n,m} \to T_{r,s}$, which implies that $r=n$ according to Proposition~\ref{prop:IsoThompson}. But we know from Theorem~\ref{thm_abelianise} that the abelianisation of $\mathrm{br}T_{n,m}$ (resp. $\mathrm{br}T_{r,s}$) has order $m |m-n+1|$ (resp. $s |s-r+1|$). It follows from Claim~\ref{claim:arithm} that if $m\neq s$ then there are three families to distinguish:
	\begin{itemize}
		\item $\mathrm{br}T_{d(u^2+v^2)+1,dv(u+v)}$ and $\mathrm{br}T_{d(u^2+v^2)+1,du(u+v)}$ where $u > v$. By \cite{GLU_finiteness}, the first group contains an element of order $\ell$ if and only if $\ell$ divides $dv(u+v)$ or $du(u-v)$; and the second group contains an element of order $\ell$ if and only if $\ell$ divides $du(u+v)$ or $dv(u-v)$. Since $du(u+v)$ is larger than $dv(u+v)$ and $du(u-v)$, the two groups cannot be isomorphic because only the second one contains an element of order $du(u+v)$. 
		\item $\mathrm{br}T_{d(u^2+v^2)+1,du(u-v)}$ and $\mathrm{br}T_{d(u^2+v^2)+1,du(u+v)}$ where $u > v$. The first group contains an element of order $\ell$ if and only if $\ell$ divides $du(u-v)$ or $dv(u+v)$. The second group contains an element of order $\ell$ if and only if $\ell$ divides $du(u+v)$ or $dv(u-v)$. Since $du(u+v)$ is larger than $du(u-v)$ and $dv(u+v)$, it follows that the two groups cannot be isomorphic because only the second one contains an element of order $du(u+v)$.
		\item $\mathrm{br}T_{n,m}$ and $\mathrm{br}T_{n,n-1-m}$ where $2 \leq m \leq (n-1)/2$ is the only possibility remaining.
	\end{itemize}
	
	\begin{claim}\label{claim:arithm}
		If $x|x-k| = y|y-k|$ where $0\leq x < y$, and $k\geq 1$ then
		\begin{itemize}
			\item $0 \leq x \leq k/2$ and $y=k-x$;
			\item or $0 \leq x \leq k/2$ and $x=dv(u+v)$, $y=du(u+v)$ where $u > v$ and $d\in\Z_{\geq0}$ are such that $k=d(u^2+v^2)$;
			\item or $k/2 \leq x \leq k$ and $x=du(u-v)$, $y=du(u+v)$ where $u > v$ and $d\in\Z_{\geq0}$ are such that $k=d(u^2+v^2)$. 
		\end{itemize}
	\end{claim}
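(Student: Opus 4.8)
The plan is to analyze the Diophantine equation $x|x-k| = y|y-k|$ with $0 \le x < y$ and $k \ge 1$ by splitting according to the signs of $x-k$ and $y-k$. First I would rule out the case $x < k < y$: here the left side is $x(k-x) > 0$ while... wait, actually $x(k-x)$ and $y(y-k)$ are both positive, so this case is live. Let me instead organize by three regimes: (i) both $x,y \le k$, (ii) both $x,y \ge k$, and (iii) $x \le k \le y$. In regime (ii) the equation reads $x(x-k) = y(y-k)$, i.e. $x^2 - y^2 = k(x-y)$, which gives $x+y = k$ (since $x \ne y$), contradicting $x,y \ge k$ unless one of them equals $k$ and the other $0$, impossible as $k\ge 1$; so regime (ii) collapses. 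In regime (i), $x(k-x) = y(k-y)$ gives the same factorization $y^2 - x^2 = k(y-x)$, hence $x + y = k$, which is the first bullet ($y = k-x$, and $x < y$ forces $x \le k/2$, with equality excluded but the statement writes $\le$).

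The substance is regime (iii): $x \le k \le y$, where the equation becomes
\[
x(k-x) = y(y-k),
\]
a genuinely quadratic relation with no linear shortcut. Writing $a = k - x \ge 0$ and $b = y - k \ge 0$, this is $x a = y b$, i.e. $(k-a)a = (k+b)b$, so $ka - a^2 = kb + b^2$, giving $k(a-b) = a^2 + b^2$. Thus $a \ge b$, and setting $d = \gcd(a,b)$, $a = d\alpha$, $b = d\beta$ with $\gcd(\alpha,\beta)=1$, we get $k(\alpha - \beta) = d(\alpha^2 + \beta^2)$. Since $\gcd(\alpha-\beta, \alpha^2+\beta^2) = \gcd(\alpha - \beta, 2\alpha^2)$ and $\gcd(\alpha-\beta,\alpha) = \gcd(\alpha,\beta) = 1$, this gcd divides $2$; the standard parametrization of the Pythagorean-like equation then forces, after renaming, $k = d'(u^2+v^2)$ with $u > v$ and $a,b$ proportional to $u(u\pm v)$ or similar. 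I would then unwind $x = k - a$, $y = k + b$ to land on exactly the forms $x = du(u-v)$, $y = du(u+v)$ of the third bullet — and check that a symmetric sub-case (when $x \le k/2$ within regime (iii), which can happen) yields the middle bullet $x = dv(u+v)$, $y = du(u+v)$ instead.

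The main obstacle I anticipate is the bookkeeping in regime (iii): the coprimality/gcd analysis of $k(a-b) = a^2 + b^2$ must be handled carefully to extract the correct parametrization, and then one must verify that the two parametrized families genuinely correspond to the two sub-ranges $0 \le x \le k/2$ versus $k/2 \le x \le k$ claimed in the statement (this amounts to comparing $du(u-v)$ and $dv(u+v)$ against $k/2 = d(u^2+v^2)/2$, which is an elementary inequality $u(u-v)$ vs. $(u^2+v^2)/2$, equivalently $(u-v)^2 \ge 0$, so it works out cleanly but needs to be stated). A minor additional point is allowing the degenerate values $d = 0$ (forcing $a = b$, hence $x = k$... no, $a=b$ and $k(a-b)=0=a^2+b^2$ forces $a=b=0$, i.e. $x=y=k$, excluded) or $v = 0$, which the statement permits via $d \in \Z_{\ge 0}$; I would simply note these edge cases reduce to the first bullet or are vacuous. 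The regimes (i) and (ii) are genuinely routine and should take only a couple of lines each.
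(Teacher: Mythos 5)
Your overall route is essentially the paper's: after disposing of the regimes where $x$ and $y$ lie on the same side of $k$ (the paper does this via monotonicity of $z\mapsto z|z-k|$, you via a sign split; both give $y=k-x$ there, and your regime (ii) indeed collapses), the substantial case $x\le k\le y$ is reduced to a classical Diophantine equation. The paper completes the square to get $(2x-k)^2+(2y-k)^2=2k^2$ and quotes the parametrization of $X^2+Y^2=2Z^2$; your substitution $a=k-x$, $b=y-k$ gives $a^2+b^2=k(a-b)$, which is the same equation after completing the square, and your proposed gcd analysis of $k(\alpha-\beta)=d(\alpha^2+\beta^2)$ with $\gcd(\alpha,\beta)=1$ can in fact be pushed through: if $\alpha-\beta$ is odd then $(\alpha-\beta)\mid d$ and, with $(u,v)=(\alpha,\beta)$, one lands on $x=dv(u+v)$, $y=du(u+v)$, $k=d(u^2+v^2)$; if $\alpha-\beta$ is even then $\alpha,\beta$ are both odd, and with $u=(\alpha+\beta)/2$, $v=(\alpha-\beta)/2$ one lands on $x=du(u-v)$, $y=du(u+v)$, $k=d(u^2+v^2)$. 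So, up to writing out this bookkeeping, the parametrization part of your plan is sound and is a reasonable self-contained alternative to citing the classical equation.

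The genuine gap is your final step, matching the two parametrized families with the sub-ranges $0\le x\le k/2$ and $k/2\le x\le k$. Comparing $du(u-v)$ (resp.\ $dv(u+v)$) with $k/2=d(u^2+v^2)/2$ does not reduce to $(u-v)^2\ge 0$: it reduces to $(u-v)^2\ge 2v^2$, i.e.\ $u\ge(1+\sqrt{2})v$, which can fail. Concretely, $d=1$, $(u,v)=(3,2)$ gives $k=13$, $x=dv(u+v)=10$, $y=du(u+v)=15$: this satisfies $x(k-x)=y(y-k)$ and $x>k/2$, yet it admits no representation $x=d'u'(u'-v')$, $y=d'u'(u'+v')$ with $k=d'(u'^2+v'^2)$ and $u'>v'\ge 0$. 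So the range conditions attached to the second and third bullets cannot be verified by your argument; as literally stated they are in fact false (a slip also present, implicitly, in the paper's own proof, which assumes the sign of $u^2-v^2-2uv$ matches the sign of $2x-k$; this is harmless for the application in Theorem~\ref{thm:BigIntro}, which only uses the two possible forms of $(x,y)$ and not the ranges). The correct fix for your write-up is to prove, and use, the claim without the range restrictions: in the regime $x\le k\le y$ one of the two forms $x=dv(u+v)$ or $x=du(u-v)$, with $y=du(u+v)$ and $k=d(u^2+v^2)$, must occur, and which half of $[0,k]$ the value $x$ falls into is governed by the sign of $(u-v)^2-2v^2$, not fixed by the form.
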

	
	\noindent
	The map $z \mapsto z|z-k|$ increases on $[0,k/2]$, decreases on $[k/2,k]$, and increases again on $[k,+\infty)$, so either $0 \leq x \leq k/2$ and $k/2 \leq y \leq k$ or $0 \leq x \leq k$ and $y \geq k$.
	
	\medskip \noindent
	In the first case, we have $x(x-k)=y(y-k)$, which can be rewritten as $(x^2-y^2) - k(x-y)=0$. Dividing by $x-y$, we get $y=k-x$ as desired.
	
	\medskip \noindent
	In the second case, we have $-x(x-k)=y(y-k)$, which can be rewritten as $(x-k/2)^2+(y-k/2)^2 = 2 (k/2)^2$, or equivalently $(2x-k)^2+(2y+k)^2= 2k^2$. The Diophantine equation $X^2+Y^2=2Z^2$ is classical and the solutions are known. It follows that there exist $u,v$ with $u\geq v$ such that
	$$\left\{ \begin{array}{l} k=d(u^2+v^2 )\\ 2x-k = d(u^2-v^2-2uv )\\ 2y-k= d(u^2-v^2+2uv) \end{array} \right. \text{ if } k/2 \leq x \leq k$$
	for some constant $d\in\Z_{\geq 0}$.
	We obtain
	$$\left\{ \begin{array}{l} k=d(u^2+v^2) \\ k-2x = d(u^2-v^2-2uv) \\ 2y-k= d(u^2-v^2+2uv) \end{array} \right. \text{ if } 0 \leq x \leq k/2.$$
	Moreover, if $u=v$ then, when $k/2\leq x\leq k$, $2x-k=d(-2u^2)$ that implies that $u=0$ and so $k=0$, which contradicts the assumption on $k$. When $0\leq x\leq k/2$, a similar argument implies the desired conclusion.
\end{proof}

	\addcontentsline{toc}{section}{References}
	
	\bibliographystyle{alpha}
	{\footnotesize\bibliography{bibliography}}
	\Address
\end{document}